\numberwithin{equation}{thm}
\setlist{leftmargin=1.2cm, topsep=0.5em, itemsep=0em}
\begin{document}
\newtheorem{nonec}{}[section]
\newtheorem{thmc}[nonec]{Theorem}
\title{Triangulated categories of motives over fs log schemes}
\author{Doosung Park}
\date{}
\maketitle
\begin{abstract}
  We introduce the notion of log motivic triangulated categories, which is the theoretical framework for understanding the motivic aspect of cohomology theories for fs log schemes. Then we study the Grothendieck six operations formalism for log motivic triangulated categories.
\end{abstract}
\tableofcontents
\titlelabel{\thetitle.\;\;}
\titleformat*{\section}{\center \bf}
\section{Introduction}
\begin{nonec}\label{0.1}
  This paper is devoted to study triangulated categories of motives over fs log schemes with rational coefficients and their six operations formalism. Throughout the introduction, let $\Lambda$ be a fixed ring. For simplicity, assume also that every log scheme we deal with in the introduction is a noetherian fs log schemes over the spectrum of a fixed field or Dedekind domain.
\end{nonec}
\begin{nonec}\label{0.2}
  As illustrated in \cite[16.2.18]{CD12}, $\mathbb{A}^1$-weak equivalences and the \'etale topology ``generate'' the right homotopy equivalences needed to produce the motivic cohomology. However, in the category of fs log schemes, we may need more homotopy equivalences. For example, consider morphisms
  \[Y\stackrel{g}\rightarrow X\stackrel{f}\rightarrow S\]
  of fs log schemes satisfying one of the following conditions:
  \begin{enumerate}[(a)]
    \item $f$ is exact log smooth, and $g$ is the verticalization (\ref{0.9}) $X^{\rm ver}\rightarrow X$ of $f$.
    \item $f$ is the identity, and $g$ is a pullback of $\mathbb{A}_u:\mathbb{A}_M\rightarrow \mathbb{A}_P$ where $u:M\rightarrow {\rm spec}\,P$ is a proper birational morphism of monoschemes.
    \item $f$ is the identity, the morphism $\underline{g}:\underline{Y}\rightarrow \underline{X}$ of underlying schemes is an isomorphism, and the homomorphism
    \[\overline{\mathcal{M}}_{Y,\overline{y}}^{\rm gp}\rightarrow \overline{\mathcal{M}}_{X,\overline{g(y)}}^{\rm gp}\]
    of groups is an isomorphism for any point $y$ of $Y$.
    \item $f$ is the projection $S\times \mathbb{A}_\mathbb{N}\rightarrow S$, and $g$ is the $0$-section $S\times {\rm pt}_\mathbb{N}\rightarrow S\times \mathbb{A}_\mathbb{N}$ where ${\rm pt}_\mathbb{N}$ denotes the reduced strict closed subscheme of $\mathbb{A}_\mathbb{N}$ whose image is the origin.
  \end{enumerate}
  For each type (a)--(d), we should expect that $g:Y\rightarrow X$ is {\it homotopy equivalent} over $S$ in some sense because the Betti realization of $g$ seems to be homotopy equivalent over the Betti realization of $S$. It is not clear that $\mathbb{A}^1$-weak equivalences and the \'etale topology can make such morphisms of the types (a)--(d) as homotopy equivalences.
\end{nonec}
\begin{nonec}\label{0.4}
  We also need the Grothendieck six operations formalism. Let $\mathscr{T}$ be a triangulated fibered category over the category of fs log schemes. The formalism should contain the following information.
  \begin{enumerate}[(1)]
    \item There exist 3 pairs of adjoint functors as follows:
    \[\begin{split}
      f^*:\mathscr{T}(S)\rightleftarrows \mathscr{T}(X):f_*&,f:X\rightarrow S\text{ any morphism},\\
      f_!:\mathscr{T}(X)\rightleftarrows \mathscr{T}(S):f^!&,f:X\rightarrow S\text{ any separated morphism of finite type},\\
      (\otimes,Hom)&,\text{ symmetric closed monoidal structure on }\mathscr{T}(X).
    \end{split}\]
    \item There exists a structure of a covariant (resp.\ contravariant) 2-functors on $f\mapsto f_*$, $f\mapsto f_!$  (resp.\ $f\mapsto f^*$, $f\mapsto f^!$).
    \item There exists a natural transformation
    \[\alpha_f:f_!\rightarrow f_*\]
    which is an isomorphism when $f$ is proper. Moreover, $\alpha$ is a morphism of $2$-functors.
    \item For any separated morphism of finite type $f:X\rightarrow S$, there exist natural transformations
    \[\begin{split}
      f_!K\otimes_S L&\stackrel{\sim}\longrightarrow f_!(K\otimes_X f^*L),\\
      Hom_S(f_!K,L)&\stackrel{\sim}\longrightarrow f_*Hom_X(K,f^!L),\\
      f^!Hom_S(L,M)&\stackrel{\sim}\longrightarrow Hom_X(f^*L,f^!M).
    \end{split}\]
    \item {\it Localization property.} For any strict closed immersion $i:Z\rightarrow S$ with complementary open immersion $j$, there exists a distinguished triangle of natural transformations as follows:
        \[j_!j^!\stackrel{ad'}\longrightarrow {\rm id}\stackrel{ad}\longrightarrow i_*i^*\stackrel{\partial_i}\longrightarrow j_!j^![1]\]
        where $ad'$ (resp.\ $ad$) denotes the counit (resp.\ unit) of the relevant adjunction.
    \item {\it Base change.} Consider a Cartesian diagram
    \[\begin{tikzcd}
      X'\arrow[r,"g'"]\arrow[d,"f'"]&X\arrow[d,"f"]\\
      S'\arrow[r,"g"]&S
    \end{tikzcd}\]
    of fs log schemes. Assume that one of the following conditions is satisfied: $f$ is strict, $f$ is exact log smooth, $g$ is strict, or $g$ is exact log smooth. Then there exists a natural isomorphism
    \[g^*f_!\longrightarrow f_!'g'^*.\]
    \item {\it Lefschetz duality.} Let $f:X\rightarrow S$ be an exact log smooth morphism of fs log schemes of relative dimension $d$, and let $j:X^{\rm ver/f}\rightarrow X$ denote its verticalization of $X$ via $f$. Then there exist natural isomorphisms
        \[j_*j^*f^!(-d)[-2d]\stackrel{\sim}\longrightarrow f^*,\]
        \[f^!\stackrel{\sim}\longrightarrow j_\sharp j^*f^*(d)[2d].\]
  \end{enumerate}
  Here, the formulations (1)--(5) are extracted from \cite[Introduction A.5.1]{CD12}.
\end{nonec}
\begin{nonec}
  In (\ref{2.9.1}), borrowing a terminology from \cite[2.4.45]{CD12}, we introduce the notion of {\it log motivic triangulated category.} This is the theoretical framework for understanding the motivic aspect of cohomology theories for log schemes. The following is our first main theorem.
\end{nonec}
\begin{thmc}[\ref{2.9.3}]\label{0.5}
  A log motivic triangulated category satisfies the properties (1)--(6) in (\ref{0.4}) and the homotopy properties (Htp--5), (Htp--6), and (Htp--7) in (\ref{2.4.1}).
\end{thmc}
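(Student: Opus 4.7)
The plan is to follow the strategy of Cisinski--Déglise \cite{CD12} for deriving the six operations from the abstract axioms of a motivic triangulated category, adapted to the fs log setting. Since the axioms of (\ref{2.9.1}) are presumably crafted as log analogues of the Cisinski--Déglise axioms, the overall architecture should transfer. The broad plan is: first use the fibered-category structure and the built-in exact log smooth/strict base change and homotopy axioms to produce $(f^*,f_*)$ and the monoidal structure, then construct $(f_!,f^!)$ by compactification, and finally read off localization, base change, projection formulas and the homotopy properties in turn.

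For the structural part (1)--(3), the pair $(f^*,f_*)$ and the closed symmetric monoidal structure are given by the fibered-category datum underlying a log motivic triangulated category. For a separated morphism of finite type $f:X\to S$, I would factor $f=p\circ j$ with $j$ an open immersion and $p$ proper (which exists in the fs log category thanks to log compactification results available in the literature), set $f_!:=p_*\circ j_!$ with $j_!$ the left adjoint of $j^*$ provided by the localization axiom, and check this is independent of the factorization using the standard argument. The natural transformation $\alpha_f:f_!\to f_*$ and its 2-functoriality come from the coincidence of $f_!$ and $f_*$ on proper morphisms together with the compatibility with the identifications on open immersions; the right adjoint $f^!$ is constructed formally. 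The projection and $Hom$ formulas in (4) reduce to their known counterparts for the open-immersion and proper cases, and then extend by compactification.

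The base change statement (6) is the most technically demanding. The cases where $f$ or $g$ is strict, or exact log smooth, are built into the axioms (2.9.1), and the task is to propagate these to $f_!$ and $f^!$. This goes through the standard two-step argument: proper base change (easy, from $f_!=f_*$) combined with base change for open immersions (from the localization axiom), patched together using the compactification. The subtle point is that, unlike in the classical setting, one must check that each of the four hypotheses on $f$ and $g$ survives under the compactification and the open/proper decomposition, which uses the good behavior of strictness and exact log smoothness under pullback and composition.

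The main obstacle, and the step that genuinely distinguishes this theorem from its non-log analogue, is the localization property (5). In the classical theory this follows from Morel--Voevodsky localization, but in the log setting neither $\mathbb{A}^1$-invariance nor the étale topology alone suffices, as flagged in (\ref{0.2}). The argument must therefore exploit the additional log homotopy equivalences of types (a)--(d) from (\ref{0.2}) that are encoded in the axioms of (\ref{2.9.1}), most likely by reducing the existence of the localization triangle to a statement over standard log points via the verticalization morphism of type (a) and the log blow-up morphisms of type (b), and then trivializing it using the homotopies of types (c) and (d). Once localization is in hand, the derivation of (Htp--5), (Htp--6) and (Htp--7) from (\ref{2.4.1}) is formal: each is a consequence of the localization triangle applied to a specific strict closed/open decomposition, combined with the already-established base change and projection formulas.
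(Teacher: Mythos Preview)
Your proposal has the architecture inverted. The localization property (5) is \emph{not} the main obstacle---it is one of the axioms of a log motivic triangulated category (see (\ref{2.9.1})(i), which lists (Loc) explicitly). So your entire final paragraph, where you identify localization as ``the step that genuinely distinguishes this theorem from its non-log analogue'' and propose to derive it from the type-(a)--(d) homotopies, is aimed at a non-problem.

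The genuine difficulty is the support property (Supp): for non-strict proper morphisms $f$, one must show that $j_\sharp f'_*\to f_* j'_\sharp$ is an isomorphism. This is what makes the construction of $f_!$ in (\ref{2.3.3}) work and is the content of Section~5 of the paper. It is not an axiom, and it does not follow formally from compactification; rather, the paper proves it by an explicit Poincar\'e duality computation for a specific compactification of $\mathbb{A}_{\mathbb{N}^2}\to\mathbb{A}_\mathbb{N}$ (see (\ref{5.5.7}), (\ref{5.5.8})), bootstraps to $\mathbb{A}_\mathbb{N}\times{\rm pt}_\mathbb{N}\to{\rm pt}_\mathbb{N}$, and then uses the additional axiom (ii) of (\ref{2.9.1}) to conclude. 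Your statement that base change for strict or exact log smooth $f,g$ is ``built into the axioms'' is also too optimistic: only ($eSm$-BC) is axiomatic, and (BC--1)--(BC--4) each require separate arguments, with (BC--2) in particular depending on (Supp), (Htp--5), and (\ref{6.4.1}). Finally, (Htp--5), (Htp--6), (Htp--7) are not formal consequences of localization; each has its own proof in Section~6 (e.g.\ (Htp--5) uses a fan-subdivision induction on $\dim P$ and (Htp--4), and (Htp--6) uses (Supp)).
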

\begin{nonec}\label{0.11}
  In (\ref{0.2}), the morphisms $g:Y\rightarrow X$ of types (c) and (d) are homotopy equivalent over $S$ in the sense of (Htp--5) and (Htp--6). See (\ref{2.4.1}) for the precise statements. The morphisms of types (a) and (b) are inverted in the definition of log motivic triangulated categories.
\end{nonec}
\begin{nonec}\label{0.6}
  In \cite[5.1]{Nak97}, the proper base change theorem is proved in the context of the derived category of Kummer log \'etale sheaves with a more general condition than that of our formalism (6), but we do not know that such a generalization is possible to our situation.
\end{nonec}
\begin{nonec}
    We do not prove (7) in (\ref{0.4}) for log motivic triangulated categories. The following is a weaker version for log motivic derivators (\ref{10.1.8}), which is called the {\it Poincar\'e duality.} Log motivic derivators are extensions of log motivic trianglated categories to diagrams of fs log schemes.
  \begin{thmc}[\ref{9.7.2}] Let $\mathscr{T}$ be a log motivic derivator satisfying strict \'etale descent, and let $f:X\rightarrow S$ be a separated vertical exact log smooth morphism of fs log schemes of relative dimension $d$. Then there exist a natural isomorphism
      \[f^!(-d)[-2d]\stackrel{\sim}\longrightarrow f^*.\]
  \end{thmc}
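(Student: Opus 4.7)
The plan is to adapt the Ayoub/Cisinski--Déglise construction of the purity transformation to the log motivic derivator setting, with verticality playing the role of eliminating the verticalization correction $j_*j^*$ that appears in the general Lefschetz statement (7) of (\ref{0.4}). Concretely, I want to exhibit a natural transformation $\mathfrak{p}_f\colon f^!(-d)[-2d] \to f^*$ and then show it is invertible when $f$ is separated, vertical, and exact log smooth.

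First I would construct $\mathfrak{p}_f$ abstractly from the six operations. The recipe is to consider the diagonal $\delta\colon X \to X\times_S X$, take the deformation to its log conormal ``bundle'' (an fs log scheme over $\mathbb{A}^1$ obtained via a suitable log blow-up), and extract from the localization triangle (5) of (\ref{0.4}) together with base change (6) a morphism comparing the two fibers; this is what produces the purity transformation. Formal properties such as compatibility with composition, base change and the projection formulas all follow from (1)--(6) of (\ref{0.4}) together with the derivator structure, which is needed in order to form and manipulate the deformation diagram over $\mathbb{A}^1$ coherently.

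Next I would show $\mathfrak{p}_f$ is an isomorphism by reducing to standard models. Using strict étale descent on $S$ and the local structure theorem for exact log smooth morphisms, one factors $f$ locally as a strict smooth morphism composed with a chart map of the form $S\times_{\mathbb{A}_Q}\mathbb{A}_P\to S$ for a suitable injection of sharp fs monoids. For the strict smooth factor, the classical Morel--Voevodsky relative purity applies, since $\mathscr{T}$ inherits $\mathbb{A}^1$-homotopy invariance from Theorem \ref{0.5}. For the chart factor, the verticality hypothesis ensures the map is ``boundary-free'' in the sense that the homotopy properties Htp--5, Htp--6, Htp--7 listed in Theorem \ref{0.5} collapse the chart into a Tate twist and shift. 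Compatibility of $\mathfrak{p}$ with composition then assembles the two cases.

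The main obstacle is the construction of the log deformation to the conormal bundle. The naïve scheme-theoretic deformation space is not an fs log scheme in general, and one must replace it by a log blow-up whose zero fiber carries extra log structure coming from the log cotangent. Tracking this additional log structure, and verifying that the resulting abstract transformation agrees with the Tate twist in the vertical local model, is where the genuine log-geometric work lies; it is also precisely why the theorem is only proven for derivators, where the diagrammatic manipulations needed for the deformation argument are available. Once $\mathfrak{p}_f$ is built and shown to be compatible with composition and base change, the reduction to local models and the local verification are largely formal, leaning on Theorem \ref{0.5} and on verticality to kill the verticalization correction term.
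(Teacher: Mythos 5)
Your proposal captures the correct overall architecture — an Ayoub/Cisinski--D\'eglise-style purity transformation, reduction to local models via strict \'etale descent, Morel--Voevodsky for the strict-smooth factor, an argument for the chart factor leaning on verticality and the homotopy properties, and assembly via compatibility with composition. That matches the shape of the paper's argument (\S\ref{Sec4} together with the Poincar\'e duality section). However, there are two places where your description is vague in a way that hides genuine difficulties, and the paper's resolution of each is a specific technical device you have not quite identified.

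First, the actual obstruction is not that ``the deformation space is not an fs log scheme.'' The obstruction, stated in the paper's introduction, is that the diagonal $X\to X\times_S X$ is \emph{not a strict regular embedding}, so the classical deformation-to-the-normal-cone construction has no input to work on. The paper's resolution is the notion of a \emph{compactified exactification} (\ref{9.1.1}): a proper log \'etale monomorphism $u\colon D\to X\times_S X$ with an interior $I\subset D$ such that $X\to I$ becomes a strict regular embedding. Once exactified, the deformation to the normal cone is the \emph{classical} one applied to $X\hookrightarrow D$; one does not build a new ``log deformation space with extra log structure on the zero fiber.'' This distinction is not cosmetic: it is why the Thom/transition calculus of \S\ref{Sec4} can be used essentially unchanged from \cite{CD12}, and why the monoscheme birational geometry of (\ref{9.1.0}), together with (Htp--4) (which inverts proper log \'etale monomorphisms), enters.

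Second, your explanation of why derivators are needed — ``the diagrammatic manipulations needed for the deformation argument are available'' — understates the issue. Compactified exactifications only exist after choosing an fs chart of exact log smooth type, i.e.\ only strict-\'etale-locally on $X$. To build a \emph{global} purity transformation one must glue the local ones, and the paper does this by working over a \v{C}ech hypercover $\mathscr{X}\to X$ (an $\mathscr{S}$-diagram), constructing the transformation there (\ref{9.6.3})--(\ref{9.6.10}), and then taking a limit over the connected category of localized compactified exactifications (\ref{9.7.1}) to get a canonical $\mathfrak{q}_f^n$. This gluing is precisely where (PD--3)--(PD--6) and strict \'etale descent are used, and it cannot be replaced by an appeal to the six operations on the scheme level. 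Finally, your description of the chart case as ``Htp--5, Htp--6, Htp--7 collapse the chart into a Tate twist and shift'' is too coarse: the actual argument (\ref{9.3.6})--(\ref{9.3.9}) is an induction on $\max_x \mathrm{rk}\,\overline{\mathcal{M}}^{\rm gp}_{X,\bar x}+\max_s \mathrm{rk}\,\overline{\mathcal{M}}^{\rm gp}_{S,\bar s}$ in which (Htp--3) and (Htp--7) identify $f_\sharp f^*$ with the Kummer interior piece, the localization triangle plus (Stab) supplies the strict closed boundary piece, and explicit left and right inverses to $\mathfrak{p}_{f,D}^o$ are constructed by hand as in \cite[2.4.42]{CD12}; the Tate twist comes from (Stab) and the orientation, not directly from the homotopy axioms. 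Verticality of $\theta$ is what guarantees the interior is Kummer log smooth, so (\ref{9.4.2}) applies. As written, your sketch would not let you carry out the chart-case induction.
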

\end{nonec}
\begin{nonec}
  {\it Organization.} In Section 2, we first review the notion of premotivic triangulated categories. Then we review properties of morphisms in \cite{Ayo07} and \cite{CD12}. Many properties of morphisms in them are trivially generalized to properties for {\it strict} morphisms. We end this section by introducing the notion of log motivic triangulated categories.

  In Section 3, we discuss results on log schemes and motives that will be needed in the later sections.

  In Section 4, we construct purity transformations. Let $f:X\rightarrow S$ be a vertical exact log smooth morphism of fs log schemes of relative dimension $d$. Unlike the case of usual schemes, the diagonal morphism
  \[\underline{X}\rightarrow \underline{X}\times_{\underline{S}}\underline{X}\]
  of underlying schemes is {\it not} a regular embedding in general. Hence we cannot apply the theorem of Morel and Voevodsky \cite[2.4.35]{CD12}. To resolve this obstacle, we assume that the diagonal morphism $X\rightarrow X\times_S X$ has a compactified version of an exactification
  \[X\stackrel{c}\rightarrow E\rightarrow X\times_S X\]
  in some sense. Then $c$ becomes a strict regular embedding, so when $f$ is a proper exact log smooth morphism, we can apply [loc.\ cit] to construct the purity transformation
  \[f_\sharp \longrightarrow f_*(d)[2d].\]

  In Section 5, we introduce the notions of the semi-universal and universal support properties, which are generalizations of the support property for {\it non proper} morphisms. Then we prove that Kummer log smooth morphisms satisfy the semi-universal support property. We next prove that morphisms satisfying the semi-universal support property enjoy some good properties. Then we prove the semi-universal support property for $\mathbb{A}_\mathbb{\theta}:\mathbb{A}_{\mathbb{N}^2}\rightarrow \mathbb{A}_\mathbb{N}$ where $\theta:\mathbb{N}\rightarrow \mathbb{N}\oplus \mathbb{N}$ denotes the diagonal morphism and the projection $\mathbb{A}_\mathbb{N}\times {\rm pt}_\mathbb{N}\rightarrow {\rm pt}_\mathbb{N}$. We end this section by proving the support property under the additional axiom (ii) of (\ref{2.9.1}).

  In Section 6, we prove the base change and various homotopy properties.

  In Section 7, we select axioms of algebraic derivators to define the notion of premotivic triangulated prederivators, and we also define log motivic derivators. We prove several consequences of the axioms.

  In Section 8, we introduce the notion of compactified exactifications. Applying these to various natural transformations defined in Section 4, we construct the Poincar\'e duality for vertical exact log smooth separated morphism $f:X\rightarrow S$ with an fs chart having some conditions, and we show the purity. Then we collect the local constructions of purity transformations using the notion of premotivic triangulated prederivators, and we discuss its canonical version.
\end{nonec}
\begin{nonec}\label{0.7}
  {\it General terminology and conventions.}
  \begin{enumerate}[(1)]
  \item
  We fix a base fs log scheme $\mathcal{S}$. Then we fix a full subcategory $\mathscr{S}$ of the category of noetherian fs log schemes over $\mathcal{S}$ such that
  \begin{enumerate}[(i)]
    \item $\mathscr{S}$ is closed under finite sums and pullbacks via morphisms of finite type,
    \item if $S$ belongs to $\mathscr{S}$ and $X\rightarrow S$ is strict quasi-projective, then $X$ belongs to $\mathscr{S}$,
    \item if $S$ belongs to $\mathscr{S}$, then $S\times \mathbb{A}_M$ belongs to $\mathscr{S}$ for every fs monoscheme $M$,
    \item If $S$ belongs to $\mathscr{S}$, then $\underline{S}$ is belongs to $\mathscr{S}$,
    \item for any separated morphism $f:X\rightarrow S$ of $\mathscr{S}$-schemes, the morphism $\underline{f}:\underline{X}\rightarrow \underline{S}$ of underlying schemes admits a compactification in the sense of \cite[3.2.5]{SGA4}, i.e., we have a factorization
        \[\underline{X}\rightarrow Y\rightarrow \underline{S}\]
        in $\mathscr{S}$ such that the first arrow is an open immersion and the second arrow is a strict proper morphism.
  \end{enumerate}
   For example, as in \cite[2.0]{CD12}, $\mathcal{S}$ can be the spectrum of a field or Dedekind domain, and then $\mathscr{S}$ can be the category of noetherian fs log schemes over $\mathcal{S}$.
  \item When $S$ is an object of $\mathscr{S}$, we say that $S$ is an $\mathscr{S}$-scheme.
  \item When $f$ is a morphism in a class $\mathscr{P}$ of morphisms in $\mathscr{S}$, we say that $f$ is a $\mathscr{P}$-morphism.
  \item If we have an adjunction $\alpha:\mathcal{C}\leftrightarrows \mathcal{D}:\beta$ of categories, then the unit ${\rm id}\rightarrow \beta\alpha$ is denoted by $ad$, and the counit $\alpha\beta\rightarrow {\rm id}$ is denoted by $ad'$.
  \item We mainly deal with fs log schemes. The fiber products of fs log schemes and fiber coproducts of fs monoids are computed in the category of fs log schemes and fs monoids respectively unless otherwise stated.
  \item An abbreviation of the strict \'etale topology is s\'et.
\end{enumerate}
\end{nonec}
\begin{nonec}\label{0.8}
  {\it Terminology and conventions for monoids.}
  \begin{enumerate}[(1)]
    \item For a monoid $P$, we denote by ${\rm Spec}\,P$ the set of prime ideals of $P$. Note that $K\mapsto (P-K)$ for ideals $K$ of $P$ gives one-to-one correspondence between ${\rm Spec}\,P$ and the set of faces of $P$.
    \item A homomorphism $\theta:P\rightarrow Q$ of monoids is said to be {\it strict} if $\overline{\theta}:\overline{P}\rightarrow \overline{Q}$ is an isomorphism.
    \item A homomorphism $\theta:P\rightarrow Q$ of monoids is said to be {\it locally exact} if for any face $G$ of $Q$, the induced homomorphism $P_{\theta^{-1}(G)}\rightarrow Q_G$ is exact.
    \item Let $\theta:P\rightarrow Q$ be a homomorphism of monoids. A face $G$ of $Q$ is said to be {\it $\theta$-critical} if $\theta^{-1}(G)=\theta^{-1}(Q^*)$. Such a face $G$ is said to be {\it maximal $\theta$-critical} if $G$ is maximal among $\theta$-critical faces.
    \item A homomorphism $\theta:P\rightarrow Q$ of monoids is said to be {\it vertical} if the cokernel of $\theta$ computed in the category of integral monoids is a group. Equivalently, $\theta$ is vertical if $\theta(P)$ is not contained in any proper face of $Q$.
  \end{enumerate}
\end{nonec}
\begin{nonec}\label{0.9}
  {\it Terminology and conventions for log schemes.}
  \begin{enumerate}[(1)]
      \item For a monoid $P$, we denote by ${\bf A}_P$ the log scheme whose underlying scheme is $\mathbb{Z}[P]$ and the log structure is induced by the homomorphism $P\rightarrow \mathbb{Z}[P]$.
      \item For a monoid $P$ with an ideal $K$, we denote by $\mathbb{A}_{(P,K)}$ the strict closed subscheme of $\mathbb{A}_P$ whose underlying scheme is ${\rm Spec}\,\mathbb{Z}[P]/\mathbb{Z}[K]$.
      \item For a sharp monoid $P$, we denote by ${\rm pt}_P$ the log scheme $\mathbb{A}_{(P,P^+)}$.
      \item For a log scheme $S$, we denote by $\underline{S}$ the underlying scheme of $S$, and we denote by $\mathcal{M}_S$ the \'etale sheaf of monoids on $\underline{S}$ given by $S$.
      \item For a morphism $f:X\rightarrow S$ of log schemes, $\underline{f}$ denotes the morphism $\underline{X}\rightarrow \underline{S}$ of underlying schemes.
      \item For a morphism $f:X\rightarrow S$ of fs log schemes, we say that $f$ is a monomorphism if it is a monomorphism in the category of fs log schemes. Equivalently, $f$ is a monomorphism if and only if the diagonal morphism $X\rightarrow X\times_S X$ is an isomorphism.
      \item For a morphism $f:X\rightarrow S$ of fine log schemes and a point $x\in X$, we say that $f$ is {\it vertical at $x$} if the induced homomorphism
      \[\overline{\mathcal{M}}_{S,f(x)}\rightarrow \overline{\mathcal{M}}_{X,x}\]
      is vertical. Then the set
      \[X^{{\rm ver}/f}:=\{x\in X:f\text{ is vertical at }x\}\]
      is an open subset of $X$, and we regard it as an open subscheme of $X$. The induced morphism $X^{{\rm ver}/f}\rightarrow S$ is said to be the {\it verticalization} of $f$, and the induced morphism $X^{{\rm ver}/f}\rightarrow X$ is said to be the {\it verticalization} of $X$ via $f$.
  \end{enumerate}
\end{nonec}
\begin{nonec}\label{0.10}
  {\it Terminology and conventions for monoschemes.}
  \begin{enumerate}[(1)]
    \item For a monoid $P$, we denote by ${\rm spec}\,P$ the monoscheme associated to $P$ defined in \cite[II.1.2.1]{Ogu17}.
    \item For a monoscheme $M$ (see \cite[II.1.2.3]{Ogu17} for the definition of monoschemes), we denote by $\mathbb{A}_M$ the log scheme associated to $M$ defined in \cite[\S III.1.2]{Ogu17}.
  \end{enumerate}
\end{nonec}
\begin{none}
  {\it Acknowledgements.} Most part of this paper comes from the author's dissertation. The author is grateful to Brad Drew and Arthur Ogus, and Martin Olsson for helpful conversations or communications.
\end{none}
\section{Properties of premotivic triangulated categories}
\begin{none}
  Throughout this section, we fix a class $\mathscr{P}$ of morphisms of $\mathscr{S}$ containing all strict smooth morphisms and stable by compositions and pullbacks.
\end{none}
\subsection{Premotivic triangulated categories}
\begin{none}
  We will review $\mathscr{P}$-premotivic triangulated categories and exchange structures formulated in \cite[\S 1]{CD12}. First recall from \cite[A.1]{CD13} the definition of $\mathscr{P}$-primotivic triangulated categories as follows.
\end{none}
\begin{df}\label{1.1.3}
  A {\it $\mathscr{P}$-premotivic triangulated category} $\mathscr{T}$ over $\mathscr{S}$ is a fibered category over $\mathscr{S}$ satisfying the following properties:
  \begin{enumerate}
    \item[(PM--1)] For any object $S$ in $\mathscr{S}$, $\mathscr{T}(S)$ is a symmetric closed monoidal triangulated category.
    \item[(PM--2)] For any morphism $f:X\rightarrow S$ in $\mathscr{S}$, the functor $f^*$ is monoidal and triangulated, and admits a right adjoint denoted by $f_*$.
    \item[(PM--3)] For any $\mathscr{P}$-morphism $f:X\rightarrow S$, the functor $f^*$ admits a left adjoint denoted by $f_\sharp$.
    \item[($\mathscr{P}$-BC)] $\mathscr{P}$-base change: For any Cartesian square
    \[\begin{tikzcd}
      X'\arrow[r,"f'"]\arrow[d,"g'"]&X\arrow[d,"f"]\\
      S'\arrow[r,"g"]&S
    \end{tikzcd}\]
    in $\mathscr{S}$ with $f\in \mathscr{P}$, the exchange transformation defined by
    \[Ex:f'_\sharp g'^*\stackrel{ad}\longrightarrow f'_\sharp g'^*f^*f_\sharp \stackrel{\sim}\longrightarrow f'_\sharp f'^*g^*f_\sharp \stackrel{ad'}\longrightarrow g^*f_\sharp\]
    is an isomorphism.
    \item[($\mathscr{P}$-PF)] $\mathscr{P}$-projection formula: For any $\mathscr{P}$-morphism $f:X\rightarrow S$, and any objects $K$ in $\mathscr{T}(X)$ and $L$ in
        $\mathscr{T}(S)$, the exchange transformation defined by
    \[Ex:f_\sharp (K\otimes_X f^*L)\stackrel{ad}\longrightarrow f_\sharp (f^*f_\sharp K\otimes_X f^*L)\stackrel{\sim}\longrightarrow f_\sharp f^*(f_\sharp K\otimes_S
    L)\stackrel{ad'}\longrightarrow f_\sharp K\otimes_S L\]
    is an isomorphism.
  \end{enumerate}
  We denote by $Hom_S$ the internal Hom in $\mathscr{T}(S)$.
\end{df}
\begin{rmk}\label{1.1.4}
  Note that the axiom (PM--2) implies the following.
  \begin{enumerate}[(1)]
    \item For any morphism $f:X\rightarrow S$ in $\mathscr{S}$ and objects $K$ and $L$ of $\mathscr{T}(S)$, we have the natural transformation
    \begin{equation}\label{1.1.4.1}
      f^*(K)\otimes_X f^*(L)\stackrel{\sim}\longrightarrow f^*(K\otimes_S L)
    \end{equation}
    with the coherence conditions given in \cite[2.1.85, 2.1.86]{Ayo07}.
    \item For any morphism $f:X\rightarrow S$ in $\mathscr{S}$, we have the natural transformation
    \[f^*(1_S)\stackrel{\sim}\longrightarrow 1_X\]
    with the coherence conditions given in \cite[2.1.85]{Ayo07}.
  \end{enumerate}
\end{rmk}
\begin{df}\label{1.1.5}
  Let $\mathscr{T}$ be a $\mathscr{P}$-premotivic triangulated category.
  \begin{enumerate}[(1)]
    \item Let $f:X\rightarrow S$ be a $\mathscr{P}$-morphism in $\mathscr{S}$. We put $M_S(X)=f_\sharp 1_X$ in $\mathscr{T}(S)$. It is called the {\it motive} over $S$ represented by $X$.
    \item A {\it cartesian section} of $\mathscr{T}$ is the data of an object $A_S$ of $\mathscr{T}(S)$ for each object $S$ of $\mathscr{S}$ and of isomorphisms
    \[f^*A_S\stackrel{\sim}\longrightarrow A_X\]
     for each morphism $f:X\rightarrow S$ in $\mathscr{S}$, subject to following coherence conditions:
    \begin{enumerate}[(i)]
      \item the morphism ${\rm id}^*A_S\stackrel{\sim}\longrightarrow A_S$ is the identity morphism,
      \item if $g:Y\rightarrow X$ is another morphisms in $\mathscr{S}$, then the diagram
      \[\begin{tikzcd}
        g^*f^*A_S\arrow[r,"\sim"]\arrow[d,"\sim"]&g^*A_X\arrow[r,"\sim"]&A_Y\arrow[d,"{\rm id}"]\\
        (gf)^*A_S\arrow[rr,"\sim"]&&A_Y
      \end{tikzcd}\]
      in $\mathscr{T}(Y)$ commutes.
    \end{enumerate}
      The tensor product of two cartesian sections is defined termwise.
    \item A set of {\it twists} $\tau$ for $\mathscr{T}$ is a set of Cartesian sections of $\mathscr{T}$ stable by tensor product. For short, we say also that $\mathscr{T}$ is $\tau$-twisted.
  \end{enumerate}
\end{df}
\begin{none}\label{1.1.6}
  Let $i$ be an object of $\tau$. It defines a section $i_S$ for each object $S$ of $\mathscr{S}$, and for an object $K$ of $\mathscr{T}(S)$, we simply put
  \[K\{i\}=K\otimes_S i_S.\]
  Then when $i,j\in \tau$, we have
  \[K\{i+j\}=(K\{i\})\{j\}.\]
  Note also that by (\ref{1.1.4}(1)), for a morphism $f:X\rightarrow S$ in $\mathscr{S}$, we have the natural isomorphism
  \[f^*(K\{i\})\stackrel{\sim}\longrightarrow (f^*K)\{i\}.\]
\end{none}
\begin{none}\label{1.1.7}
  Let $\mathscr{T}$ be a $\mathscr{P}$-premotivic triangulated category. Consider a commutative diagram
  \[\begin{tikzcd}
    X'\arrow[r,"f'"]\arrow[d,"g'"]&X\arrow[d,"f"]\\
    S'\arrow[r,"g"]&S
  \end{tikzcd}\]
  in $\mathscr{S}$. We associates several exchange transformations as follows.
  \begin{enumerate}
    \item[(1)] We obtain the exchange transformation
    \[f^*g_*\stackrel{Ex}\longrightarrow g'_*f'^*\]
    by the adjunction of the exchange transformation
    \[f'_\sharp g'^*\stackrel{Ex}\longrightarrow g^*f_\sharp\]
    in (\ref{1.1.3}). Note that it is an isomorphism when $f$ is a $\mathscr{P}$-morphism by ($\mathscr{P}$-BC).
    \item[(2)] Assume that $f$ is a $\mathscr{P}$-morphism. Then we obtain the exchange transformation
    \[Ex:f_\sharp g_*\stackrel{ad}\longrightarrow f_\sharp g_*f'^*f_\sharp'\stackrel{Ex^{-1}}\longrightarrow f_\sharp f^*g_*'f_\sharp'\stackrel{ad'}\longrightarrow g_*' f_\sharp'.\]
    \item[(3)] Assume that $g_*$ and $g_*'$ have right adjoints, denoted by $g^!$ and $g'^!$ respectively. If the exchange transformation
    \[f^*g_*\stackrel{Ex}\longrightarrow g'_*f'^*\]
    is an isomorphism, then we obtain the exchange transformation
    \[Ex:f'^*g'^!\stackrel{ad}\longrightarrow g^!g_*f'^*g'^!\stackrel{Ex^{-1}}\longrightarrow g^!f^*g_*'g'^!\stackrel{ad'}\longrightarrow g^!f^*.\]
    \item[(4)] For objects $K$ of $\mathscr{T}(X)$ and $L$ of $\mathscr{T}(S)$, we obtain the exchange transformation
    \[Ex:f_*K\otimes_S L\stackrel{ad}\longrightarrow f_*f^*(f_*K\otimes_S L)\stackrel{\sim}\longrightarrow f_*(f^*f_*K\otimes_X f^*L)\stackrel{ad'}\longrightarrow f_*(K\otimes_X
    f^*L).\]
    \item[(5)] For objects $K$ of $\mathscr{T}(S)$ and $L$ of $\mathscr{T}(X)$, we obtain the natural isomorphism
    \[Ex:Hom_S(K,f_*L)\longrightarrow f_*Hom_T(f^*K,L)\]
    by the adjunction of (\ref{1.1.4.1}).
    \item[(6)] Assume that $f$ is a $\mathscr{P}$-morphism. For objects $K$ and $L$ of $\mathscr{T}(S)$ and $K'$ of $\mathscr{T}(X)$, we obtain the exchange transformations
    \[f^*Hom_S(K,L)\stackrel{Ex}\longrightarrow Hom_X(f^*K,f^*L),\]
    \[Hom_S(f_\sharp K',L)\stackrel{Ex}\longrightarrow f_*Hom_X(K',f^*L)\]
    by the adjunction of the $\mathscr{P}$-projection formula.
    \item[(7)] Assume that $f$ is a $\mathscr{P}$-morphism and that the diagram is Cartesian. Then we obtain the exchange transformation
    \[Ex:M_{S'}(X')=f_\sharp'1_{X'}\stackrel{\sim}\longrightarrow f_\sharp'g'^*1_{X}\stackrel{Ex}\longrightarrow g^*f_\sharp 1_{X}=g^*M_S(X).\]
    Note that it is an isomorphism.
    \item[(8)] Assume that $f$ and $g$ are $\mathscr{P}$-morphisms and the the diagram is Cartesian. Then we obtain the composition
    \[\begin{split}
    M_S(X\times_S S')&=f_\sharp g_\sharp'f'^*1_S'\stackrel{Ex}\longrightarrow f_\sharp f^*g_\sharp 1_{S'}\\
    &\stackrel{\sim}\longrightarrow f_\sharp (1_X\otimes_X f^*g_\sharp 1_{S'} ) \stackrel{Ex}\longrightarrow f_\sharp 1_X\otimes_S g_\sharp 1_{S'}=M_S(X)\otimes M_S(S')
    \end{split}.\]
    Note that it is an isomorphism.
    \item[(9)] Assume that $\mathscr{T}$ is $\tau$-twisted and that $f$ is a $\mathscr{P}$-morphism. For $i\in \tau$ and an object $K$ of $\mathscr{T}(X)$, we obtain the exchange
        transformation
    \[Ex:f_\sharp ((-)\{i\})\stackrel{\sim}\longrightarrow f_\sharp((-)\otimes_X f^*1_S\{i\})\stackrel{Ex}\longrightarrow (f_\sharp (-))\{i\}.\]
    Note that it is an isomorphism.
    \item[(10)] Assume that $\mathscr{T}$ is $\tau$-twisted. For $i\in \tau$, we obtain the exchange transformation
    \[Ex:(f_*(-))\{i\}\stackrel{\sim}\longrightarrow f_*(-)\otimes_S 1_S\{i\}\stackrel{Ex}\longrightarrow f_*((-)\otimes_X f^*1_S\{i\})=f_*((-)\{i\}).\]
    If twists are $\otimes$-invertible, then it is an isomorphism since its right adjoint is the natural isomorphism
    \[f^*((-)\{-i\})\stackrel{\sim}\longrightarrow (f^*(-))\{-i\}.\]
  \end{enumerate}
\end{none}
\subsection{Elementary properties}\label{Sec2.0}
\begin{none}
  From \S \ref{Sec2.0} to \S \ref{Sec2.10}, let $\mathscr{T}$ be a $\mathscr{P}$-premotivic triangulated category.
\end{none}
\begin{none}
  In \cite{Ayo07} and \cite{CD12}, the adjoint property, base change property, $\mathbb{A}^1$-homotopy property, localization property, projection formula, purity, $t$-separated property, stability, and support property are discussed. Many of them can be trivially generalized to properties for {\it strict} morphisms. We also introduce base change properties for non strict morphisms and other homotopy properties. In the last section, we introduce the notion of log motivic triangulated categories, which will be the central topic in later sections.
\end{none}
\begin{none}\label{2.1.1}
  Recall from \cite[\S 2.1, 2.2.13]{CD12} the following definitions.
  \begin{enumerate}[(1)]
    \item We say that $\mathscr{T}$ is {\it additive} if for any $\mathscr{S}$-schemes $S$ and $S'$, the obvious functor
    \[\mathscr{T}(S\amalg S')\rightarrow \mathscr{T}(S)\times\mathscr{T}(S')\]
    is an equivalence of categories.
    \item Let $f:X\rightarrow P$ be a proper morphism of $\mathscr{S}$-schemes. We say that $f$ satisfies the {\it adjoint property}, denoted by (${\rm Adj}_f$), if the functor
    \[f_*:\mathscr{T}(X)\rightarrow \mathscr{T}(S)\]
    has a right adjoint. When $({\rm Adj}_f)$ is satisfied for any proper morphism $f$, we say that $\mathscr{T}$ satisfies the {\it adjoint property}, denoted by (Adj).
    \item Let $t$ be a topology on $\mathscr{S}$ generated by a pretopology $t_0$ on $\mathscr{S}$. We say that $T$ is {\it $t$-seperated}, denoted by ($t$-sep), if for any $t_0$-cover $\{u_i:X_i\rightarrow S\}_{i\in I}$ of $S$, the family of functors $(f_i^*)_{i\in I}$ is conservative.
  \end{enumerate}
\end{none}
\begin{df}
  A class $\mathcal{G}$ of objects of a triangulated category $\mathcal{T}$ is called {\it generating} if the family of functors
    \[{\rm Hom}_\mathcal{T}(X[n],-)\]
    for $X\in \mathcal{G}$ and $n\in \mathbb{Z}$ is conservative.
\end{df}
\begin{df}
  Let $\mathscr{T}$ be a $\mathscr{P}$-premotivic triangulated category over $\mathscr{S}$. We say that $\mathscr{T}$ is {\it generated by $\mathscr{P}$ and $\tau$} if for any object $S$ of $\mathscr{S}$, the family of objects of the form
    \[M_S(X)\{i\}\]
    for a $\mathscr{P}$-morphism $X\rightarrow S$ and $i\in \tau$ generates $\mathscr{T}(S)$.
\end{df}
\begin{none}\label{2.1.2}
  Let $t$ be a topology on $\mathscr{S}$ generated by a pretopology $t_0$ on $\mathscr{S}$ such that any $t_0$-cover is consisted with $\mathscr{P}$-morphisms. Assume that $\mathscr{T}$ satisfies ($t$-sep) and that $\mathscr{T}$ is generated by $\mathscr{P}$ and $\tau$. Let $S$ be an $\mathscr{S}$-scheme, and let $\mathscr{P}'/S$ be a class of $\mathscr{P}$-morphisms $X\rightarrow S$ such that for any $\mathscr{P}$-morphism $g:Y\rightarrow S$, there is a $t$-cover $\{u_i:Y_i\rightarrow Y\}_{i\in I}$ such that each composition $gu_i:Y_i\rightarrow S$ is in $\mathscr{P}'/S$. In this setting, we will show that the family of objects of the form
  \[M_S(X)\{i\}\]
  for morphism $X\rightarrow S$ in $\mathscr{P}'/S$ and $i\in \tau$ generates $\mathscr{T}(S)$.

  Since $\mathscr{T}$ is generated by $\mathscr{P}$ and $\tau$, the family of functors
  \[{\rm Hom}_{\mathscr{T}(S)}(M_S(X)\{i\},-)={\rm Hom}_{\mathscr{T}(X)}(1_X\{i\},f^*(-))\]
  for $\mathscr{P}$-morphism $f:X\rightarrow S$ and $i\in \tau$ is conservative. By assumption, there is a $t_0$-cover $\{u_j:X_j\rightarrow X\}_{j\in I}$ such that each composition $fu_j:X_j\rightarrow S$ is in $\mathscr{P}'/S$. Applying ($t$-sep), we see that the family of functors
  \[{\rm Hom}_{\mathscr{T}(X_j)}(u_j^*1_X\{i\},u_j^*f^*(-))={\rm Hom}_{\mathscr{T}(S)}(M_S(X_j)\{i\},-)\]
  for $\mathscr{P}$-morphism $f:X\rightarrow S$, $j\in I$, and $i\in \tau$ is conservative. This implies the assertion.
\end{none}
\subsection{Localization property}
\begin{none}\label{2.2.1}
  Let $i:Z\rightarrow S$ be a strict closed immersion of $\mathscr{S}$-schemes, and let $j:U\rightarrow S$ be its complement. Recall from (\ref{1.1.3}) that $\mathscr{T}$ satisfies ($\mathscr{P}$-BC). According to \cite[2.3.1]{CD12}, we have the following
  consequences of ($\mathscr{P}$--BC):
  \begin{enumerate}[(1)]
    \item the unit ${\rm id}\stackrel{ad}\longrightarrow j^*j_\sharp$ is an isomorphism,
    \item the counit $j^*j_*\stackrel{ad'}\longrightarrow {\rm id}$ is an isomorphism,
    \item $i^*j_\sharp=0$,
    \item $j^*i_*=0$,
    \item the composition $j_\sharp j^*\stackrel{ad'}\longrightarrow {\rm id}\stackrel{ad}\longrightarrow i_*i^*$ is zero.
  \end{enumerate}
\end{none}
\begin{df}\label{2.2.2}
  We say that $\mathscr{T}$ satisfies the {\it localization property}, denoted by (Loc), if
  \begin{enumerate}[(1)]
    \item $\mathscr{T}(\emptyset)=0$,
    \item For any {\it strict} closed immersion $i$ of $\mathscr{S}$-schemes and its complement $j$, the pair of functors $(j^*,i^*)$ is conservative, and the conunit
        $i^*i_*\stackrel{ad'}\longrightarrow {\rm id}$ is an isomorphism.
  \end{enumerate}
\end{df}
\begin{none}\label{2.2.3}
  Assume that $\mathscr{T}$ satisfies (Loc). Consequences formulated in \cite[\S 2.3]{CD12} and in the proof of \cite[3.3.4]{CD12} are as follows.
  \begin{enumerate}[(1)]
    \item For any closed immersion $i$ of $\mathscr{S}$-schemes, the functor $i_*$ admits a right adjoint $i^!$.
    \item For any closed immersion $i$ of $\mathscr{S}$-schemes and its complement $j$, there exists a unique natural transformation $\partial_i:i_*i^*\rightarrow j_\sharp j^*[1]$
        such that the triangle
        \[j_\sharp j^*\stackrel{ad'}\longrightarrow {\rm id}\stackrel{ad}\longrightarrow i_*i^*\stackrel{\partial_i}\longrightarrow j_\sharp j^*[1]\]
        is distinguished.
    \item For any closed immersion $i$ of $\mathscr{S}$-schemes and its complement $j$, there exists a unique natural transformation $\partial_i:j_*j^*\rightarrow i_* i^![1]$ such
        that the triangle
        \[i_* i^!\stackrel{ad'}\longrightarrow {\rm id}\stackrel{ad}\longrightarrow j_*j^*\stackrel{\partial_i}\longrightarrow i_* i^![1]\]
        is distinguished.
    \item Let $S$ be an $\mathscr{S}$-scheme, and let $S_{red}$ denote the reduced scheme associated with $S$. The closed immersion $\nu:S_{red}\rightarrow S$ induces an equivalence
        of categories
        \[\nu^*:\mathscr{T}(S)\rightarrow \mathscr{T}(S_{red}).\]
    \item For any partition $(S_i\stackrel{\nu_i}\longrightarrow S)_{i\in I}$ of $S$ by locally closed subsets, the family of functors $(\nu_i^*)_{i\in I}$ is conservative.
    \item The category $\mathscr{T}$ is additive.
    \item The category $\mathscr{T}$ satisfies the strict Nisnevich separation property (in the case of usual schemes, note that (Loc) implies the cdh separation property).
    \item For any $\mathscr{S}$-scheme $S$ and any strict Nisnevich distinguished square
    \[\begin{tikzcd}
      X'\arrow[r,"g'"]\arrow[d,"f'"]&X\arrow[d,"f"]\\
      T'\arrow[r,"g"]&T
    \end{tikzcd}\]
    of $\mathscr{P}/S$-schemes, the associated Mayer-Vietoris sequence
    \[p_\sharp h_\sharp h^*p^*K\longrightarrow p_\sharp f_\sharp f^*p^*K\oplus p_\sharp g_\sharp g^*p^*K\longrightarrow p_\sharp p^*K\longrightarrow p_\sharp h_\sharp h^*p^*K[1]\]
    is a distinguished triangle for any object $K$ of $\mathscr{T}(S)$ where $h=fg'$ and $p$ denotes the structural morphism $T\rightarrow S$.
  \end{enumerate}
\end{none}
\subsection{Support property}
\begin{df}\label{2.3.1}
  Following \cite[2.2.5]{CD12}, we say that a proper morphism $f$ of $\mathscr{S}$-schemes satisfies the support property, denoted by (${\rm Supp}_f$), if for any Cartesian diagram
  \[\begin{tikzcd}
    X'\arrow[r,"g'"]\arrow[d,"f'"]&X\arrow[d,"f"]\\
    S'\arrow[r,"g"]&S
  \end{tikzcd}\]
  of $\mathscr{S}$-schemes such that $g$ is an open immersion, the exchange transformation
  \[Ex:g_\sharp f'_*\rightarrow f_*g'_\sharp\]
  is an isomorphism. We say that $\mathscr{T}$ satisfies the support property (resp.\ the strict support property), denoted by (Supp) (resp.\ (sSupp)), if the support property is satisfied for any proper morphism (resp.\ for any strict proper morphism).
\end{df}
\begin{none}\label{2.3.3}
  Let $f:X\rightarrow S$ be a separated morphism of $\mathscr{S}$-schemes. Choose a
  compactification
  \[\underline{X}\rightarrow \underline{S'}\rightarrow \underline{S}\]
  of $\underline{f}$. Following \cite[5.4]{Nak97}, $f$ can be factored as
  \[X\stackrel{f_1}\longrightarrow \underline{X}\times_{\underline{S}}S\stackrel{f_2}\longrightarrow \underline{S'}\times_{\underline{S}}S\stackrel{f_3}\longrightarrow S\]
  where
  \begin{enumerate}[(i)]
    \item $f_1$ denotes the morphism induced by $X\rightarrow \underline{X}$ and $X\rightarrow S$,
    \item $f_2$ denotes the morphism induced by $\underline{X}\rightarrow \underline{S'}$,
    \item $f_3$ denotes the projection.
  \end{enumerate}
  The morphisms $f_1$ and $f_3$ are proper, and the morphism $f_2$ is an open immersion. Hence we can use the argument of \cite[\S 2.2]{CD12}. A summary of [loc.\ cit] is as follows.

  Assume that $\mathscr{T}$ satisfies (Supp). For any separated morphism of finite type $f:X\rightarrow S$ of $\mathscr{S}$-schemes, we can associate a functor
  \[f_!:\mathscr{T}(X)\rightarrow \mathscr{T}(S)\]
  with the following properties:
  \begin{enumerate}[(1)]
    \item For any separated morphism of finite types $f:X\rightarrow Y$ and $g:Y\rightarrow Z$ of $\mathscr{S}$-schemes, there is a natural isomorphism
    \[(gf)_!\rightarrow g_!f_!\]
    with the usual cocycle condition with respect to the composition.
    \item For any separated morphism of finite type $f:X\rightarrow S$ of $\mathscr{S}$-schemes, there is a natural transformation
    \[f_!\rightarrow f_*,\]
    which is an isomorphism when $f$ is proper. Moreover, it is compatible with compositions.
    \item For any open immersion $j:U\rightarrow S$, there is a natural isomorphism
    \[f_!\rightarrow f_\sharp.\]
    It is compatible with compositions.
    \item For any Cartesian diagram
    \[\begin{tikzcd}
      X'\arrow[r,"g"]\arrow[d,"f'"]&X\arrow[d,"f"]\\
      S'\arrow[r,"g"]&S
    \end{tikzcd}\]
    of $\mathscr{S}$-schemes such that $f$ is separated of finite type, there is an exchange transformation
    \[Ex:g^*f_!\rightarrow f'_!g'^*\]
    compatible with horizontal and vertical compositions of squares such that the diagrams
    \[\begin{tikzcd}
      g^*f_!\arrow[r,"Ex"]\arrow[d,"\sim"]&f'_!g'^*\arrow[d,"\sim"]\\
      g^*f_*\arrow[r,"Ex"]&f'_*g'^*
    \end{tikzcd}\quad
    \begin{tikzcd}
      g^*f_!\arrow[r,"Ex"]\arrow[d,"\sim"]&f'_!g'^*\arrow[d,"\sim"]\\
      g^*f_\sharp\arrow[r,"Ex^{-1}"]&f'_\sharp g^*
    \end{tikzcd}\]
    of functors commutes when $f$ is proper in the first diagram and is open immersion in the second diagram.
    \item For any Cartesian diagram
    \[\begin{tikzcd}
      X'\arrow[r,"g"]\arrow[d,"f'"]&X\arrow[d,"f"]\\
      S'\arrow[r,"g"]&S
    \end{tikzcd}\]
    of $\mathscr{S}$-schemes such that $f$ is separated of finite type and $g$ is a $\mathscr{P}$-morphism, there is an exchange transformation
    \[Ex:g_\sharp f'_!\rightarrow f_!g'_\sharp\]
    compatible with horizontal and vertical compositions of squares such that the diagrams
    \[\begin{tikzcd}
      g_\sharp f'_!\arrow[r,"Ex"]\arrow[d,"\sim"]&f_!g'_\sharp\arrow[d,"\sim"]\\
      g_\sharp f'_*\arrow[r,"Ex"]&f_*g'_\sharp
    \end{tikzcd}\quad
    \begin{tikzcd}
      g_\sharp f'_!\arrow[r,"Ex"]\arrow[d,"\sim"]&f_!g'_\sharp\arrow[d,"\sim"]\\
      g_\sharp f'_\sharp\arrow[r,"\sim"]&f_\sharp g'_\sharp
    \end{tikzcd}\]
    of functors commutes when $f$ is proper in the first diagram and is open immersion in the second diagram.
    \item For any separated morphism of finite type $f:X\rightarrow S$ of $\mathscr{S}$-schemes, and for any $K\in \mathscr{T}(X)$ and $L\in \mathscr{T}(S)$, there is an exchange
        transformation
    \begin{equation}\label{2.3.3.1}
      Ex:f_!K\otimes_S L\rightarrow f_!(K\otimes_X f^*L)
    \end{equation}
    compatible with compositions such that the diagrams
    \[\begin{tikzcd}
      f_!K\otimes_S L\arrow[r,"Ex"]\arrow[d,"\sim"]&f_!(K\otimes_X f^*L)\arrow[d,"\sim"]\\
      f_!K\otimes_S L\arrow[r,"Ex"]&f_*(K\otimes_X f^*L)
    \end{tikzcd}\quad
    \begin{tikzcd}
      f_!K\otimes_S L\arrow[r,"Ex"]\arrow[d,"\sim"]&f_!(K\otimes_X f^*L)\arrow[d,"\sim"]\\
      f_\sharp K\otimes_S L\arrow[r,"Ex^{-1}"]&f_\sharp (K\otimes_X f^*L)
    \end{tikzcd}\]
    of functors commutes when $f$ is proper in the first diagram and is open immersion in the second diagram.
    \item Assume that $\mathscr{T}$ satisfies (Adj). For any separated morphism of finite type $f:X\rightarrow S$ of $\mathscr{S}$-schemes, the functor $f_!$ admits a right adjoint
    \[f^!:\mathscr{T}(S)\rightarrow\mathscr{T}(X).\]
  \end{enumerate}
\end{none}
\subsection{Homotopy properties}
\begin{df}\label{2.4.1}
  Let $S$ be an $\mathscr{S}$-scheme. Let us introduce the following homotopy properties.
  \begin{enumerate}
    \item[(Htp--1)] Let $f$ denote the projection $\mathbb{A}_S^1\rightarrow S$. Then the counit
    \[f_\sharp f^*\stackrel{ad'}\longrightarrow {\rm id}\]
    is an isomorphism.
    \item[(Htp--2)] Let $f:X\rightarrow S$ be an exact log smooth morphism of $\mathscr{S}$-schemes, and let $j:X^{\rm ver}\rightarrow X$ denote the verticalization of $X$ via $f$. Then the natural
        transformation
    \[f_\sharp j_\sharp j^* \stackrel{ad'}\longrightarrow f_\sharp\]
    is an isomorphism.
    \item[(Htp--3)] Let $S$ be an $\mathscr{S}$-scheme with an fs chart $P$, let $\theta:P\rightarrow Q$ be a vertical homomorphism of exact log smooth over $S$ type (see (\ref{0.1.2}) for the definition), and let $G$ be a $\theta$-critical face of $Q$. Consider the induced morphisms
        \[S\times_{\mathbb{A}_P}\mathbb{A}_{Q_G}\stackrel{j}\rightarrow S\times_{\mathbb{A}_P}\mathbb{A}_Q\stackrel{f}\rightarrow S.\]
        Then the natural transformation
        \[f_\sharp j_\sharp j^*f^*\stackrel{ad'}\longrightarrow f_\sharp f^*\]
        is an isomorphism.
    \item[(Htp--4)] Let $f:X\rightarrow S$ be a surjective proper log \'etale monomorphism of $\mathscr{S}$-schemes. Then the unit
        \[{\rm id}\stackrel{ad}\longrightarrow f_*f^*\]
        is an isomorphism.
    \item[(Htp--5)] Let $f:X\rightarrow S$ be a morphism of $\mathscr{S}$-schemes with the same underlying schemes such that the induced homomorphism
        $\overline{\mathcal{M}}_{S,f(x)}^{\rm gp}\rightarrow \overline{\mathcal{M}}_{X,x}^{\rm gp}$ is an isomorphism for all $x\in X$. Then $f^*$ is an equivalence of categories.
    \item[(Htp--6)] Let $S$ be an $\mathscr{S}$-scheme, let $f:S\times\mathbb{A}_\mathbb{N}\rightarrow S$ denote the projection, and let $i:S\times {\rm pt}_\mathbb{N}\rightarrow S\times \mathbb{A}_\mathbb{N}$ denote the 0-section. Then the natural transformation
        \[f_*f^*\stackrel{ad}\longrightarrow f_*i_*i^*f^*\]
        is an isomorphism.
    \item[(Htp--7)] Assume that $\mathscr{T}$ satisfies (Supp). Under the notations and hypotheses of (Htp--3), the natural transformation
        \[f_!j_\sharp j^*f^*\stackrel{ad'}\longrightarrow f_!f^*\]
        is an isomorphism.
  \end{enumerate}
\end{df}
\begin{none}
  (1) Note that the right adjoint versions of (Htp--1), (Htp--2), and (Htp--3) are as follows.
  \begin{enumerate}[(i)]
    \item Under the notations and hypotheses of (Htp--1), the unit
    \[{\rm id}\stackrel{ad}\longrightarrow f_*f^*\]
    is an isomorphism.
    \item Under the notations and hypotheses of (Htp--2), the natural transformation
    \[f^*\stackrel{ad}\longrightarrow j_*f^*f^*\]
    is an isomorphism.
    \item Under the notations and hypotheses of (Htp--3), the unit
    \[f_*f^*\stackrel{ad}\longrightarrow f_*j_*j^*f^*\]
    is an isomorphism.
  \end{enumerate}
  (2) Let $P$ be an fs monoid. For any proper birational morphism $u:M\rightarrow {\rm spec}\,P$ of monoschemes, the associated morphism $\mathbb{A}_u:\mathbb{A}_M\rightarrow \mathbb{A}_P$ is a surjective proper log \'etale monomorphism of $\mathscr{S}$-schemes.
\end{none}
\subsection{Purity}
\begin{df}\label{2.5.1}
  Let $S$ be an $\mathscr{S}$-scheme, let $p$ denote projection $\mathbb{A}_S^1\rightarrow S$, and let $a$ denote the zero section $S\rightarrow \mathbb{A}_S^1$. Then we denote by
  $1_S(1)$ the element $p_\sharp a_*1_S[2]$, and we say that $\mathscr{T}$ satisfies the {\it stability property}, denoted by (Stab), if $1_S(1)$ is $\otimes$-invertible.
\end{df}
\begin{rmk}\label{2.5.2}
  Note that our definition is different from the definition in \cite[2.4.4]{CD12}, but if we assume (Loc) and (Zar-Sep), then they are equivalent by the following result.
\end{rmk}
\begin{prop}\label{2.5.10}
  Assume that $\mathscr{T}$ satisfies (Loc), (Zar-Sep), and (Stab). Let $f:X\rightarrow S$ be a strict smooth separated morphism of $\mathscr{S}$-schemes, and let $i:S\rightarrow X$ be its section. Then the functor
  \[f_\sharp i_*\]
  is an equivalence of categories.
\end{prop}
\begin{proof}
  It follows from the implication (i)$\Leftrightarrow$(iv) of \cite[2.4.14]{CD12}.
\end{proof}
\begin{df}\label{2.5.3}
  Let $f:X\rightarrow S$ be a separated $\mathscr{P}$-morphism of $\mathscr{S}$-schemes. We denote by $a$ the diagonal morphism $X\rightarrow X\times_S X$ and by $p_2$ the second
  projection $X\times_S X\rightarrow X$. Then we put
  \[\Sigma_f=p_{2\sharp} a_*.\]
  If we assume (Adj), then we put
  \[\Omega_f=a^!p_2^*.\]
  Note that $\Sigma_f$ is left adjoint to $\Omega_f$.
\end{df}
\begin{df}\label{2.5.4}
  Let $f:X\rightarrow S$ be a $\mathscr{P}$-morphism of $\mathscr{S}$-schemess. Assume that ($f$ is proper) or ($f$ is separated and $\mathscr{T}$ satisfies (Supp)). Consider the Cartesian diagram
  \[\begin{tikzcd}
    X\times_S X\arrow[d,"p_1"]\arrow[r,"p_2"]&X\arrow[d,"f"]\\
    X\arrow[r,"f"]&S
  \end{tikzcd}\]
  of $\mathscr{S}$-schemes, and let $a:X\rightarrow X\times_S X$ denote the diagonal morphism. Following \cite[2.4.24]{CD12}, we define the natural transformation
  \[\mathfrak{p}_f:f_\sharp \stackrel{\sim}\longrightarrow f_\sharp p_{1!}a_*\stackrel{Ex}\longrightarrow f_!p_{2\sharp}a_*= f_!\Sigma_f.\]
  The right adjoint of $\mathfrak{p}_f$ is denoted by
  \[\mathfrak{q}_f:\Omega_ff^!\longrightarrow f^*.\]
\end{df}
\begin{df}\label{2.5.5}
  Let $f$ be a $\mathscr{P}$-morphism of $\mathscr{S}$-schemes, and assume that ($f$ is proper) or ($f$ is separated and $\mathscr{T}$ satisfies (Supp)). We say that $f$ is {\it pure}, denoted by (${\rm Pur}_f$), if the natural transformation $\mathfrak{p}_f$ is an isomorphism. Note that if we assume (Adj), then $f$ is pure if and only if $\mathfrak{q}_f$ is an isomorphism. We say that $\mathscr{T}$ satisfies the purity, denoted by (Pur), if $\mathscr{T}$ satisfies $({\rm Pur}_f)$ for any exact log smooth separated morphism $f$.
\end{df}
\begin{rmk}\label{2.5.6}
  Note that our definition is different from the definition in \cite[2.4.25]{CD12} in which the additional condition that $\Sigma_f$ is an isomorphism is assumed. However, if we assume (Loc), (Zar-Sep), and (Stab), then the definitions are equivalent by (\ref{2.5.10}).
\end{rmk}
\begin{thm}\label{2.5.8}
  Assume that $\mathscr{T}$ satisfies (Htp--1), (Loc), and (Stab). Then $\mathscr{T}$ satisfies (sSupp).
\end{thm}
\begin{proof}
  The conditions of the theorem of Ayoub \cite[2.4.28]{CD12} are satisfied, and the same proof of [loc.\ cit] can be applied even if $S$ is not a usual scheme. The consequence is that the
  projection $\mathbb{P}_S^1\rightarrow S$ is pure for any $\mathscr{S}$-scheme $S$. Then the conclusion follow from the proof of \cite[2.4.26(2)]{CD12}.
\end{proof}
\begin{thm}\label{2.5.7}
  Assume that $\mathscr{T}$ satisfies (Htp--1), (Loc), (Stab), and (Supp). Then any strict smooth separated morphism is pure.
\end{thm}
\begin{proof}
  As in the proof of (\ref{2.5.8}), $\mathbb{P}_S^1\rightarrow S$ is pure for any $\mathscr{S}$-scheme $S$. Then the conclusion follows from the proof of \cite[2.4.26(3)]{CD12}.
\end{proof}
\begin{thm}\label{2.5.9}
  Assume that $\mathscr{T}$ satisfies (Htp--1), (Loc), (Stab) and (Supp). Consider a Cartesian diagram
  \[\begin{tikzcd}
    X'\arrow[d,"f'"]\arrow[r,"g'"]&X\arrow[d,"f"]\\
    S'\arrow[r,"g"]&S
  \end{tikzcd}\]
  of $\mathscr{S}$-schemes such that $f$ is strict smooth separated and $g$ is separated. Then the exchange transformation
  \[f_\sharp g_!'\stackrel{Ex}\longrightarrow f_\sharp'g_!\]
  is an isomorphism.
\end{thm}
\begin{proof}
  It follows from (\ref{2.5.7}) and the proof of \cite[2.4.26(3)]{CD12}.
\end{proof}
\subsection{Base change property}
\begin{none}\label{2.6.1}
  Consider a Cartesian diagram
  \[\begin{tikzcd}
    X'\arrow[r,"g'"]\arrow[d,"f'"]&X\arrow[d,"f"]\\
    S\arrow[r,"g"]&S
  \end{tikzcd}\]
  of $\mathscr{S}$-schemes. When $f$ is proper, let us introduce the following base change properties.
  \begin{enumerate}
    \item[(B${\rm C}_{f,g}$)] The exchange transformation $g^*f_*\rightarrow f'_*g'^*$ is an isomorphism.
    \item[(BC--1')] For all $f$ and $g$ such that $f$ is strict and proper, (B${\rm C}_{f,g}$) is satisfied.
    \item[(BC--2')] For all $f$ and $g$ such that $f$ is an exact log smooth morphism and proper, (B${\rm C}_{f,g}$) is satisfied.
    \item[(BC--3')] For all $f$ and $g$ such that $g$ is strict and $f$ is proper, (B${\rm C}_{f,g}$) is satisfied.
    \item[(BC--4')] For all $f$ and $g$ such that $g$ is a $\mathscr{P}$-morphism and $f$ is proper, (B${\rm C}_{f,g}$) is satisfied.
  \end{enumerate}
  On the other hand, when $f$ is just assumed to be separated but $\mathscr{T}$ satisfies (Supp), we have the following base change properties.
  \begin{enumerate}
    \item[(B${\rm C}_{f,g}$)] The exchange transformation $g^*f_!\rightarrow f'_!g'^*$ is an isomorphism.
    \item[(BC--1)] For all $f$ and $g$ such that $f$ is strict, (B${\rm C}_{f,g}$) is satisfied.
    \item[(BC--2)] For all $f$ and $g$ such that $f$ is an exact log smooth morphism, (B${\rm C}_{f,g}$) is satisfied.
    \item[(BC--3)] For all $f$ and $g$ such that $g$ is strict, (B${\rm C}_{f,g}$) is satisfied.
    \item[(BC--4)] For all $f$ and $g$ such that $g$ is a $\mathscr{P}$-morphism, (B${\rm C}_{f,g}$) is satisfied.
  \end{enumerate}
\end{none}
\begin{prop}\label{2.6.7}
  If $\mathscr{T}$ satisfies (Loc), then (B${\rm C}_{f,g}$) is satisfied for all $f$ and $g$ such that $f$ is a strict closed immersion.
\end{prop}
\begin{proof}
  It follow from the proof of \cite[2.3.13(1)]{CD12}, but we repeat the proof for the convenience of reader. Let $h'$ denote the complement of $f'$. Then by (Loc), the pair $(f'^*,h'^*)$ of functors is conservative, so it suffices to show that the natural transformations
  \[f'^*g^*f_*\stackrel{ad}\longrightarrow f'^*f'_*g'^*,\]
  \[h'^*g^*f_*\stackrel{ad}\longrightarrow h'^*f_*'g'^*\]
  are isomorphisms. The first one is an isomorphism since the counits $f^*f_*\stackrel{ad}\longrightarrow {\rm id}$ and $f'^*f_*'\stackrel{ad'}\longrightarrow {\rm id}$ are isomorphisms by (Loc), and the second one is an isomorphism by (\ref{2.2.1}(4)).
\end{proof}
\begin{prop}\label{2.6.3}
  If $\mathscr{T}$ satisfies (Supp), then the property (BC--n') implies (BC--n) for $n=1,2,3,4$.
\end{prop}
\begin{proof}
  It follows from (\ref{2.3.3}(4)).
\end{proof}
\begin{prop}\label{2.6.4}
  If $\mathscr{T}$ satisfies (Supp), then the category $\mathscr{T}$ satisfies (BC--4).
\end{prop}
\begin{proof}
  It follows from (\ref{2.6.3}) and ($\mathscr{P}$-BC).
\end{proof}
\begin{prop}\label{2.6.5}
  Assume that
  \begin{enumerate}[(i)]
    \item $\mathscr{T}$ satisfies (Loc) and (Supp),
    \item for any $\mathscr{S}$-scheme $S$, the projection $\mathbb{P}_S^1\rightarrow S$ is pure.
  \end{enumerate}
  Then the category $\mathscr{T}$ satisfies (BC--1).
\end{prop}
\begin{proof}
  The conditions of \cite[2.4.26(2)]{CD12} are satisfied, and the same proof of [loc.\ cit] can be applied even if $S$ is an fs log scheme.
\end{proof}
\begin{prop}\label{2.6.6}
  Assume that $\mathscr{T}$ satisfies (Loc), (Supp), and (Zar-Sep). Then the category $\mathscr{T}$ satisfies (BC--3).
\end{prop}
\begin{proof}
  By (\ref{2.6.3}), it suffices to show (BC--3'). Consider a Cartesian diagram
  \[\begin{tikzcd}
    X'\arrow[r,"g'"]\arrow[d,"f'"]&X\arrow[d,"f"]\\
    S'\arrow[r,"g"]&S
  \end{tikzcd}\]
  of $\mathscr{S}$-schemes such that $g$ is strict and that $f$ is proper. By (Zar-Sep), the question is Zariski local on $S'$, so we reduce to the case when $S'$ is affine. Then the morphism $g$ is quasi-projective, so we reduce to the cases when
  \begin{enumerate}[(1)]
    \item $g$ is an open immersion,
    \item $g$ is a strict closed immersion,
    \item $g$ is the projection $\mathbb{P}_S^1\rightarrow S$.
  \end{enumerate}
  In the cases (1) and (3), we are done by (BC--4), so the remaining is the case (2). Hence assume that $g$ is a strict closed immersion.

  Let $h:S''\rightarrow S$ denote the complement of $g$, and consider the commutative diagram
  \[\begin{tikzcd}
    X'\arrow[r,"g'"]\arrow[d,"f'"]&X\arrow[d,"f"]\arrow[r,"h'",leftarrow]&X''\arrow[d,"f"]\\
    S'\arrow[r,"g"]&S\arrow[r,"h",leftarrow]&S''
  \end{tikzcd}\]
  of $\mathscr{S}$-schemes where each square is Cartesian. Since the pair $(g_*',h_\sharp')$ generates $\mathscr{T}(X)$ by (Loc), it suffices to show that the natural transformations
  \begin{equation}\label{2.6.6.1}
    g^*f_*g_*'\stackrel{Ex}\rightarrow f'_*g'^*g_*',\quad g^*f_*h_\sharp'\stackrel{Ex}\rightarrow f'_*g'^*h_\sharp'
  \end{equation}
  are isomorphisms.

  Consider the commutative diagram
  \[\begin{tikzcd}
    g^*f_*g_*'\arrow[r,"Ex"]\arrow[rd,"ad'"']&f_*'g'^*g_*'\arrow[d,"ad'"]\\
    &f_*'
  \end{tikzcd}\]
  of functors. The diagonal and vertical arrows are isomorphisms, so the horizontal arrow is an isomorphism. By (Supp), $f_*h_\sharp'\cong h_\sharp f_*$. Since $g^*h_\sharp=0$ and $g'^*h_\sharp'=0$, $g^*f_*h_\sharp'\cong 0\cong f_*'g'^*h_\sharp'$. Thus the natural transformations in (\ref{2.6.6.1}) are isomorphisms.
\end{proof}
\begin{prop}\label{2.6.8}
  Assume that $\mathscr{T}$ satisfies (Loc). Consider a Cartesian diagram
  \[\begin{tikzcd}
    X'\arrow[r,"g'"]\arrow[d,"f'"]&X\arrow[d,"f"]\\
    S'\arrow[r,"g"]&S
  \end{tikzcd}\]
  of $\mathscr{S}$-schemes where $f$ and $g$ are strict closed immersions such that $f(X)\cup g(X)=S'$. We put $h=fg'$. Then for any object $K$ of $\mathscr{T}(S)$, the commutative diagram
  \[\begin{tikzcd}
    K\arrow[r,"ad"]\arrow[d,"ad"]&f_*f^*K\arrow[d,"ad"]\\
    g_*g^*K\arrow[r,"ad"]&h_*h^*K
  \end{tikzcd}\]
  in $\mathscr{T}(S)$ is homotopy Cartesian.
\end{prop}
\begin{proof}
  Let $u:S''\rightarrow S$ denote the complement of $g$. Then $u$ factors through $X$ by assumption, and let $u':S''\rightarrow X$ denote the morphism. and consider the commutative diagram
  \[\begin{tikzcd}
    X'\arrow[r,"g'"]\arrow[d,"f'"]&X\arrow[d,"f"]\arrow[r,"u'",leftarrow]&X''\arrow[d,"g''"]\\
    S'\arrow[r,"g"]&S\arrow[r,"u",leftarrow]&S''
  \end{tikzcd}\]
  where each square is Since the pair $(g^*,u^*)$ of functors is conservative, it suffices to prove that the diagrams
  \[\begin{tikzcd}
    g^*K\arrow[r,"ad"]\arrow[d,"ad"]&g^*f_*f^*K\arrow[d,"ad"]\\
    g^*g_*g^*K\arrow[r,"ad"]&g^*h_*h^*K
  \end{tikzcd}\quad \begin{tikzcd}
    u^*K\arrow[r,"ad"]\arrow[d,"ad"]&u^*f_*f^*K\arrow[d,"ad"]\\
    u^*g_*g^*K\arrow[r,"ad"]&u^*h_*h^*K
  \end{tikzcd}\]
  are homotopy Cartesian. The first diagram is isomorphic to
  \[\begin{tikzcd}
    g^*K\arrow[d,"{\rm id}"]\arrow[r,"ad"]&f_*'h^*K\arrow[d,"{\rm id}"]\\
    g^*K\arrow[r,"ad"]&f_*'h^*K
  \end{tikzcd}\]
  by (Loc) and (\ref{2.6.7}), and it is homotopy Cartesian. For the second diagram, since its lower horizontal arrow is an isomorphism by ($\mathscr{P}$-BC), it suffices to show that the natural transformation
  \[u^*\stackrel{ad}\longrightarrow u^*f_*f^*K\]
  is an isomorphism. Since $u=fu'$, we have the natural transformations
  \[u'^*f^*\stackrel{ad}\longrightarrow u'^*f^*f_*f^*K\stackrel{ad'}\longrightarrow u'^*f^*K,\]
  whose composition is an isomorphism. The second arrow is an isomorphism by (Loc), so the first arrow is also an isomorphism.
\end{proof}
\subsection{Projection formula}
\begin{df}\label{2.10.1}
  For a proper morphism $f:X\rightarrow S$ of $\mathscr{S}$-schemes, we say that $f$ satisfies the {\it projection formula}, denoted by (${\rm PF}_f$), if the exchange transformation
  \[f_*K\otimes_X L\stackrel{Ex}\longrightarrow f_*(K\otimes_Y f^*L)\]
  is an isomorphism for any objects $K$ of $\mathscr{T}(X)$ and $L$ of $\mathscr{T}(S)$. We say that $\mathscr{T}$ satisfies the {\it projection formula}, denoted by (PF), if (${\rm PF}_f$) is satisfied for any proper morphism $f$.
\end{df}
\begin{none}\label{2.10.2}
  Assume that $\mathscr{T}$ satisfies (PF) and (Supp). Let $f:X\rightarrow Y$ be a separated morphism of $\mathscr{S}$-schemes. Then by the proof of \cite[2.2.14(5)]{CD12}, the exchange transformation
  \[f_!K\otimes_X L\stackrel{\sim}\longrightarrow f_!(K\otimes_Y f^*L)\]
  is an isomorphism for any objects $K$ of $\mathscr{T}(X)$ and $L$ of $\mathscr{T}(S)$. If we assume further (Adj), then by taking adjunctions of the above exchange transformation, we obtain the natural transformations
    \[\begin{split}
    Hom_S(f_!K,L)&\stackrel{\sim}\longrightarrow f_*Hom_X(K,f^!L),\\
    f^!Hom_X(L,M)&\stackrel{\sim}\longrightarrow Hom_X(f^*L,f^!M).
    \end{split}\]
  for any objects $K$ of $\mathscr{T}(X)$ and $L$ and $M$ of $\mathscr{T}(S)$.
\end{none}
\subsection{Orientation}
\begin{df}\label{2.8.1}
  Let $p:E\rightarrow S$ be a vector bundle of rank $n$ of $\mathscr{S}$-schemes, and let $i_0:S\rightarrow E$ denote its $0$-section. Then an isomorphism
  \[\mathfrak{t}_E:p_\sharp i_{0*}\longrightarrow 1_S(n)[2n]\]
  is said to be an {\it orientation} of $E$. When $\mathscr{T}$ satisfies (Adj) and (Stab), we denote by
  \[\mathfrak{t}_E':1_S(-n)[-2n]\longrightarrow i_0^!p^*\]
  its right adjoint.

  Recall from \cite[2.4.38]{CD12} that a collection $\mathfrak{t}$ of orientations for all vector bundles $E\rightarrow S$ of $\mathscr{S}$-schemes with the compatibility conditions (a)--(c) in [loc.\ cit] is said to be an {\it orientation} of $\mathscr{T}$.
\end{df}
\begin{none}\label{2.8.2}
  Note that by (\ref{2.5.10}), if $\mathscr{T}$ satisfies (Loc), (Zar-Sep), and (Stab), then any vector bundle has an orientation.
\end{none}
\subsection{Log motivic categories}\label{Sec2.10}
\begin{df}\label{2.9.1}
  Let $\mathscr{T}$ be a $eSm$-premotivic triangulated category. Borrowing a terminology from \cite[2.4.45]{CD12}, we say that $\mathscr{T}$ is a {\it log motivic triangulated category} if
  \begin{enumerate}[(i)]
    \item $\mathscr{T}$ satisfies (Adj), (Htp--1), (Htp--2), (Htp--3), (Htp--4), (Loc), (k\'et-Sep), and (Stab),
    \item $\mathscr{T}$ is generated by $eSm$ and $\tau$,
    \item for any $\mathscr{S}$-scheme $S$ with the trivial log structure, the morphism $S\times\mathbb{A}_\mathbb{N}\rightarrow S\times\mathbb{A}^1$ removing the log structure satisfies the support property.
  \end{enumerate}
\end{df}
\begin{none}
  In \cite[2.4.45]{CD12}, motivic triangulated category is defined, and in \cite[2.4.50]{CD12}, the six operations formalism is given for motivic triangulated cateogires. Following this spirit, we introduced our notion of log motivic triangulated category, which will satisfy the log version of the six operations formalism (1)--(5) in (\ref{0.4}).

  Now, we state our main theorem.
\end{none}
\begin{thm}\label{2.9.3}
  A log motivic triangulated category satisfies the properties (1)--(6) in (\ref{0.4}) and the homotopy properties (Htp--5), (Htp--6), and (Htp--7).
\end{thm}
\begin{none}\label{2.9.2}
  Here is the outline of the proof of the above theorem. Let $\mathscr{T}$ be a log motivic triangulated category over $\mathscr{S}$.
  \begin{enumerate}[(1)]
    \item In (\ref{2.6.3}), (\ref{2.6.4}), (\ref{2.6.5}), and (\ref{2.6.6}), we have proven that $\mathscr{T}$ satisfies (BC--1), (BC--3), and (BC--4).
    \item In (\ref{5.4.4}), we show that $\mathscr{T}$ satisfies (PF).
    \item In (\ref{5.8.5}), we show that $\mathscr{T}$ satisfies (Supp).
    \item In (\ref{6.1.9}), we show that $\mathscr{T}$ satisfies (Htp--5).
    \item In (\ref{6.2.1}), we show that $\mathscr{T}$ satisfies (Htp--6).
    \item In (\ref{6.3.1}), we show that $\mathscr{T}$ satisfies (Htp--7).
    \item In (\ref{6.4.4}), we show that $\mathscr{T}$ satisfies (BC--2).
  \end{enumerate}
  Then (Supp) implies the statements (1)--(3) in (\ref{0.4}) by (\ref{2.3.3}), and (Adj), (Supp), and (PF) implies the statement (4) in (\ref{0.4}) by (\ref{2.10.2}). The statement (5) is in the axioms of log motivic triangulated categories. The statement (6) is the combination of (BC--1), (BC--2), (BC--3), and (BC--4).
\end{none}
\section{Some results on log geometry and motives}
\subsection{Charts of log smooth morphisms}
\begin{df}\label{0.1.1}
  Let $f:X\rightarrow S$ be a morphism of fine log schemes with a fine chart $\theta:P\rightarrow Q$. Consider the following conditions:
  \begin{enumerate}[(i)]
    \item $\theta$ is injective, the order of the torsion part of the cokernel of $\theta^{\rm gp}$ is invertible in $\mathcal{O}_X$, and the induced morphism $X\rightarrow S\times_{\mathbb{A}_P}\mathbb{A}_Q$ is strict \'etale,
    \item $\theta$ is locally exact,
    \item $\overline{\theta}$ is Kummer.
  \end{enumerate}
  Then we say that
  \begin{enumerate}[(1)]
    \item $\theta$ is of log smooth type if (i) is satisfied,
    \item $\theta$ is of exact log smooth type if (i) and (ii) are satisfied,
    \item $\theta$ is of Kummer log smooth type if (i) and (iii) are satisfied,
  \end{enumerate}
\end{df}
\begin{df}\label{0.1.2}
  Let $S$ be a fine log schemes with a fine chart $P$. Let $\theta:P\rightarrow Q$ be a homomorphism of fine monoids. Consider the following conditions:
  \begin{enumerate}[(i)]
    \item $\theta$ is injective and the order of the torsion part of the cokernel of $\theta^{\rm gp}$ is invertible in $\mathcal{O}_S$,
    \item $\theta$ is locally exact,
    \item $\overline{\theta}$ is Kummer.
  \end{enumerate}
  Then we say that
  \begin{enumerate}[(1)]
    \item $\theta$ is of log smooth over $S$ type if (i) is satisfied,
    \item $\theta$ is of exact log smooth over $S$ type if (i) and (ii) are satisfied,
    \item $\theta$ is of Kummer log smooth over $S$ type if (i) and (iii) are satisfied,
  \end{enumerate}
\end{df}
\begin{prop}\label{0.1.3}
  Let $f:X\rightarrow S$ be a morphism of fs log schemes, and let $P$ be an fs chart of $S$. If $f$ is log smooth, then strict \'etale locally on $X$, there is an fs chart $\theta:P\rightarrow Q$ of $f$ of log smooth type.
\end{prop}
\begin{proof}
  By \cite[IV.3.3.1]{Ogu17}, there is a {\it fine} chart $\theta:P\rightarrow Q$ of $f$ of log smooth type. We may further assume that $Q$ is exact at some point $x$ of $X$ by \cite[II.2.3.2]{Ogu17}. Then $Q$ is saturated since $\overline{Q}\cong \overline{\mathcal{M}}_{X,x}$ is saturated, so $\theta$ is an fs chart.
\end{proof}
\begin{prop}\label{0.1.4}
  Let $f:X\rightarrow S$ be a morphism of fs log schemes, let $x$ be a point of $X$, and let $P$ be an fs chart of $S$ exact at $s:=f(x)$. If $f$ is exact log smooth (resp.\ Kummer log smooth), then strict \'etale locally on $X$, there is an fs chart $\theta:P\rightarrow Q$ of $f$ of exact log smooth type (resp.\ Kummer log smooth type).
\end{prop}
\begin{proof}
  Let us use the notations and hypotheses of the proof of (\ref{0.1.3}). When $f$ is exact, by the proof of \cite[3.5]{NO10}, the homomorphism $\theta$ is critically exact. Then by \cite[ I.4.7.7]{Ogu17}, $\theta$ is locally exact. Thus we are done for the case when $f$ is exact log smooth.

  When $f$ is Kummer, the homomorphism $\overline{\mathcal{M}}_{X,x}\rightarrow \overline{\mathcal{M}}_{S,s}$ is Kummer. Thus the homomorphism $\overline{\theta}$ is Kummer since $P$ is exact at $s$ and $Q$ is exact at $x$. This proves the remaining case.
\end{proof}
\begin{prop}\label{0.1.5}
  Let $g:S'\rightarrow S$ be a strict closed immersion of fs log schemes, and let $f':X'\rightarrow S'$ be a log smooth (resp.\ exact log smooth, resp.\ Kummer log smooth) morphism of fs log schemes. Then strict \'etale locally on $X'$, there is a Cartesian diagram
  \[\begin{tikzcd}
    X'\arrow[d,"f'"]\arrow[r,"g'"]&X\arrow[d,"f"]\\
    S'\arrow[r,"g"]&S
  \end{tikzcd}\]
  of fs log schemes such that $f$ is log smooth (resp.\ exact log smooth, resp.\ Kummer log smooth).
\end{prop}
\begin{proof}
  Let $x'$ be a point of $X'$, and we put $s'=f'(x')$ and $s=g(s')$. Strict \'etale locally on $S$, we can choose an fs chart $P$ of $S$ exact at $s$ by \cite[II.2.3.2]{Ogu17}. Then $P$ is also an fs chart of $S'$ exact at $s'$. By (\ref{0.1.3}) and (\ref{0.1.4}), strict \'etale locally on $X'$, $f'$ has an fs chart $\theta:P\rightarrow Q$ of log smooth type (resp.\ exact log smooth type, resp.\ Kummer log smooth type) such that the induced morphism
  \[h':X'\rightarrow S'\times_{\mathbb{A}_P}\mathbb{A}_Q\]
  is strict \'etale. Then by \cite[IV.18.1.1]{EGA}, Zariski locally on $X'$, there is a Cartesian diagram
  \[\begin{tikzcd}
    X'\arrow[d,"h'"]\arrow[r,"g'"]&X\arrow[d,"h"]\\
    S'\times_{\mathbb{A}_P}\mathbb{A}_Q\arrow[r,"g''"]&S\times_{\mathbb{A}_P}\mathbb{A}_Q
  \end{tikzcd}\]
  such that $h$ is strict \'etale and $g''$ denotes the morphism induced by $g$. The remaining is to put $f=ph$ where $p:S\times_{\mathbb{A}_P}\mathbb{A}_Q\rightarrow S$ denotes the projection.
\end{proof}
\subsection{Change of charts}
\begin{none}\label{0.2.1}
  Let $S$ be a fine log scheme, let $\alpha:P\rightarrow \Gamma(S,\mathcal{M}_S)$ and $\alpha':P'\rightarrow \Gamma(S,\mathcal{M}_S)$ be fine charts of $S$, and let $s$ be a geometric point of $S$. Assume that one of the following conditions is satisfied:
  \begin{enumerate}[(a)]
    \item $\alpha$ is neat at $s$,
    \item $\alpha$ is exact at $s$, and $P'^{\rm gp}$ is torsion free.
  \end{enumerate}
  In this setting, strict \'etale locally on $S$ near $s$, we will explicitly construct a chart $\alpha'':P''\rightarrow\Gamma(S,\mathcal{M}_S)$ and homomorphisms $\beta:P\rightarrow P''$ and $\beta':P'\rightarrow P''$ such that $\alpha''\beta=\alpha$ and $\alpha''\beta'=\alpha'$.

  By \cite[II.2.3.9]{Ogu17}, strict \'etale locally on $S$ near $s$, there exist homomorphisms
  \[\kappa:P'\rightarrow P,\quad \gamma:P'\rightarrow \mathcal{M}_S^*\]
  such that $\alpha'=\alpha\circ \kappa+\gamma$. Consider the homomorphisms
  \[\beta:P\rightarrow P\oplus P'^{\rm gp},\quad a\mapsto (a,0),\]
  \[\beta':P'\rightarrow P\oplus P'^{\rm gp},\quad a\mapsto (\kappa(a),a),\]
  \[\alpha'':P\oplus P'^{\rm gp}\rightarrow \Gamma(S,\mathcal{M}_S),\quad (a,b)\mapsto \alpha(a)+\gamma^{\rm gp}(b)\]
  of fs monoids. Then $\alpha''\beta=\alpha$ and $\alpha''\beta'=\alpha'$. The remaining is to show that $\alpha''$ is a chart of $S$. This follows from the fact that the morphism
  \[\mathbb{A}_\beta:\mathbb{A}_{P\oplus P'^{\rm gp}}\rightarrow \mathbb{A}_P\]
  is strict. So far we have discussed the way to compare charts of $S$. In the following two propositions, we will discuss the way to compare charts of birational morphisms.
\end{none}
\begin{prop}\label{0.2.2}
  Let $S$ be an fs log scheme with fs charts $\alpha:P\rightarrow \Gamma(S,\mathcal{M}_S)$ and $\alpha':P'\rightarrow \Gamma(S,\mathcal{M}_S)$, and let $\theta':P'\rightarrow Q'$ be a homomorphism of fs monoid with a homomorphism $\varphi:Q'^{\rm gp}\rightarrow P'^{\rm gp}$ such that $\varphi\circ \theta'^{\rm gp}={\rm id}$. Assume that $P$ is neat at some geometric point $s\in S$. We denote by $\kappa$ the composition
  \[P'\rightarrow \overline{\mathcal{M}}_{S,s}\rightarrow P\]
  where the first (resp.\ second) arrow is the morphism induced by $\alpha$ (resp.\ $\alpha'$). Consider the coCartesian diagram
  \[\begin{tikzcd}
    P'\arrow[r,"\kappa"]\arrow[d,"\theta'"]&P\arrow[d,"\theta"]\\
    Q'\arrow[r,"\kappa'"]&Q
  \end{tikzcd}\]
  of fs monoids. Then strict \'etale locally on $S$, there is an isomorphism
  \[S\times_{\mathbb{A}_{P'}}\mathbb{A}_{Q'}\cong S\times_{\mathbb{A}_P}\mathbb{A}_Q.\]
\end{prop}
\begin{proof}
  Choose homomorphisms $\beta$, $\beta'$, and $\alpha''$ as in (\ref{0.2.1}). Consider the commutative diagram
  \begin{equation}\label{0.2.2.1}\begin{tikzcd}
    P'\arrow[d,"\theta'"]\arrow[r,"\beta'"]&P\oplus P'^{\rm gp}\arrow[d,"\theta''"]\\
    Q'\arrow[r,"\delta'"]&Q\oplus P'^{\rm gp}
  \end{tikzcd}\end{equation}
  of fs monoids where $\theta''$ and $\delta'$ denote the homomorphisms
  \[\theta'':P\oplus P'^{\rm gp}\rightarrow Q\oplus P'^{\rm gp},\quad (a,b)\mapsto (\theta(a),b),\]
  \[\delta':Q'\rightarrow Q\oplus P'^{\rm gp},\quad a\mapsto (\kappa'(a),\varphi(a)).\]
  We will show that the above diagram is coCartesian. The induced commutative diagram
  \[\begin{tikzcd}
    P'^{\rm gp}\arrow[d,"\theta'^{\rm gp}"]\arrow[r,"\beta'^{\rm gp}"]&P^{\rm gp}\oplus P'^{\rm gp}\arrow[d,"\theta''^{\rm gp}"]\\
    Q'^{\rm gp}\arrow[r,"\delta'^{\rm gp}"]&Q^{\rm gp}\oplus P'^{\rm gp}
  \end{tikzcd}\]
  of finitely generated abelian groups is coCartesian. Hence from the description of pushout in the category of fs monoid, to show that (\ref{0.2.2.1}) is coCartesian, it suffices to show that the images of $\eta$ and $\delta$ generate $Q\oplus P'^{\rm gp}$. This follows from the fact that $\kappa':Q'\rightarrow Q$ is surjective.

  We also have the coCartesian diagram
  \[\begin{tikzcd}
    P\arrow[d,"\theta"]\arrow[r,"\beta"]&P\oplus P'^{\rm gp}\arrow[d,"\theta''"]\\
    Q\arrow[r,"\delta"]&Q\oplus P'^{\rm gp}
  \end{tikzcd}\]
  of fs monoids where $\delta$ denotes the first inclusion. Then we have isomorphisms
  $S\times_{\mathbb{A}_{P'}}\mathbb{A}_{Q'}\cong S\times_{\mathbb{A}_{P\oplus P'^{\rm gp}}}\mathbb{A}_{Q\oplus Q'^{\rm gp}}\cong S\times_{\mathbb{A}_P}\mathbb{A}_Q.$
\end{proof}
\begin{prop}\label{0.2.3}
  Let $S$ be an fs log scheme with fs charts $\alpha:P\rightarrow \Gamma(S,\mathcal{M}_S)$ and $\alpha':P'\rightarrow \Gamma(S,\mathcal{M}_S)$, and let $u':M'\rightarrow {\rm spec}\,P'$ be a birational homomorphism of fs monoschemes. Assume that $P$ is neat at some point $s\in S$. Consider the Cartesian diagram
  \[\begin{tikzcd}
    M\arrow[r,"v"]\arrow[d,"u"]&M'\arrow[d,"u'"]\\
    {\rm spec}\,P\arrow[r,"{\rm spec}\,\kappa"]&{\rm spec}\,P'
  \end{tikzcd}\]
  of fs monoschemes where $\kappa$ denotes the homomorphism defined in (\ref{0.2.2}). Then there is an isomorphism
  \[S\times_{\mathbb{A}_{P'}}\mathbb{A}_{M'}\cong S\times_{\mathbb{A}_P}\mathbb{A}_{M}.\]
\end{prop}
\begin{proof}
  Let ${\rm spec}\,Q'\rightarrow M'$ be an open immersion, and let ${\rm spec}\,Q\rightarrow M$ denote the pullback of it via $v:M\rightarrow M'$. Then by (\ref{0.2.2}), there is an isomorphism
  \[S\times_{\mathbb{A}_{P'}}\mathbb{A}_{Q'}\cong S\times_{\mathbb{A}_P}\mathbb{A}_Q.\]
  Its construction is compatible with any further open immersion ${\rm spec}\,Q_1'\rightarrow {\rm spec}\,Q'\rightarrow M'$, so by gluing the isomorphisms, we get an isomorphism
  $S\times_{\mathbb{A}_{P'}}\mathbb{A}_{M'}\cong S\times_{\mathbb{A}_P}\mathbb{A}_{M}.$
\end{proof}
\subsection{Sections of log smooth morphisms}
\begin{lemma}\label{0.3.1}
  Let $f:X\rightarrow S$ be a log \'etale morphism of fs log schemes, and let $i:S\rightarrow X$ be its section. Then $i$ is an open immersion.
\end{lemma}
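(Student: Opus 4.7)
The plan is to show that $i$ is both strict and log \'etale; once both are in hand, $\underline{i}$ will be an \'etale monomorphism of underlying schemes, hence an open immersion, and combined with strictness this will force $i$ itself to be an open immersion of fs log schemes.

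That $i$ is log \'etale follows from the standard two-out-of-three property for log \'etale morphisms: since $f$ is log \'etale and $f\circ i=\mathrm{id}_S$ is (trivially) log \'etale, so is $i$. Moreover, $i$ is a monomorphism in the category of fs log schemes because it admits the left inverse $f$, and consequently $\underline{i}$ is a monomorphism of schemes.

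The crucial step is the strictness of $i$. Fix a geometric point $\overline{s}$ of $S$ and set $\overline{x}=\overline{i(s)}$. Log \'etaleness of $f$ provides an injection $\iota\colon \overline{\mathcal{M}}_{S,\overline{s}}^{\mathrm{gp}}\hookrightarrow \overline{\mathcal{M}}_{X,\overline{x}}^{\mathrm{gp}}$ with finite cokernel of order invertible in $\mathcal{O}_{X,\overline{x}}$, while the section condition supplies a monoid retraction $r\colon \overline{\mathcal{M}}_{X,\overline{x}}\to \overline{\mathcal{M}}_{S,\overline{s}}$ of $\iota$ at the level of characteristic stalks. Any splitting of $\iota$ on groupifications identifies $\mathrm{coker}(\iota)$ with a direct summand of $\overline{\mathcal{M}}_{X,\overline{x}}^{\mathrm{gp}}$; since the latter is torsion-free (as $X$ is fs) and the cokernel is finite, it must vanish. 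Hence $\iota$ is an isomorphism on groupifications, and this upgrades to an isomorphism of monoids: given any $q\in \overline{\mathcal{M}}_{X,\overline{x}}$, its image $r(q)$ lies in $\overline{\mathcal{M}}_{S,\overline{s}}$, and viewed inside the common groupification we have $q=r(q)$, so $q\in \overline{\mathcal{M}}_{S,\overline{s}}$. This shows $i$ is strict.

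Putting things together, $i$ strict and log \'etale implies $\underline{i}$ is \'etale via the chart description in (\ref{0.1.1}); an \'etale monomorphism of schemes is an open immersion; and a strict morphism of fs log schemes whose underlying morphism is an open immersion is itself an open immersion. The main obstacle is the strictness step, where one must promote the group-level splitting to the monoid level using the torsion-freeness of sharp fs monoid stalks rather than any finer structural property of $f$ or $i$.
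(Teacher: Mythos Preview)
Your argument is correct, and it takes a genuinely different route from the paper. The paper first reduces to the diagonal morphism $a:X\rightarrow X\times_S X$ (of which $i$ is a pullback), then shows $a$ is strict by localizing on $X$ and $S$ to the chart case $(X,S)=(\mathbb{A}_Q,\mathbb{A}_P)$ and doing an explicit computation in $Q\oplus_P Q$: for $q\in Q$ one has $n(q\oplus(-q))=0$ for some $n$, so saturation forces $q\oplus(-q)\in(Q\oplus_P Q)^*$, whence the summation map is strict. Your approach stays with $i$ itself and works at the level of characteristic stalks, using the retraction $r$ furnished by the section together with torsion-freeness of $\overline{\mathcal{M}}_{X,\overline{x}}^{\mathrm{gp}}$ to kill the finite cokernel of $\iota^{\mathrm{gp}}$, then bootstrapping to the monoid level via $q=\iota(r(q))$. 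Your argument is more direct and avoids the \'etale-local chart reduction entirely; the paper's argument, by proving strictness of the diagonal, yields the statement for all sections simultaneously and exhibits the concrete monoidal mechanism behind it. One small remark: your citation of (\ref{0.1.1}) for ``strict log \'etale $\Rightarrow$ underlying \'etale'' is slightly oblique---that result is about charts of log smooth type---but the implication itself is standard and your use of it is fine.
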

\begin{proof}
  From the commutative diagram
  \[\begin{tikzcd}
    S\arrow[r,"i"]\arrow[d,"i"]&X\arrow[d,"i'"]\arrow[r,"f"]&S\arrow[d,"i"]\\
    X\arrow[r,"a"]&X\times_S X\arrow[r,"p_2"]&X
  \end{tikzcd}\]
  of fs log schemes where
  \begin{enumerate}[(i)]
    \item $a$ denotes the diagonal morphism,
    \item $p_2$ denotes the second projection,
    \item each square is Cartesian,
  \end{enumerate}
  it suffices to show that $a:X\rightarrow X\times_S X$ is an open immersion. Since the diagonal morphism
  \[\underline{X}\rightarrow\underline{X}\times_{\underline{S}}\underline{X}\]
  is radiciel, it suffices to show that $a$ is strict \'etale by \cite[IV.17.9.1]{EGA}. As in \cite[IV.17.3.5]{EGA}, the morphism $a$ is log \'etale. Thus it suffices to show that $a$ is strict. We will show this in several steps.\\[4pt]
  (I) {\it Locality on $S$.} Let $g:S'\rightarrow S$ be a strict \'etale cover of fs log schemes, and we put $X'=X\times_S S'$. Then the commutative diagram
  \[\begin{tikzcd}
    X'\arrow[r]\arrow[d]&X'\times_{S'}X'\arrow[d]\\
    X\arrow[r]&X\times_S X
  \end{tikzcd}\]
  of fs log schemes is Cartesian, so the question is strict \'etale local on $S$.\\[4pt]
  (II) {\it Locality on $X$.} Let $h:X'\rightarrow X$ be a strict \'etale cover of fs log schemes. Then we have the commutative diagram
  \[\begin{tikzcd}
    &X'\times_S X'\arrow[d,"h''"]\\
    X'\arrow[d,"h"]\arrow[r,"a'"]\arrow[ru,"a''"]&X\times_S X'\arrow[d,"h'"]\\
    X\arrow[r,"a"]&X\times_S X
  \end{tikzcd}\]
  of fs log schemes where
  \begin{enumerate}[(i)]
    \item the small square is Cartesian,
    \item $a''$ denotes the diagonal morphism,
    \item $h'$ and $h''$ denote the morphism induced by $h:X'\rightarrow X$.
  \end{enumerate}
  Assume that $a''$ is strict. Then $a'$ is strict since $h''$ is strict, so $a$ is strict since $h$ is a strict \'etale cover. Conversely, assume that $a$ is strict. Then $a'$ is
  strict, so $a''$ is strict. Thus the question is strict \'etale local on $X$\\[4pt]
  (III) {\it Final step of the proof.} By \cite[IV.3.3.1]{Ogu17}, strict \'etale locally on $X$ and $S$, we have an fs chart $\theta:P\rightarrow Q$ of $f$ such that
  \begin{enumerate}[(i)]
    \item $\theta$ is injective, and the cokernel of $\theta^{\rm gp}$ is finite,
    \item the induced morphism $X\rightarrow S\times_{\mathbb{A}_P}\mathbb{A}_Q$ is strict \'etale.
  \end{enumerate}
  Hence by (I) and (II), we may assume that $(X,S)=(\mathbb{A}_Q,\mathbb{A}_P)$. Then it suffices to show that the diagonal homomorphism
  \[\mathbb{A}_Q\rightarrow \mathbb{A}_{Q}\oplus_{\mathbb{A}_P}\mathbb{A}_Q\]
  is strict. To show this, it suffices to show $a\oplus (-a)\in (Q\oplus_P Q)^*$ for any $a\in Q$. Choose $n\in \mathbb{N}^+$ such that $na\in P^{\rm gp}$. Because the
  summation homomorphism
  \[P^{\rm gp}\oplus_{P^{\rm gp}}P^{\rm gp}\rightarrow P^{\rm gp}\]
  is an isomorphism, the two elements $(na)\oplus 0$ and $0\oplus (na)$ of $Q\oplus_P Q$ are equal. Thus $n(a\oplus (-a))=0$. Since $Q\oplus_P Q$ is an fs monoid, we have $a\oplus (-a)\in Q\oplus_P Q$. This means $a\oplus (-a)\in (Q\oplus_P Q)^*$ since $n(a\oplus (-a))=0$.
\end{proof}
\begin{lemma}\label{0.3.6}
  Let $f:X\rightarrow S$ and $h:Y\rightarrow X$ be morphisms of fine log schemes. Assume that $h$ is surjective. If $h$ and $fh$ are strict, then $f$ is strict.
\end{lemma}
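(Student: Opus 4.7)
The plan is to verify strictness of $f$ pointwise on $X$, using the standard characterization that a morphism $f:X\to S$ of fine log schemes is strict if and only if, for every geometric point $x$ of $X$ with image $s=f(x)$, the induced homomorphism
\[\overline{\mathcal{M}}_{S,s}\longrightarrow \overline{\mathcal{M}}_{X,x}\]
on characteristic monoids is an isomorphism (equivalently, the pullback log structure $f^{*}\mathcal{M}_S$ agrees with $\mathcal{M}_X$, checked stalkwise).

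With this reformulation, I would first fix an arbitrary geometric point $x$ of $X$. Since $h$ is surjective (as a morphism of underlying schemes), I can choose a geometric point $y$ of $Y$ lying over $x$. Setting $s=f(x)=fh(y)$, the functoriality of characteristic monoids yields a commutative diagram
\[\overline{\mathcal{M}}_{S,s}\stackrel{\overline{f^{\flat}}}\longrightarrow \overline{\mathcal{M}}_{X,x}\stackrel{\overline{h^{\flat}}}\longrightarrow \overline{\mathcal{M}}_{Y,y}\]
whose composition is the characteristic map attached to $fh$.

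Next I would apply the hypotheses: strictness of $h$ gives that $\overline{h^{\flat}}$ is an isomorphism, while strictness of $fh$ gives that the composition is an isomorphism. A standard two-out-of-three argument in the category of monoids then forces $\overline{f^{\flat}}$ to be an isomorphism as well. Since $x$ was arbitrary, this confirms that the characteristic map is an isomorphism at every geometric point of $X$, hence $f$ is strict.

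The only real content here is the use of surjectivity of $h$, which is precisely what guarantees that every geometric point of $X$ is accessible from $Y$; without this, some points of $X$ could be missed and $\overline{f^{\flat}}$ could fail to be an isomorphism there. No other step presents difficulty, since the argument on sharp monoids is purely formal once the pointwise characterization of strictness is in hand.
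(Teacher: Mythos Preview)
Your proposal is correct and follows essentially the same approach as the paper: both reduce to the pointwise criterion for strictness via characteristic monoids (the paper cites \cite[III.1.2.11]{Ogu17}), lift an arbitrary point of $X$ to $Y$ using surjectivity of $h$, and then apply a two-out-of-three argument to the composite $\overline{\mathcal{M}}_{S,\overline{f(x)}}\to\overline{\mathcal{M}}_{X,\overline{x}}\to\overline{\mathcal{M}}_{Y,\overline{y}}$.
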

\begin{proof}
  By \cite[III.1.2.11]{Ogu17}, it suffices to show that the induced homomorphism
  \[\overline{\mathcal{M}}_{S,\overline{f(x)}}\rightarrow \overline{\mathcal{M}}_{X,\overline{x}}\]
  of fine monoids is an isomorphism for any point $x$ of $X$. Since $h$ is surjective, we can choose a ray $y\in Y$ whose image in $X$ is $x$. Then we have the induced homomorphisms
  \[\overline{\mathcal{M}}_{S,\overline{f(x)}}\rightarrow\overline{\mathcal{M}}_{X,\overline{x}}\rightarrow \overline{\mathcal{M}}_{Y,\overline{y}} \]
  of fine monoids. The second arrow (resp.\ the composition of the two arrows) is an isomorphism since $h$ (resp.\ $fh$) is strict. Thus the first arrow is also an isomorphism.
\end{proof}
\begin{lemma}\label{0.3.3}
  Let $f:X\rightarrow S$ be a morphism of fine log schemes. Then there is a maximal open subscheme $U$ of $X$ such that the composition $U\rightarrow X\stackrel{f}\rightarrow S$ is strict.
\end{lemma}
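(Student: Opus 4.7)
Let $U$ denote the set of points $x\in X$ at which $f$ is strict, i.e., such that the induced homomorphism $\overline{\mathcal{M}}_{S,\overline{f(x)}}\rightarrow \overline{\mathcal{M}}_{X,\overline{x}}$ is an isomorphism. Strictness is a pointwise condition (this is (I.1.2.11) of \cite{Ogu17}, already used in (\ref{0.3.6})), so once we know $U$ is open, the restriction $f|_U$ is automatically strict, and any open subscheme of $X$ on which $f$ becomes strict is set-theoretically contained in $U$. Thus the maximality follows formally, and the entire content of the lemma is that $U$ is open.

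To prove openness, fix $x\in U$ and work strict \'etale locally around $f(x)$. By \cite[II.2.3.2]{Ogu17}, we may choose a fine chart $\alpha:P\rightarrow \Gamma(V,\mathcal{M}_S)$ on some strict \'etale neighborhood $V$ of $f(x)$ which is exact at $f(x)$, i.e., the induced homomorphism $\overline{P}\rightarrow\overline{\mathcal{M}}_{S,\overline{f(x)}}$ is an isomorphism. Composing $\alpha$ with $f^\flat$ yields a pre-chart $\beta:P\rightarrow \Gamma(f^{-1}(V),\mathcal{M}_X)$, and by the universal property of the pullback log structure, the associated morphism $\beta^a:P^a\rightarrow \mathcal{M}_X|_{f^{-1}(V)}$ factors as the composition $P^a\xrightarrow{\sim} f^*\mathcal{M}_S\rightarrow\mathcal{M}_X$, the first arrow being an isomorphism because $\alpha$ was a chart.

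Since $f$ is strict at $x$, the homomorphism $\overline{P}\rightarrow\overline{\mathcal{M}}_{X,\overline{x}}$ induced by $\beta$ is an isomorphism as well, so $\beta$ is a pre-chart exact at $x$. By the standard theory of fine log structures, a pre-chart exact at a geometric point is a chart on some strict \'etale neighborhood of that point; equivalently, the map $\beta^a\rightarrow \mathcal{M}_X$ is iso on stalks at $\overline{x}$, and since source and target are fine log structures (hence coherent) the iso locus is open. On such a neighborhood we therefore have $f^*\mathcal{M}_S\cong P^a\cong \mathcal{M}_X$, and $f$ is strict there, exhibiting an open neighborhood of $x$ contained in $U$.

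The principal point requiring care is the passage from ``exact at $x$'' to ``chart on a neighborhood,'' i.e.\ the openness of the locus where a morphism of fine log structures is an isomorphism; this is the key standard fact invoked and is handled by the finite generation built into ``fine.'' Everything else is formal manipulation of charts.
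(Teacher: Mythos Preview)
Your argument is correct and takes a genuinely different route from the paper's proof. The paper invokes \cite[1.5]{Kat00} to produce a strict \'etale morphism $h:Y\rightarrow X$ whose image contains $U$ and such that $fh$ is strict; it then applies the preceding Lemma~(\ref{0.3.6}) to conclude that the image $V$ of $h$ is an open on which $f$ is strict, forcing $V=U$. Your approach instead works point by point: you pull back a chart of $S$ exact at $f(x)$ along $f$, observe that the resulting pre-chart on $X$ is exact at $x$ precisely because $x\in U$, and then appeal to the standard fact that a pre-chart for a fine log structure which is exact at a point is a chart on a neighborhood (equivalently, that the isomorphism locus of a morphism of fine log structures is open).

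The trade-off: the paper's proof is short once one accepts the black box from \cite{Kat00} and has (\ref{0.3.6}) in hand, while your argument is more self-contained and does not rely on either of those inputs, at the cost of invoking directly the openness of the iso locus for morphisms of fine log structures. Both dependencies are standard; yours is arguably the more elementary one. One cosmetic correction: the pointwise criterion for strictness you cite is \cite[III.1.2.11]{Ogu17}, not I.1.2.11.
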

\begin{proof}
  Let $U$ denote the set of points $x\in X$ such that the induced homomorphism
  \[\overline{\mathcal{M}}_{S,\overline{f(x)}}\rightarrow \overline{\mathcal{M}}_{X,\overline{x}}\]
  of fine monoids is an isomorphism. If $U$ is open in $X$, then by \cite[III.1.2.11]{Ogu17}, $U$ is the maximal open subscheme $U$ of $X$ such that the composition $U\rightarrow X\stackrel{f}\rightarrow S$ is strict. Thus the remaining is to show that $U$ is open.

  By \cite[1.5]{Kat00}, there is a strict \'etale morphism $h:Y\rightarrow X$ of fine log schemes such that the image of $h$ contains $U$ and that $fh$ is strict. Let $V$ denote the image of $h$, which can be considered as an open subscheme of $X$. Then by (\ref{0.3.6}), the composition $V\rightarrow X\stackrel{f}\rightarrow S$ is strict where the first arrow is the open immersion. Thus $V\subset U$ by the construction of $U$. Then $V=U$, and in particular $U$ is open in $X$.
\end{proof}
\begin{lemma}\label{0.3.4}
  Let $f:X\rightarrow S$ be a Kummer log smooth separated morphism of fs log schemes, and let $i:S\rightarrow X$ be its section. Then $i$ is a strict regular embedding.
\end{lemma}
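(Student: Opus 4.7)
The plan is to combine a chart-level argument that $i$ is strict with a reduction to the classical fact that a section of a smooth separated morphism of schemes is a regular embedding.

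First I would establish strictness. Fix $s\in S$ and set $x=i(s)$; the identity $fi=\mathrm{id}_S$ yields a retraction $\overline{i^\flat}\circ \overline{f^\flat}=\mathrm{id}$ on characteristic monoids at $s$, while Kummer log smoothness of $f$ makes $\overline{f^\flat}:\overline{\mathcal{M}}_{S,s}\to\overline{\mathcal{M}}_{X,x}$ injective with torsion cokernel on groupifications. Given $b\in\overline{\mathcal{M}}_{X,x}$, pick $n\ge 1$ with $nb=\overline{f^\flat}(a)$; applying the retraction gives $a=n\,\overline{i^\flat}(b)$, whence $n(b-\overline{f^\flat}\overline{i^\flat}(b))=0$ in $\overline{\mathcal{M}}_{X,x}^{\mathrm{gp}}$. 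Because $\overline{\mathcal{M}}_{X,x}$ is fs and sharp, its groupification is torsion-free (any torsion element lies in the saturation of $0$, hence in $\overline{\mathcal{M}}_{X,x}^*=0$), forcing $b=\overline{f^\flat}\overline{i^\flat}(b)$. Thus $\overline{f^\flat}$, and therefore $\overline{i^\flat}$, is an isomorphism, and by \cite[III.1.2.11]{Ogu17} the morphism $i$ is strict. Combined with the fact that $\underline{i}$ is a closed immersion of schemes (since $f$ is separated and $i$ is a section), $i$ is already a strict closed immersion.

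Next I would reduce to the classical smooth case. By Lemma~\ref{0.3.3}, the locus $U\subseteq X$ on which $f$ is strict is open, and it contains $i(S)$ by the preceding paragraph. Let $j:U\hookrightarrow X$ be the open immersion and $i':S\to U$ the induced factorization of $i$, so $(f|_U)\circ i'=\mathrm{id}_S$. Since $f|_U$ is log smooth and strict, its underlying morphism is classically smooth (a direct consequence of the chart description of log smooth morphisms: a strict chart of log smooth type produces a strict étale morphism to $S\times_{\mathbb{A}_P}\mathbb{A}_P$, which is just $S$ itself in the strict case, times a smooth factor coming from the free part of $Q^{\mathrm{gp}}/P^{\mathrm{gp}}$). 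Moreover $f|_U$ is separated, being the restriction of a separated morphism to an open.

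Finally, the classical result that a section of a smooth separated morphism of schemes is a regular immersion (cf.\ \cite[IV.17.12.1]{EGA}) applies to $\underline{i'}:\underline{S}\to \underline{U}$, making it a regular embedding. Since being a regular immersion is local on the target, $\underline{i}=\underline{j}\circ \underline{i'}$ is a regular immersion of $\underline{X}$, and the first paragraph already shows it is a strict closed immersion globally. This gives $i$ the structure of a strict regular embedding. The only delicate point is the strictness argument, which rests on the torsion-freeness of the groupification of a sharp fs monoid; the remainder is a formal reduction to well-known scheme-theoretic input.
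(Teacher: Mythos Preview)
Your argument is correct, but it follows a different path from the paper's. The paper reduces immediately to the diagonal $d:X\to X\times_S X$ (of which $i$ is a pullback), passes to a local Kummer chart $\theta:P\to Q$, and uses the monoid computation of Lemma~\ref{D18} (that the summation $Q\oplus_P Q\to Q$ is strict) to conclude that the projection $X\times_S X\to X$ is \emph{strict} smooth; then $d$ is a section of a strict smooth separated morphism and the classical fact finishes. You instead stay with $f$ itself: your pointwise strictness argument for $i$ is exactly Lemma~\ref{0.3.5} applied to the characteristic monoids, and then you invoke Lemma~\ref{0.3.3} to pass to the open locus $U$ where $f$ is strict, after which the same classical fact applies to the section $i':S\to U$ of $f|_U$. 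The paper's route avoids Lemma~\ref{0.3.3} at the cost of a chart reduction and the fiber-coproduct computation~\ref{D18}; your route avoids both the reduction to the diagonal and the computation in $Q\oplus_P Q$, at the cost of quoting Lemma~\ref{0.3.3}. One small remark: your parenthetical justification that strict log smooth implies smooth on underlying schemes is garbled; it is cleaner simply to cite this as a standard fact (e.g.\ from \cite{Ogu17}).
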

\begin{proof}
  Since $i$ is a pullback of the diagonal morphism $d:X\rightarrow X\times_S X$, it suffices to show that $d$ is a strict regular embedding. The new question is strict \'etale local on $X$ and $S$, so we may assume that $f$ has an fs chart $\theta:P\rightarrow Q$ of Kummer log smooth type by (\ref{0.1.4}). Then $\overline{\theta}:\overline{P}\rightarrow \overline{Q}$ is Kummer, so by (\ref{D18}), the summation homomorphism $Q\rightarrow Q\oplus_P Q$ is strict. Thus the first inclusion $Q\oplus_P Q\rightarrow Q$ is also strict. In particular, the first projection $p_1:X\times_S X\rightarrow X$ is strict smooth. Then $d$ is the section of a strict smooth separated morphism, so $d$ is a strict regular embedding.
\end{proof}
\begin{lemma}\label{0.3.5}
  Let $\theta:P\rightarrow Q$ be a Kummer homomorphism of fs monoids, and let $\eta:Q\rightarrow P$ be a homomorphism of fs monoids such that $\eta\theta={\rm id}$. If $Q$ is sharp, then $\theta$ is an isomorphism.
\end{lemma}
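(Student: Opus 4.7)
The plan is to prove surjectivity of $\theta$ directly from the Kummer condition, using that $Q$ being an fs sharp monoid forces $Q^{\rm gp}$ to be torsion-free; injectivity is already part of the definition of Kummer.

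First I would verify the auxiliary fact that $Q^{\rm gp}$ is torsion-free. If $x\in Q^{\rm gp}$ is torsion, say $nx=0$ for some $n\geq 1$, then $nx=0\in Q$, so by saturation $x\in Q$; applying the same argument to $-x$ gives $-x\in Q$, hence $x\in Q^*=\{0\}$ since $Q$ is sharp. So $Q^{\rm gp}$ has no torsion.

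Next, fix $q\in Q$. Since $\theta$ is Kummer, there exist $n\geq 1$ and $p\in P$ with $nq=\theta(p)$. Applying the retraction $\eta$ gives $n\eta(q)=\eta\theta(p)=p$, so $nq=\theta(p)=n\theta(\eta(q))$. Thus $n\bigl(q-\theta(\eta(q))\bigr)=0$ in $Q^{\rm gp}$, and by the torsion-freeness just established we conclude $q=\theta(\eta(q))$. This shows $\theta$ is surjective, and combined with injectivity (part of Kummer) we get that $\theta$ is an isomorphism.

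There is no serious obstacle; the only subtle point is the torsion-freeness of $Q^{\rm gp}$, but that follows at once from saturation plus sharpness. Everything else is an immediate manipulation using $\eta\theta={\rm id}$.
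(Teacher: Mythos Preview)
Your proof is correct and follows essentially the same route as the paper: both compute $n(q-\theta\eta(q))=0$ from the Kummer condition and the retraction, then use saturation plus sharpness to conclude $q=\theta\eta(q)$. The only cosmetic differences are that you isolate the torsion-freeness of $Q^{\rm gp}$ as a preliminary fact and argue directly, whereas the paper phrases it as a contradiction and invokes saturation and sharpness inline.
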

\begin{proof}
  Let $q\in Q$ be an element not in $\theta(P)$. Since $\theta$ is $\mathbb{Q}$-surjective, we can choose $n\in \mathbb{N}^+$ such that $nq=\theta(p)$ for some $p\in P$. Then
  \[n(q-\theta\eta(q))=\theta(p)-\theta\eta\theta(p)=0,\]
  so $q-\theta\eta(q)\in Q^*$ since $Q$ is saturated. Thus $q-\theta\eta(q)=0$ since $Q$ is sharp, which proves the assertion.
\end{proof}
\subsection{Generating motives}
\begin{none}
  Throughout this subsection, fix a $\tau$-twisted $eSm$-premotivic triangulated category $\mathscr{T}$ over $\mathscr{S}$ generated by $eSm$ and $\tau$ satisfying (Loc) and (s\'et-Sep).
\end{none}
\begin{none}\label{0.5.1}
  Let $S$ be an $\mathscr{S}$-scheme with an fs chart $\alpha:P\rightarrow \mathcal{M}_S$. We denote by $\mathcal{F}_{S,\alpha}$ the family of motives in $\mathscr{T}(S)$ of the form
  \[M_S(S'\times_{\mathbb{A}_{P'},\mathbb{A}_{\theta'}}\mathbb{A}_Q)\{r\}\]
  where
  \begin{enumerate}[(i)]
    \item $S'\rightarrow S$ is a Kummer log smooth morphism with an fs chart $\eta:P\rightarrow P'$ of Kummer log smooth type,
    \item $\theta':P'\rightarrow Q$ is an injective homomorphism of fs monoids such that the cokernel of $\theta'^{\rm gp}$ is torsion free,
    \item $\theta'$ is logarithmic and locally exact,
    \item $r$ is a twist in $\tau$.
  \end{enumerate}
\end{none}
\begin{prop}\label{0.5.2}
  Under the notations and hypotheses of (\ref{0.5.1}), the family $\mathcal{F}_{S,\alpha}$ generates $\mathscr{T}(S)$.
\end{prop}
\begin{proof}
  Let $f:X\rightarrow S$ be an exact log smooth morphism of $\mathscr{S}$-schemes with an fs chart $\theta:P\rightarrow Q$ of exact log smooth type. It suffices to show that the motive $M_S(X)\{r\}$ is in $\langle \mathcal{F}_{S,\alpha}\rangle$ where $r$ is a twist in $\tau$. Here, $\langle \mathcal{F}_{S,\alpha}\rangle$ denotes the localizing subcategory of $\mathcal{T}(S)$ generated by $\mathcal{F}_{S,\alpha}$.\\[4pt]
  (I) {\it Reduction of $S$.} Note first that the question is strict \'etale local on $S$ by (\ref{2.1.2}). Let $i:Z\rightarrow S$ be a strict closed immersion of $\mathscr{S}$-schemes, let $j:U\rightarrow S$ denote its complement, and let $\beta:P\rightarrow \mathcal{M}_Z$ denote the fs chart induced by $\alpha$. Assume that the question is true for $Z$ and $U$. Then by (Loc), to show the question for $S$, it suffices to show that the motive
  \[i_*M_Z(Z'\times_{\mathbb{A}_{P'}}\mathbb{A}_Q)\{r\}\]
  with the similar conditions as in (i)--(iv) of (\ref{0.5.1}) is in $\langle F_{Z,\beta}\rangle$.

  The induced morphism $Z'\rightarrow Z\times_{\mathbb{A}_P}\mathbb{A}_{P'}$ is open since it is smooth, and let $W$ denote its image. Choose an open immersion $Y\rightarrow S\times_{\mathbb{A}_P}\mathbb{A}_{P'}$ such that $W\cong Z\times_S Y$ and that $\eta^{\rm gp}$ is invertible in $\mathcal{O}_{Y}$. By \cite[IV.18.1.1]{EGA}, we may assume $Z'\cong W\times_Y S'$ for some strict smooth morphism $S'\rightarrow Y$ since the question is Zariski local on $Z'$. Then we have $Z'\cong Z\times_S S'$. By (Loc), we have the distinguished triangle
  \[M_S(U\times_S S'\times_{\mathbb{A}_{P'}}\mathbb{A}_Q)\longrightarrow M_S(S'\times_{\mathbb{A}_{P'}}\mathbb{A}_Q)\longrightarrow i_*M_Z(Z'\times_{\mathbb{A}_{P'}}\mathbb{A}_Q)\longrightarrow M_S(U\times_S S'\times_{\mathbb{A}_{P'}}\mathbb{A}_Q)[1]\]
  in $\mathscr{T}(S)$, and this proves the question since $M_S(U\times_S S'\times_{\mathbb{A}_{P'}}\mathbb{A}_Q)$ and $M_S(S'\times_{\mathbb{A}_{P'}}\mathbb{A}_Q)$ are in $\langle \mathcal{F}_{S,\alpha}\rangle$.

  By the proof of \cite[3.5(ii)]{Ols03}, there is a stratification $\{S_i\rightarrow S\}$ of $S$ such that each $S_i$ has a constant log structure. Hence applying the above argument, we reduce to the case when $\alpha:P\rightarrow \mathcal{M}_S$ induces a constant log structure.\\[4pt]
  (II) {\it Construction of $P'$.} We will use induction on
   \[d:=\max_{x\in X}{\rm rk}\,\overline{\mathcal{M}}_{X,x}^{\rm gp}.\]
   If $d=\dim P$, then $f$ is Kummer log smooth, so we are done. Hence let us assume $d>\dim P$.

   We denote by $P'$ the submonoid of $Q$ consisting of elements $p\in Q$ such that $np\in \theta(P)+Q^*$ for some $n\in \mathbb{N}^+$. Then $P'$ is an fs monoid by Gordon's lemma \cite[ I.2.3.19]{Ogu17}. Let $\theta':P'\rightarrow Q$ denote the inclusion. Then the cokernel of $\theta'^{\rm gp}$ is torsion free by construction. We will check the conditions (ii) and (iii) of (\ref{0.5.1}). Since $P'^{\rm gp}=Q^{\rm gp}$, $\theta'$ is logarithmic. For the locally exactness, it suffices to show that $\theta_\mathbb{Q}':\overline{P}_\mathbb{Q}'\rightarrow \overline{Q}_\mathbb{Q}$ is integral. This follows from \cite[I.4.5.3(2), I.4.5.3(1)]{Ogu17}. The remaining is to show that $\theta'^{\rm gp}$ is torsion free.

   Let $G$ be a maximal $\theta'$-critical face of $Q$. Then we have $(\overline{Q})_\mathbb{Q}^{\rm gp}=(\overline{P'})_\mathbb{Q}^{\rm gp}\oplus (\overline{G})_\mathbb{Q}^{\rm gp}$ by \cite[I.4.7.9]{Ogu17}. Thus, for any $q\in Q^{\rm gp}$ such that $nq\in P'^{\rm gp}$ for some $n\in \mathbb{N}^+$, the image of $q$ in $(\overline{P'})_\mathbb{Q}^{\rm gp}\oplus (\overline{G})_\mathbb{Q}^{\rm gp}$ should be in $(\overline{P'})_\mathbb{Q}^{\rm gp}$. This means $nq+p'\in P'+Q^*$ for some $p'\in P'$, so $n(q+p)\in P'+Q^*$. Thus $q+p\in P'$ by the construction of $P'$, so $q\in P'^{\rm gp}$. This proves that $\theta'^{\rm gp}$ is torsion free.\\[4pt]
   (III) {\it Construction of $S'$.} The induced morphism $X\rightarrow S\times_{\mathbb{A}_P}\mathbb{A}_{P'}$ is open by \cite[5.7]{Nak09}, and let $Y$ denote its image. Then $Y$ has the chart $P'$. Note that the induced morphism $h:Y\rightarrow S$ is Kummer log smooth and that the order of the torsion part of the cokernel of $\eta^{\rm gp}$ is invertible in $\mathcal{O}_{Y}$.

   The closed immersion $Y\times_{\mathbb{A}_{P'}}\mathbb{A}_{(P',P'^+)}\rightarrow Y$ is an isomorphism since $S$ has a constant log structure. Thus the projection
   \[\underline{Y\times_{\mathbb{A}_{P'}}\mathbb{A}_{(Q,Q^+)}}\rightarrow \underline{Y}\]
   of underlying schemes is an isomorphism since $\theta':P'\rightarrow Q$ is logarithmic. Consider the pullback
   \[g':X\times_{\mathbb{A}_Q}\mathbb{A}_{(Q,Q^+)}\rightarrow Y\times_{\mathbb{A}_{P'}}\mathbb{A}_{(Q,Q^+)}\]
   of the induced morphism $h:X\rightarrow Y\times_{\mathbb{A}_{P'}}\mathbb{A}_Q$. Since $\theta$ is exact log smooth type, $h$ is strict \'etale, so $g'$ is also strict \'etale. Then there is a unique Cartesian diagram
   \[\begin{tikzcd}
     X\times_{\mathbb{A}_{Q}}\mathbb{A}_{(Q,Q^+)}\arrow[r,"g'"]\arrow[d]&Y\times_{\mathbb{A}_{P'}}\mathbb{A}_{(Q,Q^+)}\arrow[d]\\
     S'\arrow[r,"g"]&Y
   \end{tikzcd}\]
   of $\mathscr{S}$-schemes where the right vertical arrow is the projection. The morphism $g$ is automatically strict \'etale. This verifies the condition (i) of (\ref{0.5.1}), so we have checked the conditions (i)--(iii) of (loc.\ cit) for our constructions of $P'$ and $S'$.\\[4pt]
   (IV) {\it Final step of the proof.} Then we have the commutative diagram
   \[\begin{tikzcd}
     S'\times_{\mathbb{A}_{P'}}\mathbb{A}_Q\arrow[r,leftarrow]\arrow[rddd]\arrow[rrrdd]&S'\times_{\mathbb{A}_{P'}}\mathbb{A}_{(Q,Q^+)}\arrow[r,"\sim"]\arrow[rdd,crossing over] \arrow[ddd,crossing over] &X\times_{\mathbb{A}_Q}\mathbb{A}_{(Q,Q^+)}\arrow[r]\arrow[dd,crossing over]&X\arrow[dd]\\
     \\
     &&Y\times_{\mathbb{A}_{P'}}\mathbb{A}_{(Q,Q^+)}\arrow[r,"v"']\arrow[d]&Y\times_{\mathbb{A}_{P'}}\mathbb{A}_Q\arrow[ld,"p"]\\
     &S'\arrow[r,"g"]&Y
   \end{tikzcd}\]
   of $\mathscr{S}$-schemes. Note that the projection $p$ is log smooth by the conditions (ii) and (iii) of (loc.\ cit). Let $u$ denote the complement of the closed immersion $v:Y\times_{\mathbb{A}_{P'}}\mathbb{A}_{(Q,Q^+)}\rightarrow Y$. Then by (Loc), we have distinguished triangles
   \[p_\sharp u_\sharp u^*M_{Y\times_{\mathbb{A}_{P'}}\mathbb{A}_{Q}}(X)\longrightarrow M_Y(X)\longrightarrow p_\sharp v_*v^*M_{Y\times_{\mathbb{A}_{P'}}\mathbb{A}_{Q}}(X)\longrightarrow p_\sharp u_\sharp u^*M_{Y\times_{\mathbb{A}_{P'}}\mathbb{A}_{Q}}(X)[1],\]
   \[\begin{tikzpicture}[baseline= (a).base]
    \node[scale=.73] (a) at (0,0)
    {\begin{tikzcd}
     p_\sharp u_\sharp u^*M_{Y\times_{\mathbb{A}_{P'}}\mathbb{A}_{Q}}(S'\times_{\mathbb{A}_{P'}}\mathbb{A}_Q)\longrightarrow M_Y(S'\times_{\mathbb{A}_{P'}}\mathbb{A}_Q)
     \longrightarrow p_\sharp v_*v^*M_{Y\times_{\mathbb{A}_{P'}}\mathbb{A}_{Q}}(S'\times_{\mathbb{A}_{P'}}\mathbb{A}_Q)\longrightarrow p_\sharp u_\sharp u^*M_{Y\times_{\mathbb{A}_{P'}}\mathbb{A}_Q}(S'\times_{\mathbb{A}_{P'}}\mathbb{A}_Q)[1].
   \end{tikzcd}};
  \end{tikzpicture}\]
   Let $r$ be a twist in $\tau$. We have isomorphisms
   \[\begin{split}
     v^*M_{Y\times_{\mathbb{A}_{P'}}\mathbb{A}_{Q}}(X)&\cong M_{Y\times_{\mathbb{A}_{P'}}\mathbb{A}_{(Q,Q^+)}}(X\times_{\mathbb{A}_Q}\mathbb{A}_{(Q,Q^+)})\\
     &\cong M_{Y\times_{\mathbb{A}_{P'}}\mathbb{A}_{(Q,Q^+)}}(S'\times_{\mathbb{A}_{P'}}\mathbb{A}_{(Q,Q^+)}) \\
     &\cong v^*M_{Y\times_{\mathbb{A}_{P'}}\mathbb{A}_Q}(S'\times_{\mathbb{A}_{P'}}\mathbb{A}_Q),
   \end{split}\]
   and $h_\sharp M_Y(S'\times_{\mathbb{A}_{P'}}\mathbb{A}_Q)$ is in $\langle \mathcal{F}_{S,\alpha}\rangle$ by definition. Moreover, by induction on $d$,
   \[h_\sharp p_\sharp u_\sharp u^*M_{Y\times_{\mathbb{A}_{P'}}\mathbb{A}_{Q}}(X),\quad h\sharp p_\sharp u_\sharp u^*M_{Y\times_{\mathbb{A}_{P'}}\mathbb{A}_{Q}}(S'\times_{\mathbb{A}_{P'}}\mathbb{A}_Q)\{r\}\]
   are in $\langle \mathcal{F}_{S,\alpha}\rangle$. Thus from the above triangles, we conclude that $h_\sharp M_Y(X)\{r\}\cong M_S(X)\{r\}$ is also in $\langle \mathcal{F}_{S,\alpha}\rangle$.
\end{proof}
\begin{cor}\label{0.5.3}
  Assume that $\mathscr{T}$ satisfies (Htp--3). Let $S$ be an $\mathscr{S}$-scheme with an fs chart $\alpha:P\rightarrow \mathcal{M}_S$. Consider the family of motives in $\mathscr{T}(S)$ of the form
  \[M_S(S')\{r\}\]
  where $r$ is a twist and $S'\rightarrow S$ is a Kummer log smooth morphism with an fs chart $\theta:P\rightarrow P'$ of Kummer log smooth type. Then the family generates $\mathscr{T}(S)$.
\end{cor}
\begin{proof}
  Let $S'\rightarrow S$ be a Kummer log smooth morphism with an fs chart $\theta:P\rightarrow P'$ of Kummer log smooth type, let $\theta':P'\rightarrow Q$ is a logarithmic, locally exact, and injective homomorphism of fs monoids such that the cokernel of $\theta'^{\rm gp}$ is torsion free, and let $G$ be a $\theta'$-critical face of $Q$. Then the induced morphism
  \[M_S(S'\times_{\mathbb{A}_{P'},\mathbb{A}_{\theta'}}\mathbb{A}_{Q_G})\rightarrow M_S(S'\times_{\mathbb{A}_{P'},\mathbb{A}_{\theta'}}\mathbb{A}_Q)\]
  in $\mathscr{T}(S)$ is an isomorphism by (Htp--3). Since the induced morphism
  \[S'\times_{\mathbb{A}_{P'},\mathbb{A}_{\theta'}}\mathbb{A}_{Q_G}\rightarrow S\]
  is Kummer log smooth and has the fs chart $P\rightarrow Q_G$ of Kummer log smooth type, we are done.
\end{proof}
\section{Purity}\label{Sec4}
\begin{none}
  Throughout this section, Fix a $eSm$-premotivic triangulated category satisfying (Adj), (Htp--1), (Loc), and (Stab).
\end{none}
\subsection{Thom transformations}
\begin{df}\label{4.1.3}
  Let $f:X\rightarrow S$ be a morphism of $\mathscr{S}$-schemes, and let $i:S\rightarrow X$ be its section. Assume that $i$ is a strict regular embedding. We have the following definitions.
  \begin{enumerate}[(1)]
    \item $B_SX$ denotes the blow-up of $X$ with center $S$,
    \item $B_S(X\times\mathbb{A}^1)$ denotes the blow-up of $X\times\mathbb{A}^1$ with center $S\times\{0\}$,
    \item $D_SX=B_S(X\times\mathbb{A}^1)-B_SX$,
    \item $N_SX$ denotes the normal bundle of $S$ in $X$.
  \end{enumerate}
  The morphisms $S\stackrel{i}\rightarrow X\stackrel{f}\rightarrow S$ induces the morphisms $D_SS\rightarrow D_S X\rightarrow D_S S$, which is
  \begin{equation}\label{4.1.3.1}
    S\times \mathbb{A}^1\rightarrow D_SX\rightarrow S\times\mathbb{A}^1
  \end{equation}
  since $D_SS=S\times\mathbb{A}^1$.
\end{df}
\begin{df}\label{4.1.1}
  Let $h:X\rightarrow Y$ and $g:Y\rightarrow S$ be morphisms of $\mathscr{S}$-schemes, and we put $f=gh$. Consider a commutative diagram
  \[\begin{tikzcd}
    D_0\arrow[d,"u_0"]\arrow[r,"b"]&D\arrow[d,"u"]\arrow[rd,"q_2"]\\
    X\arrow[r,"a"']&Y\times_S X\arrow[r,"p_2"']&X
  \end{tikzcd}\]
  of $\mathscr{S}$-schemes where $a$ denotes the graph morphism and $p_2$ denotes the second projection. Assume that $b$ is proper. Then we define the following functors:
  \[\Sigma_{g,f}:=p_{2\sharp}a_*,\quad \Omega_{g,f}:=a^!p_2^*,\quad \Omega_{g,f,D}:=u_{0*}b^!q_2^*.\]
  The third notation depends on the morphisms, so we will use it only when no confusion arises.

  When $b$ is a strict regular embedding, consider the diagrams
  \begin{equation}\begin{tikzcd}\label{4.1.1.1}
    D_0\arrow[r,"b"]\arrow[d,"\gamma_1"]&D\arrow[d,"\beta_1"]\arrow[r,"q_2"]&X\arrow[d,"\alpha_1"]\\
    D_0\times\mathbb{A}^1\arrow[r,"d"]\arrow[d,"\phi"]&D_{D_0}D\arrow[r,"s_2"]&X\times \mathbb{A}^1\arrow[d,"\pi"]\\
    D_0&&X
  \end{tikzcd}\quad
  \begin{tikzcd}
    D_0\arrow[r,"e"]\arrow[d,"\gamma_0"]&N_{D_0}D\arrow[d,"\beta_0"]\arrow[r,"t_2"]&X\arrow[d,"\alpha_0"]\\
    D_0\times\mathbb{A}^1\arrow[r,"d"]\arrow[d,"\phi"]&D_{D_0}D\arrow[r,"s_2"]&X\times \mathbb{A}^1\arrow[d,"\pi"]\\
    D_0&&X
  \end{tikzcd}\end{equation}
  of $\mathscr{S}$-schemes where
  \begin{enumerate}[(a)]
    \item each square is Cartesian,
    \item $\alpha_0$ denotes the $0$-section, and $\alpha_1$ denotes the $1$-section,
    \item $d$ and $s_2$ are the morphisms constructed by (\ref{4.1.3.1}),
    \item $\phi$ and $\pi$ denotes the projections.
  \end{enumerate}
  Then we define the following functors:
  \[\Omega_{g,f,D}^d:=u_{0*}\pi_*d^!s_2^*\pi^*,\quad \Omega_{g,f,D}^n:=u_{0*}e^!t_2^*.\]
  Now, assume $u_0={\rm id}$. Then we define the following functor:
  \[\Omega_{g,f,D}^o:=\mathfrak{t}'_{N_XD}.\]
  Here, $\mathfrak{t}'_{N_XD}$ is the right adjoint of an orientation of $N_XD$, and it exists by (\ref{2.8.2}). By (\ref{2.5.10}), the functor $\Omega_{g,f,D}^n$ is an equivalence, and by a theorem of Morel and Voevodsky \cite[2.4.35]{CD12}, $\Omega_{g,f,D}^d$ is also an equivalence. We denote by
  \[\Sigma_{g,f,D}^d,\quad \Sigma_{g,f,D}^n,\quad \Sigma_{g,f,D}^o\]
  the left adjoints (or equivalently right adjoints) of $\Omega_{g,f,D}^d$, $\Omega_{g,f,D}^n$, and $\Omega_{g,f,D}^o$ respectively.

  When $h$ is the identity morphism, we simply put
  \[\Sigma_f:=\Sigma_{f,f},\quad \Omega_f:=\Omega_{f,f},\quad \Omega_{f,D}:=\Omega_{f,f,D},\]
  \[\Omega_{f,D}^d:=\Omega_{f,f,D}^d,\quad \Omega_{f,D}^n:=\Omega_{f,f,D}^n,\quad \Omega_{f,D}^o:=\Omega_{f,f,D}^o,\]
  \[\Sigma_{f,D}^d:=\Sigma_{f,f,D}^d,\quad \Sigma_{f,D}^n:=\Sigma_{f,f,D}^n,\quad \Sigma_{f,D}^o:=\Sigma_{f,f,D}^o,\]
  and when $a$ is a strict regular embedding, we simply put
  \[\Sigma_f^d:=\Sigma_{f,X\times_S X}^d,\quad \Sigma_f^n:=\Sigma_{f,X\times_S X}^n.\]
  These functors are called {\it Thom transformations}.
\end{df}
\begin{none}\label{4.1.2}
  Under the notations and hypotheses of (\ref{4.1.1}), we will frequently assume that $u_0={\rm id}$ and there is a commutative diagram
  \[\begin{tikzcd}
    &I\arrow[d,"w"]\arrow[rd,"r_2"]\\
    X\arrow[ru,"c"]\arrow[r,"b"']&D\arrow[r,"q_2"']&X
  \end{tikzcd}\]
  with the following properties:
  \begin{enumerate}[(i)]
    \item $w$ is an open immersion,
    \item $c$ is a strict closed immersion,
    \item $r_2$ is a {\it strict} smooth morphism.
  \end{enumerate}
\end{none}
\subsection{Transition transformations}
\begin{none}
  In this subsection, we will develop various functorial properties of Thom transformations.
\end{none}
\begin{none}\label{4.2.2}
  Under the notations and hypotheses of (\ref{4.1.1}), consider a commutative diagram
  \[\begin{tikzcd}
   E_0\arrow[d,"v_0"]\arrow[r,"c"]&E\arrow[d,"v"]\arrow[rdd,"r_2"]\\
    D_0\arrow[d,"u_0"]\arrow[r,"b"]&D\arrow[d,"u"]\arrow[rd,"q_2"']\\
    X\arrow[r,"a"']& Y\times_S X\arrow[r,"p_2"']&X
  \end{tikzcd}\]
  of $\mathscr{S}$-schemes, and assume that $b$ and $c$ are proper. Then we have a natural transformation
  \[T_{D,E}:\Omega_{g,f,E}\longrightarrow \Omega_{g,f,D}\]
  in the below three cases. This is called a {\it transition transformation}. Here, when $D=Y\times_S X$, we put $T_{Y\times_S X,E}=T_E$ for simplicity.
  \begin{enumerate}[(i)]
    \item Assume that $v_0$ is the identity and that the exchange transformation
    \[{\rm id}^*b^!\stackrel{Ex}\longrightarrow c^!v^*\]
    is defined and an isomorphism. Then the natural transformation $\Omega_{g,f,E}\stackrel{T_{D,E}}\longrightarrow \Omega_{g,f,D}$ is given by
    \[u_{0*}c^!r_2^*\stackrel{\sim}\longrightarrow u_{0*}c^!v^*q_2^*\stackrel{Ex^{-1}}\longrightarrow u_{0*}b^!r_2^*.\]
    Note that when $v$ is an open immersion and (Supp) is satisfied, then the condition is satisfied.
    \item Assume that the unit ${\rm id}\stackrel{ad}\longrightarrow v_*v^*$ is an isomorphism. Then the natural transformation $\Omega_{g,f,E}\stackrel{T_{D,E}}\longrightarrow \Omega_{g,f,D}$ is given by
    \[u_{0*}v_{0*}c^!r_2^*\stackrel{Ex}\longrightarrow u_{0*}b^!v_*r_2^*\stackrel{\sim}\longrightarrow u_{0*}b^!v_*v^*q_2^*\stackrel{ad^{-1}}\longrightarrow u_{0*}b^!q_2^*.\]
    \item Assume that $v$ is strict \'etale, $v_0$ is the identity, and (Supp) is satisfied. The purity transformation
    \[v^!\stackrel{\mathfrak{q}_v^n}\longrightarrow v^*\]
    whose description is given in (\ref{4.4.5}) is an isomorphism by \cite[2.4.50(3)]{CD12}. Then the natural transformation $\Omega_{g,f,E}\stackrel{T_{D,E}}\longrightarrow \Omega_{g,f,D}$ is given by
    \[u_{0*}c^!r_2^*\stackrel{\sim}\longrightarrow u_{0*}c^!v^*q_2^*\stackrel{(\mathfrak{q}_v^n)^{-1}}\longrightarrow u_{0*}c^!v^!q_2^*\stackrel{\sim}\longrightarrow u_{0*}b^!q_2^*.\]
    Note that $T_{D,E}$ is an isomorphism.
  \end{enumerate}
\end{none}
\begin{none}\label{4.2.5}
  Under the notations and hypotheses of (\ref{4.1.1}), let $\eta:X'\rightarrow X$ be a morphism of $\mathscr{S}$-schemes, and we put $f'=f\eta$. Consider the commutative diagram
  \[\begin{tikzcd}
    D_0'\arrow[rd,"u_0'"']\arrow[dd,"\eta_0"']\arrow[rr,"b'"]&&D'\arrow[rd,"u'"']\arrow[rrrd,"q_2'"]\arrow[dd,"\rho",near start]\\
    &X'\arrow[rr,"a'"',near start,crossing over]&&Y\times_S X'\arrow[rr,"p_2'"']&&X'\arrow[dd,"\eta"]\\
    D_0\arrow[rd,"u_0"']\arrow[rr,"b",near start]&&D\arrow[rd,"u"']\arrow[rrrd,"q_2"]\\
    &X\arrow[uu,leftarrow,crossing over,"\eta"',near end]\arrow[rr,"a"']&&Y\times_S X\arrow[uu,"\eta'",leftarrow,crossing over]\arrow[rr,"p_2"']&&X
  \end{tikzcd}\]
  of $\mathscr{S}$-schemes where the upper layer is a pullback of the lower layer. Then we have the natural transformations
  \[\eta^*\Omega_{g,f,D}\stackrel{Ex}\longrightarrow \Omega_{g,f',D'}\eta^*,\quad \Omega_{g,f,D}\eta_*\stackrel{Ex}\longrightarrow \eta_*\Omega_{g,f',D'},\quad \Omega_{g,f',D'}\eta^!\stackrel{Ex}\longrightarrow \eta^!\Omega_{g,f,D}\]
  given by
  \[\eta^*u_{0*}b^!q_2^*\stackrel{Ex}\longrightarrow u_{0*}'\eta_0^*b^!q_2^*\stackrel{Ex}\longrightarrow u_{0*}'b'^!\rho^*q_2^*\stackrel{\sim}\longrightarrow u_{0*}'b'^!q_2'^*\eta^*,\]
  \[u_{0*}b^!q_2^*\eta_*\stackrel{Ex}\longrightarrow u_{0*}b^!\rho_*q_2'^*\stackrel{Ex^{-1}}\longrightarrow u_{0*}\eta_{0*}b'^!q_2'^*\stackrel{\sim}\longrightarrow \eta_*u_{0*}'b'^!q_2'^*,\]
  \[u_{0*}'b'^!q_2'^*\eta^!\stackrel{Ex}\longrightarrow u_{0*}'b'^!\rho^!q_2^*\stackrel{\sim}\longrightarrow u_{0*}'\eta_0^!b^!q_2^*\stackrel{Ex}\longrightarrow \eta^!u_{0*}b^!q_2^*\]
  respectively. These are called {\it exchange transformations.} Here, to define the first (resp.\ second, resp.\ third) natural transformation, we assume the condition $(CE^*)$ (resp.\ $(CE_*)$, resp.\ $(CE^!)$) whose definition is given below:
  \begin{enumerate}
    \item[$(CE^*)$] The exchange transformation $\eta_0^*b^!\stackrel{Ex}\longrightarrow b'^!\rho^*$ is defined.
    \item[$(CE_*)$] The exchange transformation $\eta_{0*}b'^!\stackrel{Ex}\longrightarrow b^!\rho_*$ is an isomorphism.
    \item[$(CE^!)$] ($\eta$ is proper) or ($\eta$ is separated and (Supp) is satisfied). Moreover, the exchange transformation $q_2'^*\eta^!\stackrel{Ex}\longrightarrow \rho^!q_2^*$ is defined.
  \end{enumerate}
  For example, if $b$ is a strict closed immersion, then by (\ref{2.6.7}), $(CE^*)$ and $(CE_*)$ are satisfied. On the other hand, if $\eta$ is a strict closed immersion, then by (\ref{2.6.7}), $(CE^!)$ is satisfied.

  When $b$ is a strict regular embedding, we similarly have the natural transformations
  \[\eta^*\Omega_{g,f,D}^d\stackrel{Ex}\longrightarrow \Omega_{g,f',D'}^d\eta^*,\quad \Omega_{g,f,D}^d\eta_*\stackrel{Ex}\longrightarrow \eta_*\Omega_{g,f',D'}^d,\]
  \[\eta^*\Omega_{g,f,D}^n\stackrel{Ex}\longrightarrow \Omega_{g,f',D'}^n\eta^*,\quad \Omega_{g,f,D}^n\eta_*\stackrel{Ex}\longrightarrow \eta_*\Omega_{g,f',D'}^n \]
  because the corresponding versions $(CE^*)$ and $(CE_*)$ are always satisfied since the morphisms $d:D_0\times\mathbb{A}^1\rightarrow D_{D_0}D$ and $e:D_0\rightarrow N_{D_0}D$ are strict regular embeddings. Since $t_2$ is exact log smooth, if ($\eta$ is proper) or ($\eta$ is separated and (Supp) is satisfied), we have the natural transformation
  \[\Omega_{g,f',D'}^n\eta^!\stackrel{Ex}\longrightarrow \eta^!\Omega_{g,f,D}^n.\]
  When $s_2$ is exact log smooth, if ($\eta$ is proper) or ($\eta$ is separated and (Supp) is satisfied), we also have the natural transformation
  \[\Omega_{g,f',D'}^d\eta^!\stackrel{Ex}\longrightarrow \eta^!\Omega_{g,f,D}^d\]
  because the corresponding version $(CE^!)$ is satisfied. However, $s_2$ may not be exact log smooth morphism. In this case, assume that (Supp) is satisfied and that the conditions of (\ref{4.1.2}) are satisfied. We put $I'=I\times_D D'$, and consider the diagram
  \[\begin{tikzcd}
    \Omega_{g,f',I'}\eta^!\arrow[d,"T_{D',I'}"]\arrow[r,"Ex"]&\eta^!\Omega_{g,f,I}\arrow[d,"T_{D,I}"]\\
    \Omega_{g,f',D'}\eta^!&\eta^!\Omega_{g,f,D}
  \end{tikzcd}\]
  of functors. The horizontal arrow is defined since the induced morphism $D_XI\rightarrow X\times \mathbb{A}^1$ is strict smooth. The vertical arrows are isomorphism by (\ref{4.2.2}). Now, the definition of
  \[\Omega_{g,f',D'}^d\eta^!\stackrel{Ex}\longrightarrow \eta^!\Omega_{g,f,D}^d\]
  is given by the composition
  \[\Omega_{g,f',D'}\eta^!\stackrel{(T_{D',I'})^{-1}}\longrightarrow \Omega_{g,f',I'}\eta^!\stackrel{Ex}\longrightarrow \eta^!\Omega_{g,f,I}\stackrel{T_{D,I}}\longrightarrow \eta^!\Omega_{g,f,D}.\]
\end{none}
\begin{lemma}\label{4.2.8}
  Under the notations and hypotheses of (\ref{4.2.5}), the exchange transformation
  \[\Omega_{g,f,D}^n\eta_*\stackrel{Ex}\longrightarrow \eta_*\Omega_{g,f',D'}^n\]
  is an isomorphism.
\end{lemma}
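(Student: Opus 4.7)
The strategy is to unfold the exchange transformation (by analogy with the recipe of (\ref{4.2.5}), replacing $b$, $q_2$, $\rho$ by $e$, $t_2$, $\rho_n$) as the composite
\[
u_{0*}e^!t_2^*\eta_* \xrightarrow{Ex_1} u_{0*}e^!\rho_{n*}t_2'^* \xrightarrow{Ex_2^{-1}} u_{0*}\eta_{0*}e'^!t_2'^* = \eta_*u_{0*}'e'^!t_2'^*,
\]
and to prove each of the three stages is an isomorphism. The final equality is tautological, coming from $u_0\eta_0 = \eta u_0'$ in the (\ref{4.2.5}) diagram. The map $Ex_2^{-1}$ is the inverse of the base change $\eta_{0*}e'^!\to e^!\rho_{n*}$; by adjunction this is the $!$-version of $\rho_n^*e_*\to e'_*\eta_0^*$, which is an isomorphism by (\ref{2.6.7}) because $e$ is the zero section of a vector bundle and hence a strict closed immersion.

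The heart of the argument is therefore $Ex_1$. Exploit the factorization $t_2 = u_0\pi_N$, where $\pi_N\colon N_{D_0}D\to D_0$ is the vector-bundle projection (strict smooth, so in $eSm$). The base change $t_2^*\eta_*\to\rho_{n*}t_2'^*$ then decomposes as
\[
\pi_N^*u_0^*\eta_* \longrightarrow \pi_N^*\eta_{0*}u_0'^* \xrightarrow{\sim} \rho_{n*}\pi_{N'}^*u_0'^*,
\]
the second arrow being an isomorphism by ($\mathscr{P}$-BC) for the Cartesian square of $\pi_N$. Applying the Thom isomorphism $e^!\pi_N^*\cong\mathfrak{t}'_{N_{D_0}D}$ from (\ref{2.8.1}) and (\ref{2.8.2}) (an equivalence of functors), together with its Cartesian-section compatibility with $\eta_0^*$ and (via the projection formula) with $\eta_{0*}$, the question reduces to showing that
\[
u_{0*}u_0^*\eta_* \longrightarrow u_{0*}\eta_{0*}u_0'^*
\]
is an isomorphism.

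The main obstacle is this last reduction, since neither ($\mathscr{P}$-BC) nor (\ref{2.6.7}) applies directly: $u_0$ is not generally in $eSm$, and $\eta$ is arbitrary. The plan is to use the factorization $u_0 = q_2 b$ from (\ref{4.1.1}), where $b\colon D_0\to D$ is a strict regular, and hence strict closed, immersion. Writing $u_0^* = b^* q_2^*$ and invoking (\ref{2.6.7}) for the Cartesian square of $b$ (whose pullback along $\rho$ is $b'$, with $\rho$ restricting to $\eta_0$ on $D_0'$) yields $b^*\eta'_*\cong\eta_{0*}b'^*$, identifying the target map with $q_{2*}b_*b^*$ applied to the base-change map $q_2^*\eta_*\to\eta'_*q_2'^*$. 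The delicate verification is that this composite becomes an isomorphism after the $q_{2*}b_*$-pushforward; I expect this to follow by combining the full faithfulness of $b_*$ (from (Loc), since $b$ is a strict closed immersion) with a further application of the base-change machinery for the auxiliary morphism $q_2$, and this is where the bulk of the technical work lies.
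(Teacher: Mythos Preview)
The paper's proof is one line: the morphism $t_2\colon N_{D_0}D\to X$ is exact log smooth, so the exchange $t_2^*\eta_*\to\rho_{n*}t_2'^*$ (your $Ex_1$) is an isomorphism directly by ($eSm$-BC); together with your correct handling of $Ex_2^{-1}$ and the final identification, the lemma follows immediately.

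You miss this because you treat $u_0$ as a genuine obstacle. In the setting the paper has in mind---note that (\ref{4.2.5}) itself already records ``since $t_2$ is exact log smooth'', and every downstream use of (\ref{4.2.8}), beginning with (\ref{4.2.7}), has $u_0={\rm id}$---your factorization $t_2=u_0\pi_N$ collapses to $t_2=\pi_N$, the vector-bundle projection, hence strict smooth and in $eSm$. Under $u_0={\rm id}$ your ``reduced question'' $u_{0*}u_0^*\eta_*\to u_{0*}\eta_{0*}u_0'^*$ is the identity map, so the detour through Thom isomorphisms and the factorization $u_0=q_2b$ is unnecessary.

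If you nonetheless try to push through for a general $u_0$, there is an actual error in your outline: the isomorphism $b^*\rho_*\cong\eta_{0*}b'^*$ you claim from (\ref{2.6.7}) is not what that proposition provides. Proposition~(\ref{2.6.7}) asserts that $g^*f_*\to f'_*g'^*$ is an isomorphism when the \emph{pushforward} morphism $f$ is a strict closed immersion; applied to the square with $b$ and $\rho$ it yields $\rho^*b_*\cong b'_*\eta_0^*$, not $b^*\rho_*\cong\eta_{0*}b'^*$. The base change you actually need at that point is of type (BC--3), which by (\ref{2.6.6}) requires (Supp), and (Supp) is not among the standing hypotheses of Section~\ref{Sec4}. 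So that route genuinely stalls; the fix is to observe that $t_2\in eSm$ and invoke ($eSm$-BC) once, as the paper does.
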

\begin{proof}
  It follows from ($eSm$-BC) because the morphism $N_{D_0}D\rightarrow X$ in (\ref{4.1.1.1}) is exact log smooth.
\end{proof}
\begin{lemma}\label{4.2.7}
  Under the notations and hypotheses of (\ref{4.2.5}), assume that $u_0$ is the identity. If $\eta$ is proper, then the exchange transformation
  \[\Omega_{g,f',D'}^n\eta^!\stackrel{Ex}\longrightarrow \eta^!\Omega_{g,f,D}^n\]
  is an isomorphism.
\end{lemma}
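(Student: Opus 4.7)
The plan is to unfold the definition from (\ref{4.2.5}) and reduce the problem to a single base change involving $t_2$. Since $u_0 = {\rm id}$, we have $D_0 = X$, so $t_2\colon N_XD\to X$ is the projection of a vector bundle of some rank $d$; in particular it is strict smooth separated, and $\rho_N$ is proper as a pullback of $\eta$. Unfolding the construction of (\ref{4.2.5}), the exchange in question factors as
\[
e'^{!}\,t_2'^{*}\,\eta^{!} \stackrel{Ex_1}\longrightarrow e'^{!}\,\rho_N^{!}\,t_2^{*} \stackrel{\sim}\longrightarrow (\rho_N e')^{!}\,t_2^{*} = (e\eta)^{!}\,t_2^{*} \stackrel{\sim}\longrightarrow \eta^{!}\,e^{!}\,t_2^{*},
\]
where the latter two arrows come from the $2$-functoriality of $(-)^{!}$ applied to the identity $\rho_N\circ e' = e\circ\eta$, and $Ex_1$ is the $(CE^!)$-exchange afforded by $t_2$ being exact log smooth and $\eta$ being proper. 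Hence it suffices to show that $Ex_1\colon t_2'^{*}\eta^{!}\to\rho_N^{!}t_2^{*}$ is an isomorphism.

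For this, I would first invoke strict smooth purity for the vector bundle projection $t_2$: the orientation $\mathfrak{t}_{N_XD}$ produced by (\ref{2.8.2}), together with (\ref{2.5.10}), yields a purity isomorphism $t_2^{!}\cong t_2^{*}(d)[2d]$, and similarly for $t_2'$ (with the same $d$, the square being Cartesian). Combining with $2$-functoriality of $(-)^{!}$ applied to $\eta t_2' = t_2\rho_N$, one obtains the chain
\[
\rho_N^{!}\,t_2^{*} \cong \rho_N^{!}\,t_2^{!}(-d)[-2d] \cong (\eta t_2')^{!}(-d)[-2d] \cong t_2'^{!}\,\eta^{!}(-d)[-2d] \cong t_2'^{*}\,\eta^{!},
\]
furnishing a natural isomorphism with the same source and target as $Ex_1$.

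The hard part is then the coherence check that this composite coincides with $Ex_1$ itself; this amounts to showing that the $(CE^!)$-exchange intertwines with strict smooth purity. A more economical path, which I would prefer, is to pass to left adjoints: since $\eta$ and $\rho_N$ are proper (so $\eta_! = \eta_*$ and $\rho_{N!} = \rho_{N*}$), $Ex_1$ is the mate of a ``swap'' exchange $\eta_*\,t'_{2\sharp}\to t_{2\sharp}\,\rho_{N*}$. Because $\Omega^n_{g,f,D} = e^{!}\,t_2^{*}$ is an equivalence with quasi-inverse $\Sigma^n_{g,f,D} = t_{2\sharp}\,e_*$ by (\ref{4.1.1}) and (\ref{2.5.10}) (the Morel--Voevodsky theorem), the invertibility of this swap exchange can be extracted from the already-established isomorphism $\Omega^n\,\eta_* \cong \eta_*\,\Omega^n$ of (\ref{4.2.8}) by a standard mate manipulation, sidestepping the purity compatibility check altogether.
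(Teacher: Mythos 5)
Your preferred ``more economical path'' is essentially the paper's proof: the paper observes that $\Omega_{g,f,D}^n$ and $\Omega_{g,f',D'}^n$ are equivalences (by (\ref{4.1.1}) and (\ref{2.5.10})), takes the left adjoint of the isomorphism $\Omega_{g,f,D}^n\eta_*\stackrel{\sim}\to\eta_*\Omega_{g,f',D'}^n$ from (\ref{4.2.8}), and concludes by a compatibility square between this adjoint, the exchange in the statement, and the counits of the $\Sigma^n\dashv\Omega^n$ adjunctions. You correctly diagnose why the purity route in your first two paragraphs is unsatisfactory---producing an abstract isomorphism with the same source and target as $Ex_1$ is not the same as showing $Ex_1$ itself is invertible, and the coherence check intertwining $(CE^!)$ with strict smooth purity would be substantive---so the ``sidestep'' via (\ref{4.2.8}) plus the equivalence fact plus a mate argument is exactly the right move, and it is the move the paper makes.
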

\begin{proof}
  Note first that $\Omega_{g,f,D}^n$ and $\Omega_{g,f',D'}^n$ are equivalences of categories by (\ref{2.8.2}). Consider the natural transformation
  \[\eta^!\Sigma_{g,f',D'}^n\stackrel{Ex}\longrightarrow \Sigma_{g,f,D}^n\eta^!\]
  given by the left adjoint of the exchange transformation
  \[\Omega_{g,f,D}^n\eta_*\stackrel{Ex}\longrightarrow \eta_*\Omega_{g,f',D'}^n.\]
  Then consider the commutative diagram
  \[\begin{tikzcd}
    \Omega_{g,f',D'}^n\eta^!\Sigma_{g,f',D'}^n\arrow[r,"Ex"]\arrow[d,"Ex"]&\eta^!\Omega_{g,f,D}^n\Sigma_{g,f',D'}^n\arrow[d,"ad'"]\\
    \Omega_{g,f',D'}^n\Sigma_{g,f,D}^n\eta^!\arrow[r,"ad'"]&\eta^!
  \end{tikzcd}\]
  of functors. By (\ref{4.2.8}), the left vertical arrow is an isomorphism. The right vertical and lower horizontal arrows are also isomorphisms since $\Omega_{g,f,D}^n$ and $\Omega_{g,f',D'}^n$ are equivalences of categories. Thus $\Omega_{g,f,D}^n$ and $\Omega_{g,f',D'}^n$ are equivalences of categories. Then the conclusion follows from the fact that $\Sigma_{g,f',D'}^n$ is an equivalence of categories.
\end{proof}
\begin{lemma}\label{4.2.13}
  Under the notations and hypotheses of (\ref{4.2.5}), assume that $u_0$ is the identity. if $q_2$ is strict smooth separated and $\eta$ is separated, then the exchange transformation
  \[\Omega_{g,f',D'}\eta^!\stackrel{Ex}\longrightarrow \eta^!\Omega_{g,f,D}\]
  is defined and an isomorphism.
\end{lemma}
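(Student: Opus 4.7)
Since $u_{0} = \mathrm{id}$, the section $b$ of the strict smooth separated morphism $q_{2}$ is a strict regular embedding by (\ref{0.3.4}), so the functors in question are $\Omega_{g,f,D} = b^{!}q_{2}^{*}$ and $\Omega_{g,f',D'} = b'^{!}q_{2}'^{*}$. By the adjoint form of (\ref{2.5.10}), both functors are equivalences of categories, with quasi-inverses $\Sigma_{g,f,D} := q_{2\sharp}b_{*}$ and $\Sigma_{g,f',D'} := q_{2\sharp}'b_{*}'$ respectively. The strategy is to first establish the analogous exchange for $\eta_{*}$ (where both adjoints are automatic), and then pass to $\eta^{!}$ by taking right adjoints, exploiting the fact that $\Omega$ is an equivalence.

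For the $\eta_{*}$-exchange, form the composite
\[
b^{!}q_{2}^{*}\eta_{*} \longrightarrow b^{!}\rho_{*}q_{2}'^{*} \longrightarrow \eta_{*}b'^{!}q_{2}'^{*},
\]
whose first arrow is induced by the ($eSm$-BC) isomorphism $q_{2}^{*}\eta_{*} \cong \rho_{*}q_{2}'^{*}$ coming from (\ref{1.1.7})(1) applied to the $eSm$-morphism $q_{2}$, and whose second arrow is the isomorphism $b^{!}\rho_{*} \cong \eta_{*}b'^{!}$ obtained by taking right adjoints of the strict closed immersion base change $\rho^{*}b_{*} \cong b'_{*}\eta^{*}$ furnished by (\ref{2.6.7}). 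Both are isomorphisms, yielding $\Omega_{g,f,D}\eta_{*} \cong \eta_{*}\Omega_{g,f',D'}$.

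To descend to $\eta^{!}$, one first makes sense of it: since $\eta$ is separated, (\ref{0.7})(v) furnishes a factorization $\eta = p \circ j$ with $j$ an open immersion and $p$ a strict proper morphism in $\mathscr{S}$, and we set $\eta^{!} := j^{!}p^{!}$, where $p^{!}$ exists by (Adj) and $j^{!} \cong j^{*}$; independence of the compactification is checked by the cofinality argument of (\ref{2.3.3}) using (sSupp) (available via (\ref{2.5.8})) for the strict proper piece. Now, because $\Omega_{g,f,D}$ is an equivalence, its right adjoint coincides with its quasi-inverse $\Sigma_{g,f,D}$, so taking right adjoints of the $\eta_{*}$-isomorphism above produces $\eta^{!}\Sigma_{g,f,D} \cong \Sigma_{g,f',D'}\eta^{!}$. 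Pre- and post-composing with $\Omega_{g,f',D'}$ and $\Omega_{g,f,D}$ yields the sought $\Omega_{g,f',D'}\eta^{!} \cong \eta^{!}\Omega_{g,f,D}$, and a short diagram chase identifies this isomorphism with the exchange transformation defined in (\ref{4.2.5}).

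\textbf{Main obstacle.} The principal difficulty is to give a meaningful $\eta^{!}$ for a merely separated $\eta$ without invoking (Supp): one must simultaneously verify that the Nagata-style definition is independent of the compactification and that it interacts correctly with the base change along $q_{2}^{*}$ so as to yield precisely the exchange from (\ref{4.2.5}). A secondary point is that (\ref{2.5.10}) cites (Zar-Sep), which is not among the standing hypotheses of Section \ref{Sec4}; however, the underlying Morel--Voevodsky argument for the equivalence of $q_{2\sharp}b_{*}$ requires only (Stab), (Loc), (Htp--1), and the strict regularity of $b$, all of which are available here, so this obstacle is bypassed by directly invoking the argument rather than the boxed statement.
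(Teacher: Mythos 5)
Your route diverges from the paper's, which reads off the result as a one-line consequence of (\ref{2.5.9}): for the Cartesian square $D' \to D$ over $X' \to X$ with $q_2$ strict smooth separated and $\eta$ separated, that theorem asserts $q_{2\sharp}\rho_! \to \eta_! q_{2\sharp}'$ is an isomorphism, and its right adjoint is precisely the first (and only non-trivial) arrow $q_2'^*\eta^! \to \rho^!q_2^*$ of the exchange in (\ref{4.2.5}) after setting $u_0 = {\rm id}$; the other arrow $b'^!\rho^! \cong \eta^!b^!$ is pseudofunctoriality of $(-)^!$ along $\rho b' = b\eta$. Your detour through the $\eta_*$-exchange and the equivalence of $\Omega_{g,f,D}$ in effect re-derives the content of (\ref{2.5.9}) rather than citing it, which is acceptable in principle, but the closing ``short diagram chase'' that identifies the adjunction-trick isomorphism with the canonical exchange of (\ref{4.2.5}) is genuinely non-trivial and is left entirely unargued.

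The concrete gap is the claim that (Supp) can be avoided. Item (\ref{0.7})(v) only compactifies the \emph{underlying scheme} morphism $\underline{\eta}$; the factorization of the log morphism $\eta$ given in (\ref{2.3.3}) has three stages $X \to \underline{X}\times_{\underline{S}}S \to \underline{S'}\times_{\underline{S}}S \to S$, and the first, while proper, is typically \emph{not strict} (it is strict precisely when $\eta$ itself is). So a factorization $\eta = p \circ j$ with $p$ strict proper and $j$ an open immersion does not exist in general, and (sSupp), which governs only strict proper morphisms, is insufficient to construct $\eta_!$ or $\eta^!$. In fact (Supp) is already a hypothesis of this lemma via the condition $(CE^!)$ of (\ref{4.2.5}) — invoked by ``Under the notations and hypotheses of (\ref{4.2.5})'' — which demands (Supp) whenever $\eta$ is separated but not proper. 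Once (Supp) is granted, (\ref{2.3.3})(7) furnishes $\eta^!$ directly and the paper's citation of (\ref{2.5.9}) finishes the proof; your longer path therefore buys no weakening of hypotheses, and the (sSupp) workaround as written does not go through.
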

\begin{proof}
  It is a direct consequence of (\ref{2.5.9}).
\end{proof}
\begin{lemma}\label{4.2.9}
  Under the notations and hypotheses of (\ref{4.2.5}), assume that $u_0$ is the identity. if $\eta$ is an open immersion and (Supp) is satisfied, then the exchange transformation
  \[\Omega_{g,f',D'}^n\eta^!\stackrel{Ex}\longrightarrow \eta^!\Omega_{g,f,D}^n\]
  is an isomorphism.
\end{lemma}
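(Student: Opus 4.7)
The plan is to unpack the definition of the exchange in (\ref{4.2.5}) and reduce the problem to a base-change isomorphism for the Cartesian square of zero sections of the normal bundles. Since $\eta$ is an open immersion and (Supp) holds, (\ref{2.3.3})(3) gives $\eta^!\cong\eta^*$, and the same identification holds for the pullback $\tilde\eta:N_{D_0'}D'\to N_{D_0}D$. With $u_0=\mathrm{id}$, the Thom transformations simplify to $\Omega_{g,f,D}^n=e^!t_2^*$ and $\Omega_{g,f',D'}^n=e'^!t_2'^*$, and the exchange constructed in (\ref{4.2.5}) factors as
\[
e'^!t_2'^*\eta^!\stackrel{Ex}\longrightarrow e'^!\tilde\eta^!t_2^*\stackrel{\sim}\longrightarrow\eta^!e^!t_2^*,
\]
where the second arrow is the pseudo-functorial identification $(\tilde\eta\circ e')^!=(e\circ\eta)^!$, and the first arrow, after invoking $\eta^!=\eta^*$ and $\tilde\eta^!=\tilde\eta^*$, collapses to the pseudo-functorial isomorphism for $(-)^*$ coming from the fibered category structure. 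So the problem reduces to showing that the base-change morphism
\[
e'^!\tilde\eta^*\longrightarrow \eta^*e^!
\]
for the Cartesian square
\[
\begin{tikzcd}
D_0'\arrow[r,"e'"]\arrow[d,"\eta"']&N_{D_0'}D'\arrow[d,"\tilde\eta"]\\
D_0\arrow[r,"e"']&N_{D_0}D
\end{tikzcd}
\]
is an isomorphism.

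In this square $e,e'$ are strict closed immersions (zero sections of the normal bundles) and $\tilde\eta,\eta$ are open immersions (pullbacks of one another). By (\ref{2.6.7}), the $(-)^*$-base change $\tilde\eta^*e_*\to e'_*\eta^*$ is an isomorphism since (Loc) holds and $e$ is a strict closed immersion. Under (Loc), (\ref{2.2.3})(1) supplies the right adjoints $e^!,e'^!$, so the $(-)^!$-base change $e'^!\tilde\eta^*\to\eta^*e^!$ is defined. I would then pass from the $(-)^*$-statement to the $(-)^!$-statement via the mate correspondence under the adjunctions $e_*\dashv e^!$ and $e'_*\dashv e'^!$: because the counits $\tilde\eta^*\tilde\eta_*\to\mathrm{id}$ and $\eta^*\eta_*\to\mathrm{id}$ are isomorphisms by (\ref{2.2.1})(2) for open immersions, this mate correspondence preserves isomorphisms here and carries the iso $\tilde\eta^*e_*\cong e'_*\eta^*$ to the desired iso $e'^!\tilde\eta^*\cong\eta^*e^!$.

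The only nontrivial bookkeeping is to confirm that the isomorphism produced by this abstract mate argument agrees with the concrete exchange built in (\ref{4.2.5}) from units and counits; both are assembled from the same adjunctions in the same combinatorial pattern, so a short diagram chase settles it, and this is where I expect the main obstacle to lie. As a more hands-on alternative that avoids mate calculus, one can apply $\tilde\eta^*$ to the localization triangle $e_*e^!\to\mathrm{id}\to j_*j^*$ of (\ref{2.2.3})(3), where $j$ denotes the complement of $e$, rewrite the first term via (\ref{2.6.7}) and the third via $(eSm\text{-BC})$ (valid since $j$ is in $eSm$), and compare with the localization triangle for $e'$ applied to $\tilde\eta^*K$: the triangulated category axioms together with the full faithfulness of $e'_*$ then deliver $\eta^*e^!\cong e'^!\tilde\eta^*$.
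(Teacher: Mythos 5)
Your opening reduction --- using (Supp) to identify $\eta^!\cong\eta^*$ and $\tilde\eta^!\cong\tilde\eta^*$ for the open immersions $\eta$ and its pullback $\tilde\eta$, so that both arrows in the decomposition of the exchange (the base change $t_2'^*\eta^!\to\tilde\eta^!t_2^*$ and the $(-)^!$-pseudofunctoriality iso) collapse to canonical isomorphisms --- is exactly the paper's one-line proof, which reads ``It is a direct consequence of (Supp),'' and is already complete. The compatibility ``bookkeeping'' you worry about (that the concrete exchange built from units and counits agrees with the pseudofunctorial iso under the identification $\eta^!\cong\eta^*$) is precisely what the squares in (\ref{2.3.3})(4) record for open immersions, so the mate-calculus paragraph and the localization-triangle alternative are not needed.
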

\begin{proof}
  It is a direct consequence of (Supp).
\end{proof}
\begin{lemma}\label{4.2.10}
  Under the notations and hypotheses of (\ref{4.2.5}), assume that $u_0$ is the identity. if $\eta$ is separated and (Supp) is satisfied, then then the exchange transformation
  \[\Omega_{g,f',D'}^n\eta^!\stackrel{Ex}\longrightarrow \eta^!\Omega_{g,f,D}^n\]
  is an isomorphism.
\end{lemma}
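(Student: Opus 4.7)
The plan is to reduce the assertion to the two cases already settled in (\ref{4.2.7}) and (\ref{4.2.9}) by compactifying $\eta$. Since $\eta\colon X'\to X$ is a separated morphism of $\mathscr{S}$-schemes, the convention (\ref{0.7})(1)(v) supplies a compactification $\underline{X'}\to \underline{X''}\to \underline{X}$ of $\underline{\eta}$, and copying the construction recalled in (\ref{2.3.3}) one obtains a factorization
\[X'\xrightarrow{\eta_1} \underline{X'}\times_{\underline{X}}X\xrightarrow{\eta_2}\underline{X''}\times_{\underline{X}}X\xrightarrow{\eta_3}X\]
in which $\eta_1$ and $\eta_3$ are proper and $\eta_2$ is an open immersion.

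Next I would verify that the exchange transformation $\Omega_{g,f',D'}^n\eta^!\xrightarrow{Ex}\eta^!\Omega_{g,f,D}^n$ is compatible with composition in $\eta$. Concretely, if $\eta=\eta'\circ\eta''$ together with the induced pullback towers of the diagrams in (\ref{4.1.1.1}) along each factor, then the exchange transformation for $\eta$ is the vertical pasting of the exchange transformations for $\eta'$ and $\eta''$. This is a formal diagram chase, using the transitivity of $(-)^!$ and the fact that the formation of $N_{D_0}D$ commutes with pullback along $\eta'$ and $\eta''$, so the intermediate data in the definition of $\Omega^n$ is transported functorially.

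With this compatibility in hand, I would apply (\ref{4.2.7}) to each of the two proper morphisms $\eta_1$ and $\eta_3$ and (\ref{4.2.9}) to the open immersion $\eta_2$; the hypothesis that (Supp) is satisfied is preserved along the way. Each of the three factor exchange transformations is then an isomorphism, hence so is their vertical composition, which is precisely the exchange transformation for $\eta$.

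The main obstacle is the composition-compatibility step. Although formally analogous to the compatibility statements that make (\ref{2.3.3}) work for $(-)_!$, one must bookkeep three primitive exchange transformations simultaneously (namely the exchanges $\eta_{0*}b^!\to b^!\rho_*$, $c^!v^*\to b^!$, and $q_2^*\eta^!\to \rho^!q_2^*$ from (\ref{4.2.5}), as well as their normal-cone analogues arising from $e^!t_2^*$) and check that each decomposes along $\eta=\eta'\eta''$. Once this routine but lengthy check is settled, the combination of (\ref{4.2.7}) and (\ref{4.2.9}) concludes the proof.
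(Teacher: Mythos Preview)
Your approach is correct and is essentially the same as the paper's: the paper's proof is the single line ``It follows from (\ref{4.2.7}) and (\ref{4.2.9}),'' which implicitly uses exactly the compactification-and-factor argument you spell out. Your version simply makes explicit the factorization of $\eta$ and the composition-compatibility check that the paper leaves to the reader.
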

\begin{proof}
  It follows from (\ref{4.2.7}) and (\ref{4.2.9}).
\end{proof}
\begin{none}\label{4.2.1}
  Under the notations and hypotheses of (\ref{4.1.1}), we have the natural transformations
  \[\Omega_{g,f,D}^n\stackrel{(T^n)^{-1}}\longleftarrow \Omega_{g,f,D}^d\stackrel{T^d}\longrightarrow \Omega_{g,f,D}\]
  whose descriptions are given below. These are called {\it transition transformations} again.
  \begin{enumerate}[(1)]
    \item The natural transformation
    \[T^d:\Omega_{g,f,D}^d\longrightarrow \Omega_{g,f,D}\]
    is given by
    \[\begin{split}
      \pi_*\Omega_{g\times\mathbb{A}^1,f\times\mathbb{A}^1,D_XD}\pi^*&\stackrel{ad}\longrightarrow \pi_*\alpha_{1*}\alpha_1^*\Omega_{g\times\mathbb{A}^1,f\times\mathbb{A}^1,D_XD} \pi^*\stackrel{\sim}\longrightarrow u_{0*}\alpha_1^*\Omega_{g\times\mathbb{A}^1,f\times\mathbb{A}^1,D_XD}\pi^*\\
      &\stackrel{Ex}\longrightarrow \Omega_{g,f,D}\alpha_1^*\pi^*\stackrel{\sim}\longrightarrow \Omega_{g,f,D}.
    \end{split}\]
    \item The natural transformation
    \[(T^n)^{-1}:\Omega_{g,f,D}^d\longrightarrow \Omega_{g,f,D}^n\]
    is given by
    \[\begin{split}
      \pi_*\Omega_{g\times\mathbb{A}^1,f\times\mathbb{A}^1,D_XD}\pi^*&\stackrel{ad}\longrightarrow \pi_*\alpha_{0*}\alpha_0^*\Omega_{g\times\mathbb{A}^1,f\times\mathbb{A}^1,D_XD} \pi^*\stackrel{\sim}\longrightarrow \alpha_0^*\Omega_{g\times\mathbb{A}^1,f\times\mathbb{A}^1,D_XD}\pi^*\\
      &\stackrel{Ex}\longrightarrow \Omega_{g,f,D}^n\alpha_0^*\pi^*\stackrel{\sim}\longrightarrow \Omega_{g,f,D}^n.
    \end{split}\]
    When $(T^n)^{-1}$ is an isomorphism, its inverse is denoted by $T^n$.
  \end{enumerate}
  When $u_0$ is the identity, we also have the natural transformation
  \[T^o:\Omega_{g,f,D}^o\longrightarrow \Omega_{g,f,D}^n\]
  given by the right adjoint of an orientation of $N_XD$. It depends on the orientation.
\end{none}
\begin{none}\label{4.2.6}
  Under the notations and hypotheses of (\ref{4.2.5}), consider the commutative diagram
  \[\begin{tikzcd}
    E_0'\arrow[dd,"\psi_0"]\arrow[rd,"v_0'"]\arrow[rr,"c'"]&&E'\arrow[rd,"v'"']\arrow[rrrd,"r_2'"]\arrow[dd,"\psi",near end]\\
    &D_0'\arrow[rr,"b'"',near start,crossing over]&&D'\arrow[rr,"q_2'"']&&X'\arrow[dd,"\eta"]\\
    E\arrow[rr,"c",near start]\arrow[rd,"v_0"']&&E\arrow[rd,"v"']\arrow[rrrd,"r_2"]\\
    &D_0\arrow[rr,"b"']\arrow[uu,crossing over,"\rho_0",near end,leftarrow]&&D\arrow[uu,"\rho",leftarrow,crossing over]\arrow[rr,"q_2"']&&X
  \end{tikzcd}\]
  of $\mathscr{S}$-schemes where each small square is Cartesian. Assume that one of the conditions (i) and (ii) of (\ref{4.2.2}) is simultaneously satisfied for both $(D,E)$ and $(D',E')$. Consider the diagrams
  \[\begin{tikzcd}
    \eta^*\Omega_{g,f,E}\arrow[r,"T_{D,E}"]\arrow[d,"Ex"]&\eta^*\Omega_{g,f,D}\arrow[d,"Ex"]\\
    \Omega_{g,f',E'}\eta^*\arrow[r,"T_{D',E'}"]&\Omega_{g,f',D'}\eta^*
  \end{tikzcd}\quad \begin{tikzcd}
    \Omega_{g,f,E}\eta_*\arrow[r,"T_{D,E}"]\arrow[d,"Ex"]&\Omega_{g,f,D}\arrow[d,"Ex"]\eta_*\\
    \eta_*\Omega_{g,f',E'}\arrow[r,"T_{D',E'}"]&\eta_*\Omega_{g,f',D'}
  \end{tikzcd}\quad \begin{tikzcd}
    \Omega_{g,f',E'}\eta^!\arrow[r,"T_{D',E'}"]\arrow[d,"Ex"]&\Omega_{g,f',D'}\eta^!\arrow[d,"Ex"]\\
    \eta^!\Omega_{g,f,E}\arrow[r,"T_{D,E}"]&\Omega_{g,f,D}
  \end{tikzcd}\]
  of functors. Here, in the first (resp.\ second, resp.\ third) case, we assume the condition $(CE^*)$ (resp.\ $(CE_*)$, resp.\ $(CE^!)$) in (\ref{4.2.5}) for $\eta$ and $\eta'$. We will show that the above diagrams commute under suitable conditions. If the condition (i) of (\ref{4.2.2}) is satisfied, then note that $v_0$ and $v_0'$ are the identity, and the assertion can be checked by considering the diagrams
  \[\begin{tikzcd}
    \rho_0^*c^!r_2^*\arrow[r,"\sim"]\arrow[d,"Ex"]&\rho_0^*c^!v^*q_2^*\arrow[r,"Ex^{-1}"]\arrow[d,"Ex"]&\rho_0^*b^!q_2^*\arrow[dd,"Ex"]\\
    c'^!\psi^*r_2^*\arrow[r,"\sim"]\arrow[dd,"\sim"]&c'^!\psi^*v^*q_2^*\arrow[d,"\sim"]\\
    &c'^!v'^*\rho^*q_2^*\arrow[r,"Ex^{-1}"]\arrow[d,"\sim"]&b'^!\rho^*q_2^*\arrow[d,"\sim"]\\
    c'^!r_2'^*\eta^*\arrow[r,"\sim"]&c'^!v'^*q_2'^*\eta^*\arrow[r,"Ex^{-1}"]&b'^!q_2'^*\eta^*
  \end{tikzcd}\quad \begin{tikzcd}
    c^!r_2^*\eta_*\arrow[r,"\sim"]\arrow[dd,"Ex"]&c^!v^*q_2^*\eta_*\arrow[d,"Ex"]\arrow[r,"Ex^{-1}"]&b^!q_2^*\eta_*\arrow[d,"Ex"]\\
    &c^!v^*\rho_*q_2'^*\arrow[r,"Ex^{-1}"]\arrow[d,"Ex"]&b^!\rho_*q_2'^*\arrow[dd,"Ex^{-1}"]\\
    c^!\psi_*r_2'^*\arrow[r,"\sim"]\arrow[d,"Ex^{-1}"]&c^!\psi_*v'^*q_2'^*\arrow[d,"Ex^{-1}"]\\
    \rho_{0*}c'^!r_2'^*\arrow[r,"\sim"]&\rho_{0*}c'^!v'^*q_2'^*\arrow[r,"Ex^{-1}"]&\rho_{0*}b'^!q_2'^*
  \end{tikzcd}\]
  \[\begin{tikzcd}
    c'^!r_2'^*\eta^!\arrow[r,"\sim"]\arrow[dd,"Ex"]&c'^!v'^*q_2'^*\eta^!\arrow[r,"Ex^{-1}"]\arrow[d,"Ex"]&b'^!q_2'^*\eta^!\arrow[d,"Ex"]\\
    &c'^!v'^*\rho^!q_2^*\arrow[r,"Ex^{-1}"]\arrow[d,"Ex"]&b'^!\rho^!q_2^*\arrow[dd,"\sim"]\\
    c'^!\psi^!r_2^*\arrow[r,"\sim"]\arrow[d,"\sim"]&c'^!\psi^!v^*q_2^*\arrow[d,"\sim"]\\
    \rho_0^!c^!r_2^*\arrow[r,"\sim"]&\rho_0^!c^!v^*q_2^*\arrow[r,"Ex^{-1}"]&\rho_0^!b^!q_2^*
  \end{tikzcd}\]
  of functors. If the condition (ii) of (loc.\ cit) is satisfied, then the assertion can be checked by considering the diagrams
  \[\begin{tikzcd}
    \rho_0^*v_{0*}c^!r_2^*\arrow[r,"Ex"]\arrow[d,"Ex"]&\rho_0^*b^!v_*r_2^*\arrow[r,"\sim"]\arrow[d,"Ex"]&\rho_0^*b^!v_*v^*q_2^*\arrow[d,"Ex"]\arrow[r,"ad^{-1}"]&\rho_0^*b^!q_2^* \arrow[dd,"Ex"]\\
    v_{0*}'\psi_0^*c^!r_2^*\arrow[d,"Ex"]&b'^!\rho^*v_*r_2^*\arrow[r,"\sim"]\arrow[d,"Ex"]&b'^!\rho^*v_*v^*q_2^*\arrow[d,"Ex"]\arrow[rd,"ad^{-1}"]\\
    v_{0*}'c'^!\psi^*r_2^*\arrow[r,"Ex"]\arrow[d,"\sim"]&b'^!v_*'\psi^*r_2^*\arrow[d,"\sim"]\arrow[r,"\sim"]&b'^!v_*'v'^*\rho^*q_2^*\arrow[d,"\sim"]\arrow[r,"ad^{-1}"]&b'^!\rho^*q_2^* \arrow[d,"\sim"]\\
    v_{0*}'c'^!r_2'^*\eta^*\arrow[r,"Ex"]&b'^!v_*'r_2'^*\eta^*\arrow[r,"\sim"]&b'^!v_*'v'^*q_2'^*\eta^*\arrow[r,"ad^{-1}"]&b'^!q_2'^*\eta^*
  \end{tikzcd}\]
  \[\begin{tikzcd}
    v_{0*}c^!r_2^*\eta_*\arrow[d,"Ex"]\arrow[r,"Ex"]&b^!v_*r_2^*\eta_*\arrow[d,"Ex"]\arrow[r,"\sim"]&b^!v_*v^*q_2^*\eta_*\arrow[r,"ad^{-1}"]&b^!q_2^*\eta_*\arrow[d,"Ex"]\\
    v_{0*}c^!\psi_*r_2'^*\arrow[d,"Ex^{-1}"]\arrow[r,"Ex"]&b^!v_*\psi_*r_2'^*\arrow[d,"\sim"]\arrow[r,"\sim"]&b^!\rho_*v_*'v'^*q_2'^*\arrow[dd,"Ex^{-1}"]\arrow[r,"ad^{-1}"]& b^!\rho_*q_2'^*\arrow[dd,"Ex^{-1}"]\\
    v_{0*}\psi_{0*}c'^!r_2'^*\arrow[d,"\sim"]&b^!\rho_*v_*'r_2'^*\arrow[d,"Ex^{-1}"]\\
    \rho_{0*}v_{0*}'c'^!r_2'^*\arrow[r,"Ex"]&\rho_{0*}b'^!v_*'r_2'^*\arrow[r,"\sim"]&\rho_{0*}b'^!v_*'v'^*q_2'^*\arrow[r,"ad^{-1}"]&\rho_{0*}b'^!q_2'^*
  \end{tikzcd}\]
  \[\begin{tikzpicture}[baseline= (a).base]
    \node[scale=.94] (a) at (0,0)
    {\begin{tikzcd}
    v_{0*}'c'^!r_2'^*\eta^!\arrow[d,"ad"]\arrow[r,"Ex"]&b'^!v_*'r_2'^*\eta^!\arrow[d,"ad"]\arrow[rr,"\sim"]&&b'^!v_*'v'^*q_2'^*\eta^!\arrow[d,"ad"]\arrow[r,"ad^{-1}"]&b'^!q_2'^*\eta^! \arrow[d,"ad"]\\
    v_{0*}'c'^!\psi^!\psi_*r_2'^*\eta^!\arrow[d,"Ex^{-1}"]\arrow[r,"Ex"]&b'^!v_*'\psi^!\psi_*r_2'^*\eta^!\arrow[d,"Ex^{-1}"]\arrow[r,"Ex"]&b'^!\rho^!v_*\psi_*r_2'^*\eta^!\arrow[d,"Ex^{-1}"] \arrow[r,"\sim"]&b'^!\rho^!\rho_*v_*'v'^*q_2'^*\eta^!\arrow[r,"ad^{-1}"]&b'^!\rho^!\rho_*q_2'^*\eta^!\arrow[d,"Ex^{-1}"]\\
    v_{0*}'c'^!\psi^!r_2^*\eta_*\eta^!\arrow[r,"Ex"]\arrow[d,"ad'"]&b'^!v_*'\psi^!r_2^*\eta_*\eta^!\arrow[d,"ad'"]\arrow[r,"Ex"]&b'^!\rho^!v_*r_2^*\eta_*\eta^!\arrow[d,"ad'"]\arrow[r,"\sim"] &b'^!\rho^!v_*v^*q_2^*\eta_*\eta^!\arrow[d,"ad'"]\arrow[r,"ad^{-1}"]&b'^!\rho^!q_2^*\eta_*\eta^!\arrow[d,"ad^{-1}"]\\
    v_{0*}'c'^!\psi^!r_2^*\arrow[d,"\sim"]\arrow[r,"Ex"]&b'^!v_*'\psi^!r_2^*\arrow[r,"Ex"]&b'^!\rho^!v_*r_2^*\arrow[r,"\sim"]\arrow[dd,"\sim"]&b'^!\rho^!v_*v^*q_2^*\arrow[r,"ad^{-1}"] &b'^!\rho^!q_2^*\arrow[dd,"\sim"]\\
    v_{0*}'\eta^!c^!r_2^*\arrow[d,"Ex"]\\
    \rho_0^!v_{0*}c^!r_2^*\arrow[rr,"Ex"]&&\rho_0^!b^!v_*r_2^*\arrow[r,"\sim"]&\rho_0^!b^!v_*v^*q_2^*\arrow[r,"ad^{-1}"]&\rho^!b^!q_2^*
   \end{tikzcd}};
  \end{tikzpicture}\]
  of functors. When $b$ is a strict regular embedding, we similarly have the commutative diagrams
  \[\begin{tikzcd}
    \eta^*\Omega_{g,f,D}^d\arrow[r,"T^d"]\arrow[d,"Ex"]&\eta^*\Omega_{g,f,D}\arrow[d,"Ex"]\\
    \Omega_{g,f',D'}^d\eta^*\arrow[r,"T^d"]&\Omega_{g,f',D'}\eta^*
  \end{tikzcd}\quad \begin{tikzcd}
    \Omega_{g,f,D}^d\eta_*\arrow[r,"T^d"]\arrow[d,"Ex"]&\Omega_{g,f,D}\arrow[d,"Ex"]\eta_*\\
    \eta_*\Omega_{g,f',D'}^d\arrow[r,"T^d"]&\eta_*\Omega_{g,f',D'}
  \end{tikzcd}\quad \begin{tikzcd}
    \Omega_{g,f',D'}^d\eta^!\arrow[r,"T^d"]\arrow[d,"Ex"]&\Omega_{g,f',D'}\eta^!\arrow[d,"Ex"]\\
    \eta^!\Omega_{g,f,D}^d\arrow[r,"T^d"]&\Omega_{g,f,D}
  \end{tikzcd}\]
  \[\begin{tikzcd}
    \eta^*\Omega_{g,f,D}^n\arrow[r,"(T^n)^{-1}",leftarrow]\arrow[d,"Ex"]&\eta^*\Omega_{g,f,D}^d\arrow[d,"Ex"]\\
    \Omega_{g,f',D'}^n\eta^*\arrow[r,"(T^n)^{-1}",leftarrow]&\Omega_{g,f',D'}^d\eta^*
  \end{tikzcd}\quad \begin{tikzcd}
    \Omega_{g,f,D}^n\eta_*\arrow[r,"(T^n)^{-1}",leftarrow]\arrow[d,"Ex"]&\Omega_{g,f,D}^d\arrow[d,"Ex"]\eta_*\\
    \eta_*\Omega_{g,f',D'}^n\arrow[r,"(T^n)^{-1}",leftarrow]&\eta_*\Omega_{g,f',D'}^d
  \end{tikzcd}\quad \begin{tikzcd}
    \Omega_{g,f',D'}^n\eta^!\arrow[r,"(T^n)^{-1}",leftarrow]\arrow[d,"Ex"]&\Omega_{g,f',D'}^d\eta^!\arrow[d,"Ex"]\\
    \eta^!\Omega_{g,f,D}^n\arrow[r,"(T^n)^{-1}",leftarrow]&\Omega_{g,f,D}^d
  \end{tikzcd}\]
  of functors.  Here, in the third and sixth diagram, we assume that ($\eta$ and $\eta'$ are proper) or ($\eta$ and $\eta'$ are separated and (Supp) is satisfied).
\end{none}
\begin{none}\label{4.2.11}
  Under the notations and hypotheses of (\ref{4.2.2}), if $b$ and $c$ are strict regular embeddings, we have the commutative diagrams
  \[\begin{tikzcd}
    E_0\arrow[rr]\arrow[rd]\arrow[dd]&&E\arrow[rd]\arrow[rrrd]\arrow[dd,near end]\\
    &D_0\arrow[dd]\arrow[rr,near start,crossing over]&&D\arrow[rr]&&X\arrow[dd]\\
    E_0\times \mathbb{A}^1\arrow[rr]\arrow[rd]&&D_{E_0}E\arrow[rd]\arrow[rrrd]\\
    &D_0\times\mathbb{A}^1\arrow[uu,leftarrow,crossing over]\arrow[rr]&&D_{D_0}D\arrow[uu,leftarrow,crossing over]\arrow[rr]&&X\times\mathbb{A}^1
  \end{tikzcd}\]
  \[\begin{tikzcd}
    E_0\arrow[rr]\arrow[dd]\arrow[rd]&&N_{E_0}E\arrow[rd]\arrow[rrrd]\arrow[dd,near end]\\
    &D_0\arrow[dd]\arrow[rr,near start,crossing over]&&N_{D_0}D\arrow[rr]&&X\arrow[dd]\\
    E_0\times \mathbb{A}^1\arrow[rr]\arrow[rd]&&D_{E_0}E\arrow[rd]\arrow[rrrd]\\
    &D_0\times\mathbb{A}^1\arrow[uu,leftarrow,crossing over]\arrow[rr]&&D_{D_0}D\arrow[uu,leftarrow,crossing over]\arrow[rr]&&X\times\mathbb{A}^1
  \end{tikzcd}\]
  of $\mathscr{S}$-schemes as in (\ref{4.1.1}). Thus we similarly obtain the natural transformations
  \[\Omega_{g,f,E}^d\longrightarrow \Omega_{g,f,D}^d,\quad \Omega_{g,f,E}^n\longrightarrow \Omega_{g,f,D}^n\]
  as in (\ref{4.2.2}) when one of the conditions (i)--(iii) of (loc.\ cit) is satisfied. These are again denoted by $T_{D,E}$ and called {\it transition transformations.} We also have the commutative diagram
  \[\begin{tikzcd}
    \Omega_{g,f,E}^n\arrow[d,"T_{D,E}"]\arrow[r,"(T^n)^{-1}",leftarrow]&\Omega_{g,f,E}^d\arrow[d,"T_{D,E}"]\arrow[r,"T^d"]&\Omega_{g,f,E}\arrow[d,"T_{D,E}"]\\
    \Omega_{g,f,D}^n\arrow[r,"(T^n)^{-1}",leftarrow]&\Omega_{g,f,D}^d\arrow[r,"T^d"]&\Omega_{g,f,D}
  \end{tikzcd}\]
  of functors. Note that in the case (iii), the horizontal arrows are isomorphisms as in (loc.\ cit). In the case (i), if (Supp) is satisfied, then the horizontal arrows are isomorphisms.
\end{none}
\begin{none}\label{4.2.14}
  Under the notations and hypotheses of assume that the conditions of (\ref{4.1.2}) are satisfied. Consider the commutative diagram
  \[\begin{tikzcd}
    \Omega_{g,f,I}^n\arrow[d,"T_{D,I}"]\arrow[r,"(T^n)^{-1}",leftarrow]&\Omega_{g,f,I}^d\arrow[d,"T_{D,I}"]\arrow[r,"T^d"]&\Omega_{g,f,I}\arrow[d,"T_{D,I}"]\\
    \Omega_{g,f,D}^n\arrow[r,"(T^n)^{-1}",leftarrow]&\Omega_{g,f,D}^d\arrow[r,"T^d"]&\Omega_{g,f,D}
  \end{tikzcd}\]
  of functors. By the proof of \cite[2.4.35]{CD12}, the upper horizontal arrows are isomorphisms. The vertical arrows are isomorphisms by (\ref{4.2.2}(i)), so the lower horizontal arrows are also isomorphisms. In particular, the natural transformation
  \[\Omega_{g,f,D}^n\stackrel{(T^n)^{-1}}\longleftarrow \Omega_{g,f,D}^d\]
  has the inverse $T^n$.
\end{none}
\begin{none}\label{4.2.12}
  Under the notations and hypotheses of (\ref{4.2.2}), assume that we have a commutative diagram
  \[\begin{tikzcd}
    F_0\arrow[r,"c'"]\arrow[d,"w_0"]&F\arrow[rdd,"r_2'"]\arrow[d,"w"]\\
    E_0\arrow[r,"c"]\arrow[d,"v_0"]&E\arrow[d,"v"]\arrow[rd,"r_2"']\\
    D_0\arrow[r,"b"']&D\arrow[r,"q_2"']&X
  \end{tikzcd}\]
  of $\mathscr{S}$-schemes and that $w:F\rightarrow E$ and $v:E\rightarrow D$ simultaneously satisfy one of the conditions (i)--(iii) of (loc.\ cit). Then the composition $vw:F\rightarrow D$ also satisfies it, and the diagram
  \[\begin{tikzcd}
    \Omega_{g,f,F}\arrow[rr,"T_{E,F}"]\arrow[rd,"T_{D,F}"']&&\Omega_{g,f,E}\arrow[ld,"T_{D,E}"]\\
    &\Omega_{g,f,D}
  \end{tikzcd}\]
  of functors commutes.

  Assume further that $b$, $c$, and $d$ are strict regular embeddings. Then we similarly have the commutative diagrams
  \[\begin{tikzcd}
    \Omega_{g,f,F}^d\arrow[rr,"T_{E,F}"]\arrow[rd,"T_{D,F}"']&&\Omega_{g,f,E}^d\arrow[ld,"T_{D,E}"]\\
    &\Omega_{g,f,D}^d
  \end{tikzcd}
  \quad\begin{tikzcd}
    \Omega_{g,f,F}^n\arrow[rr,"T_{E,F}"]\arrow[rd,"T_{D,F}"']&&\Omega_{g,f,E}^n\arrow[ld,"T_{D,E}"]\\
    &\Omega_{g,f,D}^n
  \end{tikzcd}\]
  of functors.
\end{none}
\subsection{Composition transformations}
\begin{none}\label{4.3.1}
  Let $h:X\rightarrow Y$ and $g:Y\rightarrow S$ be morphisms of $\mathscr{S}$-schemes, and we put $f=gh$. Consider a commutative diagram
  \[\begin{tikzcd}
    &X\arrow[rd,"b"]\arrow[ld,"b'"']\\
    D'\arrow[rd,"u'"]\arrow[rr,"\rho",near start]\arrow[rddd,"q_2'"']&&D\arrow[dd,"\rho'",near start]\arrow[rd,"u"]\arrow[rrrddd,bend left,"q_2"]\\
    &X\times_{Y}X\arrow[dd,"p_2'"]\arrow[uu,leftarrow,"a'"',near start,crossing over]\arrow[rr,"\varphi",crossing over,near end]&&X\times_S
    X\arrow[rrdd,"p_2"]\arrow[lluu,"a"',leftarrow, bend right, crossing over]\\
    &&D''\arrow[rd,"u''"']\arrow[rrrd,"q_2''"]\\
    &X\arrow[rr,"a''"']\arrow[ru,"b''"]&&Y\times_S X\arrow[uu,"\varphi'"',leftarrow,crossing over]\arrow[rr,"p_2''"']&&X
  \end{tikzcd}\]
  of $\mathscr{S}$-schemes. Assume that the exchange transformation
  \begin{equation}\label{4.3.1.1}
    q_2'^*b''^!\stackrel{Ex}\longrightarrow \rho^!\rho'^*
  \end{equation}
  is defined. For example, when the diagram
  \[\begin{tikzcd}
    D'\arrow[d,"q_2'"]\arrow[r,"\rho"]&D\arrow[d,"\rho'"]\\
    X\arrow[r,"b''"]&D''
  \end{tikzcd}\]
  is Cartesian and the exchange transformation
  \[b''^*\rho_*'\stackrel{Ex}\longrightarrow q_{2*}'\rho^*\]
  is an isomorphism, (\ref{4.3.1.1}) is defined. Then the {\it composition transformation}
  \[\Omega_{h,D'}\Omega_{g,f,D''}\stackrel{C}\longrightarrow \Omega_{f,D}\]
  is given by
  \[b'^!q_2'^*b''^!q_2''^*\stackrel{Ex}\longrightarrow b'^!\rho^!\rho'^*q_2''^*\stackrel{\sim}\longrightarrow b^!q_2^!.\]
  Note that it is an isomorphism when the first arrow is an isomorphism. For example, if (Supp) is satisfied and $\rho'$ is strict smooth separated, then the first arrow is an isomorphism by (\ref{2.5.9}).
\end{none}
\begin{none}\label{4.3.2}
  Under the notations and hypotheses of (\ref{4.3.1}), consider a commutative diagram
  \[\begin{tikzcd}
    &X\arrow[rd,"c"]\arrow[ld,"c'"']\\
    E'\arrow[rd,"v'"]\arrow[rr,"\psi",near start]\arrow[rddd,"r_2'"']&&E\arrow[dd,"\psi'",near start]\arrow[rd,"v"]\arrow[rrrddd,bend left,"r_2"]\\
    &D'\arrow[dd,"q_2'"]\arrow[uu,leftarrow,"b'"',near start,crossing over]\arrow[rr,"\rho",crossing over,near end]&&D\arrow[rrdd,"q_2"]\arrow[lluu,"b"',leftarrow, bend right, crossing
    over]\\
    &&E''\arrow[rd,"v''"']\arrow[rrrd,"r_2''"]\\
    &X\arrow[rr,"b''"']\arrow[ru,"c''"]&&D''\arrow[uu,"\rho'"',leftarrow,crossing over]\arrow[rr,"q_2''"']&&X
  \end{tikzcd}\]
  of $\mathscr{S}$-schemes. Assume that the exchange transformation
  \[r_2'^*c''^!\stackrel{Ex}\longrightarrow \psi^!\psi'^*\]
  is also defined. Then in the below two cases, we will show that the diagram
  \begin{equation}\label{4.3.2.1}\begin{tikzcd}
    \Omega_{h,E'}\Omega_{g,f,E''}\arrow[r,"C"]\arrow[d,"T_{D',E'}T_{D'',E''}"]&\Omega_{f,E}\arrow[d,"T_{D,E}"]\\
    \Omega_{h,D'}\Omega_{g,f,D''}\arrow[r,"C"]&\Omega_{f,D}
  \end{tikzcd}\end{equation}
  of functors commutes where the horizontal arrows are define in (loc.\ cit).
  \begin{enumerate}[(i)]
    \item Assume that the exchange transformations
    \[{\rm id}^*b^!\stackrel{Ex}\longrightarrow c^!v^*,\quad {\rm id}^*b'^!\stackrel{Ex}\longrightarrow c'^!v'^*,\quad {\rm id}^*b''^!\stackrel{Ex}\longrightarrow c''^!v''^*\]
    are defined and isomorphisms. Assume {\it further} that the exchange transformation
    \[v'^*\rho^!\stackrel{Ex}\longrightarrow \psi^!v^*\]
    is defined. Then the commutativity of (\ref{4.3.2.1}) is equivalent to the commutativity of the big outside diagram of the diagram
    \[\begin{tikzcd}
      b'^!q_2'^*b''^!q_2''^*\arrow[d,"Ex"]\arrow[r,"Ex"]&b'^!\rho^!\rho'^*q_2''^*\arrow[d,"Ex"]\arrow[r,"\sim"]&b^!q_2^*\arrow[dd,"Ex"]\\
      c'^!v'^*q_2'^*b''^!q_2''^*\arrow[r,"Ex"]\arrow[d,"Ex"]&c'^!v'^*\rho^!\rho'^*q_2''^*\arrow[d,"Ex"]\\
      c'^!v'^*q_2'^*c''^!v''^*q_2''^*\arrow[d,"\sim"]&c'^!\psi^!v^*\rho'^*q_2''^*\arrow[r,"\sim"]\arrow[d,"\sim"]&c^!v^*q_2^*\arrow[d,"\sim"]\\
      c'^!r_2'^*c''^!r_2''^*\arrow[r,"Ex"]&c'^!\psi^!\psi'^*r_2''^*\arrow[r,"\sim"]&c^!r_2^*
    \end{tikzcd}\]
    of functors. It is true since each small diagram commutes.
    \item Assume that the units
    \[{\rm id}\stackrel{ad}\longrightarrow v_*v^*,\quad {\rm id} \stackrel{ad}\longrightarrow v_*'v'^*,\quad {\rm id}\stackrel{ad}\longrightarrow v_*''v''^*\]
    are isomorphisms. Then the commutativity of (\ref{4.3.2.1}) is equivalent to the commutativity of the big outside diagram of the diagram
    \[\begin{tikzpicture}[baseline= (a).base]
    \node[scale=.78] (a) at (0,0)
    {\begin{tikzcd}
      c'^!r_2'^*c''^!r_2''^*\arrow[r,"ad"]\arrow[d,"ad"]&c'^!\psi^!\psi_*r_2'^*c''^!r_2''^*\arrow[r,"Ex^{-1}"]\arrow[d,"ad"]&c'^!\psi^!\psi'^*c_*''c''^!r_2''^*\arrow[r,"ad'"]\arrow[d,"ad"]
      &c'^!\psi^!\psi'^*r_2''^*\arrow[d,"ad"] \arrow[r,"\sim"]&c^!r_2^*\arrow[d,"ad"]\\
      c'^!v'^!v_*'r_2'^*c''^!r_*''^*\arrow[d,"\sim"]&c'^!\psi^!v^!v_*\psi_*r_2'^*c''^!r_2''^*\arrow[r,"Ex^{-1}"]\arrow[d,"\sim",leftarrow]&c'^!\psi^!v^!v_*\psi'^*c_*''c''^!r_2''^*
      \arrow[r,"ad'"] \arrow[d,"Ex",leftarrow]&c'^!\psi^!v^!v_*\psi^*r_2''^* \arrow[r,"\sim"]\arrow[d,"Ex",leftarrow]&c^!v^!v_*r_2^*\arrow[dddd,"\sim"]\\
      b'^!v_*'v'^*q_2'^*c''^!r_2''^*\arrow[r,"ad"]\arrow[d,"ad^{-1}"]&b'^!\rho^!\rho_*v_*'v'^*q_2'^*c''^!r_2''^*\arrow[d,"ad^{-1}"]&c'^!\psi^!v^!\rho'^*v_*''c_*''c''^!r_2''^*
      \arrow[r,"ad'"]\arrow[d,"\sim",leftarrow]&c'^!\psi^!v^!\rho'^*v_*''r_2''^*\arrow[d,"\sim",leftarrow]\\
      b'^!q_2'^*c''^!r_2''^*\arrow[r,"ad"]\arrow[d,"ad"]&b'^!\rho^!\rho_*q_2'^*c''^!r_2''^*\arrow[r,"Ex^{-1}"]\arrow[d,"ad"]&b'^!\rho^!\rho'^*b_*''c''^!r_2''^*\arrow[d,"ad"]
      &b'^!\rho^!\rho'^*v_*''v''^*q_2''^*\arrow[ddd,"ad^{-1}"]\\
      b'^!q_2'^*c''^!v''^!v_*''r_2''^*\arrow[r,"ad"]\arrow[d,"\sim"]&b'^!\rho^!\rho_*q_2'^*c''^!v''^!v_*''r_2''^*\arrow[r,"Ex^{-1}"]\arrow[d,"\sim"]
      &b'^!\rho^!\rho'^*b_*''c''^!v''^!v_*''r_2''^*\arrow[d,"\sim"]\\
      b'^!q_2'^*b''^!v_*''v''^*q_2''^*\arrow[r,"ad"]\arrow[d,"ad^{-1}"]&b'^!\rho^!\rho_*q_2'^*b''^!v_*''v''^*q_2''^*\arrow[r,"Ex^{-1}"]\arrow[d,"ad^{-1}"]
      &b'^!\rho^!\rho'^*b_*''b''^!v_*''b''^*q_2''^*\arrow[d,"ad^{-1}"]\arrow[ruu,"ad'"']&&b^!v_*v^*q_2^*\arrow[d,"ad^{-1}"]\\
      b'^!q_2'^*b''^!q_2''^*\arrow[r,"ad"]&b'^!\rho^!\rho_*q_2'^*b''^!q_2''^*\arrow[r,"Ex^{-1}"]&b'^!\rho^!\rho'^*b_*''b''^!q_2''^*\arrow[r,"ad'"]&b'^!\rho^!\rho'^*q_2''^*\arrow[r,"\sim"]&
      b^!q_2^*
    \end{tikzcd}};
    \end{tikzpicture}\]
    of functors. It is true since each small diagram commutes.
  \end{enumerate}
\end{none}
\subsection{Purity transformations}
\begin{none}\label{4.4.2}
  In this subsection, we will introduce purity transformations and their functorial properties.
\end{none}
\begin{df}\label{4.4.5}
  Let $f:X\rightarrow S$ be an exact log smooth morphism of $\mathscr{S}$-schemes. Assume that ($f$ is proper) or ($f$ is separated and $\mathscr{T}$ satisfies (Supp)). We also assume that we have a commutative diagram
  \[\begin{tikzcd}
    &D\arrow[d,"u"]\arrow[rd,"q_2"]\\
    X\arrow[ru,"b"]\arrow[r,"a"']&X\times_S X\arrow[r,"p_2"']&X
  \end{tikzcd}\]
  of $\mathscr{S}$-schemes where
  \begin{enumerate}
    \item $b$ is a strict regular embedding,
    \item $u$ satisfies one of the conditions (i)--(iii) of (\ref{4.2.2}).
  \end{enumerate}
  Then we denote by
  \[\mathfrak{q}_{f,D}^n:\Omega_{f,D}^n f^!\longrightarrow f^*,\quad \mathfrak{q}_{f,D}^o:\Omega_f^o f^!\longrightarrow f^*\]
  the compositions
  \[f^*\stackrel{\mathfrak{q}_f}\longrightarrow \Omega_f f^!\stackrel{T_D}\longrightarrow \Omega_{f,D}f^!\stackrel{T^d}\longrightarrow \Omega_{f,D}^d f^!\stackrel{T^n}\longrightarrow \Omega_{f,D}^n f^!,\]
  \[f^*\stackrel{\mathfrak{q}_f}\longrightarrow \Omega_f f^!\stackrel{T_D}\longrightarrow \Omega_{f,D}f^!\stackrel{T^d}\longrightarrow \Omega_{f,D}^d f^!\stackrel{T^n}\longrightarrow \Omega_{f,D}^n f^!\stackrel{T^o}\longrightarrow \Omega_{f,D}^o f^!\]
  respectively. Their left adjoints are denoted by
  \[\mathfrak{p}_{f,D}^n:f_\sharp \longrightarrow f_!\Sigma_{f,D}^n,\quad \mathfrak{p}_{f,D}^o:f_\sharp \longrightarrow f_!\Sigma_{f,D}^o\]
  respectively.
\end{df}
\begin{none}\label{4.4.3}
  Let $h:X\rightarrow Y$ and $g:Y\rightarrow S$ be separated $\mathscr{P}$-morphisms of $\mathscr{S}$-schemes, and we put $f=gh$. Then we have the commutative diagram
  \[\begin{tikzcd}
    X\arrow[d,"a'"]\arrow[rd,"a"]\\
    X\times_Y X\arrow[d,"p_2"]\arrow[r,"\varphi"]&X\times_S X\arrow[d,"\varphi'"]\arrow[rd,"p_2"]\\
    X\arrow[r,"a''"']&Y\times_S X\arrow[r,"p_2''"']&X
  \end{tikzcd}\]
  of $\mathscr{S}$-schemes, and the exchange transformation
  \[p_2'^*a''^!\stackrel{Ex}\longrightarrow \varphi^!\varphi'^*\]
  is defined by (eSm-BC). Thus by (\ref{4.3.1}), we have the composition transformation
  \[C:\Omega_h\Omega_{g,f}\rightarrow \Omega_f.\]
  Then the diagram
  \[\begin{tikzcd}
    \Omega_hh^!\Omega_g g^!\arrow[r,"\mathfrak{q}_h\mathfrak{q}_g"]\arrow[d,"Ex",leftarrow]&h^*g^*\arrow[ddd,"\sim"]\\
    \Omega_h\Omega_{g,f}h^!g^!\arrow[d,"\sim"]\\
    \Omega_h\Omega_{g,f}f^!\arrow[d,"C"]\\
    \Omega_ff^!\arrow[r,"\mathfrak{q}_f"]&f^*
  \end{tikzcd}\]
  of functors commutes by the proof of \cite[1.7.3]{Ayo07}.
\end{none}
\begin{none}\label{4.4.4}
  Let $f:X\rightarrow S$ be a separated and vertical $\mathscr{P}$-morphism of $\mathscr{S}$-schemes, and let $i:S\rightarrow X$ be its section. Then we have a commutative diagram
  \[\begin{tikzcd}
    S\arrow[r,"i"]\arrow[d,"i"]&X\arrow[r,"f"]\arrow[d,"i'"]&S\arrow[d,"i"]\\
    X\arrow[r,"a"]&X\times_S X\arrow[d,"p_1"]\arrow[r,"p_2"]&X\arrow[d,"f"]\\
    &X\arrow[r,"f"]&S
  \end{tikzcd}\]
  of $\mathscr{S}$-schemes where
  \begin{enumerate}[(i)]
    \item $a$ denotes the diagonal morphism, and $p_2$ denotes the second projection,
    \item each square is Cartesian.
  \end{enumerate}
  Consider the diagram
  \[\begin{tikzcd}
    \Omega_{f,{\rm id}}i^!f^!\arrow[r,"\sim"]\arrow[d,"Ex"]&\Omega_{f,{\rm id}}\arrow[d,equal]\\
    i^!\Omega_ff^!\arrow[r,"\mathfrak{q}_f"]&i^!f^*
  \end{tikzcd}\]
  of functors. It commutes since the big outside diagram of the diagram
  \[\begin{tikzcd}
  i^!f^*i^!f^!\arrow[r,"\sim"]\arrow[d,"Ex"]&i^!f^*(fi)^!\arrow[d,"Ex"]\arrow[r,"\sim"]&i^!f^*\arrow[dd,equal]\\
  i^!i'^!p_2^*f^!\arrow[d,"\sim"]&i^!(p_1a)^!f^*\arrow[ru,"\sim"]\arrow[d,"\sim"]\\
  i^!a^!p_2^*f^!\arrow[r,"Ex"]&i^!a^!p_1^!f^*\arrow[r,"\sim"]&i^!f^*
  \end{tikzcd}\]
  of $\mathscr{S}$-schemes commutes.
\end{none}
\begin{none}\label{4.4.6}
  Consider a Cartesian diagram
  \[\begin{tikzcd}
    X'\arrow[d,"f'"]\arrow[r,"g'"]&X\arrow[d,"f"]\\
    S'\arrow[r,"g"]&S
  \end{tikzcd}\]
  of $\mathscr{S}$-schemes where $f$ is an exact log smooth morphism. Assume that ($f$ is proper) or ($f$ is separated and (Supp) is satisfied). Then we have the commutative diagram
  \[\begin{tikzcd}
    X'\arrow[r,"a'"]\arrow[d,"g'"]&X'\times_{S'} X'\arrow[d,"g''"]\arrow[r,"p_2'"]&X'\arrow[d,"g'"]\\
    X\arrow[r,"a"]&X\times_S X\arrow[r,"p_2"]&X
  \end{tikzcd}\]
  of $\mathscr{S}$-schemes where each square is Cartesian and $p_2$ denotes the second projection. We also denote by $p_1$ (resp.\ $p_1'$) the first projection $X\times_S X\rightarrow X$ (resp.\ $X'\times_{S'}X'\rightarrow X'$). In this setting, we will show that the diagram
  \[\begin{tikzcd}
    f_\sharp'g'^*\arrow[dd,"Ex"]\arrow[r,"\mathfrak{p}_{f'}"]&f_!'\Sigma_{f'} g'^!\arrow[d,"Ex"]\\
    &f_!'g'^*\Sigma_f\arrow[d,leftarrow,"Ex"]\\
    g^*f_\sharp\arrow[r,"\mathfrak{p}_f"]&g^*f_!\Sigma_f
  \end{tikzcd}\]
  of functors commutes. It is the big outside diagram of the diagram
  \[\begin{tikzcd}
    f_\sharp'g'^*\arrow[r,"\sim"]\arrow[ddd,"Ex"]\arrow[rd,"\sim"]&f_\sharp'p_{1!}'a_*'g'^*\arrow[rr,"Ex"]\arrow[rd,"Ex",leftarrow]&&f_!'p_{2\sharp}'a_*'g'^*\arrow[d,"Ex",leftarrow]\\
    &f_\sharp'g'^*p_{1!}a_*\arrow[r,"Ex"]\arrow[dd,"Ex"]&f_\sharp'p_{1!}'g''^*a_*\arrow[r,"Ex"]&f_!'p_{2\sharp}'g''^*a_*\arrow[d,"Ex"]\\
    &&&f_!'g'^*p_{2\sharp}a_*\arrow[d,"Ex",leftarrow]\\
    g^*f_\sharp\arrow[r,"\sim"]&g^*f_\sharp p_{1!}a_*\arrow[rr,"Ex"]&&g^*f_!p_{2\sharp}a_*
  \end{tikzcd}\]
  of functors. Thus the assertion follows from the fact that each small diagram commutes.
\end{none}
\section{Support property}
\begin{none}
  Throughout this section, fix a $eSm$-premotivic triangulated category satisfying (Adj), (Htp--1), (Htp--2), (Htp--3), (Loc), (s\'et-Sep), and (Stab) that is generated by $eSm$ and $\tau$. In \S\ref{section5.6}, we assume also the axiom (ii) of (\ref{2.9.1}) and (Htp--4).
\end{none}
\subsection{Elementary properties of the support property}
\begin{none}\label{5.1.0}
  We will define the universal and semi-universal support property for not necessarily proper morphisms, and we will show that our definition coincides with the usual definition for proper morphisms in (\ref{5.1.3}). Then we will study elementary properties of the universal support property. Recall from (\ref{2.5.8}) that any proper strict morphism of $\mathscr{S}$-schemes satisfies the support property.
\end{none}
\begin{prop}\label{5.1.1}
  Let $g:Y\rightarrow X$ and $f:X\rightarrow S$ be proper morphisms of $\mathscr{S}$-schemes. If $f$ and $g$ satisfy the support property, then $fg$ also satisfies the support property.
\end{prop}
\begin{proof}
  Consider a commutative diagram
  \[\begin{tikzcd}
    W\arrow[r,"g'"]\arrow[d,"j''"]&V\arrow[d,"j'"]\arrow[r,"f'"]&U\arrow[d,"j"]\\
    Y\arrow[r,"g"]&X\arrow[r,"f"]&S
  \end{tikzcd}\]
  of $\mathscr{S}$-schemes where $j$ is an open immersion and each square is Cartesian. Then the conclusion follows from the commutativity of the diagram
  \[\begin{tikzcd}
    j_\sharp f_*'g_*'\arrow[d,"\sim"]\arrow[r,"Ex"]&f_*j_\sharp'g_*\arrow[r,"Ex"]&f_*g_*j_\sharp''\arrow[d]\\
    j_\sharp(f'g')_*\arrow[rr,"Ex"]&&(fg)_*j_\sharp''
  \end{tikzcd}\]
  of functors.
\end{proof}
\begin{df}\label{5.1.2}
  Let $f:X\rightarrow S$ be a morphism of $\mathscr{S}$-schemes. We say that $f$ satisfies the {\it universal} (resp.\ semi-universal) support property if any pullback of the {\it proper} morphism $X\rightarrow \underline{X}\times_{\underline{S}}S$ (resp.\ any pullback of the {\it proper} morphism $X\rightarrow \underline{X}\times_{\underline{S}}S$ via a strict morphism $Y\rightarrow \underline{X}\times_{\underline{S}}S$) satisfies the support property.
\end{df}
\begin{prop}\label{5.1.3}
  Let $f:X\rightarrow S$ be a {\it proper} morphism of $\mathscr{S}$-schemes. Then $f$ satisfies the {\it universal} (resp.\ semi-universal) support property if and only if any pullback of $f$ (resp.\ any pullback of $f$ via a strict morphism) satisfies the support property.
\end{prop}
\begin{proof}
  If $f$ satisfies the universal (resp.\ semi-universal) support property, let $f':X'\rightarrow S'$ be a pullback of $f$ via a morphism (resp.\ strict morphism) $S'\rightarrow S$. Consider the commutative diagram
  \[\begin{tikzcd}
    X'\arrow[r,"u'"]\arrow[d,"g'"]&\underline{X}\times_{\underline{S}}S'\arrow[d,"g''"]\arrow[r,"v'"]&S'\arrow[d,"g"]\\
    X\arrow[r,"u"]&\underline{X}\times_{\underline{S}}S\arrow[r,"v"]&S
  \end{tikzcd}\]
  of $\mathscr{S}$-schemes. By assumption, $u'$ satisfies the support property. Since $v'$ is strict proper, it satisfies the support property by (\ref{5.1.0}). Thus $f=v'u'$ satisfies the support property by (\ref{5.1.1}).

  Conversely, if the support property is satisfied for any pullback of $f$ (resp.\ for any pullback of $f$ via a strict morphism), we put $T=\underline{X}\times_{\underline{S}}S$, and let $p':X\times_T T'\rightarrow T'$ be a pullback of $X\rightarrow T$ via a morphism (resp.\ strict morphism) $T'\rightarrow T$. The morphism $p'$ has the factorization
  \[X\times_T T'\stackrel{r}\rightarrow X\times_S T'\stackrel{q}\rightarrow T'\]
  where $r$ denotes the morphism induced by $T\rightarrow S$, and $q$ denotes the projection. Then the morphism $r$ is a closed immersion since it is a pullback of the diagonal morphism $T\rightarrow T\times_S T$, so $r$ satisfies the support property, and the morphism $q$
  satisfies the support property since it is a pullback of $f$ via the morphism (resp.\ strict morphism) $T'\rightarrow T$. Thus by (\ref{5.1.1}), the morphism $p'=qr$ satisfies the support property.
\end{proof}
\begin{prop}\label{5.1.7}
  Let $f:X\rightarrow S$ be a morphism of $\mathscr{S}$-schemes. Then the question that $f$ satisfies the universal (resp.\ semi-universal) support property is strict \'etale local on $X$.
\end{prop}
\begin{proof}
  Replacing $f$ by $X\rightarrow \underline{X}\times_{\underline{S}}S$, we may assume that $\underline{f}$ is an isomorphism. Then the question is strict \'etale local on $S$ by (s\'et-Sep), which implies that the question is strict \'etale local on $X$.
\end{proof}
\begin{prop}\label{5.1.4}
  Let $g:Y\rightarrow X$ and $f:X\rightarrow S$ be morphisms of $\mathscr{S}$-schemes.
  \begin{enumerate}[(1)]
    \item If $f$ is strict, then $f$ satisfies the universal support property.
    \item If $f$ and $g$ satisfy the universal (resp.\ semi-universal) support property, then $fg$ also satisfies the universal (resp.\ semi-universal) support property.
    \item Assume that $g$ is proper and that for any pullback $h$ of $g$ via a strict morphism, the unit
    \[{\rm id}\stackrel{ad}\longrightarrow h_*h^*\]
    is an isomorphism. If $fg$ satisfy the semi-universal support property, then $f$ satisfies the semi-universal support property.
  \end{enumerate}
\end{prop}
\begin{proof}
  (1) It is true since the morphism $X\rightarrow \underline{X}\times_{\underline{S}}S$ is an isomorphism when $f$ is strict.\\[4pt]
  (2) The induced morphism $p:Y\rightarrow \underline{Y}\times_{\underline{S}}S$ has the factorization
  \[Y\stackrel{r}\longrightarrow \underline{Y}\times_{\underline{X}}X\stackrel{q}\longrightarrow \underline{Y}\times_{\underline{S}}S\]
  where $r$ denotes the morphism induced by $Y\rightarrow \underline{Y}$ and $Y\rightarrow X$, and $q$ denotes the morphism induced by $X\rightarrow S$.
  Any pullback of $r$ (resp.\ any pullback of $r$ via a strict morphism) satisfies the support property by assumption, and any pullback of $q$ (resp.\ any pullback of $q$ via a strict morphism) satisfies the support property by assumption since $q$ is a pullback of the morphism
  $X\rightarrow\underline{X}\times_{\underline{S}}S$. Thus by (\ref{5.1.1}), any pullback of $p$ (resp.\ any pullback of $p$ via a strict morphism) satisfies the support property, i.e., $fg$ satisfies the universal support property.\\[4pt]
  (3) Replacing $f$ by $X\rightarrow \underline{X}\times_{\underline{S}}S$, we may assume that $\underline{f}$ is an isomorphism. The question is also preserved by any base change via a strict morphism to $S$, so we only need to prove that $f$ satisfies the support property. Consider the commutative diagram
  \[\begin{tikzcd}
    W\arrow[d,"j''"]\arrow[r,"g'"]&V\arrow[d,"j'"]\arrow[r,"f'"]&U\arrow[d,"j"]\\
    Y\arrow[r,"g"]&X\arrow[r,"f"]&S
  \end{tikzcd}\]
  of $\mathscr{S}$-schemes where $j$ is an open immersion and each square is Cartesian. We have the commutative diagram
  \[\begin{tikzcd}
    j_\sharp f_*'\arrow[d,"ad"]\arrow[rrrr,"Ex"]&&&&f_*j_\sharp'\arrow[d,"ad"]\\
    j_\sharp f_*'g_*'g'^*\arrow[r,"\sim"]&j_\sharp (f'g')_*g'^*\arrow[r,"Ex"]&(fg)_*j_\sharp''g'^*\arrow[r,"Ex"]&(fg)_*g^*j_\sharp\arrow[r,"\sim"]&f_*g_*g^*j_\sharp'
  \end{tikzcd}\]
  of functors, and we want to show that the upper horizontal arrow is an isomorphism. The right vertical arrow is isomorphisms by assumption, and the third bottom horizontal arrow is an isomorphism by ($eSm$-BC). Moreover, the morphism
  $f'g':W\rightarrow U$ satisfies the support property by assumption, so the second bottom horizontal arrow is an isomorphism. Thus to show that the upper horizontal arrow is an isomorphism, it suffices to show the left vertical arrow is an isomorphism. To show this, we will show that the unit
  \[{\rm id}\stackrel{ad}\longrightarrow g_*'g'^*\]
  is an isomorphism.

  Consider the commutative diagram
  \[\begin{tikzcd}
    {\rm id}\arrow[rrr,"ad"]\arrow[d,"ad"]&&&g_*'g'^*\arrow[d,"ad"]\\
    j'^*j_\sharp'\arrow[r,"ad"]&j'^*g_*g^*j_\sharp'\arrow[r,"Ex"]&g_*'j''^*g^*j_\sharp'\arrow[r,"Ex"]&g_*j''^*j_\sharp''g'^*
  \end{tikzcd}\]
  of functors. The vertical arrows are isomorphisms by (\ref{2.2.1}), and the lower left horizontal arrow is an isomorphism by assumption. Moreover, the lower middle and right horizontal arrows are isomorphisms by ($eSm$-BC). Thus the upper horizontal arrow is an isomorphism.
\end{proof}
\begin{none}\label{5.1.5}
  Let $g:S'\rightarrow S$ be a morphism of $\mathscr{S}$-schemes. We will sometimes assume that
  \begin{enumerate}[(i)]
    \item for any pullback $g'$ of $g$, $g'^*$ is conservative,
    \item for any commutative diagram
    \[\begin{tikzcd}
      Y'\arrow[r,"g''"]\arrow[d,"h'"]&Y\arrow[d,"h"]\\
      X'\arrow[r,"g'"]\arrow[d,"f'"]&X\arrow[d,"f"]\\
      S'\arrow[r,"g"]&S
    \end{tikzcd}\]
    of $\mathscr{S}$-schemes such that each square is Cartesian, the exchange transformation
    \[g'^*h_*\stackrel{Ex}\longrightarrow h_*'g''^*\]
    is an isomorphism.
  \end{enumerate}
\end{none}
\begin{prop}\label{5.1.6}
  Consider a Cartesian diagram
  \[\begin{tikzcd}
    X'\arrow[r,"g'"]\arrow[d,"f'"]&X\arrow[d,"f"]\\
    S'\arrow[r,"g"]&S
  \end{tikzcd}\]
  of $\mathscr{S}$-schemes. Assume that $g$ satisfies the conditions of (\ref{5.1.5}). If $f'$ satisfies the universal support property, then $f$ satisfies the universal support property.
\end{prop}
\begin{proof}
  Replacing $f$ by $X\rightarrow \underline{X}\times_{\underline{S}}S$, we may assume that $\underline{f}$ is an isomorphism. Then $f'$ is proper, so it satisfies the support property by (\ref{5.1.3}). Since the question is preserved by any base change of $S$, it suffices to show that $f$ satisfies the support property.

  Consider a commutative diagram
  \[\begin{tikzcd}
    V'\arrow[dd,"p'"]\arrow[rr,"q'"]\arrow[rd,"u'"]&&V\arrow[dd,"p",near start]\arrow[rd,"u"]\\
    &X'\arrow[rr,"g'",near start,crossing over]&&X\arrow[dd,"f"]\\
    U'\arrow[rr,"q",near start]\arrow[rd,"j'"]&&U\arrow[rd,"j"']\\
    &S'\arrow[rr,"g"']\arrow[uu,leftarrow,crossing over,"f'"',near end]&&S
  \end{tikzcd}\]
  where $j$ is an open immersion and each small square is Cartesian. We want to show that the natural transformation
  \[j_\sharp p_*\stackrel{Ex}\longrightarrow f_*u_\sharp\]
  is an isomorphism. Consider the commutative diagram
  \[\begin{tikzcd}
    j_\sharp'q^*p_*\arrow[d,"Ex"]\arrow[r,"Ex"]&j_\sharp'p_*'q'^*\arrow[r,"Ex"]&f_*'u_\sharp'q'^*\arrow[d,"Ex"]\\
    g^*j_\sharp p_*\arrow[r,"Ex"]&g^*f_*u_\sharp\arrow[r,"Ex"]&f_*'g'^*u_\sharp
  \end{tikzcd}\]
  of functors. The vertical arrows are isomorphisms by ($eSm$-BC), and the upper left and lower right horizontal arrows are isomorphisms by the assumption that $g$
  satisfies the conditions of (\ref{5.1.5}). Moreover, the upper right horizontal arrow is an isomorphism since $f'$ satisfies the support property. Thus the lower left horizontal arrow
  is an isomorphism. Then the conservativity of $g^*$ implies the support property for $f$.
\end{proof}
\subsection{Conservativity}
\begin{lemma}\label{5.3.2}
  Let $F:\mathcal{C}\rightarrow \mathcal{C}'$ and $G:\mathcal{C}'\rightarrow \mathcal{C}''$ be functors of categories. Assume that for any objects $X$ and $Y$ of $\mathcal{C}$, the
  function
  \[\tau_{XY}:{\rm Hom}_{\mathcal{C}'}(FX,FY)\rightarrow {\rm Hom}_{\mathcal{C''}}(GFX,GFY)\]
  defined by
  \[f\mapsto Gf\]
  is bijective. If $F$ is conservative, then $GF$ is also conservative.
\end{lemma}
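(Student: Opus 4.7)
The plan is to reduce conservativity of $GF$ to conservativity of $F$ by exploiting the full faithfulness-type hypothesis to lift a would-be inverse from $\mathcal{C}''$ back to $\mathcal{C}'$. Concretely, suppose $h:X\to Y$ is a morphism in $\mathcal{C}$ such that $GF(h)$ is an isomorphism in $\mathcal{C}''$. The goal is to show that $h$ is an isomorphism in $\mathcal{C}$; by the conservativity of $F$, it suffices to show that $F(h)$ is an isomorphism in $\mathcal{C}'$.

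First I would let $k:GFY\to GFX$ denote the inverse of $GF(h)$ in $\mathcal{C}''$. Since $\tau_{YX}$ is bijective (in particular surjective), I can choose a unique morphism $g:FY\to FX$ in $\mathcal{C}'$ with $G(g)=k$. The candidate inverse of $F(h)$ is then $g$.

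Next, I verify that $g\circ F(h)=\mathrm{id}_{FX}$ and $F(h)\circ g=\mathrm{id}_{FY}$. For the first, compute
\[
G(g\circ F(h)) \;=\; G(g)\circ GF(h) \;=\; k\circ GF(h) \;=\; \mathrm{id}_{GFX} \;=\; G(\mathrm{id}_{FX}).
\]
Since $\tau_{XX}$ is injective, this forces $g\circ F(h)=\mathrm{id}_{FX}$. The identity $F(h)\circ g=\mathrm{id}_{FY}$ follows by the same argument using injectivity of $\tau_{YY}$. Hence $F(h)$ is an isomorphism with inverse $g$, and conservativity of $F$ then yields that $h$ is an isomorphism, as required.

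There is no real obstacle here: the argument is a direct application of the bijectivity hypothesis on hom-sets (used once for surjectivity to produce $g$, and twice for injectivity to check that $g$ is a two-sided inverse), combined with the conservativity of $F$. The only thing worth noting is that the hypothesis is strictly weaker than asking $G$ to be faithful on all of $\mathcal{C}'$: we only need $G$ to be fully faithful when restricted to the essential image of $F$, which is exactly what the bijectivity of $\tau_{XY}$ provides.
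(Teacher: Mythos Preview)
Your proof is correct and follows essentially the same approach as the paper: lift the inverse of $GF(h)$ to a morphism $g$ in $\mathcal{C}'$ via surjectivity of $\tau_{YX}$, check it is a two-sided inverse of $F(h)$ via injectivity of $\tau_{XX}$ and $\tau_{YY}$, and conclude by conservativity of $F$. Your write-up is in fact slightly more explicit than the paper's in verifying the inverse identities.
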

\begin{proof}
  Let $X$ and $Y$ be objects of $\mathcal{C}$, and let $\alpha:X\rightarrow Y$ be a morphism in $\mathcal{C}$ such that $GF\alpha$ is an isomorphism. We put $\beta=GF\alpha$. Choose
  the inverse of $\phi:GFY\rightarrow GFX$ of $\alpha$. Then
  \[{\rm id}=\tau_{XX}^{-1}({\rm id})=\tau_{XX}^{-1}(\phi\circ \beta)=\tau_{YX}^{-1}(\phi)\circ \tau_{XY}^{-1}(\beta),\]
  so $F\alpha=\tau_{XY}^{-1}(\beta)$ has a left inverse. Similarly, $F\alpha$ has a right inverse. Thus $F\alpha$ is an isomorphism. Then the conservativity of $F$ implies that
  $\alpha$ is an isomorphism.
\end{proof}
\begin{none}\label{5.3.3}
  Let $f:X\rightarrow S$ be a Kummer log smooth morphism of $\mathscr{S}$-schemes with an fs chart $\theta:P\rightarrow Q$ of Kummer log smooth type. We will construct a homomorphism $\eta:P\rightarrow P'$ of Kummer log smooth over $S$ type with the following properties.
  \begin{enumerate}[(i)]
    \item Let $g:S'\rightarrow S$ denotes the projection $S\times_{\mathbb{A}_P}\mathbb{A}_{P'}\rightarrow S$. For any pullback $u$ of $g$, $u^*$ is conservative.
    \item In the Cartesian diagram
    \[\begin{tikzcd}
      X'\arrow[r,"g'"]\arrow[d,"f'"]&X\arrow[d,"f"]\\
      S'\arrow[r,"g"]&S
    \end{tikzcd}\]
    of $\mathscr{S}$-schemes, $f'$ is strict smooth.
  \end{enumerate}
  This will be used in the proof of (\ref{5.4.1}).

  Consider the homomorphisms
  \[\lambda:P\rightarrow P\oplus Q,\quad \eta:P\rightarrow P^{\rm gp}\oplus Q\]
  defined by $p\mapsto (p,\theta(p))$. Using these homomorphism, we construct the fiber products
  \[S''=S\times_{\mathbb{A}_P}\mathbb{A}_{P\oplus Q},\quad S'=S\times_{\mathbb{A}_P}\mathbb{A}_{P^{\rm gp}\oplus Q}.\]
  Consider the commutative diagram
  \[\begin{tikzcd}
    S\arrow[r,"s"]\arrow[rd,"{\rm id}"']&S''\arrow[d,"h"]\arrow[r,leftarrow,"j"]&S'\arrow[ld,"g"]\\
    &S
  \end{tikzcd}\]
  of $\mathscr{S}$-schemes where $s$ denotes the morphism constructed by the homomorphism $P\oplus Q\rightarrow P$ defined by $(p,q)\mapsto p$, $h$ denotes the projection, and $j$ denotes the open immersion induced by the inclusion $P\oplus Q\rightarrow P^{\rm gp}\oplus Q$.

  From $s^*h^*\cong {\rm id}$, we see that $h^*$ is conservative. We will show that $g^*$ is also conservative. By (Htp--3), the composition
  \[h_\sharp h^*\stackrel{\sim}\longrightarrow g_\sharp j_\sharp j^*g^*\stackrel{ad'}\longrightarrow g_\sharp g^*\]
  is an isomorphism, so for $F,G\in \mathscr{T}(S)$, the homomorphism
  \[{\rm Hom}_{\mathscr{T}(S'')}(h^*F,h^*G)\rightarrow {\rm Hom}_{\mathscr{T}(S')}(g^*F,g^*G)\tag{*}\]
  is an isomorphism. Then (\ref{5.3.2}) implies that $g^*$ is conservative. The same proof can be applied to show that for any pullback $u$ of $g$, $u^*$ is conservative.

  The remaining is to show that $f'$ is strict. The homomorphism $\theta:P\rightarrow Q$ factors through $P'=P^{\rm gp}\oplus Q$ via $p\mapsto (p,\theta(p))$ and $(p,q)\mapsto q$, so
  the morphism $f:X\rightarrow S$ factors through $S'$. Consider the commutative diagram
  \[\begin{tikzcd}
    X'\arrow[d]\arrow[r,"g'"]&X\arrow[d]\\
    X''\arrow[d,"p_1"]\arrow[r]&S'\arrow[d,"g"]\\
    S'\arrow[r,"g"]&S
  \end{tikzcd}\]
  of $\mathscr{S}$-schemes where each square is Cartesian. The morphism $X\rightarrow S'$ is strict, so to show that $f'$ is strict, it suffices to show that $p_1$ is strict. This
  follows from (\ref{D18}) below.
\end{none}
\begin{lemma}\label{D18}
  Let $\theta:P\rightarrow Q$ be a Kummer homomorphism of fs monoids. Then the summation homomorphism $\eta:Q\oplus_P Q\rightarrow Q$ is strict.
\end{lemma}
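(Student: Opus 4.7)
The plan is to show that the induced map $\overline{\eta}\colon \overline{Q\oplus_P Q}\to \overline{Q}$ on sharp monoids is an isomorphism. The first-coordinate inclusion $\iota\colon Q\to Q\oplus_P Q$, $q\mapsto \overline{(q,0)}$, is a section of $\eta$, so $\overline{\iota}$ is a section of $\overline{\eta}$. Hence it suffices to verify that $\overline{\iota\eta}$ is the identity, i.e., that for every $\overline{(q_1,q_2)}\in Q\oplus_P Q$ the difference $\overline{(q_1+q_2,0)}-\overline{(q_1,q_2)}$ lies in the unit group $(Q\oplus_P Q)^*$. Rewriting this difference as $\overline{(q_2,-q_2)}$ in the group completion, the claim reduces to the following: for every $q\in Q$, the element $(q,-q)\in (Q\oplus_P Q)^{\rm gp}$ lies in $(Q\oplus_P Q)^*$.

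To prove this reduced claim, I would invoke the Kummer hypothesis on $\theta$. Fix $q\in Q$ and pick $n\in \mathbb{N}^+$ and $p\in P$ with $nq = \theta(p)$. Then in $(Q\oplus_P Q)^{\rm gp}$ one has
\[
n(q,-q) \;=\; (\theta(p),0) - (0,\theta(p)),
\]
and by the defining relation of the pushout along $P$, both $(\theta(p),0)$ and $(0,\theta(p))$ equal the common image of $p$; hence $n(q,-q) = 0$.

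It remains to pass from being torsion in the group to being a unit of the monoid, and here I would use saturation. Since $Q\oplus_P Q$ is fs by our convention on fiber coproducts, the vanishing of $n(q,-q)$ and $n(-q,q)$ in $Q\oplus_P Q$ together with saturation force both $(q,-q)$ and its inverse $(-q,q)$ to lie in $Q\oplus_P Q$; hence $(q,-q)\in (Q\oplus_P Q)^*$, as required. I do not anticipate any serious obstacle: the entire argument is formal once one identifies the two elements $(\theta(p),0)$ and $(0,\theta(p))$ of the fs pushout, which is immediate from the construction of $Q\oplus_P Q$ as the saturation of the integral pushout, and then uses saturation a second time to upgrade torsion in the group completion to invertibility in the monoid.
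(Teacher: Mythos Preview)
Your proof is correct and follows essentially the same approach as the paper: both arguments reduce to showing that $(q,-q)\in (Q\oplus_P Q)^*$ for every $q\in Q$, and both establish this by using the Kummer hypothesis to see that $n(q,-q)=0$ and then invoking saturation. Your framing via the section $\iota$ and the identity $\overline{\iota\eta}=\mathrm{id}$ is a mild repackaging of the paper's surjectivity/injectivity argument, but the substance is the same.
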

\begin{proof}
  The homomorphism $\overline{\eta}:\overline{Q\oplus_P Q}\oplus \overline{Q}$ is surjective, so the remaining is to show that $\overline{d}$ is injective. Choose $n\in \mathbb{N}^+$ such that $nq\subset \theta(P)$. For any $q\in Q$, $n(q,-q)=(nq,0)+(0,-nq)=0$ because $nq\in \theta(P)$. Thus $(q,-q)\in (Q\oplus_P Q)^*$ since $Q\oplus_P Q$ is saturated. Let $Q'$ denote the submonoid of $Q\oplus_P Q$ generated by elements of the form $(q,-q)$ for $q\in Q^{\rm gp}$. Then $Q'\subset (Q\oplus_P Q)^*$, and $Q/Q'\cong Q$. The injectivity follows from this.
\end{proof}
\begin{none}\label{5.3.4}
  Let $f:X\rightarrow S$ be a Kummer log smooth morphism of $\mathscr{S}$-schemes. By (\ref{0.1.4}), we can choose a strict \'etale cover $\{u_i:S_i\rightarrow S\}_{i\in I}$ such that for each $i$, there is a commutative diagram
  \[\begin{tikzcd}
    X_i\arrow[r,"v_i"]\arrow[d,"f_i"]&X\arrow[d,"f"]\\
    S_i\arrow[r,"u_i"]&S
  \end{tikzcd}\]
  of $\mathscr{S}$-schemes such that $f_i$ has an fs chart of log smooth type and $\{v_i\}_{i\in I}$ is a strict \'etale cover. Then by (\ref{5.3.3}), there is a Kummer log smooth morphism satisfying the conditions (i) and (ii) of (loc.\ cit). Let $g:S'\rightarrow S$ denote the union of $g_iu_i:S_i'\rightarrow S$. Then $g$ satisfies the condition (i) of (loc.\ cit) by (k\'et-Sep), and $g$ satisfies the condition (ii) of (loc.\ cit) by construction.
\end{none}
\subsection{Support property for Kummer log smooth morphisms}
\begin{prop}\label{5.4.1}
  Let $f$ be a Kummer log smooth morphisms of $\mathscr{S}$-schemes. Then $f$ satisfies the universal support property.
\end{prop}
\begin{proof}
  By (\ref{5.1.7}) and (\ref{0.1.4}), we may assume that $f$ has an fs chart $\theta:P\rightarrow Q$ of Kummer log smooth type. As in (\ref{5.3.3}), choose a Kummer log smooth morphism $g:S'\rightarrow S$ satisfying the condition (i) of (\ref{5.1.5}) such that the pullback of $f$ via $g:S'\rightarrow S$ is strict. Since $g$ is an exact log smooth morphism, the condition (ii) of (loc.\ cit) is satisfied by ($eSm$-BC). Now we can apply (\ref{5.1.6}), so replacing $f$ by the projection $X\times_S S'\rightarrow S'$, we may assume that $f$ is strict. Then the conclusion follows from (\ref{5.1.4}(1)).
\end{proof}
\begin{prop}\label{5.4.2}
  Let $f:X\rightarrow S$ and $g:Y\rightarrow X$ be morphisms of $\mathscr{S}$-schemes such that $g$ is proper. If $f$ and $fg$ satisfy the semi-universal support property, then $g$ satisfies the semi-support property.
\end{prop}
\begin{proof}
  Consider a commutative diagram
  \[\begin{tikzcd}
    T'\arrow[rr,"\beta"]\arrow[d,"\alpha'"]&&T\arrow[d,"\alpha"]\\
    Y\arrow[rr,"g"]\arrow[rd,"fg"']&&X\arrow[ld,"f"]\\
    &S
  \end{tikzcd}\]
  of $\mathscr{S}$-schemes where the small square is Cartesian and $\alpha$ is strict. By (\ref{5.1.3}), it suffices to show that $\beta$ satisfies the support property for any $\alpha$. The morphisms $\alpha$ and $\alpha'$ satisfy the semi-universal support property by (\ref{5.1.4}(1)) since they are strict, so the morphisms $f\alpha$ and $fg\alpha'$satisfies the semi-universal support property by (\ref{5.1.4}(2)). Hence replacing $(Y,X,S)$ by $(T',T,S)$, we reduce to showing that $g$ satisfies the support property.

  Consider a Cartesian diagram
  \[\begin{tikzcd}
    W\arrow[r,"w"]\arrow[d,"g'"]&Y\arrow[d,"g"]\\
    V\arrow[r,"v"]&X
  \end{tikzcd}\]
  of $\mathscr{S}$-schemes where $v$ is an open immersion. By (\ref{0.5.3}), it suffices to show that for any Kummer log smooth morphism $p:X'\rightarrow X$ of $\mathscr{S}$-schemes and any object $K$ of $\mathscr{T}(W)$, the homomorphism
  \[{\rm Hom}_{\mathscr{T}(X)}(M_X(X'),v_\sharp g_*'K)\rightarrow {\rm Hom}_{\mathscr{T}(X)}(M_X(X'),g_*w_\sharp K)\]
  is an isomorphism. It is equivalent to the assertion that
  \[{\rm Hom}_{\mathscr{T}(X')}(1_{X'},p^*v_\sharp g_*'K)\rightarrow {\rm Hom}_{\mathscr{T}(X')}(1_{X'},p^*g_*w_\sharp K)\]
  is an isomorphism since $M_X(X')=p_\sharp 1_{X'}$. By (\ref{5.4.1}), $p$ satisfies the universal support property, so $fp$ and $gp$ satisfy the semi-universal support property by (\ref{5.1.4}(2)). Since $p^*$ commutes with $v_\sharp$, $g_*'$, $g_*$, and $w_\sharp$, replacing $Y\rightarrow X\rightarrow S$ by $Y\times_X X'\rightarrow X'\rightarrow S$, we reduce to showing that
  \[{\rm Hom}_{\mathscr{T}(X)}(1_{X},v_\sharp g_*'K)\rightarrow {\rm Hom}_{\mathscr{T}(X)}(1_{X},g_*w_\sharp K)\]
  is an isomorphism. It is equivalent to showing that
  \[{\rm Hom}_{\mathscr{T}(S)}(1_{S},f_*v_\sharp g_*'K)\rightarrow {\rm Hom}_{\mathscr{T}(S)}(1_{S},f_*g_*w_\sharp K)\]
  is an isomorphism. Hence it suffices to show that the natural transformation
  \[v_\sharp g_*'\stackrel{Ex}\longrightarrow g_*w_\sharp\]
  is an isomorphism.

  The induced morphism $Y\rightarrow \underline{X}\times_{\underline{S}}S$ has the factorization
  \[Y\rightarrow \underline{Y}\times_{\underline{S}}S\rightarrow \underline{X}\times_{\underline{S}}S.\]
  The first arrow satisfies the semi-universal support property by assumption, and the second arrow satisfies the semi-universal support property by (\ref{5.1.4}(1)) since it is strict. Thus the composition also satisfies the semi-universal support property by (\ref{5.1.4}(2)). Since the induced morphism $X\rightarrow \underline{X}\times_{\underline{S}}S$ also satisfies the semi-universal support property by assumption, replacing $(Y,X,S)$ by $(Y,X,\underline{X}\times_{\underline{S}}S)$, we may assume that $\underline{f}$ is an isomorphism.

  Then there is a unique commutative diagram
  \[\begin{tikzcd}
    W\arrow[r,"w"]\arrow[d,"g'"]&Y\arrow[d,"g"]\\
    V\arrow[r,"v"]\arrow[d,"f'"]&X\arrow[d,"f"]\\
    U\arrow[r,"u"]&S
  \end{tikzcd}\]
  of $\mathscr{S}$-schemes such that the lower square is also Cartesian. Since $v$ is an open immersion, $u$ is automatically an open immersion. In the commutative diagram
  \[\begin{tikzcd}
    u_\sharp f_*'g_*'\arrow[d,"\sim"]\arrow[r,"Ex"]&f_*v_\sharp g_*'\arrow[r,"Ex"]&f_*g_*w_\sharp\arrow[d,"\sim"]\\
    u_\sharp (f'g')_*\arrow[rr,"Ex"]&&(fg)_*w_\sharp
  \end{tikzcd}\]
  of functors, the upper left arrow and lower arrows are isomorphisms by assumption. Thus the upper right arrow is an isomorphism. This completes the proof.
\end{proof}
\begin{prop}\label{5.4.3}
  Let $f:X\rightarrow S$ be a proper morphism of $\mathscr{S}$-schemes satisfying the semi-universal support. Let $g:S'\rightarrow S$ be a Kummer log smooth morphism of $\mathscr{S}$-schemes, and consider the Cartesian diagram
  \[\begin{tikzcd}
    X'\arrow[d,"f'"]\arrow[r,"g'"]&X\arrow[d,"f"]\\
    S'\arrow[r,"g"]&S
  \end{tikzcd}\]
  of $\mathscr{S}$-schemes. Then the exchange transformation
  \[g_\sharp f_*'\stackrel{Ex}\longrightarrow f_*g_\sharp'\]
  is an isomorphism.
\end{prop}
\begin{proof}
  Note first that by (\ref{5.4.1}), $g$ and $g'$ satisfies the universal support property, so $fg'$ satisfies the semi-universal support property by (\ref{5.1.4}(2)). Then $f'$ satisfies the support property by (\ref{5.4.2}). By (\ref{5.3.4}), there is a Kummer log smooth morphism $h:T\rightarrow S$ of $\mathscr{S}$-schemes such that $h^*$ is conservative and that the pullback of $g:S'\rightarrow S$ via $h$ is strict smooth. Note also that $h$ satisfies the universal support property by (\ref{5.4.1}). Thus replacing $(X',X,S',S)$ by $(X'\times_S T,X\times_S T,S'\times_S T,T)$, we may assume that $g$ is strict smooth.

  The question is Zariski local on $S'$ since $f'$ satisfies the support property, so we may assume that $g$ is a strict smooth morphism of relative dimension $d$. Choose a compactification
  \[S'\stackrel{j}\longrightarrow S''\stackrel{p}\longrightarrow S\]
  where $j$ is an open immersion and $p$ is a strict proper morphism of $\mathscr{S}$-schemes. Consider the commutative diagram
  \[\begin{tikzcd}
    X'\arrow[r,"j'"]\arrow[d,"f'"]&X''\arrow[d,"f''"]\arrow[r,"p'"]&X\arrow[d,"f"]\\
    S'\arrow[r,"j"]&S''\arrow[r,"p"]&S
  \end{tikzcd}\]
  of $\mathscr{S}$-schemes where each square is Cartesian. Since $g$ is strict smooth, as in \cite[2.4.50]{CD12}, we have the purity isomorphisms
  \[g_\sharp \stackrel{\sim}\longrightarrow p_*j_\sharp (d)[2d],\]
  \[g_\sharp'\stackrel{\sim}\longrightarrow p_*'j_\sharp'(d)[2d].\]
  with the commutative diagram
  \[\begin{tikzcd}
    g_\sharp p_*'\arrow[d,"Ex"]\arrow[rr,"\sim"]&&p_*j_\sharp f_*'(d)[2d]\arrow[d,"Ex"]\\
    f_*g_\sharp'\arrow[r,"\sim"]&f_*p_*'j_\sharp'(d)[2d]\arrow[r,"\sim"]&p_*f_*''j_\sharp'(d)[2d]
  \end{tikzcd}\]
  of functors. The morphisms $p$ and $p'$ satisfies the semi-support property by (\ref{5.1.4}(1)) since they are strict, so the morphism $fp'$ satisfies the semi-support property by (\ref{5.1.4}(2)). Then $f''$ satisfies the semi-support property by (\ref{5.4.2}), so the right vertical arrow is an isomorphism. Thus the left vertical arrow is an isomorphism.
\end{proof}
\begin{prop}\label{5.4.4}
  Let $f:X\rightarrow S$ be a proper morphism of $\mathscr{S}$-schemes satisfying the semi-universal support property. Then $f$ satisfies the projection formula.
\end{prop}
\begin{proof}
  We want to show that for any objects $K$ of $\mathscr{T}_X$ and $L$ of $\mathscr{T}_S$, the morphism
  \[f_*K\otimes_S L\stackrel{Ex}\longrightarrow f_*(K\otimes_X f^*L)\]
  is an isomorphism. By (\ref{0.5.3}), it suffices to show that for any Kummer log smooth morphism $g:S'\rightarrow S$ of $\mathscr{S}$-schemes, the morphism
  \[f_*K\otimes_S g_\sharp 1_{S'}\stackrel{Ex}\longrightarrow f_*(K\otimes f^*g_\sharp 1_{S'})\]
  is an isomorphism. Consider the Cartesian diagram
  \[\begin{tikzcd}
    X'\arrow[r,"g'"]\arrow[d,"f'"]&X\arrow[d,"f"]\\
    S'\arrow[r,"g"]&S
 \end{tikzcd}\]
  of $\mathscr{S}$-schemes. In the commutative diagram
  \[\begin{tikzcd}
    f_*K\otimes_S g_\sharp 1_{S'}\arrow[rr,"Ex"]\arrow[dd,"Ex"]&&f_*(K\otimes f^*g_\sharp 1_{S'})\arrow[d,"Ex"]\\
    &&f_*(K\otimes g'_\sharp f'^*1_{S'})\arrow[d,"Ex"]\\
    g_\sharp (g^*f_*K\otimes 1_{S'})\arrow[d,"\sim"]&&f_*g'_\sharp (g'^*K\otimes f'^*1_{S'})\arrow[d,"\sim"]\\
    g_\sharp g^* f_*K\arrow[r,"Ex"]&g_\sharp f'_*g'^*K\arrow[r,"Ex"]&f_*g'_\sharp g'^*K
  \end{tikzcd}\]
  of $\mathscr{S}$-schemes, the upper left vertical and the middle right vertical arrows are isomorphisms by ($eSm$-PF), and the lower left horizontal and the upper right vertical arrows are isomorphisms by ($eSm$-BC). Moreover, the lower right horizontal arrow is an isomorphism by (\ref{5.4.3}), so the upper horizontal arrow is an isomorphism.
\end{proof}
\begin{prop}\label{5.4.5}
  Let $f:X\rightarrow S$ be a proper morphism of $\mathscr{S}$-schemes satisfying the semi-universal support property. Then the property $({\rm BC}_{f,g})$ holds for any strict morphism $g:S'\rightarrow S$ of $\mathscr{S}$-schemes.
\end{prop}
\begin{proof}
  By (Zar-Sep), we may assume that $g$ is quasi-projective. Then $g$ has a factorization
  \[S'\stackrel{i}\longrightarrow T\stackrel{p}\longrightarrow S\]
  where $i$ is a strict closed immersion and $p$ is strict smooth. By ($eSm$-BC), we only need to deal with the case when $g$ is a strict closed immersion.

  Let $h:S''\rightarrow S$ denote the complement of $g$. Then we have the commutative diagram
  \[\begin{tikzcd}
    X'\arrow[r,"g'"]\arrow[d,"f'"]&X\arrow[d,"f"]\arrow[r,leftarrow,"h'"]&X''\arrow[d,"f''"]\\
    S'\arrow[r,"g"]&S\arrow[r,leftarrow,"h"]&S''
  \end{tikzcd}\]
  of $\mathscr{S}$-schemes where each square is Cartesian. By (Loc), we have the commutative diagram
  \[\begin{tikzcd}
    g^*f_*h_\sharp'h'^*\arrow[d,"Ex"]\arrow[r,"ad'"]&g^*f_*\arrow[d,"Ex"]\arrow[r,"ad"]&g^*f_*g_*'g'^*\arrow[d,"Ex"]\arrow[r,"\partial_{g'}"]&g^*f_*h_\sharp'h'^*[1]\arrow[d,"Ex"]\\
    f_*'g'^*h_\sharp'h'^*\arrow[r,"ad'"]&f_*'g'^*\arrow[r,"ad"]&f_*'g'^*g_*'g'^*\arrow[r,"\partial_{g'}"]&f_*'g'^*h_\sharp'h'^*[1]
  \end{tikzcd}\]
  of functors where the two rows are distinguished triangles. To show that the second vertical arrow is an isomorphism, it suffices to show that the first and third vertical arrows are isomorphisms.

  We have an isomorphism
  \[f_*h_\sharp'\cong h_\sharp f_*'\] since $f$ satisfies the support property, so we have
  \[g^*f_*h_\sharp'h'^*\cong g^*h_\sharp f_*'h'^*=0,\quad f_*g'^*h_\sharp' h'^* =0\]
  since $g^*h_\sharp=g'^*h_\sharp'=0$ by ($eSm$-BC). Thus the first vertical arrow is an isomorphism. The assertion that the third arrow is an isomorphism follows from (Loc), which completes the proof.
\end{proof}
\begin{prop}\label{5.4.6}
  Let $g:Y\rightarrow X$ and $f:X\rightarrow S$ be morphisms of $\mathscr{S}$-schemes. Assume that $g$ is proper and that the unit
  \[{\rm id}\stackrel{ad}\longrightarrow g_*g^*\]
  is an isomorphism. If $g$ and $fg$ satisfy the semi-universal support propoerty, then $f$ satisfies the semi-universal support property.
\end{prop}
\begin{proof}
  By (\ref{5.1.4}(3)), it suffices to show that for any Cartesian diagram
  \[\begin{tikzcd}
    Y'\arrow[r,"g'"]\arrow[d,"h'"]&X'\arrow[d,"h"]\\
    Y\arrow[r,"g"]&X
  \end{tikzcd}\]
  of $\mathscr{S}$-schemes such that $h$ is a strict, the unit
  \[{\rm id}\stackrel{ad}\longrightarrow g_*'g'^*\]
  is an isomorphism.

  By (\ref{5.4.4}), for any object $K$ of $\mathscr{T}(X')$, the composition
  \[g_*'g'^*1_{X'}\otimes_X K\stackrel{Ex}\longrightarrow g_*'(g'^*1_S\otimes_X g'^*K)\stackrel{\sim}\longrightarrow g'_*g'^*K\]
  is an isomorphism, so we only need to show that the morphism
  \[1_{X'}\stackrel{ad}\longrightarrow g_*'g'^*1_{X'}\]
  in $\mathscr{T}(X')$ is an isomorphism. It has the factorization
  \[1_{X'}\stackrel{\sim}\longrightarrow h^*1_X\stackrel{ad}\longrightarrow h^*g_*g^*1_X\stackrel{Ex}\longrightarrow g_*'h'^*g^*1_X\stackrel{\sim}\longrightarrow g_*'g'^*1_{X'}\]
  in $\mathscr{T}(X')$. The second arrow is an isomorphism by assumption, and the third arrow is an isomorphism by (\ref{5.4.5}). Thus the morphism
  \[1_{X'}\stackrel{ad}\longrightarrow g_*'g'^*1_{X'}\]
  in $\mathscr{T}(X')$ is an isomorphism.
\end{proof}
\subsection{Poincar\'e duality for a compactification of \texorpdfstring{$\mathbb{A}_{\mathbb{N}^2}\rightarrow \mathbb{A}_\mathbb{N}$}{AN2AN}}
\begin{none}\label{5.5.1}
  We fix an $\mathscr{S}$-scheme $S$ over $\mathbb{A}_{\mathbb{N}}$, and we put
  \[U=\mathbb{A}_{\mathbb{N}\oplus \mathbb{N}}\times_{\mathbb{A}_\theta,\mathbb{A}_{\mathbb{N}}}S\]
  where $\theta:\mathbb{N}\rightarrow \mathbb{N}\oplus\mathbb{N}$ denotes the diagonal morphism. We want to compactify the projection
  \[h:U\rightarrow S.\]
  Then we will prove the Poinca\'e duality for the compactification.
\end{none}
\begin{none}\label{5.5.2}
  Under the notations and hypotheses of (\ref{5.5.1}), consider the lattice $L=\mathbb{Z}x_1\oplus \mathbb{Z}x_2$, and consider the dual coordinates
  \[e_1=x_1^\vee,\quad e_2=x_2^\vee.\]
  We denote by $\underline{T}$ the toric variety associated to the fan generated by
  \begin{enumerate}[(a)]
    \item $e_1,e_2\geq 0$,
    \item $e_1+e_2\geq 0$, $e_1\leq 0$,
    \item $e_1+e_2\geq 0$, $e_2\leq 0$.
  \end{enumerate}
  We give the log structure on (a), (b), and (c) by
  \[\mathbb{N}x_1\oplus\mathbb{N}x_2\rightarrow \mathbb{Z}[x_1,x_2],\quad \mathbb{N}(x_1x_2)\rightarrow \mathbb{Z}[x_1x_2,x_1^{-1}],\quad \mathbb{N}(x_1x_2)\rightarrow
  \mathbb{Z}[x_1x_2,x_2^{-1}]\]
  respectively. Then we denote by $T$ the resulting $\mathscr{S}$-scheme. Because the support of this fan is $\{(e_1,e_2):e_1+e_2\geq 0\}$, the morphism $T\rightarrow
  \mathbb{A}_{\mathbb{N}}$ induced by the diagonal homomorphism $\mathbb{N}\rightarrow \mathbb{N}x_1\oplus \mathbb{N}x_2$ is proper, so we have the compactification
  \[\begin{tikzcd}
    \mathbb{A}_{\mathbb{N}^2}\arrow[r]\arrow[rd]&T\arrow[d]\\
    &\mathbb{A}_\mathbb{N}
  \end{tikzcd}\]
  of the morphism $\mathbb{A}_{\mathbb{N}^2}\rightarrow \mathbb{A}_\mathbb{N}$. Thus if we put $X=S\times_{\mathbb{A}_{\mathbb{N}}}T$, then we have the compactification
  \[\begin{tikzcd}
    U\arrow[r,"j"]\arrow[rd,"h"']&X\arrow[d,"f"]\\
    &S
  \end{tikzcd}\]
  of $h$. Here, the meaning of compactification is that $j$ is an open immersion and $f$ is proper.
\end{none}
\begin{none}\label{5.5.3}
  Under the notations and hypotheses of (\ref{5.5.2}), consider the lattice
  \[(\mathbb{Z}x_1\oplus \mathbb{Z}x_2)\oplus_\mathbb{Z} (\mathbb{Z}y_1\oplus \mathbb{Z}y_2)\]
  with $x_1+x_2=y_1+y_2$, and consider the dual coordinates
  \[f_1=y_1^\vee,\quad f_2=y_2^\vee.\]
  We denote by $\underline{T'}$ the toric variety associated to the fan generated by
  \begin{enumerate}[(a)]
    \item $e_1,e_2,f_1,f_2\geq 0$, $e_1+e_2=f_1+f_2$,
    \item $e_1+e_2=f_1+f_2\geq 0$, $e_1,f_1\leq 0$,
    \item $e_1+e_2=f_1+f_2\geq 0$, $e_2,f_2\leq 0$.
  \end{enumerate}
  Then we have an open immersion $\underline{T'}\rightarrow \underline{T}\times_{\mathbb{A}^1}\underline{T}$. Thus if we denote by $T'$ the $\mathscr{S}$-scheme whose underlying scheme
  is $\underline{T'}$ and with the log structure induced by the open immersion, then we have the open immersion
  \[T'\rightarrow T\times_S T.\]
  We put $D=(X\times_S X)\times_{T\times_{\mathbb{A}_\mathbb{N}}T} T'$.
\end{none}
\begin{none}\label{5.5.4}
  Under the notations and hypotheses of (\ref{5.5.3}), we denote by $\underline{T''}$ the toric variety associated to the fan generated by
  \begin{enumerate}
    \item[(a)] $e_1,e_2,f_1,f_2\geq 0$, $e_1+e_2=f_1+f_2$, $e_1\geq f_1$,
    \item[(a')] $e_1,e_2,f_1,f_2\geq 0$, $e_1+e_2=f_1+f_2$, $e_1\leq f_1$,
    \item[(b)] $e_1+e_2=f_1+f_2\geq 0$, $e_1,f_1\leq 0$,
    \item[(c)] $e_1+e_2=f_1+f_2\geq 0$, $e_2,f_2\leq 0$.
  \end{enumerate}
  We give the log structure on (a), (a'), (b), and (c) by
  \[\begin{split}
    &\mathbb{N}y_1\oplus \mathbb{N}x_2\oplus \mathbb{N}(x_1y_1^{-1})\rightarrow \mathbb{Z}[y_1,x_2,x_1y_1^{-1}],\\
    &\mathbb{N}x_1\oplus \mathbb{N}y_2\oplus \mathbb{N}(y_1x_1^{-1})\rightarrow \mathbb{Z}[x_1,y_2,y_1x_1^{-1}],\\
    &\mathbb{N}(x_1x_2)\rightarrow \mathbb{Z}[x_1x_2,x_1^{-1},y_1^{-1}],\\
    &\mathbb{N}(x_1x_2)\rightarrow \mathbb{Z}[x_1x_2,x_2^{-1},y_2^{-1}]
  \end{split}\]
  respectively. Then we denote by $T''$ the resulting $\mathscr{S}$-scheme. The supports of this fan and the fan in (\ref{5.5.3}) are equal, so the morphism $T''\rightarrow T'$ induced
  by the fans is proper. Thus if we put $E=Y\times_{T'} T''$, we have the proper morphism
  \[v:E\rightarrow D\]
  of $\mathscr{S}$-schemes.
\end{none}
\begin{none}\label{5.5.5}
  From (\ref{5.5.1}) to (\ref{5.5.4}), we obtain the commutative diagram
  \[\begin{tikzcd}
    &E\arrow[d,"v"]\arrow[rdd,"r_2"]\\
    &D\arrow[d,"u"]\arrow[rd,"q_2"']\\
    X\arrow[ruu,"c"]\arrow[ru,"b"']\arrow[r,"c"']&X\times_S X\arrow[r,"p_2"']&X
  \end{tikzcd}\]
  of $\mathscr{S}$-schemes where
  \begin{enumerate}[(i)]
    \item $p_2$ denotes the second projection,
    \item $a$ denotes the diagonal morphism, and $b$ and $c$ denotes the morphisms induced by $a$.
  \end{enumerate}
  Note that $u$ is an open immersion and that $v$ and $p_2$ are proper. The morphism $E\rightarrow D$ satisfies the condition of (\ref{4.2.2}(2)), and the morphism $D\rightarrow
  X\times_S X$ satisfies the condition of (\ref{4.2.2}(1)). Consider the natural transformation
  \[\mathfrak{q}_f^o:\Omega_{f,E}^of^!\longrightarrow f^*\]
  in (\ref{4.4.5}). Note that we have $\Omega_{f,E}^o=(-1)[-2]$. We also consider the pullback of the above commutative diagram via $i:Z\rightarrow X$ where $i$ denotes the complement of $j:U\rightarrow X$:
  \[\begin{tikzcd}
    &E'\arrow[d,"v'"]\arrow[rdd,"r_2'"]\\
    &D'\arrow[d,"u'"]\arrow[rd,"q_2'"']\\
    Z\arrow[ruu,"c'"]\arrow[ru,"b'"']\arrow[r,"c'"']&X\arrow[r,"p_2'"']&Z
  \end{tikzcd}\]
  Note that $u'$ and $v'$ are isomorphisms.
\end{none}
\begin{prop}\label{5.5.6}
  Under the notations and hypotheses of (\ref{5.5.5}), the natural transformation
  \[f_*f^!(-1)[-2]\stackrel{\mathfrak{q}_f^o}\longrightarrow f_*f^*\]
  is an isomorphism.
\end{prop}
\noindent {\it Proof}.
  We put $\tau=(1)[2]$, and let
  \[\mathfrak{p}_f^o:f_\sharp \longrightarrow f_*\tau\]
  denote the left adjoint of $\mathfrak{q}_f^o$. We have the commutative diagram
  \[\begin{tikzcd}
    Z_1\arrow[d,"i_1''"]\arrow[dd,bend right,"i_1'"']\arrow[dr,"i_1"]&&Z_2\arrow[d,"i_2''"']\arrow[dd,bend left,"i_2'"]\arrow[dl,"i_2"']\\
    U_1'\arrow[d,"j_1''"]\arrow[r,"j_1'"]&X&U_2'\arrow[d,"j_2''"']\arrow[l,"j_2'"']\\
    U_1\arrow[ru,"j_1"']&&U_2\arrow[lu,"j_2"]
  \end{tikzcd}\]
  of $\mathscr{S}$-schemes where
  \begin{enumerate}[(i)]
    \item $j_1'$ and $j_2'$ denote the open immersions induced by the convex sets (b) and (c) of (\ref{5.5.2}) respectively,
    \item $Z_1=U_1'\times_X (X-U)$ and $Z_2=U_2'\times_X (X-U)$,
    \item $U_1$ and $U_2$ denotes the complements of $Z_1$ and $Z_2$ respectively,
    \item $i_1$, $i_1'$, $i_1''$, $i_2$, $i_2'$, and $i_2''$ are the closed immersions,
    \item $j_1$, $j_1''$, $j_2$, $j_2''$ are the open immersions.
  \end{enumerate}
  The {\it key property} of our compactification is that $U_1$ and $U_1'$ (resp.\ $U_2$ and $U_2'$) are {\it log-homotopic equivalent} over $S$. The meaning is that the morphisms
  \[M_S(U_1')\rightarrow M_S(U_1),\quad M_S(U_2')\rightarrow M_S(U_2)\]
  in $\mathscr{T}(S)$ are isomorphisms. More generally, we will show that the natural transformations
  \begin{equation}\label{5.5.5.1}
    f_\sharp j_{1\sharp}j_{1\sharp}''j_1''^*j_1^*f^*\stackrel{ad}\longrightarrow f_\sharp j_{1\sharp}j_1^*f^*,\quad f_\sharp
    j_{2\sharp}j_{2\sharp}''j_2''^*j_2^*f^*\stackrel{ad}\longrightarrow f_\sharp j_{2\sharp}j_2^*f^*
  \end{equation}
  are isomorphisms. To show that the first one is an isomorphism, consider the Mayer-Vietoris triangle
  \[\begin{tikzcd}
     f_\sharp j_{1\sharp}'''j_1'''^*f^*\rightarrow f_\sharp j_{1\sharp}'j_1'^*f^*\oplus f_\sharp j_\sharp j^*f^*\rightarrow f_\sharp j_{1\sharp}j_1^*f^*\rightarrow f_\sharp j_{1\sharp}'''j_1'''^*f^*[1]
  \end{tikzcd}\]
  in $\mathscr{T}(S)$ where $j_1''':U_1'\times_X U\rightarrow X$ denotes the open immersion. It is a distinguished triangle by (\ref{2.2.3}), so it suffices to show that the morphism
  \[f_\sharp j_{1\sharp}'''j_1'''^*f^*\rightarrow f_\sharp j_\sharp j^*f^*\]
  is an isomorphism since $j_1'=j_1j_1''$. It is true by (Htp--3) because
  \[U_1'\times_X U\cong U\times_{\mathbb{A}_{\mathbb{N}^2}}\mathbb{A}_{(\mathbb{N}^2)_F}\]
  where $F$ is the face of $\mathbb{N}^2$ generated by $(1,0)$. Thus the first natural transformation of (\ref{5.5.5.1}) is an isomorphism. We can show that the other one is also an isomorphism similarly.

  Then guided by a method of \cite[1.7.9]{Ayo07}, consider the commutative diagram
  \begin{equation}\label{5.5.6.1}\begin{tikzcd}
    f_\sharp i_{1*}i_1^!f^*\arrow[r,"ad'"]\arrow[d,"\mathfrak{p}_f^o"]&f_\sharp f^*\arrow[r,"ad"]\arrow[d,"\mathfrak{p}_f^o"]&f_\sharp i_{2*}i_2^*f^*\arrow[d,"\mathfrak{p}_f^o"]
    \arrow[r,"\partial"]&f_\sharp i_{1*}i_1^!f^*[1]\arrow[d,"\mathfrak{p}_f^o"] \\
    f_*i_{1*}i_1^!f^*\tau\arrow[r,"ad'"]&f_*f^*\tau\arrow[r,"ad"]&f_*i_{2*}i_2^*f^*\arrow[r,"\partial"]&f_*i_{1*}i_1^!f^*\tau[1]
  \end{tikzcd}\end{equation}
  of functors. Assume that we have proven that
  \begin{enumerate}[(1)]
    \item the rows are distinguished triangles,
    \item the first and third vertical arrows are isomorphisms.
  \end{enumerate}
  Then the second arrow is also an isomorphism, so we are done. Hence in the remaining, we will prove (1) and (2).
  \begin{enumerate}[(1)]
    \item To show that the second row is a distinguished triangle, by (Loc), it suffices to show that the composition
    \[f_*i_{2*}i_2^*f^*\stackrel{\sim}\longrightarrow f_*j_{2*}i_{2*}'i_2'^*j_2^*f^*\stackrel{ad'}\longrightarrow f_*j_{2*}j_2^*f^*\]
    is an isomorphism. We have shown that the natural transformation
    \[f_*j_{2*}j_2^*f^*\stackrel{ad'}\longrightarrow f_*j_{2*}j_{2*}''j_2''^*j_2^*f^*\]
    is an isomorphism, and we have $fi_2={\rm id}$. Hence, it suffices to show that the unit
    \[g_{2\sharp}g_2^*\stackrel{ad'}\longrightarrow {\rm id}\]
    is an isomorphism where $g_2=fj_2'$. It is true by (Htp--1) since the morphism $U_2'\rightarrow S$ is the projection $\mathbb{A}_S^1\rightarrow S$.

    For the first row, first note that by (sSupp), we have an isomorphism
    \[j_{1\sharp}i_{1*}'i_1'^!j_1^*\stackrel{\sim}\longrightarrow i_{1*}i_1^!.\]
    Hence to show that the first row is distinguished, by (Loc), it suffices to show that the natural transformation
    \[f_\sharp j_{1\sharp}i_{1*}'i_1'^!j_1^*f^*\stackrel{ad'}\longrightarrow f_\sharp j_{1\sharp}j_1^*f^*\]
    is an isomorphism. By (sSupp), we have an isomorphism
    \[f_\sharp j_{1\sharp}'i_{1*}''i_1''^!j_1'^*f^*\stackrel{\sim}\longrightarrow f_\sharp j_{1\sharp}i_{1*}'i_1'^!j_1^*f^*,\]
    and we have shown that the natural transformation
    \[f_\sharp j_{1\sharp}j_{1\sharp}''j_1''^*j_1^*f^*\stackrel{ad'}\longrightarrow f_\sharp j_{1\sharp}j_1^*f^*\]
    is an isomorphism. Hence it suffices to show that the natural transformation
    \[g_{1\sharp}i_{1*}''i_1''^!g_1^*\stackrel{ad'}\longrightarrow g_{1\sharp}g_1^*\]
    is an isomorphism where $g_1=fj_1'$. By (Htp--1), the counit
    \[g_{1\sharp}g_1^*\stackrel{ad'}\longrightarrow {\rm id}\]
    is an isomorphism since the morphism $U_1'\rightarrow S$ is the projection $\mathbb{A}_S^1\rightarrow S$, and by (\ref{2.5.10}), the composition
    \[g_{1\sharp}i_{1*}''i_1''^!g_1^*\stackrel{ad'}\longrightarrow g_{1\sharp}g_1^*\stackrel{ad'}\longrightarrow {\rm id}\]
    is an isomorphism. Thus the first row is distinguished.
    \item Consider the diagram
    \[\begin{tikzpicture}[baseline= (a).base]
    \node[scale=.75] (a) at (0,0)
    {\begin{tikzcd}
    \Omega_{f,{\rm id},E'}^oi^!f^!\arrow[r,"\sim"]&\Omega_{f,{\rm id},E'}^ni^!f^!\arrow[r,"\sim"]\arrow[d,"Ex"]&\Omega_{f,{\rm id},E'}^di^!f^!\arrow[r,"\sim"]\arrow[d,"Ex"]&\Omega_{f,{\rm id},E'}i^!f^!\arrow[r,"\sim"]\arrow[d,"Ex"]&\Omega_{f,{\rm
    id},D'}i^!f^!\arrow[r,"\sim"]\arrow[d,"Ex"]&\Omega_{f,{\rm id}}i^!f^!\arrow[r,"\sim"]\arrow[d,"Ex"]&\Omega_{f,{\rm id}}\arrow[d,equal]\\
    i^!\Omega_{f,E}^of^!\arrow[r,"\sim"]&i^!\Omega_{f,E}^nf^!\arrow[r]&i^!\Omega_{f,E}^df^!\arrow[r]&i^!\Omega_{f,E}f^!\arrow[r]&i^!\Omega_{f,D}f^!\arrow[r] &i^!\Omega_ff^!\arrow[r,"\mathfrak{q}_f"]&i^!f^*
    \end{tikzcd}};
    \end{tikzpicture}\]
    of functors. It commutes by (\ref{4.2.6}), and the natural transformation $\Omega_{f,{\rm id},E'}^ni^!f^!\stackrel{Ex}\longrightarrow i^!\Omega_{f,E}^nf^!$ is an isomorphism by (\ref{4.2.7}). Thus the composition of arrows in the second row
    \[i^!\Omega_{f,E}^of^!\stackrel{\mathfrak{q}_f^o}\longrightarrow i^!f^*\]
    is also an isomorphism. Then the first vertical arrow of (\ref{5.5.6.1}) is also an isomorphism. The third vertical arrow of (\ref{5.5.6.1}) is also an isomorphism similarly.\qed
  \end{enumerate}
\begin{thm}\label{5.5.7}
  Under the notations and hypotheses of (\ref{5.5.5}), the natural transformation
  \[\mathfrak{q}_f^o:f^!(-1)[-2]\longrightarrow f^*\]
  is an isomorphism.
\end{thm}
\begin{proof}
  We put $\tau=(1)[2]$, and let
  \[\mathfrak{p}_f^o:f_\sharp \longrightarrow f_*\tau\]
  denote the left adjoint of $\mathfrak{q}_f^o$. Guided by a method of \cite[2.4.42]{CD12}, we will construct a right inverse $\phi_1$ and a left inverse $\phi_2$ to the morphism
  $\mathfrak{p}_f^o$. Note first that the natural transformation
  \[f_\sharp f^*\stackrel{\mathfrak{p}_f^o}\longrightarrow f_*f^*\tau\]
  is an isomorphism by (\ref{5.5.6}). The left inverse $\phi_2$ is constructed by
  \[\phi_2:f_*\tau\stackrel{ad}\longrightarrow f_*\tau f^*f_\sharp \stackrel{(\mathfrak{p}_{f,E}^o)^{-1}}\longrightarrow f_\sharp f^*f_\sharp \stackrel{ad'}\longrightarrow f_\sharp.\]
  To show $\phi_2\circ\mathfrak{p}_f^o={\rm id}$, it suffices to check that the outside diagram of the diagram
  \[\begin{tikzcd}
    f_\sharp \arrow[d,"\mathfrak{p}_{f,E}^o"']\arrow[r,"ad"]&f_\sharp f^*f_\sharp\arrow[d,"\mathfrak{p}_{f,E}^o"]\arrow[rd,equal]\arrow[rrd,"ad'"]\\
    f_*\tau\arrow[r,"ad"']&f_*\tau f^*f_\sharp \arrow[r,"(\mathfrak{p}_{f,E}^o)^{-1}"']&f_\sharp f^*f_\sharp \arrow[r,"ad'"']&f_\sharp
  \end{tikzcd}\]
  of functors commutes since the composition
  \[f_\sharp \stackrel{ad}\longrightarrow f_\sharp f^*f_\sharp\stackrel{ad'}\longrightarrow f_\sharp\]
  is the identity. It is true since each small diagram commutes.

  The right inverse $\phi_1$ is constructed by
  \[\phi_1:f_*\tau\stackrel{ad}\longrightarrow f_*f^*f_*\tau\stackrel{Ex^{-1}}\longrightarrow f_*f^*\tau f_*\stackrel{\sim}\longrightarrow f_* \tau f^*
  f_*\stackrel{(\mathfrak{p}_{f,E}^o)^{-1}}\longrightarrow f_\sharp f^*f_*\stackrel{ad'}\longrightarrow f_\sharp.\]
  To show $\mathfrak{p}_f^o\circ \phi_1={\rm id}$, it suffices to check that the composition of the outer cycle starting from upper $f_*\tau$ in the below diagram of functors is the
  identity:
  \[\begin{tikzcd}
    f_\sharp \arrow[r,"\mathfrak{p}_{f,E}^o"]&f_*\tau \arrow[rr,"ad"]\arrow[rd,equal]&&f_*f^*f_*\tau\arrow[dd,"Ex^{-1}"]\arrow[dl,"ad'"]\\
    &&f_*\tau\\
    f_\sharp f^*f_*\arrow[uu,"ad'"']&f_*\tau f^*f_*\arrow[ru,"ad'"']\arrow[l,"(\mathfrak{p}_{f,E}^o)^{-1}"']\arrow[uu,"ad'"']&&f_*f^*\tau f_*\arrow[ll,"\sim"']
  \end{tikzcd}\]
  It is true since each small diagram commutes.

  Then from the existences of left and right inverses, we conclude that
  \[\mathfrak{p}_f^o:f_\sharp \longrightarrow f_*\tau\]
  is an isomorphism.
\end{proof}
\begin{cor}\label{5.5.8}
  Under the notations and hypotheses of (\ref{5.5.5}), the universal support property holds for $f:X\rightarrow S$.
\end{cor}
\begin{proof}
  We put $E''=E\times_S V$. Consider the Cartesian diagram
  \[\begin{tikzcd}
    V'\arrow[d,"\mu'"]\arrow[r,"f''"]&V\arrow[d,"\mu"]\\
    X\arrow[r,"f"]&S
  \end{tikzcd}\]
  of $\mathscr{S}$-schemes where $\mu$ is an open immersion. Then by the above theorem, the support property for $f$ follows from the commutativity of the diagram
  \[\begin{tikzcd}
    \mu_\sharp f''_\sharp\arrow[r,"\sim"]\arrow[dd,"\mathfrak{p}_{f'',E''}^o"]&f_\sharp\mu'_\sharp\arrow[d,"\mathfrak{p}_{f,E}^o"]\\
    &f_*\tau\mu'_\sharp\arrow[d,"Ex^{-1}"]\\
    \mu_\sharp f''_*\tau\arrow[r,"Ex"]&f_*\tau\mu'_\sharp
  \end{tikzcd}\]
  of functors.

  Then because we can choose $S$ arbitrary, $f$ satisfies the universal support property.
\end{proof}
\begin{cor}\label{5.5.9}
  Under the notations and hypotheses of (\ref{5.5.5}) the universal support property holds for $h:U\rightarrow S$.
\end{cor}
\begin{proof}
  The conclusion follows from (\ref{5.5.8}) and (\ref{5.1.4}(1),(2)).
\end{proof}
\subsection{Support property for the projection \texorpdfstring{$\mathbb{A}_{\mathbb{N}}\times {\rm pt}_\mathbb{N}\rightarrow {\rm pt}_\mathbb{N}$}{ANptNptN}}
\begin{none}\label{5.6.1}
  Let $x$ and $y$ denote the first and second coordinates of $\mathbb{N}\oplus \mathbb{N}$ respectively, and let $S$ be an $\mathscr{S}$-scheme. Consider the morphisms
  \[S\times\mathbb{A}_{(\mathbb{N}\oplus\mathbb{N},(x))}\stackrel{h}\longrightarrow S\times\mathbb{A}_{(\mathbb{N}\oplus\mathbb{N},(xy))}\stackrel{g}\longrightarrow
  S\times\mathbb{A}_{(\mathbb{N}\oplus\mathbb{N},(x))}\stackrel{f}\longrightarrow S\times{\rm pt}_\mathbb{N}\]
  of $\mathscr{S}$-schemes where
  \begin{enumerate}[(i)]
    \item $h$ denotes the obvious closed immersion,
    \item $g$ denotes the morphism induced by
    \[\mathbb{N}\oplus \mathbb{N}\mapsto \mathbb{N}\oplus \mathbb{N},\quad (a,b)\mapsto (a,a+b),\]
    \item $f$ denotes the morphism induced by
    \[\mathbb{N}\mapsto \mathbb{N}\oplus \mathbb{N},\quad a\mapsto (a,0).\]
  \end{enumerate}
  To simplify the notations, we put
  \[X=S\times\mathbb{A}_{(\mathbb{N}\oplus\mathbb{N},(x))},\quad Y=S\times\mathbb{A}_{(\mathbb{N}\oplus\mathbb{N},(xy))},\quad T=S\times{\rm pt}_\mathbb{N}.\]
  Then we have the sequence
  \[X\stackrel{h}\longrightarrow Y \stackrel{g}\longrightarrow X\stackrel{f}\longrightarrow T\]
  of morphisms of $\mathscr{S}$-schemes.
\end{none}
\begin{prop}\label{5.6.2}
  Under the notations and hypotheses of (\ref{5.6.1}), the morphism $gh$ satisfies the semi-universal support property.
\end{prop}
\begin{proof}
  Consider the commutative diagram
  \[\begin{tikzcd}
    X\arrow[rr,"gh"]\arrow[rd,"q"']&&X\arrow[ld,"p"]\\
    &T
  \end{tikzcd}\]
  of $\mathscr{S}$-schemes where $p$ and $q$ denote the morphisms induced by the homomorphisms
  \[\mathbb{N}\rightarrow\mathbb{N}\oplus\mathbb{N},\quad a\mapsto (a,a),\]
  \[\mathbb{N}\rightarrow\mathbb{N}\oplus\mathbb{N},\quad a\mapsto (a,2a),\]
  respectively. By (\ref{5.4.2}), it suffices to show that $p$ and $q$ satisfy the semi-universal support property.

  The morphism $fg$ satisfies the universal support property by (\ref{5.5.9}), and the morphism $h$ satisfies the universal support property by (\ref{5.1.4}(1)) since it is strict. Thus the morphism $p=fgh$ satisfies the universal support property by (\ref{5.1.4}(2)). Hence the remaining is to show that $q$ satisfies the semi-universal support property.

  The morphism $q$ has the factorization
  \[S\times\mathbb{A}_{(\mathbb{N}\oplus \mathbb{N},(x))}\stackrel{i}\longrightarrow S\times\mathbb{A}_{(\mathbb{N}\oplus \mathbb{N}\oplus \mathbb{Z},(x))}\stackrel{u}\longrightarrow S\times\mathbb{A}_{(\mathbb{N}\oplus \mathbb{N},(x))}\stackrel{p}\longrightarrow S\times{\rm pt}_{\mathbb{N}}\]
  where
  \begin{enumerate}[(i)]
    \item $i$ denotes the morphism induced by the homomorphism
    \[\mathbb{N}\oplus\mathbb{N}\oplus \mathbb{Z}\rightarrow \mathbb{N}\oplus \mathbb{N},\quad (a,b,c)\mapsto (a,b),\]
    \item $u$ denotes the morphism induced by the homomorphism
    \[\mathbb{N}\oplus \mathbb{N}\rightarrow \mathbb{N}\oplus\mathbb{N}\oplus \mathbb{Z},\quad (a,b)\mapsto (a,2b,b).\]
  \end{enumerate}
  We already showed that $p$ satisfies the universal support property. The morphism $i$ satisfies the universal support property by (\ref{5.1.4}(1)) since it is strict, and the morphism $u$ satisfies the support property by (\ref{5.4.1}) since it is Kummer log smooth. Thus by (\ref{5.1.4}(2)), the morphism $q=pui$ satisfies the universal support property.
\end{proof}
\begin{cor}\label{5.6.3}
  Under the notations and hypotheses of (\ref{5.6.1}), the morphism $gh$ satisfies the projection formula.
\end{cor}
\begin{proof}
  It follows from (\ref{5.6.2}) and (\ref{5.4.4}).
\end{proof}
\begin{prop}\label{5.6.4}
  Under the notations and hypotheses of (\ref{5.6.1}), the unit
  \[{\rm id}\stackrel{ad}\longrightarrow (gh)_*(gh)^*\]
  is an isomorphism.
\end{prop}
\begin{proof}
  To simplify the notation, we put $v=gh$. By (\ref{5.6.3}), for any object $K$ of $\mathscr{T}(X)$, the composition
  \[v_*v^*1_X\otimes_X K\stackrel{Ex}\longrightarrow v_*(v^*1_S\otimes_X v^*K)\stackrel{\sim}\longrightarrow v_*v^*K\]
  is an isomorphism, so we only need to show that the morphism
  \[1_X\stackrel{ad}\longrightarrow v_*v^*1_X\]
  in $\mathscr{T}(X)$ is an isomorphism.

  We denote by $j:U\rightarrow X$ the verticalization of $X$ via $f$. Then we have the Cartesian diagram
  \[\begin{tikzcd}
    U\arrow[d,"j"]\arrow[r,"{\rm id}"]&U\arrow[d,"j"]\\
    X\arrow[r,"v"]&X
  \end{tikzcd}\]
  of $\mathscr{S}$-schemes, and by (Htp--2), the morphism
  \begin{equation}\label{5.6.4.1}
  1_X\stackrel{ad}\longrightarrow j_*j^*1_X
  \end{equation}
  in $\mathscr{T}(X)$ is an isomorphism. In the commutative diagram
  \[\begin{tikzcd}
    1_X\arrow[r,"ad"]\arrow[d,"ad"]&v_*v^*1_X\arrow[d,"ad"]\\
    j_*j^*1_X\arrow[r,"ad"]&v_*j_*j^*v^*1_X
  \end{tikzcd}\]
  of functors, the vertical arrows are isomorphisms since the morphism (\ref{5.6.4.1}) is an isomorphism and $v^*1_X\cong 1_X$. The lower horizontal arrow is an isomorphism since $vj=j$. Thus the upper horizontal arrow is an isomorphism, which completes the proof.
\end{proof}
\begin{prop}\label{5.6.6}
  Under the notations and hypotheses of (\ref{5.6.1}), the morphism $f$ satisfies the semi-support property.
\end{prop}
\begin{proof}
  By (\ref{5.5.9}), the morphism $fg$ satisfies the semi-universal support property. The morphism $h$ satisfies the semi-universal support property by (\ref{5.1.4}(1)) since it is strict, so by (\ref{5.1.4}(2)), the morphism $fgh$ satisfies the semi-universal support property. Then by (\ref{5.6.2}) and (\ref{5.6.4}), the morphism $gh:X\rightarrow X$ and $f:X\rightarrow S$ satisfy the condition of (\ref{5.4.6}), so (loc.\ cit) implies that $f$ satisfies the semi-universal support property.
\end{proof}
\subsection{Proof of the support property}\label{section5.6}
\begin{none}\label{5.8.1}
  Throughout this subsection, we assume (Htp--4) and the axiom (ii) of (\ref{2.9.1}) for $\mathscr{T}$.
\end{none}
\begin{prop}\label{5.8.2}
  Let $S$ be an $\mathscr{S}$-scheme with an fs chart $\mathbb{N}$. Then the quotient morphism $f:S\rightarrow \underline{S}$ satisfies the support property.
\end{prop}
\begin{proof}
  We have the factorization
  \[S\stackrel{i}\rightarrow \underline{S}\times \mathbb{A}_M\stackrel{g}\rightarrow \underline{S}\times \mathbb{P}^1\stackrel{p}\rightarrow \underline{S}\]
  where
  \begin{enumerate}[(i)]
    \item $i$ denotes the strict closed immersion induced by the chart $S\rightarrow \mathbb{A}_\mathbb{N}$,
    \item $p$ denotes the projection,
    \item $M$ denotes the fs monoscheme that is the gluing of ${\rm spec}\,\mathbb{N}$ and ${\rm spec}\mathbb{N}^{-1}$ along ${\rm spec}\,\mathbb{Z}$,
    \item $g$ denotes the morphism removing the log structure.
  \end{enumerate}
  Then by (sSupp), $i$ and $p$ satisfies the support property. Hence by (\ref{5.1.1}), the remaining is to show the support property for $g$. This question is Zariski local on $\underline{S}\times\mathbb{P}^1$, so we reduce to showing the support property for the morphism
  \[\underline{S}\times\mathbb{A}_\mathbb{N}\rightarrow \underline{S}\times\mathbb{A}^1\]
  removing the log structure. This is the axiom (ii) of (\ref{2.9.1}).
\end{proof}
\begin{prop}\label{5.8.3}
  Let $S$ be an $\mathscr{S}$-scheme with the trivial log structure. Then the semi-universal support property is satisfied for the projection $p:S\times\mathbb{A}_\mathbb{N}\rightarrow S$.
\end{prop}
\begin{proof}
  Let $q$ denote the morphism $S\times \mathbb{A}_\mathbb{N}\rightarrow S\times \mathbb{A}^1$ removing the log structure. By definition, we need to show that $q$ satisfies the semi-universal support property. Any pullback of $q$ via a strict morphism is the quotient morphism $X\rightarrow \overline{X}$ for some $\mathscr{S}$-scheme $X$. Thus the conclusion follows from (\ref{5.8.2}).
\end{proof}
\begin{prop}\label{5.8.4}
  Let $f:X\rightarrow S$ be a morphism of $\mathscr{S}$-schemes, and assume that $S$ has the trivial log structure. Then $f$ satisfies the semi-universal support property.
\end{prop}
\begin{proof}
  By (\ref{5.1.4}(3)) and (Htp--3), the question is dividing local on $X$. Hence by \cite[11.1.9]{CLS11}, we may assume that $X$ has an fs chart $\mathbb{N}^r$. Then $p$ has a factorization
  \[X\stackrel{i}\longrightarrow \underline{X}\times\mathbb{A}_{\mathbb{N}^r}\stackrel{\mathbb{A}_{\theta_r}}\longrightarrow \underline{X}\times\mathbb{A}_{\mathbb{N}^{r-1}}\stackrel{\mathbb{A}_{\theta_{r-1}}}\longrightarrow \cdots \rightarrow \underline{X}\times\mathbb{A}_\mathbb{N}\stackrel{q}\longrightarrow \underline{X}\stackrel{\underline{f}}\longrightarrow S\]
  where
  \begin{enumerate}[(i)]
    \item $i$ denotes the morphism induced by the chart $S\rightarrow \mathbb{A}_{\mathbb{N}^r}$,
    \item $\theta_s:\mathbb{N}^{s-1}\rightarrow \mathbb{N}^{s}$ denotes the homomorphism
    \[(a_1,\ldots,a_{s-2},a_{s-1})\mapsto (a_1,\ldots,a_{s-2},a_{s-1},a_{s-1}),\]
    \item $q$ denotes the projection.
  \end{enumerate}
  The morphism $i$ and $\underline{f}$ satisfy the semi-universal support property by (\ref{5.1.4}(1)) since they are strict, and the morphisms $\mathbb{A}_{\theta_s}$ for $s=2,\ldots,r$ satisfy the universal support property by (\ref{5.5.9}). Thus the conclusion follows from the (\ref{5.8.3}) and (\ref{5.1.4}(2)).
\end{proof}
\begin{thm}\label{5.8.5}
  The support property holds for $\mathscr{T}$.
\end{thm}
\begin{proof}
  Let $f:X\rightarrow S$ be a proper morphism of $\mathscr{S}$-schemes. Consider the commutative diagram
  \[\begin{tikzcd}
    X\arrow[r,"f"]\arrow[d,"q"]&S\arrow[d,"p"]\\
    \underline{X}\arrow[r,"\underline{f}"]&\underline{S}
  \end{tikzcd}\]
  of $\mathscr{S}$-schemes where $p$ and $q$ denote the morphisms removing the log structures. Then $p$ and $q$ satisfy the semi-universal support property by (\ref{5.8.4}), and $\underline{f}$ satisfies the semi-universal support property by (\ref{5.1.4}(1)) since it is strict. Thus the composition $\underline{f}q$ satisfies the semi-support property by (\ref{5.1.4}(2)), and then $f$ satisfies the semi-universal support property by (\ref{5.4.2}).
\end{proof}
\section{Base change and homotopy properties}
\subsection{Homotopy property 5}
\begin{none}\label{6.1.1}
  Let $f:X\rightarrow S$ be a morphism of $\mathscr{S}$-schemes. In this subsection, we often consider the following conditions:
  \begin{enumerate}[(i)]
    \item the morphism $\underline{f}:\underline{X}\rightarrow \underline{S}$ of underlying schemes is an isomorphism,
    \item the induced homomorphism $\overline{\mathcal{M}}_{X,\overline{x}}^{\rm gp}\rightarrow \overline{\mathcal{M}}_{S,\overline{s}}^{\rm gp}$ is an isomorphism.
  \end{enumerate}
\end{none}
\begin{prop}\label{6.1.2}
  Consider the coCartesian diagram
  \[\begin{tikzcd}
    P\arrow[d,"\theta"]\arrow[r,"\eta"]&P'\arrow[d,"\theta'"]\\
    Q\arrow[r,"\eta'"]&Q'
  \end{tikzcd}\]
  of sharp fs monoids such that
  \begin{enumerate}[(i)]
    \item $\theta'^{\rm gp}$ is an isomorphism,
    \item if $F$ is a face of $P'$ such that $F\cap \eta(P)=\langle 0\rangle$, then $\theta'(F)$ is a face of $Q'$,
    \item if $G$ is a face of $Q'$ such that $G\cap \eta'(Q)=\langle 0\rangle$, then $G=\theta'(F)$ for some face $F$ of $P'$.
  \end{enumerate}
  Then the induced morphism
  \[f:\mathbb{A}_{(Q',(\eta'(Q^+)))}\rightarrow \mathbb{A}_{(P',(\lambda(P^+)))}\]
  satisfies the conditions (i) and (ii) of (\ref{6.1.1}).
\end{prop}
\begin{proof}
  By \cite[I.3.3.4]{Ogu17}, $\mathbb{A}_{(P',(\lambda(P^+)))}$ has the stratification
  \[\bigcup_F (\mathbb{A}_{F^*}\times {\rm pt}_{P'/F})\]
  for face $F$ of $P'$ such that $F\cap \eta(P)=\langle 0\rangle$. Similarly, $\mathbb{A}_{(Q',(\lambda'(Q^+)))}$ has the stratification
  \[\bigcup_G (\mathbb{A}_{G^*}\times {\rm pt}_{Q'/G})\]
  for face $Q$ of $Q'$ such that $G\cap \eta(Q)=\langle 0\rangle$.

  Thus by assumption, $f$ is a union of the morphisms
  \[f_F:\mathbb{A}_{{\theta'(F)}^*}\times{\rm pt}_{Q'/\theta'(F)}\rightarrow \mathbb{A}_{F^*}\times {\rm pt}_{P/F}.\]
  This satisfies the condition (i) and (ii) of (\ref{6.1.1}) because $\theta'^{\rm gp}$ is an isomorphism. Then the conclusion follows from \cite[IV.18.12.6]{EGA}.
\end{proof}
\begin{prop}\label{6.1.3}
  Let $f:X\rightarrow S$ be a morphism of $\mathscr{S}$-schemes satisfying the conditions (i) and (ii) of (\ref{6.1.1}). Then the unit
  \[{\rm id}\stackrel{ad}\longrightarrow f_*f^*\]
  is an isomorphism.
  \end{prop}
\begin{proof}
  (I) {\it Locality of the question.} The question is strict \'etale local on $S$, so we may assume that $S$ has an fs chart. Since $\underline{f}$ is an isomorphism, the question is also strict \'etale local on $X$, so we may assume that $X$ has an fs chart.

  Let $i:Z\rightarrow S$ be a closed immersion, and let $j:U\rightarrow S$ denote its complement. By (Loc) and (\ref{2.6.6}), we reduce to the question for $X\times_S Z\rightarrow Z$ and $X\times_S U\rightarrow U$. Hence by the proof of \cite[3.5(ii)]{Ols03}, we reduce to the case when $S$ has a constant log structure. Since $\underline{f}$ is an isomorphism, we can do the same method for $X$, and by \cite[3.5(ii)]{Ols03}, we reduce to the case when $X$ has a constant log structure. Hence we reduce to the case when $f$ is the morphism
  \[\underline{S}\times{\rm pt}_Q\rightarrow \underline{S}\times {\rm pt}_P\]
  induced by a homomorphism $\theta:P\rightarrow Q$ of sharp fs monoids. By assumption, $\theta^{\rm gp}$ is an isomorphism.\\[4pt]
  (II) {\it Induction.} We will use an induction on $n={\rm dim}\,P$. If $n=1$, then we are done since $P=Q$, so we may assume $n>1$.\\[4pt]
  (III) {\it Reduction to the case when $Q=(P+\langle a\rangle )^{\rm sat}$.} Choose generators $a_1,\ldots,a_m$ of $Q$. Then consider the homomorphisms
  \[P\longrightarrow (P+\langle a_1\rangle)^{\rm sat}\longrightarrow \cdots \longrightarrow (P+\langle a_1,\ldots,a_m\rangle)^{\rm sat}\]
  of sharp fs monoids. If we show the question for each morphism \[(P+\langle a_1,\ldots,a_i\rangle)^{\rm sat} \longrightarrow (P+\langle a_1,\ldots,a_{i+1}\rangle)^{\rm sat},\] then we are done, so we reduce to the case when $Q=(P+\langle a\rangle )^{\rm sat}$ for some $a\in P^{\rm gp}$.\\[4pt]
  (IV) {\it Construction of fans.} We put
  \[C=P_\mathbb{Q},\quad D=Q_\mathbb{Q},\]
  and consider the dual cones $C^\vee$ and $D^\vee$. Choose a point $v$ in the interior of $D^\vee$. We triangulate $D^\vee$, and then we triangulate $C^\vee$ such that the triangulations are compatible.

  Let $\{\Delta_i\}_{i\in I}$ denote the set of $(n-1)$-simplexes of the triangulation $C^\vee$ contained in the boundary of $C^\vee$. We put
  \[C_i^\vee=\Delta_i+\langle v\rangle,\quad D_i^\vee=C_i^\vee\cap D^\vee,\]
  and we denote by $C_i$ and $D_i$ the dual cones of $C_i$ and $D_i$ respectively. Now we put
  \[P_i=C_i\cup P^{\rm gp},\quad Q_i=D_i\cap P^{\rm gp},\quad H=(\langle v\rangle)^{\bot},\quad r_i=\Delta_i^\bot.\]
  Then $r_i$ is a ray of $C$ since $\Delta_i$ is an $(n-1)$-simplex, and $H$ is an $(n-1)$-hyperplane such that $H\cap C=H\cap D=\langle 0\rangle$ since $v$ is in the interior of $D^\vee$. For each $i\in I$, we have the following two cases: $C_i\neq D_i$ or $C_i=D_i$.

  If $C_i\neq D_i$, then
  \[C_i=\langle b_1,\ldots,b_{n-1},r_i\rangle,\quad D_i=\langle b_1,\ldots,b_{n-1},a\rangle\]
  for some $b_1,\ldots,b_{n-1}\in H$. Since $H\cap C=H\cap D=\langle 0\rangle$, if $F$ (resp.\ $G$) is a face of $C_i$ (resp.\ $D_i$), then
  \[F\cap C=\langle 0\rangle \;\;\Leftrightarrow \;\;F\subset \langle b_1,\ldots,b_{n-1}\rangle,\]
  \[G\cap D=\langle 0\rangle \;\;\Leftrightarrow \;\;G\subset \langle b_1,\ldots,b_{n-1}\rangle.\]
  Thus the coCartesian diagram
  \[\begin{tikzcd}
    P\arrow[r]\arrow[d]&P_i\arrow[d]\\
    Q\arrow[r]&Q_i
  \end{tikzcd}\]
  satisfies the condition of (\ref{6.1.2}), so by (loc.\ cit), the induced morphism
  \begin{equation}\label{6.1.3.1}
  S\times \mathbb{A}_{Q_i}\times_{\mathbb{A}_Q}{\rm pt}_Q\rightarrow S\times \mathbb{A}_{P_i}\times_{\mathbb{A}_P}{\rm pt}_P
  \end{equation}
  satisfies the conditions (i) and (ii) of (\ref{6.1.1}).

  If $C_i=D_i$, then $C_i=D_i=\langle b_1,\ldots,b_{n-1},r_i\rangle$ for some $b_1,\ldots,b_{n-1}\in H$, and we can similarly show that (\ref{6.1.3.1}) satisfies the conditions (i) and (ii) of (\ref{6.1.1}).

  Let $M$ (resp.\ $N$) denote the fs monoscheme that is a gluing of ${\rm spec}\,P_i$ (resp.\ ${\rm spec}\,Q_i$) for $i\in I$. Then consider the induced commutative diagram
  \[\begin{tikzcd}
    \underline{S}\times\mathbb{A}_N\times_{\mathbb{A}_Q}{\rm pt}_Q\arrow[r,"f'"]\arrow[d,"g'"]&\underline{S}\times\mathbb{A}_M\times_{\mathbb{A}_P}{\rm pt}_P\arrow[d,"g"]\\
    \underline{S}\times {\rm pt}_Q\arrow[r,"f"]&\underline{S}\times {\rm pt}_P
  \end{tikzcd}\]
  of $\mathscr{S}$-schemes. We have shown that $f'$ satisfies the conditions (i) and (ii) of (loc.\ cit).

  Consider the commutative diagram
  \[\begin{tikzcd}
    {\rm id}\arrow[rr,"ad"]\arrow[d,"ad"]&&f_*f^*\arrow[d,"ad"]\\
    g_*g^*\arrow[r,"ad"]&g_*f_*'f'^*g^*\arrow[r,"\sim"]&f_*g_*'g'^*f^*
  \end{tikzcd}\]
  of functors. The vertical arrows are isomorphisms by (Htp--4) since $g$ and $g'$ are dividing covers. Thus the question for $f$ reduces to the question for $f'$.

  Then using Mayer-Vietoris triangle, by induction on ${\rm dim}\,P$, we reduce to the questions for $(P,Q)=(P_i,Q_i)$ for $i\in I$. In particular, we may assume
  \[(P_i)_\mathbb{Q}=\langle b_1,\ldots,b_{r-1},b_r\rangle,\quad (Q_i)_\mathbb{Q}=\langle b_1,\ldots,b_{r-1},a\rangle.\]\\[4pt]
  (V) {\it Final step of the proof.} We put
  \[F=\langle b_1\rangle\cap P,\quad G=\langle b_1\rangle \cap  Q,\quad P'=\langle b_2,\ldots,b_r\rangle \cap P,\quad Q'=\langle b_2,\ldots,b_{r-1},a\rangle \cap Q.\]
  Consider the commutative diagram
  \[\begin{tikzcd}
    \underline{S}\times{\rm pt}_Q\arrow[r,"i'"]\arrow[d,"f"]&\underline{S}\times\mathbb{A}_{(Q,Q-G)}\arrow[d,"f'"]\arrow[rr,"j'",leftarrow]\arrow[rdd,"q",bend left,near start]&&\underline{S}\times(\mathbb{A}_{(Q,Q-G)}-{\rm pt}_Q)\arrow[d,"f''"]\\
    \underline{S}\times{\rm pt}_P\arrow[r,"i"]&\underline{S}\times\mathbb{A}_{(P,P-F)}\arrow[rr,"j",leftarrow,crossing over]\arrow[d,"p"]&&\underline{S}\times(\mathbb{A}_{(P,P-F)}-{\rm pt}_P)\\
    &\underline{S}\times{\rm pt}_{P'}&\underline{S}\times{\rm pt}_{Q'}
  \end{tikzcd}\]
  where
  \begin{enumerate}[(i)]
    \item $p$ denotes the morphism induced by the inclusion $P'\rightarrow P$,
    \item $j$ denotes the complement of $i$,
    \item each square is Cartesian.
  \end{enumerate}
  Then $j$ (resp.\ $j'$) is the verticalization of $\underline{S}\times \mathbb{A}_{(P,P-F)}$ (resp.\ $\underline{S}\times\mathbb{A}_{(Q,Q-G)}$) via $p$ (resp.\ $f'p$). Thus by (Htp--2), the natural transformations
  \[p^*\stackrel{ad}\longrightarrow j_*j^*p^*,\quad q^*\stackrel{ad}\longrightarrow j_*'j'^*q^*\]
  are isomorphisms. From the commutative diagram
  \[\begin{tikzcd}
    p^*\arrow[rr,"ad"]\arrow[d,"ad","\sim"']&&f_*'f'^*p^*\arrow[d,"\sim"]\\
    j_*j^*p^*\arrow[r,"ad","\sim"']&j_*f_*''f''^*j^*p^*\arrow[r,"\sim"]&p_*'j_*'j'^*f'^*p^*
  \end{tikzcd}\]
  of functors, we see that the upper horizontal arrow is an isomorphism.

  In the commutative diagram
  \[\begin{tikzcd}
    i^*p^*\arrow[r,"ad","\sim"']\arrow[rrd,"ad"']&i^*f_*'f'^*p^*\arrow[r,"Ex"]&f_*i'^*f'^*p^*\arrow[d,"\sim"]\\
    &&f_*f^*i^*p^*
  \end{tikzcd}\]
  of functors, the upper right horizontal arrow is an isomorphism by (\ref{2.6.6}). Thus the diagonal arrow is an isomorphism. In particular, the morpism
  \[1_S\stackrel{ad}\longrightarrow f_*f^*1_S\]
  in $\mathscr{T}_S$ is an isomorphism.

  For any object $K$ of $\mathscr{T}_S$, we have the commutative diagram
  \[\begin{tikzcd}
    K\arrow[r,"\sim"]\arrow[rrd,"ad"']&1_S\otimes K\arrow[r,"ad","\sim"']&f_*f^*1_S\otimes K\arrow[d,"Ex"]\\
    &&f_*f^*K
  \end{tikzcd}\]
  in $\mathscr{T}_S$. By (PF), the right vertical arrow is an isomorphism. Thus the diagonal arrow is also an isomorphism, which completes the proof.
\end{proof}
\begin{none}\label{6.1.4}
  So far, we have proven the half of (Htp--5). In the remaining, we will first prove a few lemmas. Then we will prove (Htp--5).
\end{none}
\begin{lemma}\label{6.1.5}
  Let $\theta:P\rightarrow Q$ be a homomorphism of fs sharp monoids such that $\theta^{\rm gp}:P^{\rm gp}\rightarrow Q^{\rm gp}$ is an isomorphism, and let $\eta':Q\rightarrow Q'$ be a homomorphism of fs monoids such that $\overline{\eta}':Q\rightarrow \overline{Q'}$ is Kummer. Then there is a coCartesian diagram
  \[\begin{tikzcd}
    P\arrow[d,"\eta"]\arrow[r,"\theta"]&Q\arrow[d,"\eta'"]\\
    P'\arrow[r]&Q'
  \end{tikzcd}\]
  of fs monoids.
\end{lemma}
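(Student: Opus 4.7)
The approach is to take $P' := Q'$, define $\eta := \eta' \circ \theta : P \to P'$, and let the map $P' \to Q'$ be the identity. The square then commutes tautologically, since both compositions $P \to Q'$ equal $\eta' \circ \theta$. The content of the lemma is to verify this choice is a pushout in fs monoids.

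I will verify the universal property directly. Suppose $R$ is an fs monoid with homomorphisms $\alpha : Q \to R$ and $\beta : Q' \to R$ satisfying the cocone condition $\alpha \circ \theta = \beta \circ \eta$. The only candidate for the induced map is $\gamma := \beta : Q' \to R$; then $\gamma \circ \mathrm{id}_{Q'} = \beta$ is immediate, so the nontrivial step is to show $\gamma \circ \eta' = \alpha$ as maps $Q \to R$. The hypothesis gives this equality only on $\theta(P) \subseteq Q$. To propagate it to all of $Q$, pass to group envelopes: the homomorphisms $\alpha^{\mathrm{gp}}, (\gamma \circ \eta')^{\mathrm{gp}} : Q^{\mathrm{gp}} \to R^{\mathrm{gp}}$ agree on $\theta^{\mathrm{gp}}(P^{\mathrm{gp}})$, which equals $Q^{\mathrm{gp}}$ because $\theta^{\mathrm{gp}}$ is an isomorphism by hypothesis; hence $\alpha^{\mathrm{gp}} = (\gamma \circ \eta')^{\mathrm{gp}}$. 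Restricting to $Q \hookrightarrow Q^{\mathrm{gp}}$ (valid since $Q$ is integral) and observing that $R$ being fs is in particular fine so that $R \hookrightarrow R^{\mathrm{gp}}$, we conclude $\alpha = \gamma \circ \eta'$ on $Q$. Uniqueness of $\gamma$ is forced because $\gamma = \beta$ was the only possibility.

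The main potential obstacle: this construction is essentially trivial, and notably the Kummer hypothesis on $\overline{\eta'}$ is not used. This suggests either that the lemma is being recorded as an existence statement for streamlining a later reference, or that the downstream use tacitly requires $P'$ to satisfy additional properties (for instance, $P'$ sharp, or $\eta$ Kummer) that are not visible from the statement. In the latter scenario, a more delicate construction is required: one would take $P' \subseteq Q'^{\mathrm{gp}}$ to be the saturation of $\eta'(\theta(P)) + (Q')^*$ inside $Q'^{\mathrm{gp}}$ and verify the pushout $Q \oplus_P^{\mathrm{fs}} P'$ recovers $Q'$ as the saturation of $\eta'(Q) + P'$; here the Kummer hypothesis enters crucially, since one uses that $\overline{Q'}^{\mathrm{gp}} / \overline{\eta'}(Q^{\mathrm{gp}})$ is torsion to produce, for each $q' \in Q'$, an integer $n \geq 1$ and $q \in Q$, $u \in (Q')^*$ with $nq' = \eta'(q) + u \in \eta'(Q) + P'$.
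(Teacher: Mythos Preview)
Your direct verification is correct for the statement as literally written, and you rightly flag that the Kummer hypothesis on $\overline{\eta}'$ plays no role in it. The paper, however, takes precisely the second route you sketch: it defines $P'$ as the submonoid of $Q'$ consisting of those $p'$ with $np' \in \eta'\theta(P) + (Q')^*$ for some $n\in\mathbb{N}^+$, and then shows $P'^{\rm gp}=Q'^{\rm gp}$ (this is where Kummer is used), which by the explicit description of fs pushouts yields $Q\oplus_P^{\rm fs} P' \cong Q'$.

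The reason for the extra work is that the paper's $P'$ satisfies, by construction, that $\overline{\eta}:P\to\overline{P'}$ is Kummer, and this property---not recorded in the lemma statement---is exactly what the application in (\ref{6.1.8}) invokes. Your trivial choice $P'=Q'$ does not have it in general: take $Q=\mathbb{N}^2$, $P=\{(x,y)\in\mathbb{Z}^2:0\le y\le 2x\}$ (a sharp saturated submonoid with $P^{\rm gp}=Q^{\rm gp}$), and $\eta'={\rm id}$; then $(0,1)\in\overline{P'}=\mathbb{N}^2$ has no positive multiple in the image of $P$. So while your argument proves the lemma as stated, it would not serve the downstream purpose; your closing paragraph correctly anticipates both the issue and its resolution.
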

\begin{proof}
  Let $P'$ denote the submonoid of $Q'$ consisting of elements $p'\in Q'$ such that $np'\in P+Q'^*$ for some $n\in \mathbb{N}^+$. Since $\overline{\eta}'$ is Kummer, by the construction of pushout in the category of fs monoids, it suffices to verify $P'^{\rm gp}=Q'^{\rm gp}$ to show that the above diagram is coCartesian.

  Let $q'\in Q'$ be an element. Since $\overline{\eta}'$ is Kummer, we can choose $m\in \mathbb{N}^+$ such that $mq'=q+q''$ for some $q\in Q$ and $q''\in Q'^*$. We put $r={\rm dim}\,P$, and choose $r$ linearly independent elements $p_1,\ldots,p_r\in P$ over $\mathbb{Q}$. Then let $(a_1,\ldots,a_r)$ denote the coordinate of $q\in Q$ according to the basis $\{p_1,\ldots,p_r\}$.

  Choose $b_1,\ldots,b_r\in \mathbb{N}^+$ such that $a_1+mb_1,\ldots,a_r+mb_r>0$, and we put $p=(b_1,\ldots,b_r)\in P$. Then
  \[q+mp=(a_1+mb_1,\ldots,a_r+mb_r)\in P,\]
  so $m(q'+p)=q+mp+q''\in P+Q'^*$. Thus $q'+p\in P'$, so $q'\in P'^{\rm gp}$. This proves $P'^{\rm gp}=Q'^{\rm gp}$.
\end{proof}
\begin{lemma}\label{6.1.6}
  Let $P$ be a sharp fs monoid, and let $\eta:P\rightarrow P'$ be a homomorhpism of fs monoids such that $\overline{\eta}:P\rightarrow \overline{P'}$ is Kummer. We denote by $I$ the ideal of $P'$ generated by $\eta(P^+)$. Then the induced morphism
  \[\mathbb{A}_{(P',P'^+)}\rightarrow \mathbb{A}_{(P',I)}\]
  is a bijective strict closed immersion.
\end{lemma}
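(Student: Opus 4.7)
The plan is to unwind the definitions and then reduce the statement to a direct computation in the monoid algebra $\mathbb{Z}[P']$. Recall that $\mathbb{A}_{(P',K)}$ is by definition the strict closed subscheme of $\mathbb{A}_{P'}$ cut out by the ideal $\mathbb{Z}[K]\subset\mathbb{Z}[P']$, so it carries the log structure pulled back from $\mathbb{A}_{P'}$. In particular, once we check the inclusion $I\subseteq P'^+$ of ideals of $P'$, we automatically obtain a factorization
\[\mathbb{A}_{(P',P'^+)}\longrightarrow \mathbb{A}_{(P',I)}\longrightarrow \mathbb{A}_{P'}\]
of strict closed immersions, and the first arrow is strict because both log structures are pullbacks of the same log structure on $\mathbb{A}_{P'}$. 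The inclusion $I\subseteq P'^+$ follows from the fact that $P$ is sharp and $\overline{\eta}:P\to\overline{P'}$ is Kummer (hence injective): if $p\in P^+$, then $\overline{\eta}(p)\neq 0$ in $\overline{P'}$, so $\eta(p)\notin P'^*$, i.e.\ $\eta(p)\in P'^+$, so the ideal $I$ it generates lies in $P'^+$.

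It then remains to verify bijectivity on underlying topological spaces. Equivalently, I must show that the kernel of the surjection $\mathbb{Z}[P']/\mathbb{Z}[I]\twoheadrightarrow \mathbb{Z}[P']/\mathbb{Z}[P'^+]$ is contained in the nilradical. This kernel is spanned as a $\mathbb{Z}$-module by the monomials $e^{p'}$ for $p'\in P'^+\setminus I$, so it suffices to prove that for every $p'\in P'^+$ the element $e^{p'}$ is nilpotent modulo $\mathbb{Z}[I]$ (sums of nilpotents are nilpotent in a commutative ring, so the kernel then lies in the nilradical).

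For this last step I use the hypothesis that $\overline{\eta}$ is Kummer. Given $p'\in P'^+$, its image $\overline{p'}\in\overline{P'}$ is nonzero, and by the Kummer property there exists $n\in\mathbb{N}^+$ and $p\in P$ with $n\overline{p'}=\overline{\eta}(p)$. Since $\overline{p'}\neq 0$, $\overline{\eta}(p)\neq 0$, so $p\in P^+$ (using sharpness of $P$). Lifting from $\overline{P'}$ to $P'$ gives $np'=\eta(p)+u$ for some unit $u\in P'^*$; since $\eta(p)\in\eta(P^+)\subseteq I$ and $I$ is an ideal of the monoid $P'$ with $u\in P'$, we conclude $np'\in I$. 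Therefore $e^{np'}=(e^{p'})^n\in\mathbb{Z}[I]$, which is the nilpotence we wanted.

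The whole argument is short and the only mild subtlety is the lifting step in the last paragraph, where one must pass from the equality $n\overline{p'}=\overline{\eta}(p)$ in $\overline{P'}$ to a relation $np'=\eta(p)+u$ with $u\in P'^*$ (not merely $u\in (P'^*)^{\rm gp}$) — but this is automatic since the quotient $P'\to\overline{P'}$ has fibers that are precisely cosets of $P'^*$. Everything else is formal bookkeeping with the definitions of $\mathbb{A}_{(P',K)}$, strict closed immersions and the monoid algebra.
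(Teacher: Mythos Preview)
Your argument is correct and follows essentially the same route as the paper: the key step in both is that for every $p'\in P'^+$ there exists $n\in\mathbb{N}^+$ with $np'\in I$, obtained from the Kummer hypothesis on $\overline{\eta}$, which forces $\mathbb{Z}[P'^+]$ and $\mathbb{Z}[I]$ to have the same radical in $\mathbb{Z}[P']$. Your write-up is somewhat more explicit than the paper's (you spell out why $I\subseteq P'^+$ and why the map is a strict closed immersion, which the paper leaves implicit), but the mathematical content is the same.
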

\begin{proof}
  For any element $p'\in P'^+$, for some $n\in \mathbb{N}^+$, we have $np'\in I$ since $\overline{\eta}$ is Kummer. Let $\mathfrak{m}$ (resp.\ $\mathfrak{n}$) denote the ideal of $\mathbb{Z}[P']$ induced by $P'^+$ (resp.\ $I$). Then by the above argument, for some $m\in \mathbb{N}^+$, we have $\mathfrak{n}^m\subset \mathfrak{m}$. This implies the assertion.
\end{proof}
\begin{none}\label{6.1.7}
  Let $\theta:P\rightarrow Q$ be a homomorphism of sharp fs monoids such that $\theta^{\rm gp}$ is an isomorphism, and consider a coCartesian diagram
  \[\begin{tikzcd}
    P\arrow[d,"\eta"]\arrow[r,"\theta"]&Q\arrow[d,"\eta'"]\\
    P'\arrow[r,"\theta'"]&Q'
  \end{tikzcd}\]
  of fs monoids where $\overline{\eta}:P\rightarrow \overline{P'}$ is Kummer. Note that $\eta'^{\rm gp}$ is also an isomorphism by the construction of the pushout in the category of fs monoids. Then we have the induced commutative diagram
  \[\begin{tikzcd}
    \mathbb{A}_{(Q',Q'^+)}\arrow[r]\arrow[d]&\mathbb{A}_{(P',P'^+)}\arrow[d]\\
    \mathbb{A}_{(Q',J)}\arrow[r]&\mathbb{A}_{(P',I)}
  \end{tikzcd}\]
  of schemes where $I$ (resp.\ $J$) denote the ideal of $P'$ (resp.\ $Q'$) generated by $\eta(P)$ (resp.\ $\eta'(Q)$). By (\ref{6.1.6}), the vertical arrows are bijective strict closed immersions, and the upper horizontal arrow is an isomorphism since they are isomorphic to $\mathbb{A}_{Q'^{\rm gp}}=\mathbb{A}_{P'^{\rm gp}}$. Thus the category of strict \'etale morphisms to $\mathbb{A}_{(Q,J)}$ is equivalent to that of $\mathbb{A}_{(P,I)}$ by \cite[IV.18.1.2]{EGA}.
\end{none}
\begin{prop}\label{6.1.8}
  Let $\theta:P\rightarrow Q$ be a homomorphism of sharp fs monoids such that $\theta^{\rm gp}$ is an isomorphism, and let $S$ be an $\mathscr{S}$-scheme with the trivial log structure. Consider the induced morphism $f:S\times{\rm pt}_Q\rightarrow S\times {\rm pt}_P$ of $\mathscr{S}$-schemes. Then the functor $f^*$ is essentially surjective.
\end{prop}
\begin{proof}
  By (\ref{0.5.3}), it suffices to show that for any Kummer log smooth morphism $g':Y'\rightarrow S\times {\rm pt}_Q$ with an fs chart $\eta':Q\rightarrow Q'$ of Kummer log smooth type, there is a Cartesian diagram
  \[\begin{tikzcd}
    Y'\arrow[d,"g'"]\arrow[r]&Y\arrow[d,"g"]\\
    S\times {\rm pt}_Q\arrow[r,"f"]&S\times {\rm pt}_P
  \end{tikzcd}\]
  of $\mathscr{S}$-schemes such that $g$ is Kummer log smooth. Note that $\overline{\eta'}:Q\rightarrow \overline{Q'}$ is Kummer.

  By definition (\ref{0.1.1}), we can choose a factorization
  \[Y'\rightarrow S\times {\rm pt}_Q\times_{\mathbb{A}_Q}\mathbb{A}_{Q'}\rightarrow S\]
  of $g'$ where the first arrow is strict \'etale and the second arrow is the projection. Then by (\ref{6.1.5}), there is a coCartesian diagram
  \[\begin{tikzcd}
    P\arrow[d,"\eta"]\arrow[r,"\theta"]&Q\arrow[d,"\eta'"]\\
    P'\arrow[r,"\theta'"]&Q'
  \end{tikzcd}\]
  of fs monoids such that $\overline{\eta}:P\rightarrow \overline{P'}$ is Kummer. Now we have the commutative diagram
  \[\begin{tikzcd}
    Y'\arrow[d]\\
    S\times {\rm pt}_Q\times_{\mathbb{A}_Q}\mathbb{A}_{Q'}\arrow[d]\arrow[r]&S\times{\rm pt}_P\times_{\mathbb{A}_P}\mathbb{A}_{P'}\arrow[d]\\
    S\times {\rm pt}_Q\arrow[r]&S\times {\rm pt}_P
  \end{tikzcd}\]
  of $\mathscr{S}$-schemes where the square is Cartesian. Since
  \[{\rm pt}_Q\times_{\mathbb{A}_Q}\mathbb{A}_{Q'}=\mathbb{A}_{(Q,J)},\quad {\rm pt}_P\times_{\mathbb{A}_P}\mathbb{A}_{P'}=\mathbb{A}_{(P,I)}\]
  where $I$ (resp.\ $J$) denotes the ideal of $P'$ (resp.\ $Q'$) generated by $\eta(P)$ (resp.\ $\eta'(Q)$), by (\ref{6.1.7}), there is a commutative diagram
  \[\begin{tikzcd}
    Y'\arrow[d]\arrow[r]&Y\arrow[d]\\
    S\times {\rm pt}_Q\times_{\mathbb{A}_Q}\mathbb{A}_{Q'}\arrow[d]\arrow[r]&S\times{\rm pt}_P\times_{\mathbb{A}_P}\mathbb{A}_{P'}\arrow[d]\\
    S\times {\rm pt}_Q\arrow[r]&S\times {\rm pt}_P
  \end{tikzcd}\]
  of $\mathscr{S}$-schemes where each square is Cartesian and the arrow $Y\rightarrow S\times {\rm pt}_P\times_{\mathbb{A}_P}\mathbb{A}_{P'}$ is strict \'etale. Thus we have constructed the diagram we want.
\end{proof}
\begin{thm}\label{6.1.9}
  The log motivic triangulated category $\mathscr{T}$ satisfies (Htp--5).
\end{thm}
\begin{proof}
  Let $f:X\rightarrow S$ be a morphism of $\mathscr{S}$-schemes satisfying the conditions (i) and (ii) of (\ref{6.1.1}). By (\ref{6.1.3}), it suffices to show that the counit
  \[f^*f_*\stackrel{ad'}\longrightarrow {\rm id}\]
  is an isomorphism.

  Let $\{g_i:S_i\rightarrow S\}_{i\in I}$ be a strict \'etale cover. Consider the Cartesian diagram
  \[\begin{tikzcd}
    X_i\arrow[d,"f_i"]\arrow[r,"g_i'"]&X\arrow[d,"f"]\\
    S_i\arrow[r,"g_i"]&S
  \end{tikzcd}\]
  of $\mathscr{S}$-schemes. Then we have the commutative diagram
  \[\begin{tikzcd}
    g_i'^*f^*f_*\arrow[rrd,"ad'"']\arrow[r,"\sim"]& f_i^*g_i^*f_*\arrow[r,"Ex"]&f_i^*f_{i*}g_i'^*\arrow[d,"ad'"]\\
    &&g_i'^*
  \end{tikzcd}\]
  of functors, and the upper right horizontal arrow is an isomorphism by ($eSm$-BC). Since the family of functors $\{g_i'^*\}_{i\in I}$ is conservative by (k\'et-Sep), we reduce to showing that for any $i\in I$, the counit $f_i^*f_{i*}\stackrel{ad'}\longrightarrow {\rm id}$ is an isomorphism. Using this technique, we reduce to the case when $f$ has an fs chart $\theta:P\rightarrow Q$.

  Then let $i:S'\rightarrow S$ be a strict closed immersion of $\mathscr{S}$-schemes, and let $j:S''\rightarrow S$ denote its complement. Consider the commutative diagram
  \[\begin{tikzcd}
    X'\arrow[d,"f'"]\arrow[r,"i'"]&X\arrow[d,"f"]\arrow[r,"j'",leftarrow]&X''\arrow[d,"f''"]\\
    S'\arrow[r,"i"]&S\arrow[r,"j",leftarrow]&S''
  \end{tikzcd}\]
  of $\mathscr{S}$-schemes where each square is Cartesian. Then we have the commutative diagrams
  \[\begin{tikzcd}
    i'^*f^*f_*\arrow[rrd,"ad'"']\arrow[r,"\sim"]& f'^*i^*f_*\arrow[r,"Ex"]&f'^*f_*'i'^*\arrow[d,"ad'"]\\
    &&i'^*
  \end{tikzcd}
  \quad
  \begin{tikzcd}
    j'^*f^*f_*\arrow[rrd,"ad'"']\arrow[r,"\sim"]&f''^*j^*f_*\arrow[r,"Ex"]&f''^*f_*''j'^*\arrow[d,"ad'"]\\
    &&j^*
  \end{tikzcd}\]
  of functors. As in the above paragraph, by (\ref{2.6.6}) and (Loc), we reduce to showing that the counits
  \[f'^*f_*'\stackrel{ad}\longrightarrow {\rm id},\quad f''^*f_*''\stackrel{ad}\longrightarrow {\rm id}\]
  are isomorphisms. Using this technique, by the proof of \cite[3.5(ii)]{Ols03}, we reduce to the case when $X\rightarrow S$ is the morphism $\underline{S}\times{\rm pt}_Q\rightarrow \underline{S}\times {\rm pt}_P$ induced by the homomorphism $P\rightarrow Q$. In this case, the conclusion follows from (\ref{6.1.8}) since $f^*$ is fully faithful by (\ref{6.1.3}).
\end{proof}
\subsection{Homotopy property 6}
\begin{thm}\label{6.2.1}
  The log motivic triangulated category $\mathscr{T}$ satisfies (Htp--6).
\end{thm}
\begin{proof}
  Let $S$ be an $\mathscr{S}$-scheme, and we put $X=S\times\mathbb{A}_\mathbb{N}$ and $Y=S\times {\rm pt}_\mathbb{N}$. Consider the commutative diagram
  \[\begin{tikzcd}
    S\times{\rm pt}_\mathbb{N}\arrow[r,"i'"]\arrow[d,"g'"']&S\times\mathbb{A}_\mathbb{N}\arrow[d,"f'"]\arrow[r,"j'",leftarrow]&S\times\mathbb{G}_m\arrow[d,"{\rm id}"]\\
    S\arrow[r,"i"]\arrow[rd,"{\rm id}"']&S\times\mathbb{A}^1\arrow[d,"f"]\arrow[r,"j",leftarrow]&S\times\mathbb{G}_m\\
    &S
  \end{tikzcd}\]
  of $\mathscr{S}$-schemes where
  \begin{enumerate}[(i)]
    \item each small square is Cartesian,
    \item $f$ denotes the projection, and $f'$ denotes the morphism removing the log structure.
    \item $i$ denotes the $0$-section, and $j$ denotes its complement.
  \end{enumerate}
  We want to show that the natural transformation
  \[f_*f_*'f'^*f^*\stackrel{ad}\longrightarrow f_*f_*'i_*'i'^*f'^*f^*\stackrel{\sim}\longrightarrow g_*'g'^*\]
  is an isomorphism. By (Loc), it is equivalent to showing
  \[f_*f_*'j_\sharp'j'^*f'^*f^*=0.\]
  Then by (Supp), it is equivalent to showing
  \[f_*j_\sharp j^*f^*=0.\]
  Thus by (Loc), it is equivalent to showing that the natural transformation
  \[f_*f^*\stackrel{ad}\longrightarrow f_*i_*i^*f^*\]
  is an isomorphism, which follows from (Htp--1).
\end{proof}
\begin{none}\label{6.2.2}
  Here, we give an application of (Htp--6). Let $S$ be an $\mathscr{S}$-scheme. Consider the commutative diagram
  \[\begin{tikzcd}
    S\times {\rm pt}_\mathbb{N}\arrow[r,"i_0"]\arrow[rd,"g"']&S\times \mathbb{A}_\mathbb{N}\arrow[d,"f"]\\
    &S
  \end{tikzcd}\]
  of $\mathscr{S}$-schemes where $f$ denotes the projection and $i_0$ denotes the $0$-section. Let $i_1:S\rightarrow S\times \mathbb{A}_\mathbb{N}$ denote the $1$-section. By (Htp--6), the natural transformation
  \[f_*f^*\stackrel{ad}\longrightarrow f_*i_*i^*f^*\]
  is an isomorphism, and $f^*$ is conservative since $fi_1={\rm id}$. Thus by (\ref{5.3.2}), $(fi)^*=g^*$ is conservative.
\end{none}
\subsection{Homotopy property 7}
\begin{thm}\label{6.3.1}
  Let $S$ be an $\mathscr{S}$-scheme with an fs chart $P$, let $\theta:P\rightarrow Q$ be a vertical homomorphism of exact log smooth over $S$ type, and let $G$ be a $\theta$-critical face of $Q$. We denote by
  \[f:S\times_{\mathbb{A}_P}(\mathbb{A}_Q-\mathbb{A}_{Q_G})\rightarrow S\]
  the projection. Then $f_!f^*=0$. In other words, $\mathscr{T}$ satisfies (Htp--7).
\end{thm}
\begin{proof}
  (I) {\it Locality of the question.} Note first that the question is strict \'etale local on $S$ by ($eSm$-BC).

  Let $i:Z\rightarrow S$ be a strict closed immersion of $\mathscr{S}$-schemes, and let $j:U\rightarrow S$ denote its complement. Consider the commutative diagram
  \[\begin{tikzcd}
    Z\times_{\mathbb{A}_P}(\mathbb{A}_Q-\mathbb{A}_{Q_G})\arrow[d,"g"]\arrow[r,"i'"]&S\times_{\mathbb{A}_P}(\mathbb{A}_Q-\mathbb{A}_{Q_G})\arrow[d,"f"]\arrow[r,"j'",leftarrow]& U\times_{\mathbb{A}_P}(\mathbb{A}_Q-\mathbb{A}_{Q_G})\arrow[d,"h"]\\
    Z\arrow[r,"i"]&S\arrow[r,"j",leftarrow]&U
  \end{tikzcd}\]
  of $\mathscr{S}$-schemes where each square is Cartesian. Then by (BC--3), we reduce to showing $g_!g^*=0$ and $h_!h^*=0$. By the proof of \cite[3.5(ii)]{Ols03}, we reduce to the case when $S$ has a constant log structure.\\[4pt]
  (II) {\it Reduction of $G$.} Let $G_1$ be a maximal $\theta$-critical face of $Q$ containing $G$. Consider the induced commutative diagram
  \[\begin{tikzcd}
    S\times_{\mathbb{A}_P}(\mathbb{A}_Q-\mathbb{A}_{Q_G})\arrow[rd,"f"']\arrow[r]&S\times_{\mathbb{A}_P}(\mathbb{A}_Q-\mathbb{A}_{Q_{G_1}})\arrow[d,"f'"]\arrow[r,leftarrow]& S\times_{\mathbb{A}_P}(\mathbb{A}_{Q_G}-\mathbb{A}_{Q_{G_1}})\arrow[ld,"f''"]\\
    &S
  \end{tikzcd}\]
  of $\mathscr{S}$-schemes. By (Loc), to show $f_!f^*=0$, it suffices to show $f_!'f'^*=0$ and $f_!''f''^*=0$. Hence replacing $f$ by $f'$ or $f''$, we reduce to the case when $G$ is a maximal $\theta$-critical face of $Q$.\\[4pt]
  (III) {\it Reduction of $P$.} We denote by $P'$ the submonoid of $Q$ consisting of elements $q\in Q$ such that $nq\in P+Q^*$ for some $n\in \mathbb{N}^+$. The induced homomorphism $\theta':P'\rightarrow Q$ is again a vertical homomorphism of exact log smooth over $S\times_{\mathbb{A}_P}\mathbb{A}_{P'}$ type. Replacing $S$ by $S\times_{\mathbb{A}_P}\mathbb{A}_{P'}$, we reduce to the case when the cokernel of $\theta^{\rm gp}$ is torsion free and $\theta$ is logarithmic.

  Then by (\ref{0.2.2}), since the question is strict \'etale local on $S$, we may assume that $P$ is sharp and that the fs chart $S\rightarrow \mathbb{A}_P$ is neat at some point $s\in S$. Then $P$ and $Q$ are sharp, and with (I), we may further assume that the fs chart $S\rightarrow \mathbb{A}_P$ factors through ${\rm pt}_P$.\\[4pt]
  (IV) {\it Homotopy limit.} Let $G_1=G,\ldots,G_r$ denote the maximal $\theta$-critical faces of $Q$. The condition that $\theta$ is vertical implies $r\geq 2$. For any nonempty subset $I=\{i_1,\ldots,i_s\}\subset \{2,\ldots,r\}$, we put
  \[G_I=G_{i_1}\cap \cdots\cap G_{i_s},\]
  and we denote by
  \[f_I:S\times_{\mathbb{A}_P}\mathbb{A}_{(Q,Q-G_I)}\rightarrow S\]
  the projection. For any face $H$ of $Q$, $\mathbb{A}_H\subset \mathbb{A}_Q-\mathbb{A}_{Q_H}$ if and only if $H\neq Q,G$, which is equivalent to $H\subset G_2\cup \cdots \cup G_r$. Thus the family
  \[\{S\times_{\mathbb{A}_P}\mathbb{A}_{(Q,Q-G_{2})},\ldots,S\times_{\mathbb{A}_P}\mathbb{A}_{(Q,Q-G_{r})}\}\]
  forms a closed cover of $S\times_{\mathbb{A}_P}(\mathbb{A}_Q-\mathbb{A}_{Q_G})$, so for any object $K$ of $\mathscr{T}(S)$, $f_*f^*K$ is the homotopy limit of the \v{C}ech-type sequence
  \[\bigoplus_{|I|=1,|I|\subset \{2,\ldots,r\}} f_{I*}f_I^*K\longrightarrow \cdots \longrightarrow \bigoplus_{|I|=r-1,|I|\subset \{2,\ldots,r\}} f_{I*}f_I^*K\]
  in $\mathscr{T}(S)$. Hence we reduce to showing $f_{I*}f_I^*K=0$ for any nonempty subset $I\subset \{2,\ldots,r\}$. This is proved in (\ref{6.3.1}) below.
\end{proof}
\begin{lemma}\label{6.3.2}
  Let $S$ be an $\mathscr{S}$-scheme with a constant log structure $S\rightarrow {\rm pt}_P$ where $P$ is a sharp fs monoid, let $\theta:P\rightarrow Q$ be a homomorphism of exact log smooth over $S$ type, and let $G$ be a $\theta$-critical face of $Q$ such that $G\neq Q^*$. We denote by
  \[f:S\times_{\mathbb{A}_P}\mathbb{A}_{(Q,Q-G)}\rightarrow S\]
  the projection. Then $f_!f^*=0$.
\end{lemma}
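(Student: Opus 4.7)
The plan is to prove $f_!f^*=0$ by combining local reductions parallel to those of Theorem~\ref{6.3.1} with an induction on the rank $d$ of $\overline{G}=G/Q^*$. The base case will be handled via (Htp--6) together with the explicit compactification machinery of \S\ref{section5.6} (in particular Proposition~\ref{5.6.6} and the Poincar\'e duality \ref{5.5.7}).

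First, I would run the locality reductions of Theorem~\ref{6.3.1}: by (s\'et-Sep) and the compatibility of $f_!f^*$ with strict \'etale base change (via ($eSm$-BC) and (Supp), now available by Theorem~\ref{5.8.5}), the question is strict \'etale local on $S$, and by (Loc) together with (\ref{2.6.6}) it is Mayer--Vietoris local in $S$. Then, via the change-of-charts results of \S3.2 combined with a base change along a suitable proper birational morphism $\mathbb{A}_{P'}\to\mathbb{A}_P$, I would reduce to the case where $\theta$ is logarithmic with torsion-free cokernel (parallel to step (III) of Theorem~\ref{6.3.1}); under these reductions the underlying scheme of $X$ becomes the product $\underline{S}\times\mathbb{A}_G$.

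Next, induct on $d=\mathrm{rank}\,\overline{G}$. The case $d=0$ (i.e.\ $G=Q^*$) is excluded by hypothesis, hence vacuous. For $d\geq 1$, pick a ray of $\overline{G}$, lift it to $g\in G\setminus Q^*$, and form the rank-one sub-face $G'=\langle Q^*,g\rangle\subsetneq G$. The closure of the $G'$-stratum inside $\mathbb{A}_{(Q,Q-G)}$ is the strict closed subscheme $\mathbb{A}_{(Q,Q-G')}$, and its open complement is a union of strata for faces $F\subsetneq G$ with $F\not\subset G'$. Every such $F$ is automatically $\theta$-critical (since any face contained in a $\theta$-critical face is $\theta$-critical) and has strictly smaller rank in $\overline{Q}$ than $G$, so the inductive hypothesis applies to the open complement; by (Loc), the vanishing on $X$ reduces to the analogous vanishing for the rank-one face $G'$.

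For this rank-one base case, the morphism $f':S\times_{\mathbb{A}_P}\mathbb{A}_{(Q,Q-G')}\to S$ admits an $\mathbb{A}_\mathbb{N}$-direction coming from $g\in G'$, and the plan is to invoke (Htp--6) (which yields $\widetilde{f}_*j_\sharp j^*\widetilde{f}^*=0$ for the auxiliary projection $\widetilde{f}:S\times\mathbb{A}_\mathbb{N}\to S$) after factoring $f'$ through a compactification in the spirit of \S\ref{section5.6}, then transferring the vanishing from $f'_*$-type expressions to $f'_!f'^*$ using (Supp), the projection formula (\ref{5.4.4}), and the Poincar\'e duality (\ref{5.5.7}). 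The main obstacle is this transfer step: $\mathbb{A}_{(Q,Q-G')}$ is not naturally a product of $\mathbb{A}_{G'}$ with a transverse factor, so isolating the $\mathbb{A}_\mathbb{N}$-direction on which (Htp--6) applies will require a careful analysis of the pushout defining the log structure on $X$, likely including a toric subdivision reducing $G$ to be free modulo $Q^*$, together with checks that $\theta$-criticality is preserved under each reduction.
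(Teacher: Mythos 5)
Your overall strategy — reduce to a one-dimensional face $G'$ via (Loc) and an induction, then handle the rank-one case by a compactification argument — matches the broad shape of the paper's proof, but there are two genuine gaps.

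First, the induction variable. You propose to induct on $d=\operatorname{rank}\overline{G}$ and to dispatch the open complement of $S\times_{\mathbb{A}_P}\mathbb{A}_{(Q,Q-G')}$ inside $S\times_{\mathbb{A}_P}\mathbb{A}_{(Q,Q-G)}$ by noting that its strata correspond to faces $F\subsetneq G$ with $F\not\subset G'$ and that each such $F$ is $\theta$-critical of smaller rank. But that open complement is not itself of the form $S\times_{\mathbb{A}_P}\mathbb{A}_{(Q,Q-F)}$ for any single $F$, and the strata are locally closed, not closed, so the inductive hypothesis does not apply to the open complement directly. The paper instead covers this open locus by the affine opens $S\times_{\mathbb{A}_P}\mathbb{A}_{(Q_{b_i},Q_{b_i}-G_{b_i})}$, where the $b_i$ generate the ideal $Q-G'$, runs a \v{C}ech/Mayer--Vietoris argument, and inducts on $\dim Q$. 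The crucial point is that the localizations $Q_{b_i}$ have $\dim Q_{b_i}<\dim Q$, whereas $\operatorname{rank}\overline{G_{b_i}}$ need not drop (e.g.\ when $b_i\notin G$), so your induction variable does not in general decrease along the recursion the paper uses. You would need to replace $\operatorname{rank}\overline{G}$ by $\dim Q$ (or at least incorporate it) and supply the Mayer--Vietoris cover.

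Second, the rank-one endgame. After reaching $\dim G=1$ the paper does two further reductions (using (\ref{6.1.3}) and the conservativity construction of (\ref{5.3.3})) to normalize all the way down to $P,Q$ sharp with $Q=P\oplus G$ — at which point $S\times_{\mathbb{A}_P}\mathbb{A}_{(Q,Q-G)}=S\times_{\mathbb{A}_P}\mathbb{A}_Q$. The final vanishing is then obtained not from (Htp--6) but from (Htp--1) and (Htp--2), applied to the explicit toric compactification $T$ obtained by gluing $\operatorname{Spec}(Q\to\mathbb{Z}[Q])$ with $\operatorname{Spec}(P\to\mathbb{Z}[P,a_1^{-1}])$ along $\operatorname{Spec}(P\to\mathbb{Z}[P,a_1^{\pm}])$: here $j_2:S\times_{\mathbb{A}_P}(\mathbb{A}_P\times\mathbb{A}^1)\to S\times_{\mathbb{A}_P}T$ is the verticalization, so (Htp--2) plus (Htp--1) give $\mathrm{id}\cong h_*h^*$, and then (Loc) applied to the other open $j_1:X\to S\times_{\mathbb{A}_P}T$ (whose complementary section $i_1$ satisfies $hi_1=\mathrm{id}$) forces $h_*j_{1\sharp}j_1^*h^*\cong f_!f^*=0$. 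Your plan to extract the vanishing from (Htp--6), (Supp), the projection formula, and the duality (\ref{5.5.7}) is the step you yourself flag as uncertain, and it is indeed where the argument would need to be redesigned — the (Htp--1)+(Htp--2) route via the bespoke compactification $T$ avoids the transfer problem you describe entirely.
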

\begin{proof}
  We will use induction on ${\rm dim}\,Q$. We have ${\rm dim}\,Q\geq 1$ since $G\neq Q^*$.\\[4pt]
  (I) {\it Locality of the question.} Note that by ($eSm$-BC), the question is strict \'etale local on $S$.\\[4pt]
  (II) {\it Reduction of $G$.} Let $G'$ be a $1$-dimensional face of $G$, and choose generators $b_1,\ldots,b_r$ of the ideal $Q-G'$ in $Q$. For any nonempty subset $I=\{i_1,\ldots,i_s\}\subset \{1,\ldots,r\}$, we denote by $Q_I$ the localization $Q_{b_{i_1},\ldots,b_{i_s}}$, and we denote by $G_I$ the face of $Q_I$ generated by $G$. The family
  \[\{S\times_{\mathbb{A}_P}\mathbb{A}_{(Q_{b_1},Q_{b_1}-G_{b_1})},\ldots,S\times_{\mathbb{A}_P}\mathbb{A}_{(Q_{b_s},Q_{b_s}-G_{b_s})}\}\]
  forms an open cover of $S\times_{\mathbb{A}_P}(\mathbb{A}_{(Q,Q-G)}-\mathbb{A}_{(Q,Q-G')})$. Thus if we denote by
  \[f_I:S\times_{\mathbb{A}_P}\mathbb{A}_{(Q_I,Q_I-G_I)}\rightarrow S,\]
  \[f':S\times_{\mathbb{A}_P}\mathbb{A}_{(Q,Q-G')}\rightarrow S,\quad f'':S\times_{\mathbb{A}_P}(\mathbb{A}_{(Q,Q-G)}-\mathbb{A}_{(Q,Q-G')})\rightarrow S\]
  the projections, then for any object $K$ of $\mathscr{T}(S)$, $f_!''f''^*K$ is a homotopy colimit of
  \[\bigoplus_{|I|=r} f_{I*}f_I^*K\longrightarrow \cdots \longrightarrow \bigoplus_{|I|=1} f_{I*}f_I^*K\]
  in $\mathscr{T}(S)$. Since ${\rm dim}\,Q_I<{\rm dim}\,Q$, for any nonempty subset $I\subset \{1,\ldots,r\}$, $f_{I!}f_I^*=0$ by induction on ${\rm dim}\,Q$. Thus $f_!''f''^!=0$. Then by (Loc), $f_!f^*=0$ is equivalent to $f_!'f'^*=0$. Hence replacing $G$ by $G'$, we reduce to the case when ${\rm dim}\,G=1$.\\[4pt]
  (III) {\it Reduction of $\theta$.} Let $a_1$ be a generator of $G$, let $H$ be a maximal $\theta$-critical face of $Q$ containing $G$, and choose $a_2,\ldots,a_d\in H$ where $d={\rm dim}\,G$ such that $\overline{a_1},\ldots,\overline{a_d}$ in $\overline{Q}$ are independent over $\mathbb{Q}$.

  We denote by $P'$ (resp.\ $Q'$) the submonoid of $Q$ consisting of elements $q\in Q$ such that $nq\in \langle a_2,\ldots,a_n\rangle +P$ (resp.\ $nq\in \langle a_1,\ldots,a_n\rangle +P$) for some $n\in \mathbb{N}^+$. Then we denote by $G'$ the face of $Q'$ generated by $a_1$, and we denote by $\theta':P'\rightarrow Q'$ the induced homomorphism. Consider the induced morphisms
  \[S\times_{\mathbb{A}_P}\mathbb{A}_{(Q,Q-G)}\stackrel{w}\rightarrow S\times_{\mathbb{A}_P}\mathbb{A}_{(Q',Q'-G')}\stackrel{v}\rightarrow S\times_{\mathbb{A}_P}\mathbb{A}_{(P',P')}\stackrel{u}\rightarrow S\]
  of $\mathscr{S}$-schemes. The induced homomorphism $Q'^{\rm gp}\rightarrow Q^{\rm gp}$ is an isomorphism by \cite[I.4.7.9]{Ogu17}. Thus by (\ref{6.1.3}), the unit ${\rm id}\stackrel{ad}\longrightarrow w_*w^*$ is an isomorphism. Hence to show $f_!f^*=0$, it suffices to show $v_!v^*=0$.

  The cokernel of $\theta'^{\rm gp}$ is torsion free, and the diagram
  \[\begin{tikzcd}
    P'\arrow[d,"\theta'"]\arrow[r]&\overline{P'}\arrow[d,"\overline{\theta'}"]\\
    Q'\arrow[r]&\overline{Q'}
  \end{tikzcd}\]
  is coCartesian where the horizontal arrows are the quotient homomorphisms. Thus we can apply (\ref{0.2.2}), so strict \'etale locally on $S$, we have a Cartesian diagram
  \[\begin{tikzcd}
    S\times_{\mathbb{A}_P}\mathbb{A}_{(Q',Q'-G')}\arrow[d]\arrow[r]&\mathbb{A}_{(\overline{Q'},\overline{Q'}-\overline{G'})}\arrow[d]\\
    S\times_{\mathbb{A}_P}\mathbb{A}_{(P',P')}\arrow[r]&\mathbb{A}_{\overline{P'}}
  \end{tikzcd}\]
  of $\mathscr{S}$-schemes. The homomorphism $\overline{\theta'}:\overline{P'}\rightarrow \overline{Q'}$ is again a homomorphism of exact log smooth over $S\times_{\mathbb{A}_P}\mathbb{A}_{(P',P')}$ type. Replacing $(S\times_{\mathbb{A}_P}\mathbb{A}_{(Q,Q-G)}\rightarrow S,\;\theta:P\rightarrow Q)$ by
  \[(S\times_{\mathbb{A}_P}\mathbb{A}_{(Q',Q'-G')}\rightarrow S\times_{\mathbb{A}_P}\mathbb{A}_{(P',P')},\;\overline{\theta'}:\overline{P'}\rightarrow \overline{Q'}),\]
  we may assume that
  \begin{enumerate}[(i)]
    \item $P$ and $Q$ are sharp,
    \item $Q_\mathbb{Q}=P_\mathbb{Q}\oplus G_\mathbb{Q}$,
    \item ${\rm dim}\,G=1$.
  \end{enumerate}
  (IV) {\it Further reduction of $\theta$.} Since $P_\mathbb{Q}$ and $G_\mathbb{Q}$ generate $Q_\mathbb{Q}$, as in (\ref{5.3.3}), we can choose a homomorphism $P\rightarrow P_1$ of Kummer log smooth over $S$ type such that
  \begin{enumerate}
    \item $P_1$ and $G$ generate $P_1\oplus_P Q$,
    \item the functor $g^*$ is conservative where $g:S\times_{\mathbb{A}_P}\mathbb{A}_{P_1}\rightarrow S$ denotes the projection.
  \end{enumerate}
  We put $Q_1=P_1\oplus_P Q$, and we denote by $G_1$ the face of $Q_1$ generated by $G$. Consider the Cartesian diagram
  \[\begin{tikzcd}
    S\times_{\mathbb{A}_P}\mathbb{A}_{(Q_1,Q_1-G_1)}\arrow[d,"g'"]\arrow[r,"f'"]&S\times_{\mathbb{A}_P}\mathbb{A}_{P_1}\arrow[d,"g"]\\
    S\times_{\mathbb{A}_P}\mathbb{A}_{(Q,Q-G)}\arrow[r,"f"]&S
  \end{tikzcd}\]
  of $\mathscr{S}$-schemes. Since $g^*$ is conservative, to show $f_!f^*=0$, it suffices to show $f_!'f'^*=0$ by ($eSm$-BC).

  By (\ref{0.2.2}), strict \'etale locally on $S\times_{\mathbb{A}_P}\mathbb{A}_{P'}$, there is a Cartesian diagram
  \[\begin{tikzcd}
    S\times_{\mathbb{A}_P}\mathbb{A}_{(Q_1,Q_1-G_1)}\arrow[r]\arrow[d,"f'"]&\mathbb{A}_{(\overline{Q_1},\overline{Q_1}-\overline{G_1})}\arrow[d]\\
    S\times_{\mathbb{A}_P}\mathbb{A}_{P_1}\arrow[r]&\mathbb{A}_{\overline{P_1}}
  \end{tikzcd}\]
  of $\mathscr{S}$-schemes. Replacing $(S\times_{\mathbb{A}_P}\mathbb{A}_{(Q,Q-G)}\rightarrow S,\;\theta:P\rightarrow Q)$ by
  \[(S\times_{\mathbb{A}_P}\mathbb{A}_{(Q_1,Q_1-G_1)}\rightarrow S\times_{\mathbb{A}_P}\mathbb{A}_{P_1},\;\overline{P_1}\rightarrow \overline{Q_1}),\]
  we may assume that
    \begin{enumerate}[(i)]
    \item $P$ and $Q$ are sharp,
    \item $Q=P\oplus G$,
    \item ${\rm dim}\,G=1$.
  \end{enumerate}
  (V) {\it Final step of the proof.} Then $S\times_{\mathbb{A}_P}\mathbb{A}_{(Q,Q-G)}=S\times_{\mathbb{A}_P}\mathbb{A}_Q$. Let $a_1$ denote the generator of $G$. We denote by $T$ the gluing of
  \[{\rm Spec}(Q\rightarrow \mathbb{Z}[Q]),\quad {\rm Spec}(P\rightarrow \mathbb{Z}[P,a_1^{-1}])\]
  along ${\rm Spec}(P\rightarrow \mathbb{Z}[P,a_1^\pm])$. Consider the commutative diagram
  \[\begin{tikzcd}
    &&S\arrow[ld,"i_1"']\arrow[ldd,"{\rm id}",near start]\\
    S\times_{\mathbb{A}_P}\mathbb{A}_Q\arrow[rd,"f"']\arrow[r,"j_1"]&S\times_{\mathbb{A}_P}T\arrow[d,"h"]\arrow[rr,"j_2",leftarrow,crossing over]&& S\times_{\mathbb{A}_P}(\mathbb{A}_P\times\mathbb{A}^1)\arrow[lld,"p"]\\
    &S
  \end{tikzcd}\]
  of $\mathscr{S}$-schemes where
  \begin{enumerate}[(i)]
    \item $j_1$ and $j_2$ denote the induced open immersions,
    \item $h$ and $p$ denote the projections,
    \item $i_1$ is a complement of $j_1$.
  \end{enumerate}
  Then $h$ is exact log smooth, and $j_2$ is the verticalization of $S\times_{\mathbb{A}_P} T$ via $h$. Thus by (Htp--2), the natural transformation
  \[h^*\stackrel{ad}\longrightarrow j_{2*}j_2^*h^*\]
  is an isomorphism. By (Htp--1), the composition
  \[{\rm id}\longrightarrow p_*p^*\stackrel{\sim}\longrightarrow h_*j_{2*}j_2^*h^*\]
  is an isomorphism, so the unit ${\rm id}\stackrel{ad}\longrightarrow h_*h^*$ is an isomorphism. Then by (Loc), for any object $K$ of $\mathscr{T}(S)$, we have the distinguished triangle
  \[h_*j_{1\sharp}j_1^*h^*K\stackrel{ad'}\longrightarrow h_*h^*K\stackrel{ad}\longrightarrow h_*i_{1*}i_1^*h^*K\longrightarrow h_*j_{1\sharp}j_1^*h^*K[1]\]
  in $\mathscr{T}(S)$. Since $i_1h={\rm id}$, the second arrow is the inverse of the unit ${\rm id}\stackrel{ad}\longrightarrow h_*h^*$, which is an isomorphism. Thus $f_!f^*\cong h_*j_{1\sharp}j_1^*h^*=0$.
\end{proof}
\subsection{Base change property 2}
\begin{prop}\label{6.4.1}
  Let $T$ be an $\mathscr{S}$-scheme, and consider the commutative diagram
  \[\begin{tikzcd}
    X'\arrow[d,"f'"]\arrow[r,"g'"]&X\arrow[d,"f"]\arrow[r]&T\times \mathbb{A}_{\mathbb{N}\oplus\mathbb{N}}\arrow[d,"p"]\\
    S'\arrow[r,"g"]&S\arrow[r]&T\times\mathbb{A}_\mathbb{N}
  \end{tikzcd}\]
  of $\mathscr{S}$-schemes where
  \begin{enumerate}[(i)]
    \item each square is Cartesian,
    \item $p$ denotes the morphism induced by the diagonal homomorphism $\mathbb{N}\rightarrow \mathbb{N}\oplus \mathbb{N}$ of fs monoids.
  \end{enumerate}
  Then $({\rm BC}_{f,g})$ is satisfied.
\end{prop}
\begin{proof}
  By (\ref{5.5.7}), the purity transformations
  \[f_\sharp \stackrel{\mathfrak{p}_f^o}\longrightarrow f_!(1)[2],\quad f_\sharp'\stackrel{\mathfrak{p}_{f'}^o}\longrightarrow f_!'(1)[2]\]
  are isomorphisms. Thus to show that the exchange transformation $g^*f_!\stackrel{Ex}\longrightarrow f_!'g'^*$ is an isomorphism, it suffices to show that the exchange transformation
  \[f_\sharp' g'^*\stackrel{Ex}\longrightarrow g^*f_\sharp\]
  is an isomorphism. This follows from ($eSm$-BC).
\end{proof}
\begin{prop}\label{6.4.2}
  Let $f:X\rightarrow S$ be a separated morphism of $\mathscr{S}$-schemes, and let $g:S\times {\rm pt}_\mathbb{N}\rightarrow S$ denote the projection. Then $({\rm BC}_{f,g})$ is satisfied.
\end{prop}
\begin{proof}
  Consider the commutative diagram
  \[\begin{tikzcd}
    X''\arrow[r,"i'"]\arrow[d,"f'"]&X'\arrow[d,"f''"]\arrow[r,"p'"]&X\arrow[d,"f"]\\
    S\times {\rm pt}_\mathbb{N}\arrow[r,"i"]&S\times \mathbb{A}_\mathbb{N}\arrow[r,"p"]&S
  \end{tikzcd}\]
  of $\mathscr{S}$-schemes where
  \begin{enumerate}[(i)]
    \item each square is Cartesian,
    \item $i$ denotes the $0$-section, and $p$ denotes the projection.
  \end{enumerate}
  Then $({\rm BC}_{f,p})$ is satisfied by ($eSm$-BC), and  $({\rm BC}_{f'',i})$ is satisfied by (BC--3). These two imply $({\rm BC}_{f,g})$.
\end{proof}
\begin{prop}\label{6.4.3}
  Let $T$ be an $\mathscr{S}$-scheme, and let $\theta:P\rightarrow Q$ be a locally exact homorphism of sharp fs monoids. We put $X=T\times {\rm pt}_Q$ and $S=T\times {\rm pt}_P$, and consider the morphism $f:X\rightarrow S$ induced by $\theta$. Then $({\rm BC}_{f,g})$ is satisfied for any morphism $g:S'\rightarrow S$ of $\mathscr{S}$-schemes.
\end{prop}
\begin{proof}
  (I) {\it Reduction method 1.} Assume that we have a factorization
  \[P\stackrel{\theta'}\longrightarrow Q'\stackrel{\theta''}\longrightarrow Q\]
  of $\theta$ where $\theta'$ is locally exact, $\theta''^{\rm gp}$ is an isomorphism, and $\theta'$ is a sharp fs monoid. Consider the morphism
  \[f':T\times {\rm pt}_{Q'}\rightarrow T\times {\rm pt}_P\]
  induced by $\theta'$. Then by (Htp--5), (${\rm BC}_{f,g}$) is equivalent to (${\rm BC}_{f',g}$).\\[4pt]
  (II) {\it Reduction method 2.} Let $u:S_0\rightarrow S$ be a morphism of $\mathscr{S}$-schemes, and consider a commutative diagram
  \[\begin{tikzcd}
    X_0'\arrow[dd,"f_0'"]\arrow[rr,"g_0'"]\arrow[rd,"v'"]&&X_0\arrow[rd,"v"]\arrow[dd,"f_0",near start]\\
    &X'\arrow[rr,"g'",crossing over,near start]&&X\arrow[dd,"f"]\\
    S_0'\arrow[rr,"g_0",near start]\arrow[rd,"u'"]&&S_0\arrow[rd,"u"]\\
    &S\arrow[uu,"f'",leftarrow,crossing over,near end]\arrow[rr,"g"]&&S
  \end{tikzcd}\]
  of $\mathscr{S}$-schemes where each small square is Cartesian. Then we have the commutative diagram
  \[\begin{tikzcd}
    u'^*g^*f_*\arrow[d,"\sim"]\arrow[r,"Ex"]&u'^*f_*'g'^*\arrow[r,"Ex"]&f_{0*}'v'^*g'^*\arrow[d,"\sim"]\\
    g_0^*u^*f_*\arrow[r,"Ex"]&g_0^*f_{0*}v^*\arrow[r,"Ex"]&f_{0*}'g_0'^*v^*
  \end{tikzcd}\]
  of functors. Assume that $u'^*$ is conservative. If $({\rm BC}_{f,u})$, $({\rm BC}_{f',u'})$, and $({\rm BC}_{f_0,g_0})$ are satisfied, then the lower left horizontal, upper right horizontal, and lower right horizontal arrows are isomorphisms. Thus the upper left horizontal arrow is also an isomorphism. Then $({\rm BC}_{f,g})$ is satisfied since $u'^*$ is conservative.

  We will apply this technique to the following two cases.
  \begin{enumerate}[(a)]
    \item When $u$ is an exact log smooth morphism such that $u'^*$ is conservative, then $({\rm BC}_{f,u})$ and $({\rm BC}_{f',u'})$ are satisfied by ($eSm$-BC). Thus $({\rm BC}_{f_0,g_0})$ implies $({\rm BC}_{f,g})$.
    \item When $u$ is the projection $S\times {\rm pt}_\mathbb{N}\rightarrow S$, $u'^*$ is conservative by (\ref{6.2.2}). We also have $({\rm BC}_{f,u})$ and $({\rm BC}_{f',u'})$ by (\ref{6.4.2}). Thus  $({\rm BC}_{f_0,g_0})$ implies  $({\rm BC}_{f,g})$.
  \end{enumerate}
  (III) {\it Final step of the proof.} Let $G$ be a maximal $\theta$-critical face of $Q$, and we denote by $Q'$ the submonoid of $Q$ consisting of elements $q\in Q'$ susch that $nq\in P+G$ for some $n\in \mathbb{N}^+$. Then by \cite[I.4.7.9]{Ogu17}, $Q'^{\rm gp}=Q^{\rm gp}$. Thus by (I), we reduce to the case when $Q=Q'$. In this case, we have
  \[Q_\mathbb{Q}=(P\oplus G)_\mathbb{Q}.\]

  Then choose $n\in \mathbb{N}^+$ such that $nq\in P+G$ for any $q\in Q$, and consider the homomorphism
  \[P\rightarrow P^{\rm gp}\oplus P,\quad a\mapsto (a,na).\]
  We put $P'=P^{\rm gp}\oplus P$, and consider the projection $u:S\times_{\mathbb{A}_P}\mathbb{A}_{P'}\rightarrow S$. Then $u^*$ is conservative as in (\ref{5.3.3}). Thus by the case (a) in (II), we can replace $P\rightarrow Q$ by $P'\rightarrow P'\oplus_P Q$. Thus we reduce to the case when
  \[Q=P\oplus G.\] Since $X=S\times {\rm pt}_G$, we reduce to the case when $P=0$.

  By \cite[11.1.9]{CLS11}, there is a homomorphism $\lambda:\mathbb{N}^r\rightarrow Q$ of fs monoids such that $\lambda^{\rm gp}$ is an isomorphism. Thus by (I), we reduce to the case when $Q=\mathbb{N}^r$. Then we have the factorization
  \[S\times {\rm pt}_{\mathbb{N}^r}\rightarrow \cdots \rightarrow S\times {\rm pt}_\mathbb{N}\rightarrow S\]
  of $f$, so we reduce to the case when $Q=\mathbb{N}$. By the case (b) of (II), we reduce to the case when $\theta$ is the first inclusion $\mathbb{N}\oplus \mathbb{N}\oplus \mathbb{N}$. Composing with the homomorphism
  \[\mathbb{N}\oplus\mathbb{N}\rightarrow \mathbb{N}\oplus\mathbb{N},\quad (a,b)\mapsto (a,a+b)\]
  of fs monoids, by (I), we reduce to the case when $\theta$ is the diagonal homomorphism $\mathbb{N}\rightarrow \mathbb{N}\oplus\mathbb{N}$.

  Then $f$ has the factorization
  \[S\times {\rm pt}_{\mathbb{N}\oplus\mathbb{N}}\stackrel{i}\longrightarrow S\times \mathbb{A}_{\mathbb{N}\oplus\mathbb{N}}\times_{\mathbb{A}_\theta,\mathbb{A}_\mathbb{N}}{\rm pt}_\mathbb{N}\stackrel{p}\longrightarrow S\]
  where $i$ denotes the induced strict closed immersion and $p$ denotes the projection. By (\ref{6.4.1}), $({\rm BC}_{p,g})$ is satisfied. If $g''$ denotes the pullback of $g:S'\rightarrow S$ via $p$, $({\rm BC}_{i,g''})$ is satisfied by (BC--3). These two implies $({\rm BC}_{f,g})$.
\end{proof}
\begin{thm}\label{6.4.4}
  The log motivic triangulated category $\mathscr{T}$ satisfies {\rm (BC--2)}.
\end{thm}
\begin{proof}
  Consider a Cartesian diagram
  \[\begin{tikzcd}
    X'\arrow[d,"f'"]\arrow[r,"g'"]&X\arrow[d,"f"]\\
    S'\arrow[r,"g"]&S
  \end{tikzcd}\]
  of $\mathscr{S}$-schemes where $f$ is a separated exact log smooth morphism. We want to show $({\rm BC}_{f,g})$.\\[4pt]
  (I) {\it Reduction method 1.} Let $\{u_i:S_i\rightarrow S\}_{i\in I}$ be a family of strict morphisms. For $i\in I$, consider the commutative diagram
  \[\begin{tikzcd}
    X_i'\arrow[dd,"f_i'"]\arrow[rr,"g_i'"]\arrow[rd,"v'"]&&X_i\arrow[rd,"v"]\arrow[dd,"f_i",near start]\\
    &X'\arrow[rr,"g'",crossing over,near start]&&X\arrow[dd,"f"]\\
    S_i'\arrow[rr,"g_i",near start]\arrow[rd,"u'"]&&S_i\arrow[rd,"u"]\\
    &S\arrow[uu,"f'",leftarrow,crossing over,near end]\arrow[rr,"g"]&&S
  \end{tikzcd}\]
  of $\mathscr{S}$-schemes where each square is Cartesian. Assume that the family of functors $\{u_i^*\}_{i\in I}$ is conservative. Then we have the commutative diagram
  \[\begin{tikzcd}
    u_i'^*g^*f_!\arrow[d,"\sim"]\arrow[r,"Ex"]&u_i'^*f_!'g'^*\arrow[r,"Ex"]&f_{i!}'v_i'^*g'^*\arrow[d,"\sim"]\\
    g_i^*u_i^*f_!\arrow[r,"Ex"]&g_i^*f_{i!}v_i^*\arrow[r,"Ex"]&f_{i*}'g_i'^*v_i^*
  \end{tikzcd}\]
  of functors. By (BC--3), the lower left horizontal and upper right horizontal arrows are isomorphisms. If $({\rm BC}_{f_i,g_i})$ is satisfied for any $i$, then the lower right horizontal arrow is an isomorphism for any $i$, so the upper left horizontal arrow is an isomorphism for any $i$. This implies $({\rm BC}_{f,g})$ since $\{u_i^*\}_{i\in I}$ is conservative.

  We will apply this in the following two situations.
  \begin{enumerate}[(a)]
    \item When $\{u_i\}_{i\in I}$ is a strict \'etale cover, the family of functors $\{u_i^*\}_{i\in I}$ is conservative by (k\'et-sep).
    \item When $u_0$ is a strict closed immersion and $u_1$ is its complement, the pair of functors $(u_0^*,u_1^*)$ is conservative by (Loc).
  \end{enumerate}
  (II) {\it Reduction method 2.} Let $\{v_i:X_i\rightarrow X\}_{i\in I}$ be a family of separated strict morphisms such that the family of functors $\{v_{i}^!\}_{i\in I}$ is conservative. Consider the commutative diagram
  \[\begin{tikzcd}
    X_i'\arrow[d,"v_i'"]\arrow[r,"g_i'"]&X_i\arrow[d,"v_i"]\\
    X'\arrow[r,"g'"]\arrow[d,"f'"]&X\arrow[d,"f"]\\
    S'\arrow[r,"g"]&S
  \end{tikzcd}\]
  of $\mathscr{S}$-schemes where each square is Cartesian. Then we have the commutative diagram
  \[\begin{tikzcd}
    g^*f_!v_{i!}\arrow[d,"\sim"]\arrow[r,"Ex"]&f_!'g'^*v_{i!}\arrow[r,"Ex"]&f_!'v_{i!}'g_i'^*\arrow[d,"\sim"]\\
    g^*(fv_i)_!\arrow[rr,"Ex"]&&(f'v_i')_!g_i'^*
  \end{tikzcd}\]
  of functors. The upper right horizontal arrow is an isomorphism by (BC--1). If $({\rm BC}_{fv_i,g})$ is satisfied, then the lower horizontal arrow is an isomorphism, so the upper left horizontal arrow is an isomorphism. This implies that the natural transformation
  \[v_i^!g_*'f^!\stackrel{Ex}\longrightarrow v_i^!f^!g_*\]
  is an isomorphism. This implies $({\rm BC}_{f,g})$ since $\{v_{i}^!\}_{i\in I}$ is conservative.

  We will apply this technique in the following two situations.
  \begin{enumerate}[(a)]
    \item When $\{v_i\}_{i\in I}$ is a strict \'etale cover such that each $v_i$ is separated, by (k\'et-sep) and (\ref{2.5.7}), the family of functors $\{v_i^!\}_{i\in I}$ is conservative.
    \item When $v_0:S_0\rightarrow S$ is a strict closed immersion and $v_1$ is its complement, by (Loc), the pair of functors $(v_0^!,v_1^!)$ is conservative.
  \end{enumerate}
  (III) {\it Final step of the proof.} By the case (a) of (I) and the case (a) of (II), we reduce to the case when $f$ has an fs chart $\theta:P\rightarrow Q$ of exact log smooth type. Then by the case (b) of (I) and the proof of \cite[3.5(ii)]{Ols03}, we may assume that $S$ has a constant log structure.

  Consider the commutative diagram
  \[\begin{tikzcd}
    Y\arrow[d,"h"]\arrow[r,"g''"]&{\rm pt}_{Q}\arrow[d]\\
    X\arrow[d,"f"]\arrow[r,"g'"]&\mathbb{A}_Q\arrow[d]\\
    S\arrow[r,"g"]&\mathbb{A}_P
  \end{tikzcd}\]
  of $\mathscr{S}$-schemes where each square is Cartesian. By the case (b) of (II) and the proof of \cite[3.5(ii)]{Ols03}, we reduce to showing $({\rm BC}_{fh,g})$.

  Now, consider the the commutative diagram
  \[\begin{tikzcd}
    Y'\arrow[d,"q'"]\arrow[r,"g''"]&Y\arrow[d,"q"]\\
    T'\arrow[d,"p'"]\arrow[r,"g'''"]&\underline{Y}\times_{\underline{S}}S\arrow[d,"p"]\\
    S'\arrow[r,"g"]&S
  \end{tikzcd}\]
  of $\mathscr{S}$-schemes where
  \begin{enumerate}[(i)]
    \item each square is Cartesian,
    \item $q$ denotes the induced morphism and $p$ denotes the projection.
  \end{enumerate}
  Then since $p$ is strict, $({\rm BC}_{p,g})$ is satisfied by (BC--1). By (\ref{6.4.3}), $({\rm BC}_{q,g'''})$ is also satisfied. These two implies $({\rm BC}_{f,g})$.
\end{proof}
\section{Log motivic derivators}
\subsection{Axioms of premotivic triangulated prederivators}
\begin{none}\label{10.1.1}
  Through this subsection, fix a class of morphisms $\mathscr{P}$ of $\mathscr{S}$ containing all isomorphisms and stable by compositions and pullbacks.
\end{none}
\begin{df}\label{10.1.2}
  We will introduce several notations and terminology.
  \begin{enumerate}[(1)]
    \item A small category is called a {\it diagram.} We denote by ${\rm dia}$ the 2-category of small categories.
    \item An {\it $\mathscr{S}$-diagram} is a functor
    \[\mathscr{X}:I\rightarrow \mathscr{S}\]
    where $I$ is a small category. The 2-category of $\mathscr{S}$-diagrams is denoted by $\mathscr{S}^{\rm dia}$. We often write $\mathscr{X}=(\mathscr{X},I)$ for $\mathscr{X}$. The category $I$ is called the {\it index category} of $\mathscr{X}$, and an object $\lambda$ of $I$ is called an {\it index} of $\mathscr{X}$.
    \item For any $\mathscr{S}$-scheme $S$ and any diagram $I$, we denote by $(S,I)$ the functor $I\rightarrow \mathscr{S}$ mapping to $S$ constantly.
    \item Let $f:I\rightarrow J$ be a functor of small categories, and let $\mu$ be an object of $J$. We denote by $I_\mu$ the full subcategory of $I$ such that $\lambda$ is an object of $I_\mu$ if and only if $u(\lambda)$ is isomorphic to $\mu$ in $J$.

        We denote by $I/\mu$ the category where
        \begin{enumerate}[(i)]
          \item object is a pair $(\lambda\in {\rm ob}(I),\;a:f(\lambda)\rightarrow \mu)$,
          \item morphism $(\lambda,a)\rightarrow (\lambda',a')$ is the data of commutative diagrams:
          \[\begin{tikzcd}
            \lambda\arrow[r,"b"]&\lambda'
          \end{tikzcd}
          \quad
          \begin{tikzcd}
            f(\lambda)\arrow[rd,"a"']\arrow[rr,"b"]&&f(\lambda')\arrow[ld,"a'"]\\
            &\mu
          \end{tikzcd}\]
        \end{enumerate}
    \item Let $f:(\mathscr{X},I)\rightarrow (\mathscr{Y},J)$ be a 1-morphism of $\mathscr{S}$-diagrams. Abusing the notation, we denote by $f$ the induced functor $I\rightarrow J$. For an object $\mu$ of $J$, we denote by $\mathscr{X}_\mu$ and $\mathscr{X}/\mu$ the $\mathscr{S}$-diagrams
        \[I_\mu\longrightarrow I\stackrel{\mathscr{X}}\longrightarrow \mathscr{S},\quad I/\mu\longrightarrow I\stackrel{\mathscr{X}}\longrightarrow \mathscr{S}\]
        respectively where the first arrows are the induced functors. Then we denote be
        \[\mu:\mathscr{X}_\mu\rightarrow \mathscr{X},\quad \overline{\mu}:\mathscr{X}/\mu\rightarrow \mathscr{X}\]
        the induced functors.
    \item Let $f:\mathscr{X}\rightarrow \mathscr{Y}$ be a $1$-morphism of $\mathscr{S}$-diagrams, and let $\mu$ be an index of $\mathscr{Y}$. Consider the induced 2-diagrams
    \[\begin{tikzcd}
      \mathscr{X}_\mu\arrow[d]\arrow[r,"\mu"]\arrow[rd,phantom,"{\rotatebox[origin=c]{45}{$\Longleftrightarrow$}}"]&\mathscr{X}\arrow[d,"f"]\\
      \mathscr{Y}_\mu\arrow[r,"\mu"']&\mathscr{Y}
    \end{tikzcd}
    \begin{tikzcd}
      \mathscr{X}/\mu\arrow[d]\arrow[r,"\overline{\mu}"]\arrow[rd,phantom,"{\rotatebox[origin=c]{45}{$\Longleftarrow$}}"]&\mathscr{X}\arrow[d,"f"]\\
      \mathscr{Y}_\mu\arrow[r,"\mu"']&\mathscr{Y}
    \end{tikzcd}
    \begin{tikzcd}
      \mathscr{X}/\mu\arrow[d]\arrow[r,"\overline{\mu}"]\arrow[rd,phantom,"{\rotatebox[origin=c]{45}{$\Longleftrightarrow$}}"]&\mathscr{X}\arrow[d,"f"]\\
      \mathscr{Y}/\mu\arrow[r,"\overline{\mu}"']&\mathscr{Y}
    \end{tikzcd}\]
    of $\mathscr{S}$-diagrams. Here, the arrows $\Leftrightarrow$ and $\Rightarrow$ express the induced $2$-morphisms. Then we denote by
    \[f_\lambda:\mathscr{X}_\mu\rightarrow \mathscr{Y}_\mu,\quad f_{\overline{\mu}\mu}:\mathscr{X}/\mu\rightarrow \mathscr{Y}_\mu,\quad f_{\overline{\mu}}:\mathscr{X}/\mu\rightarrow \mathscr{Y}/\mu\]
    the 1-morphisms in the above $2$-diagrams.

    Let $\lambda$ be an index of $\mathscr{X}$ such that $f(\lambda)$ is isomorphic to $\mu$. Then we denote by
    \[f_{\lambda\mu}:\mathscr{X}_\lambda\rightarrow \mathscr{Y}_\mu\]
    the induced $1$-morphism.
    \item Let $f:\mathscr{X}\rightarrow \mathscr{Y}$ be a $1$-morphism of $\mathscr{S}$-diagrams. For a property ${\bf P}$ of morphisms in $\mathscr{S}$, we say that $f$ is a $\bf P$ {\it morphism} if for any index $\lambda$ of $\mathscr{X}$, the morphism $f_{\lambda\mu}:\mathscr{X}_\lambda\rightarrow \mathscr{Y}_\mu$ where $\mu=f(\lambda)$ is a $\bf P$ morphism in $\mathscr{S}$.
    \item We denote by ${\rm dia}$ the 2-category of small categories.
    \item We denote by $Tri^{\otimes}$ the $2$-category of triangulated symmetric monoidal categories.
    \item We denote by ${\bf e}$ the trivial category.
  \end{enumerate}
\end{df}
\begin{df}\label{10.1.3}
  A $\mathscr{P}${\it -premotivic triangulated prederivator} $\mathscr{T}$ over $\mathscr{S}$ is a 1-contravariant and $2$-contravariant $2$-functor
  \[\mathscr{T}:\mathscr{S}^{\rm dia}\longrightarrow {\rm Tri}^{\otimes}\]
  with the following properties.
  \begin{enumerate}
    \item[(PD--1)] For any $1$-morphism $f:\mathscr{X}\rightarrow \mathscr{Y}$ of $\mathscr{S}$-diagrmas, we denote by $f^*:\mathscr{T}(\mathscr{Y})\rightarrow \mathscr{T}(\mathscr{X})$ the image of $f$ under $\mathscr{T}:\mathscr{S}^{\rm dia}\longrightarrow {\rm Tri}^{\otimes}$. Then the functor $f^*$ admits a right adjoint denoted by $f^*$.\\
        For any $2$-morphism $t:f\rightarrow g$ of $1$-morphisms $f,g:\mathscr{X}\rightarrow \mathscr{Y}$ of $\mathscr{S}$-diagrams, we denote by $t^*:g^*\rightarrow f^*$ the image of $t$ under $\mathscr{T}$.
    \item[(PD--2)] For any $\mathscr{P}$-morphism $f:\mathscr{X}\rightarrow \mathscr{Y}$, the functor $f^*$ admits a left adjoint denoted by $f_\sharp$.
    \item[(PD--3)] For any $\mathscr{S}$-diagram $\mathscr{X}=(\mathscr{X},I)$, if $I$ is a discrete category, then the induced functor
    \[\mathscr{T}(\mathscr{X})\longrightarrow \prod_{\lambda\in {\rm ob}(I)}\mathscr{T}(\mathscr{X}_\lambda)\]
    is an equivalence of categories.
    \item[(PD--4)] For any $\mathscr{S}$-diagram $\mathscr{X}=(\mathscr{X},I)$, the family of functors $\lambda^*$ for $\lambda\in {\rm ob}(I)$ is conservative.
    \item[(PD--5)]  For any object $S$ of $\mathscr{S}$, the fibered category
    \[\mathscr{T}(-,{\bf e})\]
    is a $\mathscr{P}$-premotivic triangulated category.
    \item[(PD--6)] For any morphism $f:\mathscr{X}\rightarrow \mathscr{Y}$ of $\mathscr{S}$-diagrams and any index $\mu$ of $\mathscr{Y}$, in the $2$-diagram
    \[\begin{tikzcd}
      \mathscr{X}/\mu\arrow[d,"f_{\overline{\mu}\mu}"']\arrow[r,"\overline{\mu}"]\arrow[rd,phantom,"\;\;\,\;_t"',"{\rotatebox[origin=c]{45}{$\Longleftarrow$}}"]&\mathscr{X}\arrow[d,"f"]\\
      \mathscr{Y}_\mu\arrow[r,"\mu"']&\mathscr{Y}
    \end{tikzcd}\]
    of $\mathscr{S}$-diagrmas, the exchange transformation
    \[\mu^*f_*\stackrel{ad}\longrightarrow f_{\overline{\mu}\mu*}f_{\overline{\mu}\mu}^*\mu^*f_*\stackrel{t^*}\longrightarrow
     f_{\overline{\mu}\mu*}\overline{\mu}^*f^*f_*\stackrel{ad'}\longrightarrow    f_{\overline{\mu}\mu*}\overline{\mu}^*\]
    is an isomorphism.
  \end{enumerate}
\end{df}
\begin{rmk}
  Our axioms are selected from \cite[2.4.16]{Ayo07} and the axioms of {\it algebraic derivators} in \cite[2.4.12]{Ayo07}.
\end{rmk}
\begin{df}\label{10.1.4}
  Let $\mathscr{T}$ be a $\mathscr{P}$-premotivic triangulated prederivator.
  \begin{enumerate}[(1)]
    \item A {\it cartesian section} of $\mathscr{T}$ is the data of an object $A_\mathscr{X}$ of $\mathscr{T}(\mathscr{X})$ for each $\mathscr{S}$-diagram $\mathscr{X}$ and of isomorphisms
    \[f^*(A_{\mathscr{Y}})\stackrel{\sim}\longrightarrow A_{\mathscr{X}}\]
     for each morphism $f:\mathscr{X}\rightarrow \mathscr{Y}$ of $\mathscr{S}$-diagrams, subject to following coherence conditions as in (\ref{1.1.5}). The tensor product of two cartesian sections is defined termwise.
    \item A set of {\it twists} for $\mathscr{T}$ is a set of Cartesian sections of $\mathscr{T}$ stable by tensor product. For short, we say also that $\mathscr{T}$ is $\tau$-twisted.
  \end{enumerate}
\end{df}
\begin{prop}\label{10.1.5}
  Let $\mathscr{T}$ be a $\mathscr{P}$-premotivic triangulated prederivator, and let $\mathscr{X}$ be an $\mathscr{S}$-diagram. Then $\lambda_\sharp K$ is compact for any index $\lambda$ of $\mathscr{X}$ and any compact object $K$ of $\mathscr{T}(\mathscr{X}_\lambda)$.
\end{prop}
\begin{proof}
  We have the isomorphism
  \[{\rm Hom}_{\mathscr{T}(\mathscr{Z})}(\lambda_\sharp K,-)\cong {\rm Hom}_{\mathscr{T}(\mathscr{X}_\lambda)}(K,\lambda^*(-)).\]
  Using this, ther conclusion follows fom the fact that $\lambda^*$ preserves small sums.
\end{proof}
\begin{df}\label{10.1.6}
  Let $\mathscr{T}$ be a $\tau$-twisted $\mathscr{P}$-premotivic triangulated prederivator. For any object $S$ of $\mathscr{S}$, we denote by $\mathcal{F}_{\mathscr{P}/S}$ the family of motives of the form
  \[M_S(X)\{i\}\]
  for $\mathscr{P}$-morphism $X\rightarrow S$ and twist $i\in \tau$. Then for any $\mathscr{S}$-diagram $\mathscr{X}$, we denote by $\mathcal{F}_{\mathscr{P}/\mathscr{X}}$ the family of motives of the form
  \[\lambda_\sharp K\]
  for index $\lambda$ of $\mathscr{X}$ and object $K$ of $\mathcal{F}_{\mathscr{P}/\mathscr{X}_\lambda}$.
\end{df}
\begin{prop}\label{10.1.7}
  Let $\mathscr{T}$ be a $\tau$-twisted $\mathscr{P}$-premotivic triangulated prederivator. Assume that $\mathscr{T}(-,{\bf e})$ is compactly generated by $\mathscr{P}$ and $\tau$. Then $\mathcal{F}_{\mathscr{P}/\mathscr{X}}$ generates $\mathscr{T}(\mathscr{X})$.
\end{prop}
\begin{proof}
  Let $K\rightarrow K'$ be a morphism in $\mathscr{T}(\mathscr{X})$ such that the homomorphism
  \[{\rm Hom}_{\mathscr{T}(\mathscr{X})}(\lambda_\sharp L,K)\rightarrow {\rm Hom}_{\mathscr{T}(\mathscr{X})}(\lambda_\sharp L,K')\]
  is an isomorphism for any index $\lambda$ of $\mathscr{X}$ and any element $L$ of $\mathcal{F}_{\mathscr{P}/\mathscr{X}_\lambda}$. We want to show that the morphism $K\rightarrow K'$ in $\mathscr{T}(\mathscr{X})$ is an isomorphism.

  The homomorphism
  \[{\rm Hom}_{\mathscr{T}(\mathscr{X}_\lambda)}(L,\lambda^*K)\rightarrow {\rm Hom}_{\mathscr{T}(\mathscr{X}_\lambda)}(L,\lambda^*K')\]
  is an isomorphism, and $\mathcal{F}_{\mathscr{P}/\mathscr{X}_\lambda}$ generates $\mathscr{T}(\mathscr{X}_\lambda)$ by assumption. Thus the morphism $\lambda^*K\rightarrow \lambda^*K'$ in $\mathscr{T}(\mathscr{X}_\lambda)$ is an isomorphism. Then (PD--4) implies that the morphism $K\rightarrow K'$ in $\mathscr{T}(\mathscr{X})$ is an isomorphism, which completes the proof.
\end{proof}
\begin{df}\label{10.1.8}
  We say that a $\mathscr{P}$-premotivic triangulated prederivator over $\mathscr{S}$ is a {\it log motivic derivator} over $\mathscr{S}$ if it satisfies the following conditions.
  \begin{enumerate}
    \item[(i)] The restriction of $\mathscr{T}$ on $\mathscr{S}$ is a log motivic triangulated category over $\mathscr{S}$.
    \item[(ii)] For any $\mathscr{S}$-scheme $S$, the functor $\mathscr{T}(S,-):{\rm dia}\rightarrow {\rm Tri}^{\otimes}$ is a triangulated derivator in the sense of \cite[2.1.34]{Ayo07}.
  \end{enumerate}
\end{df}
\begin{rmk}\label{10.1.9}
  Note that the axiom (ii) is never used in this paper. In particular, (\ref{9.7.2}) is proved without (ii). Nevertheless, we include (ii) in the axioms since any $\mathscr{T}$ came from stable model categories satisfies (ii). Note also that (ii) is one of the axioms of algebraic derivators in \cite[2.4.12]{Ayo07}.
\end{rmk}
\begin{df}[(CD12, 3.2.7)]
  Let $t$ be a topology on $\mathscr{S}$. We say that a $\mathscr{P}$-premotivic triangulated prederivator $\mathscr{T}$ satisfies $t$-{\it descent} if for any $t$-hypercover $f:(\mathscr{X},I)\rightarrow S$, the functor $f^*$ is fully faithful.
\end{df}
\subsection{Consequences of axioms}
\begin{none}\label{10.2.0}
  Throughout this subsection, fix a class of morphisms $\mathscr{P}$ of $\mathscr{S}$ containing all isomorphisms and stable by compositions and pullbacks.
\end{none}
\begin{df}\label{10.2.1}
  Let $f:(\mathscr{X},I)\rightarrow (\mathscr{Y},J)$ be a $1$-morphism of $\mathscr{S}$-diagrams.
  \begin{enumerate}[(1)]
    \item We say that $f$ is {\it reduced} if the functor $f:I\rightarrow J$ is an equivalence.
    \item We say that $f$ is {\it Cartesian} if $f$ is reduced and for any morphism $\mu\rightarrow \mu'$ in $J$, the diagram
    \[\begin{tikzcd}
      \mathscr{X}_\mu\arrow[d]\arrow[r]&\mathscr{X}_{\mu'}\arrow[d]\\
      \mathscr{Y}_\mu\arrow[r]&\mathscr{Y}_{\mu'}
    \end{tikzcd}\]
    in $\mathscr{S}$ is Cartesian.
  \end{enumerate}
\end{df}
\begin{none}\label{10.2.2}
  Let $f:(\mathscr{X},I)\rightarrow (\mathscr{Y},J)$ and $g:(\mathscr{Y}',J')\rightarrow (\mathscr{Y},J)$ be 1-morphisms of $\mathscr{S}$-diagrams. Consider the category $J'\times_J I$. We have the functors
  \[u_1:J'\times_J I\stackrel{p_1}\rightarrow J'\stackrel{\mathscr{Y}'}\rightarrow \mathscr{S},\]
  \[u_2:J'\times_J I\stackrel{p_2}\rightarrow J'\stackrel{\mathscr{X}}\rightarrow \mathscr{S},\]
  \[u:J'\times_J I\stackrel{p}\rightarrow J'\stackrel{\mathscr{Y}}\rightarrow \mathscr{S}\]
  where $p_1$, $p_2$, and $p$ denote the projections. Then we denote by
  \[(\mathscr{Y}'\times_{\mathscr{Y}}\mathscr{X},J'\times_J I)\]
  the functor $J'\times_J I\rightarrow \mathscr{S}$ obtained by taking fiber products $u_1(\lambda)\times_{u(\lambda)}u_2(\lambda)$ for $\lambda\in {\rm ob}(J'\times_J I)$. Note that by \cite[2.4.10]{Ayo07}, the commutative diagram
  \[\begin{tikzcd}
    \mathscr{Y}'\times_\mathscr{Y}\mathscr{X}\arrow[r,"g'"]\arrow[d,"f'"]&\mathscr{X}\arrow[d,"f"]\\
    \mathscr{Y}'\arrow[r,"g"]&\mathscr{Y}
  \end{tikzcd}\]
  of $\mathscr{S}$-diagrams is Cartesian where $g'$ and $f'$ denote the first and second projections respectively.
\end{none}
\begin{prop}\label{10.2.3}
  Let $f:\mathscr{X}\rightarrow \mathscr{Y}$ be a Cartesian $\mathscr{P}$-morphism of $\mathscr{S}$-diagrams, and let $\mu$ be an index of $\mathscr{Y}$. Then in the Cartesian diagram
  \[\begin{tikzcd}
    \mathscr{X}_\mu\arrow[d,"f_\mu"]\arrow[r,"\mu"]&\mathscr{X}\arrow[d,"f"]\\
    \mathscr{Y}_\mu\arrow[r,"\mu"]&\mathscr{Y}
  \end{tikzcd}\]
  of $\mathscr{S}$-diagrams, the exchange transformation
  \[f^*\mu_*\stackrel{Ex}\longrightarrow \mu_*f_\mu^*\]
  is an isomorphism.
\end{prop}
\begin{proof}
  Let $\lambda$ be an index of $\mathscr{X}$ (so an index of $\mathscr{Y}$ since $f$ is Cartesian). By (PD--4), it suffices to show that the natural transformation
  \[\lambda^*f^*\mu_*\stackrel{Ex}\longrightarrow \lambda^*\mu_*f_\mu^*\]
  is an isomorphism.

  Consider the 2-diagrams
  \[\begin{tikzcd}
    \mathscr{X}_\mu/\lambda\arrow[dd,"(f_\mu)_{\overline{\lambda}}"'] \arrow[rrrd,phantom,"{\rotatebox[origin=c]{30}{$\Longleftarrow$}}"]\arrow[rd,"\overline{\lambda}"] \arrow[rr,"\mu_{\overline{\lambda}\lambda}"] &&\mathscr{X}_\lambda\arrow[rd,"\lambda"]\\
    &\mathscr{X}_\mu\arrow[dd,"f_\mu"]\arrow[rr,"\mu"]\arrow[ld,phantom,"{\rotatebox[origin=c]{45}{$\Longleftrightarrow$}}"] &&\arrow[lldd,phantom,"{\rotatebox[origin=c]{45}{$\Longleftrightarrow$}}"]\mathscr{X}\arrow[dd,"f"]\\
    \mathscr{Y}_\mu/\lambda\arrow[rd,"\overline{\lambda}"']\\
    &\mathscr{Y}_\mu\arrow[rr,"\mu"']&&\mathscr{Y}
  \end{tikzcd}\quad \begin{tikzcd}
    \mathscr{X}_\mu/\lambda\arrow[dd,"(f_\mu)_{\overline{\lambda}}"'] \arrow[rrdd,phantom,"{\rotatebox[origin=c]{45}{$\Longleftrightarrow$}}"] \arrow[rr,"\mu_{\overline{\lambda}\lambda}"] &&\mathscr{X}_\lambda\arrow[rd,"\lambda"]\arrow[dd,"f_\lambda"]\\
    &&&\arrow[ld,phantom,"{\rotatebox[origin=c]{45}{$\Longleftrightarrow$}}"]\mathscr{X}\arrow[dd,"f"]\\
    \mathscr{Y}_\mu/\lambda\arrow[rd,"\overline{\lambda}"']\arrow[rr,"\mu_{\overline{\lambda}\lambda}"]&&\mathscr{Y}_\lambda\arrow[rd,"\lambda"] \arrow[ld,phantom,"{\rotatebox[origin=c]{30}{$\Longleftarrow$}}"]\\
    &\mathscr{Y}_\mu\arrow[rr,"\mu"']&&\mathscr{Y}
  \end{tikzcd}\]
  of $\mathscr{S}$-diagrams. Then we have the commutative diagram
  \[\begin{tikzcd}
    \lambda^*f^*\mu_*\arrow[d,"Ex"]\arrow[r,"\sim"]&f_\lambda^*\lambda^*\mu_*\arrow[r,"Ex"]&f_\lambda^*\mu_{\overline{\lambda}\lambda^*}\overline{\lambda}^*\arrow[d,"Ex"]\\
    \lambda^*\mu_*f_\mu^*\arrow[r,"Ex"]&\mu_{\overline{\lambda}\lambda*}\overline{\lambda}^*f_\mu^*\arrow[r,"\sim"]&\mu_{\overline{\lambda}\lambda*}(f_\mu)_{\overline{\lambda}}^* \overline{\lambda}^*
  \end{tikzcd}\]
  of functors. By (PD--6), the lower left horizontal and upper right horizontal arrows are isomorphisms. Thus it suffices to show that the right vertical arrow is an isomorphism. We have the identification
  \[\mathscr{X}_\mu/\lambda=\mathscr{X}_\mu\times {\rm Hom}_J(\mu,\lambda),\quad \mathscr{Y}_\mu/\lambda=\mathscr{Y}_\mu\times {\rm Hom}_J(\mu,\lambda)\]
  where $J$ denotes the index category of $\mathscr{Y}$. Thus by (PD--3), it suffices to show that for any morphism $\mu\rightarrow \lambda$ in $J$, in the induced Cartesian diagram
  \[\begin{tikzcd}
    \mathscr{X}_\mu\arrow[d,"f_\mu"]\arrow[r,"{\rm id}_{\mu\lambda}"]&\mathscr{X}_\lambda\arrow[d,"f_\lambda"]\\
    \mathscr{Y}_\mu\arrow[r,"{\rm id}_{\mu\lambda}"]&\mathscr{Y}_\lambda
  \end{tikzcd}\]
  in $\mathscr{S}$, the exchange transformation
  \[f_\lambda^*{\rm id}_{\mu\lambda*}\stackrel{Ex}\longrightarrow {\rm id}_{\mu\lambda*}f_\mu^*\]
  is an isomorphism. This follows from (PD--5) and the assumption that $f$ is a Cartesian $\mathscr{P}$-morphism.
\end{proof}
\begin{prop}\label{10.2.4}
  Consider a Cartesian diagram
  \[\begin{tikzcd}
    \mathscr{X}'\arrow[d,"f'"]\arrow[r,"g'"]&\mathscr{X}\arrow[d,"f"]\\
    \mathscr{Y}'\arrow[r,"g"]&\mathscr{Y}
  \end{tikzcd}\]
  of $\mathscr{S}$-diagrams where $f$ is a Cartesian $\mathscr{P}$-morphism. Then the exchange transformation
  \[f_\sharp'g'^*\stackrel{Ex}\longrightarrow g^*f_\sharp\]
  is an isomorphism.
\end{prop}
\begin{proof}
  Note that $f'$ is also a Cartesian $\mathscr{P}$-morphism. Let $\mu'$ be an index of $\mathscr{Y}'$. By (PD--4), it suffices to show that the natural transformation
  \[\mu'^*f_\sharp'g'^*\stackrel{Ex}\longrightarrow \mu'^*g^*f_\sharp\]
  is an isomorphism. We put $\mu=g(\mu')$.

  Consider the commutative diagrams
  \[\begin{tikzcd}
    \mathscr{X}_{\mu'}'\arrow[d,"f_{\mu'}"]\arrow[r,"\mu'"]&\mathscr{X}'\arrow[d,"f'"]\arrow[r,"g'"]&\mathscr{X}\arrow[d,"f"]\\
    \mathscr{Y}_{\mu'}'\arrow[r,"\mu'"]&\mathscr{Y}'\arrow[r,"g"]&\mathscr{Y}
  \end{tikzcd}\quad \begin{tikzcd}
    \mathscr{X}_{\mu'}'\arrow[d,"f_{\mu'}"]\arrow[r,"g_{\mu'\mu}'"]&\mathscr{X}_\mu\arrow[d,"f_\mu"]\arrow[r,"g'"]&\mathscr{X}\arrow[d,"f"]\\
    \mathscr{Y}_{\mu'}'\arrow[r,"g_{\mu'\mu}"]&\mathscr{Y}_\mu\arrow[r,"g"]&\mathscr{Y}
  \end{tikzcd}\]
  of $\mathscr{S}$-diagrams. Then we have the commutative diagram
  \[\begin{tikzcd}
    f_{\mu'\sharp}'\mu'^*g'^*\arrow[d,"\sim"]\arrow[r,"Ex"]&\mu'^*f_\sharp'g'^*\arrow[r,"Ex"]&\mu'^*g^*f_\sharp\arrow[d,"\sim"]\\
    f_{\mu'\sharp}'g_{\mu'\mu}^*\mu^*\arrow[r,"Ex"]&g_{\mu'\mu}^*f_{\mu\sharp}\mu^*\arrow[r,"Ex"]&g_{\mu'\mu}^*\mu^*f_\sharp
  \end{tikzcd}\]
  of functors. The upper left horizontal and lower right horizontal arrows are isomorphisms by (\ref{10.2.3}), and the lower left horizontal arrow is an isomorphism by (PD--5) since the commutative diagram
  \[\begin{tikzcd}
    \mathscr{X}_{\mu'}'\arrow[d,"f_{\mu'}'"]\arrow[r,"g_{\mu'\mu}'"]&\mathscr{X}_\mu\arrow[d,"f_\mu"]\\
    \mathscr{Y}_{\mu'}'\arrow[r,"g_{\mu'\mu}"]&\mathscr{Y}_\mu
  \end{tikzcd}\]
  is Cartesian by assumption. Thus the upper right horizontal arrow is also an isomorphism.
\end{proof}
\begin{prop}\label{10.2.5}
  Let $\mathscr{X}$ be an $\mathscr{S}$-diagram. Assume that the index category of $\mathscr{X}$ has a terminal object $\lambda$. Consider the $1$-morphisms
  \[\mathscr{X}_\lambda\stackrel{\lambda}\longrightarrow \mathscr{X}\stackrel{f}\longrightarrow \mathscr{X}_\lambda\]
  where $f$ denotes the morphism induced by the functor $I\rightarrow {\bf e}$ to the terminal object $\lambda$. Then the natural transformation
  \[\lambda_\sharp \lambda^*f^*\stackrel{ad}\longrightarrow f^*\]
  is an isomorphism.
\end{prop}
\begin{proof}
  Let $\lambda'$ be an index of $\mathscr{X}$. By (PD--4), it suffices to show that the natural transformation
  \[\lambda'^*\lambda_\sharp\lambda^*f^*\stackrel{ad'}\longrightarrow \lambda'^*f^*\]
  is an isomorphism. We will show that its right adjoint
  \[f_*\lambda_*'\stackrel{ad}\longrightarrow f_*\lambda_*\lambda^*\lambda_*'\]
  is an isomorphism.

  Consider the diagram
  \[\begin{tikzcd}
    \mathscr{X}_{\lambda'}\arrow[d,"{\rm id}_{\lambda'\lambda}"]\arrow[r,"{\rm id}"]&\mathscr{X}_{\lambda'}\arrow[d,"\lambda'"]\arrow[rd,"{\rm id}_{\lambda'\lambda}"]\\
    \mathscr{X}_\lambda\arrow[r,"\lambda"]&\mathscr{X}\arrow[r,"f"]&\mathscr{X}_\lambda
  \end{tikzcd}\]
  of $\mathscr{S}$-diagrams. Then we have the commutative diagram
  \[\begin{tikzcd}
    f_*\lambda_*'\arrow[d,"ad"]\arrow[rd,"\sim"]\\
    f_*\lambda_*\lambda^*\lambda_*'\arrow[r,"Ex"]&f_*\lambda_*{\rm id}_{\lambda'\lambda*}{\rm id}^*
  \end{tikzcd}\]
  of functors, so it suffices to show that the horizontal arrow is an isomorphism. This follows from (PD--6) since $\mathscr{X}_{\lambda'}=\mathscr{X}_{\lambda'}/\lambda$.
\end{proof}
\begin{prop}\label{10.2.6}
  Let $f:\mathscr{X}\rightarrow \mathscr{Y}$ be a reduced morphism of $\mathscr{S}$-diagrams, and let $\mu$ be an index of $\mathscr{Y}$. Consider the Cartesian diagram
  \[\begin{tikzcd}
    \mathscr{X}_\mu\arrow[d,"f_\mu"]\arrow[r,"\mu"]&\mathscr{X}\arrow[d,"f"]\\
    \mathscr{Y}_\mu\arrow[r,"\mu"]&\mathscr{Y}
  \end{tikzcd}\]
  of $\mathscr{S}$-diagrams. Then the exchange transformation
  \[\mu^*f_*\stackrel{Ex}\longrightarrow f_{\mu*}\mu^*\]
  is an isomorphism.
\end{prop}
\begin{proof}
  Consider the 2-diagram
  \[\begin{tikzcd}
    \mathscr{X}_\mu\arrow[rdd,"f_\mu"',bend right]\arrow[rrd,"\mu",bend left]\arrow[rd,"{\rm id}_{\mu\overline{\mu}}"]
    \arrow[rdd,phantom,"\Longleftrightarrow"]\arrow[rrd,phantom,"{\rotatebox[origin=c]{90}{$\Longleftrightarrow$}}"]    \\
    &\mathscr{X}/\mu\arrow[r,"\overline{\mu}"]\arrow[rd,phantom,"{\rotatebox[origin=c]{45}{$\Longleftarrow$}}"]\arrow[d,"f_{\overline{\mu}\mu}"]&\mathscr{X}\arrow[d,"f"]\\
    &\mathscr{Y}_\mu\arrow[r,"\mu"]&\mathscr{Y}
  \end{tikzcd}\]
  of $\mathscr{S}$-diagrams. Then the exchange transformation
  \[\mu^*f_*\stackrel{Ex}\longrightarrow f_{\mu*}\mu^*\]
  has the decomposition
  \[\mu^*f_*\stackrel{Ex}\longrightarrow f_{\mu\mu*}\overline{\mu}^*\stackrel{ad}\longrightarrow f_{\overline{\mu}\mu*}{\rm id}_{\mu\overline{\mu}*} {\rm id}_{\mu\overline{\mu}}^*\overline{\mu}^*\stackrel{\sim}\longrightarrow f_{\mu*}\mu^*.\]
  By (PD--6), the first arrow is an isomorphism. Thus it suffices to show that the second arrow is an isomorphism.

  Consider the 1-morphisms
  \[\mathscr{X}_\mu\stackrel{{\rm id}_{\mu\overline{\mu}}}\rightarrow \mathscr{X}/\mu\stackrel{{\rm id}_{\overline{\mu}\mu}}\rightarrow \mathscr{X}_\mu\stackrel{f_\mu}\rightarrow \mathscr{Y}_\mu\]
  of $\mathscr{S}$-diagrams. Then it suffices to show that the natural transformation
  \[{\rm id}_{\overline{\mu}\mu*}{\rm id}_{\mu\overline{\mu}*}{\rm id}_{\mu\overline{\mu}}^*\stackrel{ad}\longrightarrow {\rm id}_{\overline{\mu}\mu*}\]
  is an isomorphism, which follows from (\ref{10.2.5}).
\end{proof}
\begin{prop}\label{10.2.7}
  Consider a Cartesian diagram
  \[\begin{tikzcd}
    \mathscr{X}'\arrow[d,"f'"]\arrow[r,"g'"]&\mathscr{X}\arrow[d,"f"]\\
    \mathscr{Y}'\arrow[r,"g"]&\mathscr{Y}
  \end{tikzcd}\]
  of $\mathscr{S}$-diagrams where
  \begin{enumerate}[(i)]
    \item $f$ is reduced,
    \item for any index $\mu'$ of $\mathscr{Y}'$, in the Cartesian diagram
    \[\begin{tikzcd}
      \mathscr{X}_{\mu'}'\arrow[d,"f_{\mu'}'"]\arrow[r,"g_{\mu'\mu}'"]&\mathscr{X}_\mu\arrow[d,"f_\mu"]\\
      \mathscr{Y}_{\mu'}'\arrow[r,"g_{\mu'\mu}"]&\mathscr{Y}_\mu
    \end{tikzcd}\]
    in $\mathscr{S}$ where $\mu=g(\mu')$, the exchange transformation
    \[g_{\mu'\mu}^*f_{\mu*}\stackrel{Ex}\longrightarrow f_{\mu'*}'g_{\mu'\mu}'^*\]
    is an isomorphism.
  \end{enumerate}
  Then the exchange transformation
  \[g^*f_*\stackrel{Ex}\longrightarrow f_*'g'^*\]
  is an isomorphism.
\end{prop}
\begin{proof}
  Note that $f'$ is also reduced. Let $\mu'$ be an index of $\mathscr{Y}'$. By (PD--4), it suffices to show that the natural transformation
  \[\mu'^*g^*f_*\stackrel{Ex}\longrightarrow \mu'^*f_*'g'^*\]
  is an isomorphism. We put $\mu=g(\mu')$.

  Consider the commutative diagrams
  \[\begin{tikzcd}
    \mathscr{X}_{\mu'}'\arrow[d,"f_{\mu'}"]\arrow[r,"\mu'"]&\mathscr{X}'\arrow[d,"f'"]\arrow[r,"g'"]&\mathscr{X}\arrow[d,"f"]\\
    \mathscr{Y}_{\mu'}'\arrow[r,"\mu'"]&\mathscr{Y}'\arrow[r,"g"]&\mathscr{Y}
  \end{tikzcd}\quad \begin{tikzcd}
    \mathscr{X}_{\mu'}'\arrow[d,"f_{\mu'}"]\arrow[r,"g_{\mu'\mu}'"]&\mathscr{X}_\mu\arrow[d,"f_\mu"]\arrow[r,"g'"]&\mathscr{X}\arrow[d,"f"]\\
    \mathscr{Y}_{\mu'}'\arrow[r,"g_{\mu'\mu}"]&\mathscr{Y}_\mu\arrow[r,"g"]&\mathscr{Y}
  \end{tikzcd}\]
  of $\mathscr{S}$-diagrams. Then we have the commutative diagram
  \begin{equation}\label{10.2.7.1}\begin{tikzcd}
   \mu'^*g^*f_*\arrow[d,"\sim"]\arrow[r,"Ex"]&\mu'^*f_*'g'^*\arrow[r,"Ex"]&f_{\mu'*}\mu'^*g'^*\arrow[d,"\sim"]\\
    g_{\mu'\mu}^*\mu^*f_*\arrow[r,"Ex"]&g_{\mu'\mu}^*f_{\mu*}\mu^*\arrow[r,"Ex"]&g_{\mu'\mu}^*f_{\mu'*}'g'^*
  \end{tikzcd}\end{equation}
  of functors. The upper right horizontal and lower left horizontal arrows are isomorphisms by (\ref{10.2.6}), and the lower right horizontal arrow is an isomorphism by (PD--5) since the commutative diagram
  \[\begin{tikzcd}
    \mathscr{X}_{\mu'}'\arrow[d,"f_{\mu'}'"]\arrow[r,"g_{\mu'\mu}'"]&\mathscr{X}_\mu\arrow[d,"f_\mu"]\\
    \mathscr{Y}_{\mu'}'\arrow[r,"g_{\mu'\mu}"]&\mathscr{Y}_\mu
  \end{tikzcd}\]
  is Cartesian by assumption. Thus the upper left horizontal arrow of (\ref{10.2.7.1}) is also an isomorphism.
\end{proof}
\begin{none}\label{10.2.10}
  Under the notations and hypotheses of (\ref{10.2.7}), we will give two examples satisfying the conditions of (loc.\ cit).
  \begin{enumerate}[(1)]
    \item When $f$ is reduced and $g$ is a $\mathscr{P}$-morphism, the conditions are satisfied by ($\mathscr{P}$-BC).
    \item Assume that $\mathscr{T}(-,{\bf e})$ satisfies (Loc). Then the conditions are satisfied when $f$ is a reduced strict closed immersion by (\ref{2.6.7}).
  \end{enumerate}
\end{none}
\begin{none}\label{10.2.8}
  Let $i:\mathscr{Z}\rightarrow \mathscr{X}$ be a Cartesian strict closed immersion of $\mathscr{S}$-diagrmas. Then for any morphism $\lambda\rightarrow \lambda'$ in the index category of $\mathscr{X}$, we have the commutative diagram
  \[\begin{tikzcd}
    \mathscr{Z}_\lambda\arrow[r,"i_\lambda"]\arrow[d,"{\rm id}_{\lambda\lambda'}"]&\mathscr{X}_\lambda\arrow[d,"{\rm id}_{\lambda\lambda'}"]\arrow[r,leftarrow,"j_{\lambda}"]& \mathscr{U}_\lambda\arrow[d]\\
    \mathscr{Z}_{\lambda'}\arrow[r,"i_{\lambda'}"]&\mathscr{X}_{\lambda'}\arrow[r,"j_{\lambda'}",leftarrow]&\mathscr{U}_{\lambda'}
  \end{tikzcd}\]
  in $\mathscr{S}$ where each square is Cartesian and $j_\lambda$ (resp.\ $j_{\lambda'}$) denotes the complement of $i_\lambda$ (resp.\ $i_{\lambda'}$). From this, we obtain the Cartesian open immersion $j:\mathscr{U}\rightarrow \mathscr{X}$. It is called the {\it complement} of $i$.
\end{none}
\begin{none}\label{10.2.9}
  We have assumed or proven the axioms DerAlg 0, DerAlg 1, DerAlg 2d, DerAlg 2g, DerAlg 3d, and DerAlg 3g in \cite[4.2.12]{Ayo07}. With the additional assumption that $\mathscr{T}(-,{\bf e})$ satisfies (Loc), the following results are proved in \cite[\S 2.4.3]{Ayo07}.
  \begin{enumerate}[(1)]
    \item Let $i:\mathscr{Z}\rightarrow \mathscr{X}$ be a Cartesian strict closed immersion, and let $j:\mathscr{U}\rightarrow \mathscr{X}$ denote its complement. Then the pair of functors $(i^*,j^*)$ is conservative.
    \item Let $i:\mathscr{Z}\rightarrow \mathscr{X}$ be a strict closed immersion. Then the counit
    \[i^*i_*\stackrel{ad'}\longrightarrow {\rm id}\]
    is an isomorphism.
    \item Consider a Cartesian diagram
    \[\begin{tikzcd}
      \mathscr{X}'\arrow[d,"f'"]\arrow[r,"g'"]&\mathscr{X}\arrow[d,"f"]\\
      \mathscr{Y}'\arrow[r,"g"]&\mathscr{Y}
    \end{tikzcd}\]
    of $\mathscr{S}$-diagrams where $f$ is a $\mathscr{P}$-morphism and $g$ is a Cartesian strict closed immersion. Then the exchange transformation
    \[f_\sharp g_*'\stackrel{Ex}\longrightarrow g_*f_\sharp'\]
    is an isomorphism.
  \end{enumerate}
\end{none}
\begin{none}
  The notion of $\mathscr{P}$-premotivic triangulated prederivators can be used to descent theory of $\mathscr{P}$-premotivic triangulated categories. Let $t$ be a Grothendieck topology on $\mathscr{S}$. Recall from \cite[3.2.5]{CD12} that $\mathscr{T}$ satisfies $t$-descent if the unit
  \[{\rm id}\stackrel{ad}\longrightarrow f_*f^*\]
  is an isomorphism for any $t$-hypercover (see \cite[3.2.1]{CD12} for the definition of $t$-hypercover) $f:\mathscr{X}\rightarrow \mathscr{Y}$ of $\mathscr{S}$-diagrams.
\end{none}
\section{Poincar\'e duality}
\begin{none}
  Throughout this section, fix a log motivic derivator $\mathscr{T}$ over $\mathscr{S}$ satisfying the strict \'etale descent. We will often use notations for various natural transformations in \S \ref{Sec4}.
\end{none}
\subsection{Compactified exactifications}
\begin{none}\label{9.1.0}
  {\it Compactification via toric geometry.} Let $\theta:P\rightarrow Q$ be a homomorphism of fs monoids such that $\theta^{\rm gp}$ is an isomorphism. Choose a fan $\Sigma$ of the dual lattice $(\overline{P}^{\rm gp})^\vee$ whose support is $(\overline{P})^\vee$ and containing $(Q/\theta(P^*))^\vee$ as a cone. This fan induces a factorization
  \[{\rm spec}(Q/\theta({P^*}))\rightarrow M\rightarrow {\rm spec}\,\overline{P}\]
  of the morphism ${\rm spec}\,(Q/\theta(P^*))\rightarrow {\rm spec}\,\overline{P}$ for some fs monoscheme $M$. Consider the open immersions ${\rm spec}\,P_i\rightarrow M$ of fs monoschemes induced by the fan, and, we denote by $P_i'$ the preimage of $P_i$ via the homomorphism $P^{\rm gp}\rightarrow \overline{P}^{\rm gp}$. Then the family of $P_i'$ forms an fs monoscheme $M'$ with the factorization
  \[{\rm spec}\,Q\rightarrow M'\rightarrow {\rm spec}\,P\]
  of the morphism ${\rm spec}\,Q\rightarrow {\rm spec}\,P$. Here, the first arrow is an open immersion, and the second arrow is a proper log \'etale monomorphism.

  We will sometimes use this construction later.
\end{none}
\begin{df}\label{9.1.1}
  Let $f:X\rightarrow S$ be an exact log smooth separated morphism of $\mathscr{S}$-schemes, let $a:X\rightarrow X\times_S X$ denote the diagonal morphism, and let $p_1,p_2:X\times_S X\rightrightarrows X$ denote the first and second projections respectively. A {\it compactified exactification} of the diagram $X\rightarrow X\times_S X\rightrightarrows X$ is a commutative diagram
  \[\begin{tikzcd}
    &D\arrow[d,"u"]\\
    X\arrow[ru,"b"]\arrow[r,"a"]&X\times_S X\arrow[r,"p_1",shift left]\arrow[r,"p_2"',shift right]&X
  \end{tikzcd}\]
  of $\mathscr{S}$-schemes such that
  \begin{enumerate}[(i)]
    \item there is an open immersion $v:I\rightarrow D$ of fs log schemes such that the compositions $p_1uv$ and $p_2uv$ are strict,
    \item $b$ is a strict closed immersion and factors through $I$,
    \item $u$ is a proper and log \'etale monomorphism of fs log schemes.
  \end{enumerate}
  We often say that $u:D\rightarrow X$ is a compactified exactification of $a$ if no confusion seems likely to arise. We also call $I$ an {\it interior} of $E$. Then $p_1uv$ and $p_2uv$ are strict log smooth, and the morphism $X\rightarrow I$ of $\mathscr{S}$-schemes induced by $b$ is a strict regular embedding. Note also that the natural transformation
  \[{\rm id}\stackrel{ad}\longrightarrow u_*u^*\]
  is an isomorphism by (Htp--4) and that the natural transformation
  \[\Omega_{f,I}\stackrel{T_{D,I}}\longrightarrow \Omega_{f,D}\]
  given in (\ref{4.2.2}) is an isomorphism by construction.
\end{df}
\begin{none}\label{9.1.2}
  Under the notations and hypotheses of (\ref{9.1.1}), let $\mathcal{CE}_a$ denote the category whose objects consist of compactified exactifications of $a$ and morphisms consist of commutative diagrams
  \[\begin{tikzcd}
    &E\arrow[rdd,"r_2"]\arrow[d,"v"]\\
    &D\arrow[rd,"q_2"']\arrow[d,"u"]\\
    X\arrow[r,"a"']\arrow[ru,"b"']\arrow[ruu,"c"]&X\times_S X\arrow[r,"p_2"']&X
\end{tikzcd}\]
  of $\mathscr{S}$-schemes. Note that $v$ is a proper log \'etale monomorphism. For such a morphism in $\mathcal{CE}_a$, we associate the natural transformation
  \[T_{D,E}:\Omega_{f,E}\longrightarrow \Omega_{f,D}\]
  given in (\ref{4.2.2}).  We will show that it is an isomorphism. Let $I$ be an interior of $D$, and let $J$ be an interior of $E$ contained in $I\times_D E$. Consider the induced commutative diagram
  \[\begin{tikzcd}
    &J\arrow[rdd,"r_2'"]\arrow[d,"v'"]\\
    &I\arrow[rd,"q_2'"']\arrow[d,"u'"]\\
    X\arrow[r,"a"']\arrow[ru,"b'"']\arrow[ruu,"c'"]&X\times_S X\arrow[r,"p_2"']&X
  \end{tikzcd}\]
  of $\mathscr{S}$-schemes. Then $v'$ is a strict \'etale monomorphism, so it is an open immersion by \cite[IV.17.9.1]{EGA}. Consider the diagram
  \[\begin{tikzcd}
    \Omega_{f,J}\arrow[d,"T_{I,J}"]\arrow[r,"T_{E,J}"]&\Omega_{f,E}\arrow[d,"T_{D,E}"]\\
    \Omega_{f,I}\arrow[r,"T_{D,I}"]&\Omega_{f,D}
  \end{tikzcd}\]
  of functors. It commutes by (\ref{4.2.12}), and the horizontal arrows are isomorphisms by (\ref{9.1.1}). The left vertical arrow is also an isomorphism since $v'$ is an open immersion, so $T_{D,E}$ is an isomorphism.
\end{none}
\begin{df}\label{9.1.5}
  Let $\theta:P\rightarrow Q$ be a homomorphism of fs monoids. Then the submonoid of $P^{\rm gp}$ consisting of elements $p\in P^{\rm gp}$ such that $n\theta^{\rm gp}(p)\in Q$ for some $n\in \mathbb{N}^+$ is called the {\it fs exactification} of $\theta$. It is the fs version of \cite[I.4.2.17]{Ogu17}.
\end{df}
\begin{none}\label{9.1.3}
  Let $f:X\rightarrow S$ be an exact log smooth separated morphism of $\mathscr{S}$-schemes with an fs chart $\theta:P\rightarrow Q$ of exact log smooth type, and let $Q_1$ denote the fs exactification of the summation homomorphism of $Q^{\rm gp}\oplus_{P^{\rm gp}}Q^{\rm gp}$. Applying (\ref{9.1.0}), we obtain the morphisms
  \[{\rm spec}\,Q_1\rightarrow M\rightarrow {\rm spec}(Q\oplus_P Q)\]
  of fs monoschemes. If we put
  \[I=(X\times_S X)\times_{\mathbb{A}_{Q\oplus_P Q}}\mathbb{A}_{Q_1},\quad D=(X\times_S X)\times_{\mathbb{A}_{Q\oplus_P Q}}\mathbb{A}_M,\]
  then the projection $u:D\rightarrow X\times_S X$ is a compactified exactification of the diagonal morphism $a:X\rightarrow X\times_S X$ with an interior $I$. In particular, $a$ has a compactified exactification.
\end{none}
\begin{prop}\label{9.1.4}
  Let $f:X\rightarrow S$ be an exact log smooth separated morphism of $\mathscr{S}$-schemes, and let $a:X\rightarrow X\times_S X$ denote the diagonal morphism. For any compactified exactifications $u:D\rightarrow X\times_SX$ and $u':D\rightarrow X\times_S X$, the morphism $D\times_{X\times_S X}D'\rightarrow X\times_S X$ is a compactified exactification.
\end{prop}
\begin{proof}
  Consider the induced commutative diagram
  \[\begin{tikzcd}
    &&I\arrow[d,"w"']\arrow[rrd,"r_2"',near end]&I'\arrow[ld,"w'"',crossing over,near end]\arrow[rd,"r_2'"]\\
    X\arrow[rru,"c"']\arrow[rrru,bend left,"c'"]\arrow[rr,"a"]&&X\times_S X\arrow[rr,"p_2"']&&X
  \end{tikzcd}\]
  of $\mathscr{S}$-schemes where $I$ (resp.\ $I'$) is an interior of $D$ (resp.\ $D'$). To show the claim, it suffices to construct an open immersion $I''\rightarrow I\times_{X\times_S X}I'$ such that $a$ factors through $I''$ and that the morphisms $I''\rightrightarrows X$ induced by $p_1$ and $p_2$ are strict.

  We have the induced morphisms
  \[X\stackrel{\alpha}\rightarrow I\times_{X\times_S X}I'\stackrel{\beta}\rightarrow I\times_{r_2,X,r_2'}I'\]
  of $\mathscr{S}$-schemes. Let $x\in X$ be a point. Consider the associated homomorphisms
  \[ \overline{\mathcal{M}}_{I\times_{r_2,X,r_2'}I',\overline{\beta\alpha(x)}}\stackrel{\lambda}\rightarrow \overline{\mathcal{M}}_{I\times_{X\times_S X}I',\overline{\alpha(x)}} \stackrel{\eta}\rightarrow \overline{\mathcal{M}}_{X,\overline{x}}\]
  of fs monoids. Then $\eta\lambda$ is an isomorphism since $r_2$ and $r_2'$ are strict. In particular, $\lambda$ is injective. Since $\beta$ is a pullback of the diagonal morphism $X\times_S X\rightarrow (X\times_S X)\times_{p_2,X,p_2} (X\times_S X)$ that is a closed immersion, $\lambda$ is a pushout of a $\mathbb{Q}$-surjective homomorphism. Thus $\lambda$ is $\mathbb{Q}$-surjective, so $\lambda$ is Kummer. Then by (\ref{0.3.5}), $\eta$ is an isomorphism, i.e., $\alpha$ is strict. Thus the conclusion follows from (\ref{0.3.3}).
\end{proof}
\begin{cor}\label{9.1.6}
  Let $f:X\rightarrow S$ be an exact log smooth separated morphism of $\mathscr{S}$-schemes, and let $a:X\rightarrow X\times_S X$ denote the diagonal morphism. Then the category $\mathcal{CE}_a$ is connected.
\end{cor}
\begin{proof}
  It is a direct consequence of (\ref{9.1.4}).
\end{proof}
\subsection{Functoriality of purity transformations}
\begin{none}\label{9.2.1}
  Let $h:X\rightarrow Y$ and $g:Y\rightarrow S$ be exact log smooth separated morphisms of $\mathscr{S}$-schemes. We put $f=gh$. Consider the commutative diagram
  \[\begin{tikzcd}
    X\arrow[d,"a'"]\arrow[rd,"a"]\\
    X\times_Y X\arrow[d,"p_2'"]\arrow[r,"\varphi"]&X\times_S X\arrow[d,"\varphi'"]\arrow[rd,"p_2"]\\
    X\arrow[d,"h"]\arrow[r,"a''"]&Y\times_S X\arrow[r,"p_2''"]\arrow[d,"\varphi''"]&X\arrow[d,"h"]\\
    Y\arrow[r,"a'''"]&Y\times_S Y\arrow[r,"p_2'''"]&Y
  \end{tikzcd}\]
  of $\mathscr{S}$-schemes where
  \begin{enumerate}[(i)]
    \item $a$, $a'$, and $a'''$ denote the diagonal morphisms,
    \item $p_2$, $p_2'$, and $p_2'''$ denote the second projections,
    \item each small square is Cartesian.
  \end{enumerate}
\end{none}
\begin{none}\label{9.2.2}
  Under the notations and hypotheses of (\ref{9.2.1}), assume that we have a commutative diagram
  \begin{equation}\label{9.2.2.1}\begin{tikzcd}
    &X\arrow[rd,"b"]\arrow[ld,"b'"']\\
    D'\arrow[rd,"u'"]\arrow[rr,"\rho",near start]\arrow[rddd,"q_2'"']&&D\arrow[dd,"\rho'",near start]\arrow[rd,"u"]\arrow[rrrddd,bend left,"q_2"]\\
    &X\times_{Y}X\arrow[dd,"p_2'"]\arrow[uu,leftarrow,"a'"',near start,crossing over]\arrow[rr,"\varphi",crossing over,near end]&&X\times_S
    X\arrow[rrdd,"p_2"]\arrow[lluu,"a"',leftarrow, bend right, crossing over]\\
    &&D''\arrow[rd,"u''"']\arrow[dd,"\rho'"',near start]\arrow[rrrd,"q_2''"]\\
    &X\arrow[rr,"a''"',crossing over,near start]\arrow[dd,"h"]\arrow[ru,"b''"]&&Y\times_S X\arrow[uu,"\varphi'"',leftarrow,crossing over]\arrow[rr,"p_2''"']&&X\arrow[dd,"h"]\\
    &&D'''\arrow[rd,"u'''"']\arrow[rrrd,"q_2'''"]\\
    &Y\arrow[ru,"b'''"]\arrow[rr,"a'''"']&&Y\times_S Y\arrow[uu,"\varphi''"',leftarrow,crossing over]\arrow[rr,"p_2'''"']&&Y
  \end{tikzcd}\end{equation}
  of $\mathscr{S}$-schemes where each small square is Cartesian and $u$ (resp.\ $u'$, resp.\ $u'''$) is a compactified exactification of $a$ (resp.\ $a'$, resp.\ $a'''$).

  We will use these notations and hypotheses later.
\end{none}
\begin{none}\label{9.2.3}
  Under the notations and hypotheses of (\ref{9.2.2}), let $\alpha:S_0\rightarrow S$ be a morphism of $\mathscr{S}$-schemes, and consider the commutative diagrams
  \[\begin{tikzcd}
    X_0\arrow[d,"\beta"]\arrow[r,"h_0"]&Y_0\arrow[d]\arrow[r,"g_0"]&S_0\arrow[d,"\alpha"]\\
    X\arrow[r,"h"]&Y\arrow[r,"g"]&S
  \end{tikzcd}\]
  \[\begin{tikzcd}
    X_0\arrow[dd,"b_0'"]\arrow[rd,"\beta"]\arrow[rrdd,"b_0",bend left]\\
    &X\\
    D_0'\arrow[rr,"\rho_0",near end]\arrow[rd,"\gamma'"]\arrow[dd,"q_{02}'"]&&D_0\arrow[dd,"\rho_0'",near start]\arrow[rd,"\gamma"']\arrow[rrdd,"q_{02}",bend left]\\
    &D'\arrow[rr,"\rho",near start,crossing over]\arrow[uu,"b'",crossing over,near end,leftarrow]&&D\arrow[lluu,leftarrow,"b"',bend right,crossing over]\arrow[rrdd,"q_2",bend left,crossing over]\\
    X_0\arrow[rd,"\beta"]\arrow[rr,"b_0''",near end]&&D_0''\arrow[rd,"\gamma''"]\arrow[rr,"q_{02}''",near end]&&X_0\arrow[rd,"\beta"]\\
    &X\arrow[rr,"b''"]\arrow[uu,"q_2'",crossing over,near end,leftarrow]&&D''\arrow[rr,"q_2''"]\arrow[uu,"\rho'",near end,leftarrow,crossing over]&&X
  \end{tikzcd}\]
  of $\mathscr{S}$-schemes where each small square is Cartesian. We put $f_0=g_0h_0$. Then the diagram
  \[\begin{tikzcd}
    \beta^*\Omega_{h,D'}\Omega_{g,f,D''}\arrow[r,"C"]\arrow[d,"Ex"]&\beta^*\Omega_{f,D}\arrow[dd,"Ex"]\\
    \Omega_{h_0,D_0'}\beta^*\Omega_{g,f,D''}\arrow[d,"Ex"]\\
    \Omega_{h_0,D_0'}\Omega_{g_0,f_0,D_0''}\beta^*\arrow[r,"C"]&\Omega_{f_0,D_0}\beta^*
  \end{tikzcd}\]
  of functors commutes since it is the big outside diagram of the commutative diagram
  \[\begin{tikzcd}
    \beta^*b'^!q_2'^*b''^!q_2''^*\arrow[d,"Ex"]\arrow[r,"Ex"]&\beta^*b'^!\rho^!\rho'^*q_2''^*\arrow[d,"Ex"]\arrow[r,"\sim"]&\beta^*b^!q_2^*\arrow[dd,"Ex"]\\
    b_0'^!\gamma'^*q_2'^*b''^!q_2''^*\arrow[d,"\sim"]\arrow[r,"Ex"]&b_0'^!\gamma'^*\rho^!\rho'^*q_2''^*\arrow[d,"Ex"]\\
    b_0'^!q_{02}'^*\beta^*b''^!q_2''^*\arrow[d,"Ex"]&b_0'^!\rho_0^!\gamma^*\rho'^*q_2''^*\arrow[dd,"\sim"]\arrow[r,"\sim"]&b_0^!\gamma^*q_2^*\arrow[dd,"\sim"]\\
    b_0'^!q_{02}'^*b_0''^!\gamma''^*q_2''^*\arrow[d,"\sim"]\\
    b_0'^!q_{02}'^*b_0''^!q_{02}''^*\alpha^*\arrow[r,"Ex"]&b_0'^!\rho_0^!\rho_0'^*q_2''^*\beta^*\arrow[r,"\sim"]&b_0^!q_{02}^*\beta^*
  \end{tikzcd}\]
  of functors.

  We will use these notations and hypotheses later.
\end{none}
\begin{none}\label{9.2.4}
  Under the notations and hypotheses of (\ref{9.2.2}), we denote by $\mathcal{I}$ (resp.\ $\mathcal{I}'$, resp.\ $\mathcal{I}''$) the ideal sheaf of $X$ on $D$ (resp.\ $D'$, resp.\ $D'''$). Then by \cite[IV.3.2.2]{Ogu17}, the morphisms
  \[\mathcal{I}/\mathcal{I}^2\rightarrow b^*\Omega_{D/X}^1,\quad \mathcal{I}'/\mathcal{I}'^2\rightarrow b'^*\Omega_{D'/X}^1,\quad \mathcal{I}''/\mathcal{I}''^2\rightarrow b''^*\Omega_{D''/X}^1\]
  of quasi-coherent sheaves on $\underline{X}$ are isomorphisms, and by \cite[IV.3.2.4, IV.1.2.15]{Ogu17}, the morphisms
  \[\Omega_{D/X}^1\rightarrow u^*\Omega_{X\times_S X/X}^1\rightarrow u^*p_1^*\Omega_{X/S}^1,\]
  \[\Omega_{D'/X}^1\rightarrow u'^*\Omega_{X\times_Y X/X}^1\rightarrow u'^*p_1'^*\Omega_{X/Y}^1,\]
  \[\Omega_{D''/X}^1\rightarrow u''^*\Omega_{Y\times_S X/X}^1\rightarrow u''^*p_1''^*\Omega_{Y/S}^1\]
  of quasi-coherent sheaves on $\underline{X}$ are isomorphisms where
  \[p_1:X\times_S X\rightarrow X,\quad p_1':X\times_Y X\rightarrow X,\quad p_1'':Y\times_S X\rightarrow Y\]
  denote the first projections. Then from the exact sequence
  \[0\longrightarrow h^*\Omega_{Y/S}^1\longrightarrow \Omega_{X/S}^1\longrightarrow \Omega_{X/Y}^1\longrightarrow 0\]
  of quasi-coherent sheaves on $\underline{X}$ given in \cite[IV.3.2.3]{Ogu17}, we have the exact sequence
  \[0\longrightarrow \mathcal{I}''/\mathcal{I}''^2\longrightarrow \mathcal{I}/\mathcal{I}^2\longrightarrow \mathcal{I}'/\mathcal{I}'^2\longrightarrow 0\]
  of quasi-coherent sheaves on $\underline{X}$. This shows that the induced diagram
  \begin{equation}\label{9.2.4.1}\begin{tikzcd}
    N_X{D'}\arrow[d,"t_2'"]\arrow[r,"\chi"]&N_X{D}\arrow[d,"\chi'"]\\
    X\arrow[r,"e_2"]&N_XD''
  \end{tikzcd}\end{equation}
  of $\mathscr{S}$-schemes is Cartesian. Thus the induced diagram
  \[\begin{tikzcd}
    D_X{D'}\arrow[d]\arrow[r]&D_X{D}\arrow[d]\\
    X\times\mathbb{A}^1\arrow[r]&D_XD''
  \end{tikzcd}\]
  of $\mathscr{S}$-schemes is also Cartesian. Then as in (\ref{4.3.1}), we have the natural transformations
  \[\Omega_{h,D'}^d\Omega_{g,f,D''}^d\stackrel{C}\longrightarrow \Omega_{f,D}^d,\]
  \begin{equation}\label{9.2.4.2}
    \Omega_{h,D'}^n\Omega_{g,f,D''}^n\stackrel{C}\longrightarrow \Omega_{f,D}^n
  \end{equation}
  These are called again {\it composition transformations.} Note that the left adjoint versions are
  \[\Sigma_{f,D}^d\stackrel{C}\longrightarrow \Sigma_{h,D'}^d\Sigma_{g,f,D''}^d,\]
  \[\Sigma_{f,D}^n\stackrel{C}\longrightarrow \Sigma_{h,D'}^n\Sigma_{g,f,D''}^n.\]
  In the Cartesian diagram (\ref{9.2.4.1}), the morphisms $e_2$, $t_2'$, $\chi$, and $\chi'$ are strict, $\chi'$ are strict smooth, and $e_2$ is a strict closed immersion. Thus by (\ref{2.5.9}) and (\ref{4.3.1}), the natural transformation (\ref{9.2.4.2}) is an isomorphism.

  Applying (\ref{9.2.3}) to the cases when the diagram
  \[\begin{tikzcd}
    X_0\arrow[d,"\beta"]\arrow[r,"b_0''"]&D_0\arrow[d,"\gamma"]\arrow[r,"q_{02}"]&X_0\arrow[d,"\beta"]\\
    X\arrow[r,"b''"]&D\arrow[r,"q_2"]&X
  \end{tikzcd}\]
  is equal to one of the diagrams in (\ref{4.1.1.1}) and similar things are true for $D'$ and $D''$, we have the commutative diagram
  \[\begin{tikzcd}
    \Omega_{h,D'}^n\Omega_{g,f,D''}^n\arrow[r,"C"]\arrow[d,"T^nT^n"]&\Omega_{f,D}^n\arrow[d,"T^n"]\\
    \Omega_{h,D'}^d\Omega_{g,f,D''}^d\arrow[r,"C"]\arrow[d,"T^dT^d"]&\Omega_{f,D}^d\arrow[d,"T^d"]\\
    \Omega_{h,D'}\Omega_{g,f,D''}\arrow[r,"C"]&\Omega_{f,D}
  \end{tikzcd}\]
  of functors.
\end{none}
\begin{none}\label{9.2.5}
  Under the notations and hypotheses of (\ref{9.2.2}), consider the diagram
  \begin{equation}\label{9.2.5.1}\begin{tikzcd}
    f_\sharp\arrow[rr,"\mathfrak{p}_{f,D}^n"]\arrow[ddd,"\sim"]&&f_!\Sigma_{f,D}^n\arrow[d,"C"]\\
    &&f_!\Sigma_{g,f,D''}^n\Sigma_{h,D'}^n\arrow[d,"\sim"]\\
    &&g_!h_!\Sigma_{g,f,D''}^n\Sigma_{h,D'}^n\arrow[d,"Ex",leftarrow]\\
    g_\sharp h_\sharp\arrow[rr,"\mathfrak{p}_{g,D'''}^n\mathfrak{p}_{h,D'}^n"]&&g_!\Sigma_{g,D'''}^nh_!\Sigma_{h,D'}^n
  \end{tikzcd}\end{equation}
  of functors. We will show that it commutes. Its right adjoint is the big outside diagram of the diagram
  \[\begin{tikzpicture}[baseline= (a).base]
    \node[scale=.96] (a) at (0,0)
    {\begin{tikzcd}
    \Omega_{h,D'}^nh^!\Omega_{g,D'''}^ng^!\arrow[r,"T^nT^n"]\arrow[d,"Ex",leftarrow]&\Omega_{h,D'}^dh^!\Omega_{g,D'''}^dg^!\arrow[r,"T^dT^d"]\arrow[d,"Ex",leftarrow]& \Omega_{h,D'}h^!\Omega_{g,D'''}g^!\arrow[r,"T_{D'}T_{D'''}"]\arrow[d,"Ex",leftarrow]&\Omega_h h^!\Omega_g g^!\arrow[d,"Ex",leftarrow]\arrow[r,"\mathfrak{q}_h\mathfrak{q}_g"]& h^*g^*\arrow[ddd,"\sim"]\\
    \Omega_{h,D'}^n\Omega_{g,f,D''}^nh^!g^!\arrow[r,"T^nT^n"]\arrow[d,"\sim"]&\Omega_{h,D'}^d\Omega_{g,f,D''}^dh^!g^!\arrow[r,"T^dT^d"]\arrow[d,"\sim"] &\Omega_{h,D'}\Omega_{g,f,D''}h^!g^!\arrow[r,"T_{D'}T_{D''}"]\arrow[d,"\sim"]& \Omega_h\Omega_{g,f}h^!g^!\arrow[d,"\sim"]\\
    \Omega_{h,D'}^n\Omega_{g,f,D''}^nf^!\arrow[r,"T^nT^n"]\arrow[d,"C"]&\Omega_{h,D'}^d\Omega_{g,f,D''}^df^!\arrow[r,"T^dT^d"]\arrow[d,"C"]&\Omega_{h,D'}\Omega_{g,f,D''}f^! \arrow[d,"C"]\arrow[r,"T_{D'}T_{D''}"]& \Omega_h\Omega_{g,f}f^!\arrow[d,"C"]\\
    \Omega_{f,D}^nf^!\arrow[r,"T^n"]&\Omega_{f,D}^df^!\arrow[r,"T^d"]&\Omega_{f,D}f^!\arrow[r,"T_D"]&\Omega_ff^!\arrow[r,"\mathfrak{q}_f"]&f^*
    \end{tikzcd}};
  \end{tikzpicture}\]
  of functors. It commutes by (\ref{4.2.6}), (\ref{4.3.2}), (\ref{4.4.3}), and (\ref{9.2.4}). Thus (\ref{9.2.5.1}) also commutes.

  Note also that the right vertical top arrow of (\ref{9.2.5.1}) is an isomorphism by (\ref{9.2.4}) and that the right vertical bottom arrow of (\ref{9.2.5.1}) is an isomorphism by (\ref{4.2.10}).
\end{none}
\begin{none}\label{9.2.6}
  Let $h:X\rightarrow Y$ and $g:Y\rightarrow S$ be exact log smooth separated morphisms of $\mathscr{S}$-schemes. Assume that $f$ (resp.\ $g$) has an fs chart $\theta:Q\rightarrow R$ (resp.\ $\eta:R\rightarrow P$) of exact log smooth type. In this setting, we will construct the diagram (\ref{9.2.2.1}) and verify the hypotheses of (\ref{9.2.2}).

  We denote by $T$ and $T'''$ the fs exactification of the summation homomorphisms
  \[Q\oplus_P Q\rightarrow Q,\quad R\oplus_P R\rightarrow R\]
  respectively. Then we put
  \[T'=T\oplus_{Q\oplus_P Q}(Q\oplus_R Q).\]
  By (\ref{9.1.0}), we have the factorization
  \[{\rm spec}\,T'''\rightarrow M'''\rightarrow {\rm spec}\,R\oplus_P R\]
  such that the first arrow is an open immersion of fs monoschemes and the second arrow is a proper log \'etale monomorphism of fs monoschemes. We put
  \[M''=M'''\times_{{\rm spec}(R\oplus_P R)}{\rm spec}(R\oplus_P Q).\]
  Consider the induced morphism
  \[{\rm spec}\,T\rightarrow M''\times_{{\rm spec}(R\oplus_P Q)}{\rm spec}(Q\oplus_P Q).\]
  By the method of (\ref{9.1.0}), it has a factorization
  \[{\rm spec}\,T\rightarrow M\rightarrow M''\times_{{\rm spec}(R\oplus_P Q)}{\rm spec}(Q\oplus_P Q)\]
  where the first arrow is an open immersion of fs monoschemes and the second arrow is a proper log \'etale monomorphism of fs monoschemes. We put
  \[M'=M\times_{{\rm spec}(Q\oplus_P Q)}{\rm spec}(Q\oplus_R Q),\]
  and we put
  \[I=(X\times_S X)\times_{\mathbb{A}_{Q\oplus_P Q}}\mathbb{A}_T,\quad I'=(X\times_Y X)\times_{\mathbb{A}_{Q\oplus_R Q}}\mathbb{A}_{T'},\quad I'''=(Y\times_S Y)\times_{\mathbb{A}_{R\oplus_P R}}\mathbb{A}_{T'''},\]
  \[D=(X\times_S X)\times_{\mathbb{A}_{Q\oplus_P Q}}\mathbb{A}_M,\quad D'=(X\times_Y X)\times_{\mathbb{A}_{Q\oplus_R Q}}\mathbb{A}_{M'},\quad D'''=(Y\times_S Y)\times_{\mathbb{A}_{R\oplus_P R}}\mathbb{A}_{M'''}.\]
  Then we have the commutative diagram (\ref{9.2.2.1}). By construction, $D$ (resp.\ $D'$, resp.\ $D'''$) are compactified exactifications of the diagonal morphism $a:X\rightarrow X\times_S X$ (resp.\ $a':X\rightarrow X\times_Y X$, resp.\ $a''':Y\rightarrow Y\times_S Y$) with an interior $I$ (resp.\ $I'$, resp.\ $I'''$).
\end{none}
\begin{prop}\label{9.2.7}
  Let $f:X\rightarrow S$ be an exact log smooth separated morphism of $\mathscr{S}$-schemes, let $D$ be a compactified exactification of the diagonal morphism $a:X\rightarrow X\times_S X$, and let $g:S'\rightarrow S$ be a morphism of $\mathscr{S}$-schemes. We put
  \[X'=X\times_S S',\quad D'=D\times_{X\times_S X}(X'\times_{S'}X').\]
  Then the diagram
  \begin{equation}\label{9.2.7.1}\begin{tikzcd}
    f_\sharp'g'^*\arrow[dd,"Ex"]\arrow[r,"\mathfrak{p}_{f'}^n"]&f_!'\Sigma_{f',D'}^n g'^!\arrow[d,"Ex"]\\
    &f_!'g'^*\Sigma_{f,D}^n\arrow[d,leftarrow,"Ex"]\\
    g^*f_\sharp\arrow[r,"\mathfrak{p}_f^n"]&g^*f_!\Sigma_{f,D}^n
  \end{tikzcd}\end{equation}
  of functors commutes.
\end{prop}
\begin{proof}
  The right adjoint of (\ref{9.2.7.1}) is the big outside diagram of the diagram
  \[\begin{tikzcd}
    \Omega_f^nf^!g_*\arrow[d,"Ex",leftarrow]\arrow[r,"T^n"]&\Omega_f^d f^!g_*\arrow[d,"Ex",leftarrow]\arrow[r,"T^d"]&\Omega_f f^!g_*\arrow[d,"Ex",leftarrow]\arrow[r,"\mathfrak{q}_f"]& f^*g_*\arrow[dd,"Ex"]\\
    \Omega_f^n g_*'f'^!\arrow[d,"Ex"]\arrow[r,"T^n"]&\Omega_f^dg_*'f'^!\arrow[d,"Ex"]\arrow[r,"T^d"]&\Omega_fg_*'f'^!\arrow[d,"Ex"]\\
    g_*'\Omega_{f'}^nf'^!\arrow[r,"T^n"]&g_*'\Omega_{f'}^df'^!\arrow[r,"T^d"]&g_*'\Omega_{f'}f'^!\arrow[r,"\mathfrak{q}_{f'}"]&g_*'f'^*
  \end{tikzcd}\]
  of functors. By (\ref{4.2.6}) and (\ref{4.4.6}), each small diagram commutes. The conclusion follows from this.
\end{proof}
\subsection{Poincar\'e duality for Kummer log smooth separated morphisms}
\begin{prop}\label{9.4.1}
  Let $f:X\rightarrow S$ be a strict smooth separated morphism of $\mathscr{S}$-schemes. Then the natural transformation
  \[\mathfrak{p}_f^n:f_\sharp\longrightarrow f_!\Sigma_f^n\]
  is an isomorphism.
\end{prop}
\begin{proof}
  It follows from (\ref{2.5.7}) and (\ref{4.2.1}).
\end{proof}
\begin{none}\label{9.4.3}
  Let $f:X\rightarrow S$ be a Kummer log smooth separated morphism of $\mathscr{S}$-schemes. Then the diagonal morphism $a:X\rightarrow X\times_S X$ is a strict regular embedding by (\ref{0.3.4}). In particular, we can use the notation $\Sigma_f^n$.
\end{none}
\begin{prop}\label{9.4.2}
  Let $f:X\rightarrow S$ be a Kummer log smooth separated morphism of $\mathscr{S}$-schemes. Then the natural transformation
  \[\mathfrak{p}_f^n:f_\sharp \longrightarrow f_!\Sigma_f^n\]
  is an isomorphism.
\end{prop}
\begin{proof}
  By (\ref{5.3.3}), there is a Cartesian diagram
  \[\begin{tikzcd}
    X'\arrow[d,"g'"]\arrow[r,"f'"]&S'\arrow[d,"g"]\\
    X\arrow[r,"f"]&S
  \end{tikzcd}\]
  of $\mathscr{S}$-schemes such that
  \begin{enumerate}[(i)]
    \item $g$ is Kummer log smooth,
    \item $g^*$ is conservative,
    \item $f'$ is strict.
  \end{enumerate}
  Hence we reduce to showing that the natural transformation
  \[g^*f_\sharp \stackrel{\mathfrak{p}_f^n}\longrightarrow g^*f_!\Sigma_f^n\]
  is an isomorphism.

  Consider the commutative diagram (\ref{9.2.7.1}). The left vertical arrow and right lower vertical arrow are isomorphisms since $f$ and $g$ are exact log smooth. The right upper vertical arrow is an isomorphism by (\ref{4.2.8}). Thus we reduce to showing that the upper horizontal arrow is an isomorphism. It follows from (\ref{9.4.1}).
\end{proof}
\subsection{Poincar\'e duality for \texorpdfstring{$\mathbb{A}_Q\rightarrow \mathbb{A}_P$}{AQAP}}
\begin{df}\label{9.3.4}
  In this subsection, we will consider the following conditions:
  \begin{enumerate}
    \item[$({\rm PD}_{f,D})$] Let $f:X\rightarrow S$ be an exact log smooth separated morphism of $\mathscr{S}$-schemes, and let $D$ be a compactified exactification of the diagonal morphism $a$. We denote by $({\rm PD}_{f,D})$ the condition that the natural transformation
        \[\mathfrak{p}_f^n:f_\sharp\longrightarrow f_!\Sigma_{f,D}^n\]
        is an isomorphism.
    \item[$({\rm PD}_f)$] Let $f:X\rightarrow S$ be an exact log smooth separated morphism of $\mathscr{S}$-schemes, and let $a:X\rightarrow X\times_S X$ denote the diagonal morphism. We denote by $({\rm PD}_f)$ the conditions that
        \begin{enumerate}[(i)]
          \item there is a compactified exactification of $a$,
          \item for any compactified exactification $D$ of $a$, $({\rm PD}_{f,D})$ is satisfied.
        \end{enumerate}
    \item[$({\rm PD}^{m})$] We denote by $({\rm PD}^m)$ the condition that $({\rm PD}_f)$ is satisfied for any {\it vertical} exact log smooth separated morphism $f:X\rightarrow S$ with an fs chart $\theta:P\rightarrow Q$ such that $\theta$ is a vertical homomorphism of exact log smooth type and
        \[\max_{x\in X}{\rm rk}\,\overline{\mathcal{M}}_{X,\overline{x}}^{\rm gp}+\max_{s\in S}{\rm rk}\,\overline{\mathcal{M}}_{S,\overline{s}}^{\rm gp}\leq m.\]
  \end{enumerate}
  Note that by (\ref{2.8.2}), we get equivalent conditions if we use $\Sigma_{f,D}^o$ instead of $\Sigma_{f,D}^n$
\end{df}
\begin{prop}\label{9.3.1}
  Let $f:X\rightarrow S$ be a vertical exact log smooth separated morphism of $\mathscr{S}$-schemes, and let $E\rightarrow D$ be a morphism in $\mathcal{CE}_a$ where $a:X\rightarrow X\times_S X$ denotes the diagonal morphism. Then $({\rm PD}_{f,D})$ is equivalent to $({\rm PD}_{f,E})$.
\end{prop}
\begin{proof}
  The diagram
  \[\begin{tikzcd}
    \Omega_{g,f,E}^n\arrow[d,"T_{D,E}"]\arrow[r,"T^n"]&\Omega_{g,f,E}^d\arrow[d,"T_{D,E}"]\arrow[r,"T^d"]&\Omega_{g,f,E}\arrow[d,"T_{D,E}"]\arrow[r,"T_E"]&\Omega_f f^!\arrow[d,equal] \arrow[r,"\mathfrak{q}_f"]&f^*\arrow[d,equal]\\
    \Omega_{g,f,D}^n\arrow[r,"T^n"]&\Omega_{g,f,D}^d\arrow[r,"T^d"]&\Omega_{g,f,D}\arrow[r,"T_D"]&\Omega_f f^!\arrow[r,"\mathfrak{q}_f"]&f^*
  \end{tikzcd}\]
  of functors commutes by (\ref{4.2.11}) and (\ref{4.2.12}). The left vertical arrow is an isomorphism because the normal cones $N_XD$ and $N_XE$ are isomorphic to the vector bundle associated to the sheaf $\Omega_{X/S}^1$. Then the conclusion follows from the fact that the composition of row arrows are $\mathfrak{q}_{f,E}^n$ and $\mathfrak{q}_{f,D}^n$ respectively.
\end{proof}
\begin{cor}\label{9.3.2}
  Let $f:X\rightarrow S$ be a vertical exact log smooth separated morphism of $\mathscr{S}$-schemes such that there is a compactified exactification $D$ of the diagonal morphism $a:X\rightarrow X\times_S X$. Then $({\rm PD}_f)$ is equivalent to $({\rm PD}_{f,D})$.
\end{cor}
\begin{proof}
  Since $\mathcal{CE}_a$ is connected by (\ref{9.1.4}), the conclusion follows from (\ref{9.3.1}).
\end{proof}
\begin{prop}\label{9.3.3}
  Under the notations and hypotheses of (\ref{9.2.2}), if $({\rm PD}_{g,D'''})$ and $({\rm PD}_{h,D'})$ are satisfied, then $({\rm PD}_{f,D})$ is satisfied.
\end{prop}
\begin{proof}
  By (\ref{9.2.6}), we can use (\ref{9.2.5}). Then by (loc.\ cit), in the commutative diagram (\ref{9.2.5.1}), the upper horizontal arrow is an isomorphism if and only if the lower horizontal arrow is an isomorphism. The conclusion follows from this.
\end{proof}
\begin{prop}\label{9.3.10}
  Let $f:X\rightarrow S$ be an exact log smooth separated morphism of $\mathscr{S}$-schemes, and let $D$ be a compactified exactification of the diagonal morphism $a:X\rightarrow X\times_S X$. Then $({\rm PD}_f)$ is strict \'etale local on $X$.
\end{prop}
\begin{proof}
  Let $\{u_i:X_i\rightarrow X\}_{i\in I}$ be a strict \'etale cover of $X$. We put
  \[f_i=fu_i,\quad D_i=D\times_{X\times_S X}(X_i\times_S X_i),\quad D_i''=D\times_{X\times_S X}(X_i\times_S X_i)\]
  Then $D_i$ is a compactified exactification of the diagonal morphism $a_i:X_i\rightarrow X_i\times_S X_i$. Hence by (\ref{9.3.2}), it suffices to show that $({\rm PD}_{f,D})$ is satisfied if and only if $({\rm PD}_{f_i,D_i})$ is satisfied for all $i$. Since $\mathscr{T}$ satisfies the strict \'etale descent, by \cite[3.2.8]{CD12}, $({\rm PD}_{f,D})$ is equivalent to the condition that the natural transformation
  \[u_i^*f^!\Omega_{f,D}^n\stackrel{\mathfrak{q}_{f,D}^n}\longrightarrow u_i^*f^*\]
  is an isomorphism for any $i\in I$. This is equivalent to the condition that the natural transformation
  \[f_\sharp u_{i\sharp}\stackrel{\mathfrak{p}_{f,D}^n}\longrightarrow f_!\Sigma_{f,D}^nu_{i\sharp}\]
  is an isomorphism for any $i\in I$.

  By (\ref{9.2.6}), we can use (\ref{9.2.5}) for $u_i:X_i\rightarrow X$ and $X\rightarrow S$. Then by (loc.\ cit), in the commutative diagram
  \[\begin{tikzcd}
    f_{i\sharp}\arrow[rr,"\mathfrak{p}_{f_i,D_i}^n"]\arrow[ddd,"\sim"]&&f_{i!}\Sigma_{f_i,D_i}^n\arrow[d,"C"]\\
    &&f_{i!}\Sigma_{u_i,f_i,D_i''}^n\Sigma_{u_i}^n\arrow[d,"\sim"]\\
    &&f_!u_{i!}\Sigma_{u_i,f_i,D_i''}^n\Sigma_{u_i}^n\arrow[d,"Ex",leftarrow]\\
    f_\sharp u_{i\sharp}\arrow[r,"\mathfrak{p}_{f,D}^n"]&f_!\Sigma_{f,D}^n u_{i\sharp}\arrow[r,"\mathfrak{p}_{u_i}^n"]&f_!\Sigma_{f,D}^nu_!\Sigma_{u_i}^n
  \end{tikzcd}\]
  of functors, the right vertical top arrow and the right vertical bottom arrow are isomorphisms. The lower horizontal right arrow is also an isomorphism by (\ref{9.4.2}). Thus the upper horizontal arrow is an isomorphism if and only if the lower horizontal left arrow is an isomorphism, which is what we want to prove.
\end{proof}
\begin{none}\label{9.3.5}
  Let $S$ be an $\mathscr{S}$-scheme with an fs chart $P$ that is exact at some point $s\in S$, and let $\theta:P\rightarrow Q$ be a locally exact, injective, logarithmic, and vertical homomorphism of fs monoids such that the cokernel of $\theta^{\rm gp}$ is torsion free. We put
  \[X=S\times_{\mathbb{A}_P}\mathbb{A}_Q,\quad m={\rm dim}\,P+{\rm dim}\,Q,\]
  and assume $m>0$. By (\ref{9.1.0}), there is a compactified exactification $D$ of the diagonal morphism $a:X\rightarrow X\times_S X$.
\end{none}
\begin{prop}\label{9.3.6}
  Under the notations and hypotheses of (\ref{9.3.5}), the natural transformation
  \[f_\sharp f^*\stackrel{\mathfrak{p}_{f,D}^n}\longrightarrow f_!\Sigma_{f,D}^nf^*\]
  is an isomorphism.
\end{prop}
\begin{proof}
  Let $G$ be a maximal $\theta$-critical face of $Q$, and we put
  \[U=S\times_{\mathbb{A}_P}\mathbb{A}_{Q_F},\quad D'=D\times_{X\times_S X}(U\times_S U).\]
  We denote by $j:U\rightarrow X$ the induced open immersion. Then the diagram
  \[\begin{tikzcd}
    (fj)_\sharp j^*f^*\arrow[rr,"\sim"]\arrow[d,"\mathfrak{p}_{fj,D'}^n"]&&f_\sharp j_\sharp j^*f^*\arrow[d,"\mathfrak{p}_{f,D}^n"]\arrow[r,"ad'"]&f_\sharp f^*\arrow[d,"\mathfrak{p}_{f,D}^n"]\\
    (fj)_!\Sigma_{fj,D'}^nf^*f^*\arrow[r,"\sim"]&f_!j_\sharp \Sigma_{fj,D'}^nj^*f^*\arrow[r,"Ex"]&f_!\Sigma_{f,D}^nj_\sharp j^*f^*\arrow[r,"ad'"]&f_!\Sigma_{f,D}^nf^*
  \end{tikzcd}\]
  of functors commutes by (\ref{9.2.5}) and (\ref{9.2.6}). By (Htp--3) and (Htp--7), the upper and lower right side horizontal arrows are isomorphisms, and the lower middle horizontal arrow is an isomorphism by (\ref{4.2.9}). The composition $fj:U\rightarrow S$ is Kummer log smooth and separated, so the left vertical arrow is an isomorphism by (\ref{9.4.2}). Thus the right vertical arrow is also an isomorphism.
\end{proof}
\begin{prop}\label{9.3.7}
  Under the notations and hypotheses of (\ref{9.3.5}), $({\rm PD}^{m-1})$ implies $({\rm PD}_f)$.
\end{prop}
\begin{proof}
  Assume $({\rm PD}^{m-1})$. By (\ref{9.3.2}), it suffices to show $({\rm PD}_{f,D})$. We put
  \[d={\rm rk}\,Q^{\rm gp}-{\rm rk}\,P^{\rm gp},\quad \tau=(d)[2d].\] Then it suffices to show that the natural transformation
  \[\mathfrak{p}_{f,D}^o:f_\sharp\longrightarrow f_!\tau\]
  is an isomorphism. Guided by a method of \cite[2.4.42]{CD12}, we will construct its left inverse $\phi_2$ as follows:
  \[\phi_2:f_!\tau\stackrel{ad}\longrightarrow f_!\tau f^*f_\sharp \stackrel{(\mathfrak{p}_{f,D}^o)^{-1}}\longrightarrow f_\sharp f^*f_\sharp \longrightarrow f_\sharp\]
  Here, the second arrow is defined and an isomorphism by (\ref{9.3.6}). We have $\phi_2\circ \mathfrak{p}_{f,D}^o={\rm id}$ as in the proof of (\ref{5.5.7}).

  To construct a right inverse of $\mathfrak{p}_{f,D}^o$, consider the Cartesian diagram
  \[\begin{tikzcd}
    Z\arrow[d,"i"]\arrow[r]&\mathbb{A}_Q-\mathbb{A}_{(Q,Q^+)}\arrow[d]\\
    X\arrow[r]&\mathbb{A}_Q
  \end{tikzcd}\]
  of $\mathscr{S}$-schemes, and we put $g=fi$. Note that the morphism $\underline{g}:\underline{Z}\rightarrow \underline{S}$ of underlying schemes is an isomorphism by assumption on $\theta$. Consider the commutative diagram
  \[\begin{tikzcd}
    f_\sharp j_\sharp j^*\arrow[r,"ad'"]\arrow[d,"\mathfrak{p}_{f,D}^o"]&f_\sharp\arrow[r,"ad"]\arrow[d,"\mathfrak{p}_{f,D}^o"]&f_\sharp i_*i^*\arrow[r,"\partial_i"]\arrow[d,"\mathfrak{p}_{f,D}^o"] &f_\sharp j_\sharp j^*[1]\arrow[d,"\mathfrak{p}_{f,D}^o"]\\
    f_!\tau j_\sharp j^*\arrow[r,"ad'"]&f_!\tau\arrow[r,"ad"]&f_!\tau i_*i^*\arrow[r,"\partial_i"]&f_!\tau j_\sharp j^*[1]
  \end{tikzcd}\]
  of functors where $j$ denotes the complement of $i$. The two rows are distinguished triangles by (Loc). The first vertical arrow is an isomorphism by $({\rm PD}^{m-1})$, and $\phi_2$ induces the left inverse to the third vertical arrow. If we show that the third vertical arrow is an isomorphism, then the second vertical arrow is also an isomorphism. Hence it suffices to construct a right inverse of the natural transformation
  \[f_\sharp i_*\stackrel{\mathfrak{p}_{f,D}^o}\longrightarrow f_!\tau i_*.\]

  Consider also the commutative diagram
  \[\begin{tikzcd}
    f_\sharp j_\sharp j^*f^*\arrow[r,"ad'"]\arrow[d,"\mathfrak{p}_{f,D}^o"]&f_\sharp f^*\arrow[r,"ad"]\arrow[d,"\mathfrak{p}_{f,D}^o"]&f_\sharp i_*i^*f^*\arrow[r,"\partial_i"]\arrow[d,"\mathfrak{p}_{f,D}^o"] &f_\sharp j_\sharp j^*f^*[1]\arrow[d,"\mathfrak{p}_{f,D}^o"]\\
    f_!\tau j_\sharp j^*f^*\arrow[r,"ad'"]&f_!\tau f^*\arrow[r,"ad"]&f_!\tau i_*i^*f^*\arrow[r,"\partial_i"]&f_!\tau j_\sharp j^*f^*[1]
  \end{tikzcd}\]
  of functors. The two rows are distinguished triangles by (Loc), and the first vertical arrow is an isomorphism by $({\rm PD}^{m-1})$. Since the second vertical arrow is an isomorphism by (\ref{9.3.6}), the third vertical arrow is also an isomorphism.

  Then a right inverse of $f_\sharp i_*\stackrel{\mathfrak{p}_{f,D}^o}\longrightarrow f_!\tau i_*$ is constructed by
  \[\begin{split}
    \phi_1':f_!\tau i_*&\stackrel{Ex}\longrightarrow f_!i_*\tau\stackrel{\sim}\longrightarrow g_*\tau\stackrel{ad}\longrightarrow g_*g^*g_*\tau\stackrel{Ex^{-1}}\longrightarrow g_*g^*\tau g_*\\
    &\stackrel{\sim}\longrightarrow f_!i_*\tau g^*g_*\stackrel{Ex^{-1}}\longrightarrow f_!\tau i_*g^*g_*\stackrel{(\mathfrak{p}_{f,D}^o)^{-1}}\longrightarrow f_\sharp i_*g^*g_*\stackrel{ad'}\longrightarrow f_\sharp i_*.
  \end{split}\]
  Here, the fourth and sixth natural transformations are defined and isomorphisms by (Stab), and the seventh natural transformation is an isomorphism by the above paragraph. To show that $\phi_1'$ is a right inverse of $f_\sharp i_*\stackrel{\mathfrak{p}_{f,D}^o}\longrightarrow f_!\tau i_*$, it suffices to check that the composition of the outer cycle of the diagram
  \[\begin{tikzcd}
    f_\sharp i_*\arrow[dd,leftarrow,"ad'"]\arrow[r,"\mathfrak{p}_{f,D}^o"]&f_!\tau i_*\arrow[dd,"ad'",leftarrow]\arrow[r,"\sim"]\arrow[rd,equal]&g_*\tau\arrow[rd,equal]\arrow[rr,"ad"]&& g_*g^*g_*\tau\arrow[dd,"Ex^{-1}"]\\
    &&f_!\tau i_*\arrow[r,"\sim"]&g_*\tau\arrow[ru,leftarrow,"ad'"]\\
    f_\sharp i_*g^*g_*\arrow[r,leftarrow,"(\mathfrak{p}_{f,D}^o)^{-1}"']&f_!\tau i_*g^*g_*\arrow[r,leftarrow,"Ex^{-1}"']\arrow[ru,"ad'"]&f_!i_*\tau g^*g_*\arrow[rr,leftarrow,"\sim"']&&g_*g^*\tau g_*
  \end{tikzcd}\]
  of functors is the identity. It is true since each small diagram commutes.
\end{proof}
\begin{prop}\label{9.3.8}
  Let $f:X\rightarrow S$ be a vertical exact log smooth separated morphism of $\mathscr{S}$-schemes with an fs chart $\theta:P\rightarrow Q$ where $\theta$ is a vertical homomorphism of exact log smooth type. Then $({\rm PD}^{m-1})$ implies $({\rm PD}_f)$.
\end{prop}
\begin{proof}
  By (\ref{9.3.10}) and \cite[II.2.3.2]{Ogu17}, we may assume that $f$ has a factorization
  \[X\stackrel{u}\rightarrow S_0\stackrel{v}\rightarrow S\]
  such that
  \begin{enumerate}[(i)]
    \item $v$ strict \'etale,
    \item the fs chart $S_0\rightarrow \mathbb{A}_P$ is exact at some point of $s_0\in S_0$,
    \item $s_0$ is in the image of $u$.
  \end{enumerate}
  By (\ref{9.2.6}), we can use (\ref{9.3.3}) for the morphisms $X\rightarrow S_0$ and $S_0\rightarrow S$, and by (\ref{9.4.2}), $({\rm PD}_{v})$ is satisfied. Hence replacing $S$ by $S_0$, we may assume that the fs chart $S\rightarrow \mathbb{A}_P$ is exact at some point of $s\in S$ and that $s$ is in the image of $f$.

  By assumption, the induced morphism
  \[X\rightarrow S\times_{\mathbb{A}_P}\mathbb{A}_Q\]
  is strict \'etale and separated. We denote by $P'$ the submonoid of $Q$ consisting of elements $q\in Q$ such that $nq\in P+Q^*$ for some $n\in \mathbb{N}^+$. Then the induced homomorphism $\theta':P'\rightarrow Q$ is locally exact, injective, logarithmic, and vertical, and the cokernel of $\theta^{\rm gp}$ is torsion free. In particular, the induced morphism
  \[S\times_{\mathbb{A}_P}\mathbb{A}_Q\rightarrow S\times_{\mathbb{A}_P}\mathbb{A}_{P'}\]
  is exact log smooth, so it is an open morphism by \cite[5.7]{Nak09}.

  We denote by $S'$ the image of $X$ via the composition
  \[X\rightarrow S\times_{\mathbb{A}_P}\mathbb{A}_Q\rightarrow S\times_{\mathbb{A}_P}\mathbb{A}_{P'}.\]
  we consider $S'$ as an open subscheme of $S\times_{\mathbb{A}_P}\mathbb{A}_{P'}$. Then the induced morphism $g:S'\rightarrow S$ has the fs chart $\theta':P\rightarrow Q'$ of Kummer log smooth type. Consider the factorization
  \[X\stackrel{g_3}\rightarrow S'\times_{\mathbb{A}_{P'}}\mathbb{A}_Q\stackrel{g_2}\rightarrow S'\stackrel{g_1}\rightarrow S\]
  of $f:X\rightarrow S$. Then $({\rm PD}_{g_1})$ and $({\rm PD}_{g_3})$ are satisfied by (\ref{9.4.2}) since $g_1$ and $g_3$ are Kummer log smooth and separated. The set $g_1^{-1}(a)$ is nonempty since $s$ is in the image of $f$, and the chart $S'\rightarrow \mathbb{A}_{P'}$ is exact at a point in $g_1^{-1}(s)$. Thus $({\rm PD}_{g_2})$ is satisfied by (\ref{9.3.7}), so $({\rm PD}_f)$ is satisfied by (\ref{9.2.6}) and (\ref{9.3.3}).
\end{proof}
\begin{thm}\label{9.3.9}
  Let $f:X\rightarrow S$ be a vertical exact log smooth separated morphism of $\mathscr{S}$-schemes with an fs chart $\theta:P\rightarrow Q$ where $\theta$ is a vertical homomorphism of exact log smooth type. Then $({\rm PD}_f)$ is satisfied.
\end{thm}
\begin{proof}
  It suffices to show $({\rm PD}^m)$ for any $m$. By (\ref{9.4.2}), $({\rm PD}^0)$ is satisfied. Then the conclusion follows from (\ref{9.3.8}) and induction on $m$.
\end{proof}
\begin{cor}\label{9.3.11}
  Under the notations and hypotheses of (\ref{9.3.9}), let $D$ be a compactified exactification of the diagonal morphism $a:X\rightarrow X\times_S X$. Then the composition
  \[\Omega_{f,D}f^!\stackrel{T_D}\longrightarrow \Omega_ff^!\stackrel{\mathfrak{q}_f}\longrightarrow f^*\]
  is an isomorphism.
\end{cor}
\begin{proof}
  By (\ref{9.3.9}), the composition
  \[\Omega_{f,D}^nf^!\stackrel{T^n}\longrightarrow \Omega_{f,D}^df^!\stackrel{T^d}\longrightarrow \Omega_{f,D}f^!\stackrel{T_D}\longrightarrow \Omega_ff^!\stackrel{\mathfrak{q}_f}\longrightarrow f^*\]
  is an isomorphism. By (\ref{4.2.1}), the first and second arrows are isomorphisms, so the composition of the third and fourth arrows are also an isomorphism.
\end{proof}
\subsection{Purity}
\begin{prop}\label{9.5.2}
  Let $f:X\rightarrow S$ be a vertical exact log smooth separated morphism of $\mathscr{S}$-schemes with an fs chart $\theta:P\rightarrow Q$ where $\theta$ is a vertical homomorphism of exact log smooth type. Consider a Cartesian diagram
  \[\begin{tikzcd}
    X'\arrow[d,"f'"]\arrow[r,"g'"]&X\arrow[d,"f"]\\
    S'\arrow[r,"g"]&S
  \end{tikzcd}\]
  of $\mathscr{S}$-schemes. Then the exchange transformation
  \[Ex:g^*f_!\longrightarrow f_!'g'^*\]
  is an isomorphism.
\end{prop}
\begin{proof}
  By (\ref{9.1.3}), there is a compactified exactification $D$ of the diagonal morphism $a:X\rightarrow X\times_S X$. Then by (\ref{9.2.7}), the diagram
  \[\begin{tikzcd}
    f_\sharp'g'^*\arrow[dd,"Ex"]\arrow[r,"\mathfrak{p}_{f'}^n"]&f_!'\Sigma_{f',D'}^n g'^!\arrow[d,"Ex"]\\
    &f_!'g'^*\Sigma_{f,D}^n\arrow[d,leftarrow,"Ex"]\\
    g^*f_\sharp\arrow[r,"\mathfrak{p}_f^n"]&g^*f_!\Sigma_{f,D}^n
  \end{tikzcd}\]
  of functors commutes. The left vertical arrow is an isomorphism by ($eSm$-BC), and the right upper vertical arrow is an isomorphism by (\ref{4.2.10}). The horizontal arrows are also isomorphisms by (\ref{9.3.9}). Thus the right lower vertical arrow is an isomorphism. Then the conclusion follows from the fact that the functor
  \[\Sigma_{f,D}^n\cong \Sigma_{f,D}^o\]
  is an equivalence of categories.
\end{proof}
\begin{none}\label{9.5.3}
  Under the notations and hypotheses of (\ref{4.2.5}), note that by (\ref{9.5.2}), the condition $(CE^!)$ is satisfied when $\eta$ in (loc.\ cit) is a vertical exact log smooth separated morphism of $\mathscr{S}$-schemes with an fs chart $\theta:P\rightarrow Q$ where $\theta$ is a vertical homomorphism of exact log smooth type.
\end{none}
\begin{prop}\label{9.5.1}
  Let $f:X\rightarrow S$ be a vertical exact log smooth separated morphism of $\mathscr{S}$-schemes with an fs chart $\theta:P\rightarrow Q$ where $\theta$ is a vertical homomorphism of exact log smooth type, and let $D$ be a compactified exactification of the diagonal morphism $a:X\rightarrow X\times_S X$. Then the transition transformation
  \[T_D:\Omega_{f,D}\longrightarrow \Omega_f\]
  is an isomorphism.
\end{prop}
\begin{proof}
  Let $v:I\rightarrow D$ be an interior of $D$. Consider the commutative diagram
  \[\begin{tikzcd}
    &D'\arrow[rd,"u'"']\arrow[rrrd,"q_2'"]\arrow[dd,"\rho",near start]\\
    X'\arrow[ru,"b'"]\arrow[dd,"p_2"]\arrow[rr,"a'"',near start,crossing over]&&Y\times_S X'\arrow[rr,"p_2'"']&&X'\arrow[dd,"p_2"]\\
    &D\arrow[rd,"u"']\arrow[rrrd,"q_2"]\\
    X\arrow[ru,"b"]\arrow[rr,"a"']&&Y\times_S X\arrow[uu,"\eta'",leftarrow,crossing over]\arrow[rr,"p_2"']&&X
  \end{tikzcd}\]
  of $\mathscr{S}$-schemes where $p_2$ denotes the second projection and each square is Cartesian. We denote by $v':I'\rightarrow D'$ the pullback of $v:I\rightarrow D$. Then $I'$ is also an interior of $D'$. By (\ref{9.5.3}) and (\ref{4.2.5}), we have the exchange transformations
  \[\Omega_{p_2}p_2^!\stackrel{Ex}\longrightarrow p_2^!\Omega_f,\quad \Omega_{p_2,D'}p_2^!\stackrel{Ex}\longrightarrow p_2^!\Omega_{f,D},\quad \Omega_{p_2,I'}p_2^!\stackrel{Ex}\longrightarrow p_2^!\Omega_{f,I},\]
  and we have the commutative diagram
  \begin{equation}\label{9.5.1.1}\begin{tikzcd}
    a^!\Omega_{f',I'}p_2^!\arrow[r,"T_{I,D}"]\arrow[d,"Ex"]&a^!\Omega_{p_2,D'}p_2^!\arrow[d,"Ex"]\arrow[r,"T_D'"]&a^!\Omega_{p_2}p_2^!\arrow[d,"Ex"]\arrow[rdd,"\mathfrak{q}_f"]\\
    a^!p_2^!\Omega_{f,I}\arrow[r,"T_{I,D}"]&a^!p_2^!\Omega_{f,D}\arrow[d,"\sim"]\arrow[r,"T_D"]&a^!p_2^!\Omega_f\arrow[d,"\sim"]\\
    &\Omega_{f,D}\arrow[r,"T_D"]&\Omega_f\arrow[r,equal]&a^!p_2^*
  \end{tikzcd}\end{equation}
  of functors. The natural transformations
  \[a^!p_2^!\Omega_{f,I}\stackrel{T_{I,D}}\longrightarrow a^!p_2^!\Omega_{f,D},\quad a^!\Omega_{f',I'}p_2^!\stackrel{T_{I,D}}\longrightarrow a^!\Omega_{f',D'}p_2^!\]
  are isomorphisms by (\ref{4.2.9}), and the natural transformation
  \[a^!\Omega_{f',I'}p_2^!\stackrel{Ex}\longrightarrow a^!p_2^!\Omega_{f,I}\]
  is an isomorphism by (\ref{4.2.13}) since $r_2=q_2v$ is strict by definition of interior. The composition
  \[a^!\Omega_{p_2,D'}^np_2^!\stackrel{T^n}\longrightarrow a^!\Omega_{p_2,D'}^dp_2^!\stackrel{T^d}\longrightarrow  a^!\Omega_{p_2,D'}p_2^!\stackrel{T_{D'}}\longrightarrow a^!\Omega_{p_2}p_2^!\stackrel{q_f}\longrightarrow a^!p_2^*\]
  is also an isomorphism by (\ref{9.3.11}). Applying these to (\ref{9.5.1.1}), we conclude that the natural transformation
  \[T_D:\Omega_{f,D}\longrightarrow \Omega_f\]
  is an isomorphism.
\end{proof}
\begin{prop}\label{9.5.4}
  Let $f:X\rightarrow S$ be a vertical exact log smooth separated morphism of $\mathscr{S}$-schemes. Then $({\rm Pur}_f)$ is satisfied.
\end{prop}
\begin{proof}
  We want to show that the purity transformation
  \[\mathfrak{q}_f:\Omega_ff^!\longrightarrow f^*\]
  is an isomorphism. It is equivalent to showing that the natural transformation
  \[\mathfrak{p}_f:f_\sharp \longrightarrow f_!\Sigma_f\]
  is an isomorphism.\\[4pt]
  (I) {\it Locality on $S$.} Let $\{u_i:S_i\rightarrow S\}_{i\in I}$ be a strict \'etale separated cover of $S$. Consider the Cartesian diagram
  \[\begin{tikzcd}
    X_i\arrow[d,"f_i"]\arrow[r,"u_i'"]&X\arrow[d,"f"]\\
    S_i\arrow[r,"u_i"]&S
  \end{tikzcd}\]
  of $\mathscr{S}$-schemes. Then by (\ref{4.4.6}), the diagram
  \[\begin{tikzcd}
    f_{i\sharp}u_i'^*\arrow[dd,"Ex"]\arrow[r,"\mathfrak{p}_{f_i}"]&f_{i!}\Sigma_{f_i} u_i'^*\arrow[d,"Ex"]\\
    &f_{i!}u_i'^*\Sigma_f\arrow[d,leftarrow,"Ex"]\\
    u_i^*f_\sharp\arrow[r,"\mathfrak{p}_f"]&u_i^*f_!\Sigma_f
  \end{tikzcd}\]
  of functors commutes. The left vertical arrow is an isomorphism by (eSm-BC), and the right lower vertical arrow is an isomorphism since $u_i$ is exact log smooth. The right upper vertical arrow is also an isomorphism by (\ref{2.5.9}). Thus the upper horizontal arrow is an isomorphism if and only if the lower horizontal arrow is an isomorphism.

  Since the family of functors $\{u_i^*\}_{i\in I}$ is conservative by (k\'et-sep), the lower horizontal arrow is an isomorphism if and only if the natural transformation
  \[\mathfrak{p}_f:f_\sharp\longrightarrow f_!\Sigma_f\]
  is an isomorphism. Thus we have proven that the question is strict \'etale separated local on $S$.\\[4pt]
  (II) {\it Locality on $X$.} Let $\{v_i:X_i\rightarrow X\}_{i\in J}$ be a strict \'etale separated cover of $X$. By (\ref{4.4.3}), we have the commutative diagram
  \[\begin{tikzcd}
    \Omega_{v_i}v_i^!\Omega_f f^!\arrow[r,"\mathfrak{q}_{v_i}"]\arrow[d,"Ex",leftarrow]&v_i^*\Omega_f f^!\arrow[r,"\mathfrak{q}_f"]&v_i^*f^*\arrow[ddd,"\sim"]\\
    \Omega_{v_i}\Omega_{f,fv_i}v_i^!f^!\arrow[d,"\sim"]\\
    \Omega_{v_i}\Omega_{f,fv_i}(fv_i)^!\arrow[d,"C"]\\
    \Omega_{fv_i}(fv_i)^!\arrow[rr,"\mathfrak{q}_{fv_i}"]&&(fv_i)^*
  \end{tikzcd}\]
  of functors. The left top vertical arrow is an isomorphism by (\ref{2.5.9}), and the left bottom vertical arrow is an isomorphism by (\ref{4.3.1}). The upper left horizontal arrow is also an isomorphism by (\ref{2.5.7}), so the upper right horizontal arrow is an isomorphism if and only if the lower horizontal arrow is an isomorphism.

  Since the family of functors $\{v_i^*\}_{i\in J}$ is conservative, the lower horizontal arrow is an isomorphism if and only if the natural transformation
  \[\mathfrak{q}_f:\Omega_f f^!\longrightarrow f^*\]
  is an isomorphism. Thus we have proven that the question is strict \'etale separated local on $X$.\\[4pt]
  (III) {\it Final step of the proof.} Since the question is strict \'etale separated local on $X$ and $S$, we may assume that $f:X\rightarrow S$ has an fs chart $\theta:P\rightarrow Q$ of exact log smooth type by (\ref{0.1.4}) (in (loc.\ cit), if we localize $X$ and $S$ further so that $\underline{X}$ and $\underline{S}$ are affine, then the argument is strict \'etale separated local instead of strict \'etale local). Localizing $Q$ further, since $f$ is vertical, we may assume that $\theta$ is vertical.

  By (\ref{9.1.3}), there is a compactified exactification $D$ of the diagonal morphism $a:X\rightarrow X\times_S X$. Then we have the natural transformation
  \[\Omega_{f,D}f^!\stackrel{T_D}\longrightarrow \Omega_f f^!\stackrel{\mathfrak{q}_f}\longrightarrow f^!.\]
  The composition is an isomorphism by (\ref{9.3.11}), and the first arrow is an isomorphism by (\ref{9.5.1}). Thus the second arrow is an isomorphism.
\end{proof}
\begin{thm}\label{9.5.5}
  Let $f:X\rightarrow S$ be an exact log smooth separated morphism of $\mathscr{S}$-schemes. Then $({\rm Pur}_f)$ is satisfied.
\end{thm}
\begin{proof}
  Let $j:U\rightarrow X$ denote the verticalization of $f$. By (\ref{4.4.3}), the diagram
  \[\begin{tikzcd}
    \Omega_jj^!\Omega_f f^!\arrow[r,"\mathfrak{q}_j"]\arrow[d,"Ex",leftarrow]&j^*\Omega_f f^!\arrow[r,"\mathfrak{q}_f"]&j^*f^*\arrow[ddd,"\sim"]\\
    \Omega_j\Omega_{f,fj}j^!f^!\arrow[d,"\sim"]\\
    \Omega_j\Omega_{f,fj}(fj)^!\arrow[d,"C"]\\
    \Omega_{fj}(fj)^!\arrow[rr,"\mathfrak{q}_f"]&&(fj)^*
  \end{tikzcd}\]
  of functors commutes. The left top vertical arrow is an isomorphism by (\ref{2.5.7}), and the left bottom vertical arrow is an isomorphism by (\ref{4.3.1}). The upper left horizontal arrow is an isomorphism by (\ref{2.5.7}), and the lower horizontal arrow is an isomorphism by (\ref{9.5.4}). Thus the upper right horizontal arrow is an isomorphism.

  Then consider the commutative diagram
  \[\begin{tikzcd}
    \Omega_ff^!\arrow[d,"ad"]\arrow[r,"\mathfrak{q}_f"]&f^*\arrow[d,"ad"]\\
    j_*j^*\Omega_f f^!\arrow[r,"\mathfrak{q}_f"]&j_*j^*f^*
  \end{tikzcd}\]
  of functors. We have shown that the lower horizontal arrow is an isomorphism. Since the right vertical arrow is an isomorphism by (Htp--2), the remaining is to show that the left vertical arrow is an isomorphism.

  Consider the commutative diagram
  \[\begin{tikzcd}
    U\arrow[d,"a'"]\arrow[r,"j"]&X\arrow[d,"a"]\\
    U\times_S X\arrow[d,"p_2'"]\arrow[r,"j'"]&X\times_S X\arrow[d,"p_2"]\\
    U\arrow[r,"j"]&X
  \end{tikzcd}\]
  of $\mathscr{S}$-schemes where
  \begin{enumerate}[(i)]
    \item $p_2$ denotes the second projection.
    \item $a$ denotes the diagonal morphism,
    \item each square is Cartesian.
  \end{enumerate}
  Then $j'$ is the verticalization of $p_2$, so by (Htp--2), the natural transformation
  \[p_2^*\stackrel{ad}\longrightarrow j_*'j'^*p_2^*\]
  is an isomorphism. Consider the natural transformations
  \[\Omega_f \stackrel{\sim}\longrightarrow a^!p_2^*\stackrel{ad}\longrightarrow  a^!j_*'j'^*p_2^*\stackrel{Ex}\longleftarrow j_*a'^!j'^*p_2^*\stackrel{Ex}\longleftarrow j_*j^*a^!p_2^*\stackrel{\sim}\longleftarrow j_*j^*\Omega_f.\]
  We have shown that the second arrow is an isomorphism. The third arrow is an isomorphism by (eSm-BC), and the fourth arrow is an isomorphism by (Supp). This completes the proof.
\end{proof}
\subsection{Purity transformations}
\begin{df}\label{9.6.1}
  Let $i:(\mathscr{X},I)\rightarrow (\mathscr{Y},I)$ be a Cartesian strict regular embedding of $\mathscr{S}$-diagrams. For any morphism $\lambda\rightarrow \mu$ in $I$, there are induced morphisms
  \[D_{\mathscr{X}_\lambda}\mathscr{Y}_\lambda\rightarrow D_{\mathscr{X}_\mu}\mathscr{Y}_\mu,\quad N_{\mathscr{X}_\lambda}\mathscr{Y}_\lambda\rightarrow N_{\mathscr{X}_\mu}\mathscr{Y}_\mu\]
  of $\mathscr{S}$-schemes. Using these, we have the following $\mathscr{S}$-schemes.
  \begin{enumerate}[(1)]
    \item $D_\mathscr{X}\mathscr{Y}$ denotes the $\mathscr{S}$-diagram constructed by $D_{\mathscr{X}_\lambda}\mathscr{Y}_\lambda$ for $\lambda\in I$,
    \item $N_\mathscr{X}\mathscr{Y}$ denotes the $\mathscr{S}$-diagram constructed by $N_{\mathscr{X}_\lambda}\mathscr{Y}_\lambda$ for $\lambda\in I$.
  \end{enumerate}
  Note that if the induced morphism $\underline{\mathscr{Y}_\lambda}\rightarrow \underline{\mathscr{Y}_\mu}$ is flat for any morphism $\lambda\rightarrow \mu$ in $I$, then the induced morphisms $\mathscr{X}\rightarrow D_\mathscr{X}\mathscr{Y}$ and $\mathscr{X}\rightarrow N_\mathscr{X}\mathscr{Y}$ are Cartesian strict regular embeddings by \cite[B.7.4]{Ful98}.
\end{df}
\begin{df}\label{9.6.2}
  Let $f:X\rightarrow S$ be an exact log smooth morphism of $\mathscr{S}$-schemes, and let $h:\mathscr{X}\rightarrow X$ be a morphism of $\mathscr{S}$-diagrams. Then we denote by
  \[N_\mathscr{X}(X\times_S \mathscr{X})\]
  the vector bundle of $\mathscr{X}$ associated to the dual free sheaf $(h^*\Omega_{X/S})^\vee$. Note that when the induced morphism $\mathscr{X}\rightarrow X\times_S \mathscr{X}$ is a Cartesian strict regular embedding, this definition is equivalent to the definition in (\ref{9.6.1}).
\end{df}
\begin{none}\label{9.6.3}
  Let $f:X\rightarrow S$ be a separated vertical exact log smooth morphism of $\mathscr{S}$-schemes. We will construct several $\mathscr{S}$-diagrams and their morphisms.\\[4pt]
  (1) {\it Construction of $\mathscr{X}$.} Let $\{h_\lambda:\mathscr{X}_\lambda\rightarrow X\}_{\lambda\in I_0}$ be a strict \'etale cover such that there is a commutative diagram
  \[\begin{tikzcd}
    \mathscr{X}_\lambda\arrow[r,"h_\lambda"]\arrow[d,"f_\lambda"]&X\arrow[d,"f"]\\
    S_\lambda\arrow[r,"l_\lambda"]&S
  \end{tikzcd}\]
    of $\mathscr{S}$-schemes where
    \begin{enumerate}[(i)]
      \item $\mathscr{X}_\lambda$ and $S_\lambda$ are affine,
      \item $f_\lambda$ has an fs chart $\theta_\lambda:P_\lambda\rightarrow Q_\lambda$ of exact log smooth type,
      \item $l_\lambda$ is strict \'etale.
    \end{enumerate}
  Then we denote by $\mathscr{X}=(\mathscr{X},I)$ the \v{C}ech hypercover associated to $\{h_\lambda:\mathscr{X}_\lambda\rightarrow X\}_{\lambda\in I_0}$.\\[4pt]
  (2) {\it Construction of $\mathscr{D}$.} For $\lambda\in I_0$, we denote by $h_\lambda'$ the induced morphism
  \[X\times_S \mathscr{X}_\lambda\rightarrow X\times_S X\]
  of $\mathscr{S}$-schemes, and let $U_\lambda$ denote the open subscheme
  \[X\times_S \mathscr{X}_\lambda-(h_\lambda')^{-1}(a(X)).\]
  Then we denote by $\mathscr{D}_\lambda$ the \v{C}ech hypercover of $X\times_S \mathscr{X}_\lambda$ associated to
  \[\{\mathscr{X}_\lambda\times_{S_\lambda}\mathscr{X}_\lambda\rightarrow X\times_S \mathscr{X}_\lambda,\;U_\lambda\rightarrow X\times_S \mathscr{X}_\lambda\},\]
  and we denote by $\mathscr{D}=(\mathscr{D},J)$ the \v{C}ech hypercover of $X\times_S X$ associated to
  \[\{\mathscr{D}_\lambda\rightarrow X\times_S X\}_{\lambda\in I_0}.\]
  Note that from our construction, we have the morphism
  \[u:\mathscr{D}\rightarrow X\times_S \mathscr{X}\]
  of $\mathscr{S}$-diagrams.\\[4pt]
  (3) {\it Construction of $\mathscr{E}$.} For $\lambda\in I_0$, we put $Y_\lambda=\mathscr{X}_\lambda\times_{S_\lambda\times_S X}\mathscr{X}_\lambda$. Then the composition
  \[Y_\lambda\rightarrow \mathscr{X}_\lambda\rightarrow \mathbb{A}_{Q_\lambda}\]
  where the first arrow is the first projection gives an fs chart of $Y_\lambda$. The induced morphism $Y_\lambda\rightarrow S_\lambda$ also has an fs chart $P_\lambda\rightarrow Q_\lambda$. As in (\ref{9.1.3}), choose a proper birational morphism
  \[M_\lambda\rightarrow {\rm spec}(Q_\lambda\oplus_{P_\lambda}Q_\lambda)\]
  of fs monoschemes, and we put
  \[E_\lambda=(\mathscr{X}_\lambda\times_{S_\lambda}\mathscr{X}_\lambda)\times_{\mathbb{A}_{Q_\lambda\oplus_{P_\lambda}Q_\lambda}}\mathbb{A}_{M_\lambda},\]
  and let $u_\lambda'':E_\lambda\rightarrow \mathscr{X}_\lambda\times_{S_\lambda}\mathscr{X}_\lambda$ denote the projection. Note that the diagonal morphism $\mathscr{X}_\lambda\rightarrow \mathscr{X}_\lambda\times_{S_\lambda}\mathscr{X}_\lambda$ factors through $E_\lambda$ by construction in (loc.\ cit). Let $b_\lambda'':\mathscr{X}\rightarrow E_\lambda$ denote the factorization. We will show that the projection
  \[Y_\lambda\times_{\mathscr{X}_\lambda\times_{S_\lambda}\mathscr{X}_\lambda}E_\lambda\rightarrow Y_\lambda\]
  is an isomorphism. Consider the commutative diagram
  \[\begin{tikzcd}
    &E_\lambda'\arrow[r]\arrow[d]&E_\lambda\arrow[d]\arrow[r]&\mathbb{A}_{M_\lambda}\arrow[d]\\
    Y_\lambda\arrow[r]\arrow[d]&Y_\lambda\times_{S_\lambda}Y_\lambda\arrow[d,"\iota_1"]\arrow[r]&\mathscr{X}_\lambda\times_{S_\lambda}\mathscr{X}_\lambda\arrow[r]&\mathbb{A}_{Q_\lambda \oplus_{P_\lambda}Q_\lambda}\\
    \mathbb{A}_{M_\lambda}\arrow[r]&\mathbb{A}_{Q_\lambda\oplus_{P_\lambda}Q_\lambda}
  \end{tikzcd}\]
  of $\mathscr{S}$-diagrams where
  \begin{enumerate}[(i)]
    \item $\iota_1$ denotes the fs chart induced by the fs charts $P_\lambda\rightarrow Q_\lambda$ of $Y_\lambda\rightarrow S_\lambda$ defined above,
    \item the arrow $Y_\lambda\times_{S_\lambda}Y_\lambda\rightarrow \mathscr{X}_\lambda\times_{S_\lambda}\mathscr{X}_\lambda$ is the morphism induced by the first projection $Y_\lambda\rightarrow \mathscr{X}_\lambda$, the identity $S_\lambda\rightarrow S_\lambda$, and the second projection $Y_\lambda\rightarrow \mathscr{X}_\lambda$,
    \item $E_\lambda'=(Y_\lambda\times_{S_\lambda}Y_\lambda)\times_{\mathscr{X}_\lambda\times_{S_\lambda}\mathscr{X}_\lambda}E_\lambda$.
  \end{enumerate}
  By (\ref{0.2.3}), we have an isomorphism
  \[(Y_\lambda\times_{S_\lambda}Y_\lambda)\times_{\iota_1,\mathbb{A}_{Q_\lambda\oplus_{P_\lambda}Q_\lambda}}\mathbb{A}_M\cong E_\lambda',\]
  and this shows the assertion since the morphism $\mathbb{A}_{M_\lambda}\rightarrow \mathbb{A}_{Q_\lambda\oplus_{P_\lambda}Q_\lambda}$ is a monomorphism of fs log schemes.

  Now, we denote by $\mathscr{E}_\lambda$ the \v{C}ech hypercover of $X\times_S \mathscr{X}_\lambda$ associated to
  \[\{E_\lambda\rightarrow X\times_S \mathscr{X}_\lambda,\;U_\lambda\rightarrow X\times_S \mathscr{X}_\lambda\},\]
  and we denote by $\mathscr{E}=(\mathscr{E},J)$ the \v{C}ech hypercover of $X\times_S X$ associated to
  \[\{\mathscr{E}_\lambda\rightarrow X\times_S X\}_{\lambda\in I_0}.\]
  Note that from our construction, we have the morphism
  \[v:\mathscr{E}\rightarrow \mathscr{D}\]
  of $\mathscr{S}$-diagrams. We put
  \[\mathscr{Y}=\mathscr{X}\times_{X\times_S \mathscr{X}}\mathscr{D}.\]
  Then the assertion in the above paragraph shows that the projection $\mathscr{Y}\times_{\mathscr{D}}\mathscr{E}\rightarrow \mathscr{Y}$ is an isomorphism, so the projection $b:\mathscr{Y}\rightarrow \mathscr{D}$ factors through $c:\mathscr{Y}\rightarrow \mathscr{E}$.\\[4pt]
  (4) {\it Commutative diagrams.} Now, we have the commutative diagram
  \begin{equation}\label{9.6.3.1}\begin{tikzcd}
    &\mathscr{E}\arrow[rdd,"r_2"]\arrow[d,"v"]\\
    \mathscr{Y}\arrow[ru,"c"]\arrow[r,"b"]\arrow[d,"u_0"]&\mathscr{D}\arrow[d,"u"]\arrow[rd,"q_2"]\\
    \mathscr{X}\arrow[r,"a'"']\arrow[d,"h"]&X\times_S \mathscr{X}\arrow[d,"h'"]\arrow[r,"p_2'"']&\mathscr{X}\arrow[d,"h"]\\
    X\arrow[r,"a"']&X\times_S X\arrow[r,"p_2"']&X
  \end{tikzcd}\end{equation}
  of $\mathscr{S}$-diagrams where
  \begin{enumerate}[(i)]
    \item each small square is Cartesian,
    \item $u$, $v$, and $c$ are the morphisms constructed above.
  \end{enumerate}
  As in (\ref{4.1.1}), we also have the commutative diagrams
  \[\begin{tikzcd}
    \mathscr{Y}\arrow[d,"\gamma_1"]\arrow[r,"c"]&\mathscr{E}\arrow[d,"\beta_1"]\arrow[r,"r_2"]&\mathscr{X}\arrow[d,"\alpha_1"]\\
    \mathscr{Y}\times\mathbb{A}^1\arrow[d,"\phi"]\arrow[r,"d"]&D_{\mathscr{Y}}\mathscr{E}\arrow[r,"s_2"]&\mathscr{X}\times\mathbb{A}^1\arrow[d,"\pi"]\\
    \mathscr{Y}&&\mathscr{X}
  \end{tikzcd}\quad
  \begin{tikzcd}
    \mathscr{Y}\arrow[d,"\gamma_0"]\arrow[r,"e"]&\mathscr{E}\arrow[d,"\beta_0"]\arrow[r,"t_2"]&\mathscr{X}\arrow[d,"\alpha_0"]\\
    \mathscr{Y}\times\mathbb{A}^1\arrow[d,"\phi"]\arrow[r,"d"]&D_{\mathscr{Y}}\mathscr{E}\arrow[r,"s_2"]&\mathscr{X}\times\mathbb{A}^1\arrow[d,"\pi"]\\
    \mathscr{Y}&&\mathscr{X}
  \end{tikzcd}\]
  of $\mathscr{S}$-diagrams where
  \begin{enumerate}[(i)]
    \item each square is Cartesian,
    \item $\alpha_0$ denotes the 0-section, and $\alpha_1$ denotes the 1-section,
    \item $d$ and $s_2$ are the morphisms constructed as in (\ref{4.1.1.1}),
    \item $\pi$ and $\phi$ denotes the projections.
  \end{enumerate}
  Then we have the commutative diagram
  \[\begin{tikzcd}
    \mathscr{Y}\arrow[d,"u_0"]\arrow[r,"e"]&N_\mathscr{Y}\mathscr{E}\arrow[d,"u_1"]\arrow[rd,"t_2"]\\
    \mathscr{X}\arrow[d,"h"]\arrow[r,"e'"]&N_\mathscr{X}(X\times_S \mathscr{X})\arrow[d,"h_1"]\arrow[r,"t_2'"]&\mathscr{X}\arrow[d,"h"]\\
    X\arrow[r,"e''"]&N_X(X\times_S X)\arrow[r,"t_2''"]&X
  \end{tikzcd}\]
  of $\mathscr{S}$-diagrams where
  \begin{enumerate}[(i)]
    \item each small square is Cartesian,
    \item $e''$ denotes the $0$-section, and $t_2''$ denotes the projection.
  \end{enumerate}
  For $\lambda\in I$, we also have the corresponding commutative diagrams
  \[\begin{tikzcd}
    &\mathscr{E}_\lambda\arrow[rdd,"r_{2\lambda}"]\arrow[d,"v_\lambda"]\\
    \mathscr{Y}_\lambda\arrow[ru,"c_\lambda"]\arrow[r,"b_\lambda"']\arrow[d,"u_{0\lambda}"]&\mathscr{D}_\lambda\arrow[d,"u_\lambda"]\arrow[rd,"q_{2\lambda}"']\\
    \mathscr{X}_\lambda\arrow[r,"a_\lambda'"']&X\times_S \mathscr{X}_\lambda\arrow[r,"p_{2\lambda}'"']&\mathscr{X}_\lambda\\
  \end{tikzcd}\]
  \[\begin{tikzcd}
    \mathscr{Y}_\lambda\arrow[d,"\gamma_{1\lambda}"]\arrow[r,"c_\lambda"]&\mathscr{E}_\lambda\arrow[d,"\beta_{1\lambda}"]\arrow[r,"r_{2\lambda}"]&\mathscr{X}_\lambda \arrow[d,"\alpha_{1\lambda}"]\\
    \mathscr{Y}_\lambda\times\mathbb{A}^1\arrow[d,"\phi_\lambda"]\arrow[r,"d_\lambda"]&D_{\mathscr{Y}_\lambda}\mathscr{E}_\lambda\arrow[r,"s_{2\lambda}"] &\mathscr{X}_\lambda\times\mathbb{A}^1\arrow[d,"\pi_\lambda"]\\
    \mathscr{Y}_\lambda&&\mathscr{X}_\lambda
  \end{tikzcd}\quad
  \begin{tikzcd}
    \mathscr{Y}_\lambda\arrow[d,"\gamma_{0\lambda}"]\arrow[r,"e_\lambda"]&N_{\mathscr{Y}_\lambda}\mathscr{E}_\lambda\arrow[d,"\beta_{0\lambda}"]\arrow[r,"t_{2\lambda}"]&\mathscr{X}_\lambda \arrow[d,"\alpha_{0\lambda}"]\\
    \mathscr{Y}_\lambda\times\mathbb{A}^1\arrow[d,"\phi_\lambda"]\arrow[r,"d_\lambda"]&D_{\mathscr{Y}_\lambda}\mathscr{E}_\lambda\arrow[r,"s_{2\lambda}"] &\mathscr{X}_\lambda\times\mathbb{A}^1\arrow[d,"\pi_\lambda"]\\
    \mathscr{Y}_\lambda&&\mathscr{X}_\lambda
  \end{tikzcd}\]
  \[\begin{tikzcd}
    \mathscr{Y}_\lambda\arrow[d,"u_{0\lambda}"]\arrow[r,"e_\lambda"]&N_{\mathscr{Y}_\lambda}\mathscr{E}_\lambda\arrow[d,"u_{1\lambda}"]\arrow[rd,"t_{2\lambda}"]\\
    \mathscr{X}_\lambda\arrow[r,"e_\lambda'"]&N_{\mathscr{X}_\lambda}(X\times_S \mathscr{X}_\lambda)\arrow[r,"t_{2\lambda}'"]&\mathscr{X}_\lambda
  \end{tikzcd}\]
  of $\mathscr{S}$-schemes. We also put
  \[g=fh,\quad g_\lambda=fh_\lambda,\]
\end{none}
\begin{none}\label{9.6.9}
  Under the notations and hypotheses of (\ref{9.6.3}), we have an isomorphism $N_{\mathscr{Y}}\mathscr{E}\cong N_{\mathscr{X}}(X\times_S \mathscr{X})\times_\mathscr{X} \mathscr{Y}$ by \cite[IV.1.2.15]{Ogu17}. In particular, the morphism $u_1:N_{\mathscr{Y}}\mathscr{E}\rightarrow N_{\mathscr{X}}(X\times_S \mathscr{X})$ is a strict \'etale hypercover. We have the natural transformation
  \[\Omega_{f,g}^n\stackrel{T^{n'}}\longrightarrow \Omega_{f,g,\mathscr{E}}^n\]
  given by
  \[e'^!t_2'^*\stackrel{ad}\longrightarrow e'^!u_{1*}u_1^*t_2'^*\stackrel{\sim}\longrightarrow e'^!u_{1*}t_2^*\stackrel{Ex^{-1}}\longrightarrow u_{0*}e^!t_2^*.\]
  Here, the first arrow is an isomorphism since $\mathscr{T}$ satisfies strict \'etale descent, and the third arrow is defined and an isomorphism by (\ref{10.2.10}) since $e'$ is a Cartesian strict closed immersion. Thus the composition is an isomorphism.

  We similarly have the natural transformation
  \[\Omega_{f,g_\lambda}^n\stackrel{T^{n'}}\longrightarrow \Omega_{f,g_\lambda,\mathscr{E}_\lambda}^n,\]
  which is also an isomorphism.
\end{none}
\begin{none}\label{9.6.4}
  Under the notations and hypotheses of (\ref{9.6.3}), for $\lambda\in I_0$, we temporary put
  \[A_\lambda= X\times_S \mathscr{X}_\lambda,\quad B_\lambda =\mathscr{X}_\lambda\times_{S_\lambda}\mathscr{X}_\lambda\]
  for simplicity. We had the Cartesian diagram
  \[\begin{tikzcd}
    E_\lambda\arrow[d]\arrow[r]&\mathbb{A}_{M_\lambda}\arrow[d]\\
    B_\lambda\arrow[r]&\mathbb{A}_{Q_\lambda\oplus_{P_\lambda}Q_\lambda}
  \end{tikzcd}\]
  of $\mathscr{S}$-schemes. Consider the commutative diagram
  \[\begin{tikzcd}
    E_\lambda\times_{A_\lambda}E_\lambda\arrow[d]\arrow[r,"\zeta_2''"]&E_\lambda\times_{A_\lambda}B_\lambda\arrow[r]\arrow[d,"\zeta_1'"]&\mathbb{A}_{M_\lambda}\arrow[d]\\
    B_\lambda\times_{A_\lambda}E_\lambda\arrow[d]\arrow[r,"\zeta_2'"]&B_\lambda\times_{A_\lambda}B_\lambda\arrow[d,"\zeta_1"]\arrow[r,"\zeta_2"]&\mathbb{A}_{Q_\lambda\oplus_{P_\lambda} Q_\lambda}\\
    \mathbb{A}_{M_\lambda}\arrow[r]&\mathbb{A}_{Q_\lambda\oplus_{P_\lambda}Q_\lambda}
  \end{tikzcd}\]
  of $\mathscr{S}$-schemes where
  \begin{enumerate}[(i)]
    \item each square is Cartesian,
    \item $\zeta_1$ denotes the composition $B_\lambda\times_{A_\lambda}B_\lambda\rightarrow B_\lambda\rightarrow \mathbb{A}_{Q_\lambda\oplus_{P_\lambda}Q_\lambda}$ where the first arrow is the first projection,
    \item $\zeta_2$ denotes the composition $B_\lambda\times_{A_\lambda}B_\lambda\rightarrow B_\lambda\rightarrow \mathbb{A}_{Q_\lambda\oplus_{P_\lambda}Q_\lambda}$ where the first arrow is the second projection.
  \end{enumerate}
  By (\ref{0.2.3}), we have isomorphisms
  \[B_\lambda\times_{A_\lambda}E_\lambda\cong B_\lambda\times_{\zeta_1,\mathbb{A}_{Q_\lambda\oplus_{P_\lambda}Q_\lambda}}\mathbb{A}_M\cong B_\lambda\times_{\zeta_2,\mathbb{A}_{Q_\lambda\oplus_{P_\lambda}Q_\lambda}}\mathbb{A}_M\cong E_\lambda\times_{A_\lambda}B_\lambda,\]
  so using this, we have the Cartesian diagram
  \[\begin{tikzcd}
    E_\lambda\times_{A_\lambda}E_\lambda\arrow[r,"\zeta_2''"]\arrow[d]&E_\lambda\times_{A_\lambda}B_\lambda\arrow[d,"\zeta_1'"]\\
    E_\lambda\times_{A_\lambda}B_\lambda\arrow[r,"\zeta_1'"]&B_\lambda\times_{A_\lambda}B_\lambda
  \end{tikzcd}\]
  of $\mathscr{S}$-schemes. Since $\zeta_1'$ is a pullback of $\mathbb{A}_M\rightarrow \mathbb{A}_{Q_\lambda\oplus_{P_\lambda}Q_\lambda}$ that is a monomorphism, the morphism $\zeta_1''$ is an isomorphism. From this, we conclude that the induced morphism
  \[E_\lambda\times_{X\times_S \mathscr{X}_\lambda}\mathscr{E}_\lambda\rightarrow E_\lambda\times_{X\times_S \mathscr{X}_\lambda}\mathscr{D}_\lambda\]
  is an isomorphism for $\lambda\in I_0$.

  Now, for $\lambda\in I$ instead of $\lambda\in I_0$, if $D_\lambda=D_{\lambda_1}\times_{X\times_S X}\cdots \times_{X\times_S X}D_{\lambda_r}$ for some $\lambda_1,\ldots,\lambda_r\in I_0$, we put
  \[E_\lambda=E_{\lambda_1}\times_{X\times_S X}\cdots \times_{X\times_S X}E_{\lambda_r}.\]
  From the result in the above paragraph, we see that the induced morphism
  \[E_\lambda\times_{X\times_S \mathscr{X}_\lambda}\mathscr{E}_\lambda\rightarrow E_\lambda\times_{X\times_S \mathscr{X}_\lambda}\mathscr{D}_\lambda\]
  is an isomorphism. In particular, the the first projection
  \[E_\lambda\times_{X\times_S \mathscr{X}_\lambda}\mathscr{E}_\lambda\rightarrow E_\lambda\]
  is a strict \'etale \v{C}ech hypercover.
\end{none}
\begin{none}\label{9.6.5}
  Under the notations and hypotheses of (\ref{9.6.4}), consider the commutative diagram
  \[\begin{tikzcd}
    &&\mathscr{E}_\lambda\arrow[rd,"v_\lambda"]\\
    \mathscr{Y}_\lambda\arrow[r,"c_\lambda''"]\arrow[d,"u_{0\lambda}"]&\mathscr{E}_\lambda'\arrow[ru,"w_\lambda''"]\arrow[d,"v_\lambda''"]\arrow[r,"v_\lambda'"] &\mathscr{D}_\lambda'\arrow[d,"u_\lambda'"] \arrow[r,"w_\lambda'"]&\mathscr{D}_\lambda\arrow[d,"u_\lambda"]\arrow[rd,"q_{2\lambda}"]\\
    \mathscr{X}_\lambda\arrow[r,"b_\lambda''"]&E_\lambda\arrow[r,"u_\lambda''"]&\mathscr{X}_\lambda\times_{S_\lambda}\mathscr{X}_\lambda\arrow[r,"w_\lambda"]&X\times_S \mathscr{X}_\lambda \arrow[r,"p_{2\lambda}'"]&\mathscr{X}_\lambda
  \end{tikzcd}\]
  of $\mathscr{S}$-diagrams where each small square is Cartesian and $w_\lambda$ denotes the induced morphism. Then we have the commutative diagram
  \begin{equation}\label{9.6.5.1}\begin{tikzcd}
    \Omega_{f,g_\lambda,\mathscr{E}_\lambda'}^n\arrow[d,"T_{E_\lambda,\mathscr{E}_\lambda'}"]\arrow[r,leftarrow,"(T^n)^{-1}"]&\Omega_{f,g,\mathscr{E}_\lambda'}^d \arrow[d,"T_{E_\lambda,\mathscr{E}_\lambda'}"]\arrow[r,"T^d"]&\Omega_{f,g,\mathscr{E}_\lambda'}\arrow[d,"T_{E_\lambda,\mathscr{E}_\lambda'}"] \arrow[rr,"T_{\mathscr{D}_\lambda',\mathscr{E}_\lambda'}"]&&\Omega_{f,g_\lambda,\mathscr{D}_\lambda'}\arrow[d,"T_{\mathscr{X}_\lambda \times_{S_\lambda}\mathscr{X}_\lambda, D_\lambda'}"]\arrow[rr,"T_{\mathscr{D}_\lambda,\mathscr{D}_\lambda'}"]&&\Omega_{f,g_\lambda,\mathscr{D}_\lambda}\arrow[d,"T_{\mathscr{D}_\lambda}"]\\
    \Omega_{f,g_\lambda,E_\lambda}^n\arrow[r,leftarrow,"(T^n)^{-1}"']&\Omega_{f,g,E_\lambda}^d\arrow[r,"T^d"']&\Omega_{f,g,E_\lambda} \arrow[rr,"T_{\mathscr{X}_\lambda\times_{S_\lambda} \mathscr{X}_\lambda,E_\lambda}"']&&\Omega_{f,g_\lambda,\mathscr{X}_\lambda\times_{S_\lambda}\mathscr{X}_\lambda} \arrow[rr,"T_{\mathscr{X}_\lambda\times_{S_\lambda}\mathscr{X}_\lambda}"'] && \Omega_{f,g_\lambda}
  \end{tikzcd}\end{equation}
  of functors. Here, the arrows are defined by the $\mathscr{S}$-diagram versions of (\ref{4.2.2}), (\ref{4.2.1}), and (\ref{4.2.11}). Since $u_\lambda$ is an exact log smooth morphism and $a_\lambda'$ is reduced, the exchange transformation
  \[u_{0\lambda*}b_\lambda^!\stackrel{Ex}\longrightarrow a_\lambda'^!u_{\lambda*}\]
  for the commutative diagram
  \[\begin{tikzcd}
    \mathscr{Y}_\lambda\arrow[d,"u_{0\lambda}"]\arrow[r,"b_\lambda"]&\mathscr{D}_\lambda\arrow[d,"u_\lambda"]\\
    \mathscr{X}_\lambda\arrow[r,"a_\lambda'"]&X\times_S \mathscr{X}
  \end{tikzcd}\]
  is an isomorphism by (\ref{10.2.10}). The unit ${\rm id}\stackrel{ad}\longrightarrow u_{\lambda*}u_\lambda^*$ is also an isomorphism since $\mathscr{T}$ satisfies strict \'etale descent. Thus by construction in (\ref{4.2.2}(ii)), the transition transformation $T_{\mathscr{D}_\lambda}$ is an isomorphism. Similarly, the other vertical arrows of (\ref{9.6.5.1}) are isomorphisms.

  By construction in (\ref{9.6.3}) using (\ref{9.1.3}), the conditions of (\ref{4.1.2}) are satisfied, so by (\ref{4.2.14}), the lower horizontal arrows of (\ref{9.6.5.1}) denoted by $(T^n)^{-1}$ and $T^d$ are isomorphisms. The lower horizontal arrow of (\ref{9.6.5.1}) denoted by $T_{\mathscr{X}_\lambda\times_{S_\lambda} \mathscr{X}_\lambda,E_\lambda}$ is an isomorphism by (\ref{9.5.1}), and the lower horizontal arrow of (\ref{9.6.5.1}) denoted by $T_{\mathscr{X}_\lambda\times_{S_\lambda} \mathscr{X}_\lambda}$ is an isomorphism by construction (\ref{4.2.2}(iii)). Thus we have shown that the lower horizontal arrows of (\ref{9.6.5.1}) are all isomorphisms, so the upper horizontal arrows of (\ref{9.6.5.1}) are also isomorphisms.

  Now, consider the commutative diagram
  \[\begin{tikzcd}
    \Omega_{f,g_\lambda,\mathscr{E}_\lambda'}^n\arrow[d,"T_{\mathscr{E}_\lambda,\mathscr{E}_\lambda'}"]\arrow[r,leftarrow,"(T^n)^{-1}"]& \Omega_{f_\lambda,g,\mathscr{E}_\lambda'}^d \arrow[d,"T_{\mathscr{E}_\lambda,\mathscr{E}_\lambda'}"]\arrow[r,"T^d"]& \Omega_{f_\lambda,g,\mathscr{E}_\lambda'}\arrow[d,"T_{\mathscr{E}_\lambda,\mathscr{E}_\lambda'}"] \arrow[r,"T_{\mathscr{D}_\lambda',\mathscr{E}_\lambda'}"]& \Omega_{f,g_\lambda,\mathscr{D}_\lambda'}\arrow[d,"T_{\mathscr{D}_\lambda,\mathscr{D}_\lambda'}"] \arrow[rr,"T_{\mathscr{X}_\lambda\times_{S_\lambda}\mathscr{X}_\lambda,\mathscr{D}_\lambda'}"]&&\Omega_{f,g_\lambda,\mathscr{X}_\lambda\times_{S_\lambda}\mathscr{X}_\lambda} \arrow[d,"T_{\mathscr{X}_\lambda\times_{S_\lambda}\mathscr{X}_\lambda}"]\\
    \Omega_{f,g_\lambda,\mathscr{E}_\lambda}^n\arrow[r,leftarrow,"(T^n)^{-1}"]& \Omega_{f_\lambda,g,\mathscr{E}_\lambda}^d \arrow[r,"T^d"]& \Omega_{f_\lambda,g,\mathscr{E}_\lambda} \arrow[r,"T_{\mathscr{D}_\lambda,\mathscr{E}_\lambda}"]&\Omega_{f,g_\lambda,\mathscr{D}_\lambda}\arrow[rr,"T_{\mathscr{D}_\lambda}"]&&\Omega_{f,g_\lambda}
  \end{tikzcd}\]
  of functors. Here, the arrows are defined by the $\mathscr{S}$-diagram versions of (\ref{4.2.2}), (\ref{4.2.1}), and (\ref{4.2.11}). We have shown that the upper horizontal arrows and the right side vertical arrow are isomorphisms, and the other vertical arrows are also isomorphisms by (\ref{4.2.2}) and (\ref{4.2.11}). The lower horizontal arrows are isomorphisms. In particular, the natural transformation
  \[\Omega_{f,g_\lambda,\mathscr{E}_\lambda}^n\stackrel{(T^n)^{-1}}\longleftarrow \Omega_{f_\lambda,g,\mathscr{E}_\lambda}^d \]
  is an isomorphism. Let $T^n$ denote its inverse.
\end{none}
\begin{none}\label{9.6.6}
  Under the notations and hypotheses of (\ref{9.6.5}), as in (\ref{4.2.5}), we have several exchange transformations (or inverse exchange transformations) as follows.
  \begin{enumerate}[(1)]
    \item We put $\Omega_{f,g,\lambda}=a'^!\lambda_*p_{2\lambda'}^*$. Then we have the natural transformations
    \[\lambda_*\Omega_{f,g_\lambda}\stackrel{Ex}\longrightarrow \Omega_{f,g,\lambda}\stackrel{Ex}\longrightarrow \Omega_{f,g}\lambda_*\]
    given by
    \[\lambda_*a_\lambda'^!p_{2\lambda}'^*\stackrel{Ex}\longrightarrow a'^!\lambda_*p_{2\lambda}'^*\stackrel{Ex^{-1}}\longrightarrow a'^!p_2'^*\lambda_*.\]
    Here, the first arrow is an isomorphism by (\ref{10.2.6}), and the second arrow is defined and an isomorphism by (\ref{10.2.4}) since $p_2'$ is Cartesian exact log smooth.
    \item We put $\Omega_{f,g,\mathscr{D},\lambda}=u_{0*}b^!\lambda_*q_{2\lambda}^*$. Then we have the natural transformations
    \[\lambda_*\Omega_{f,g_\lambda,\mathscr{D}_\lambda}\stackrel{Ex}\longrightarrow \Omega_{f,g,\mathscr{D},\lambda}\stackrel{Ex^{-1}}\longleftarrow\Omega_{f,g,\mathscr{D}}\lambda_*\]
    given by
    \[\lambda_*u_{0\lambda*}b_\lambda^!q_{2\lambda}^*\stackrel{\sim}\longrightarrow u_{0*}\lambda_*b_\lambda^!q_{2\lambda}^*\stackrel{Ex}\longrightarrow u_{0*}b^!\lambda_*q_{2\lambda}^*\stackrel{Ex}\longleftarrow u_{0*}b^!q_2^*\lambda_*.\]
    \item We have the {\it inverse} exchange transformation
    \[\Omega_{f,g,\mathscr{E}}\lambda_*\stackrel{Ex^{-1}}\longrightarrow \lambda_*\Omega_{f,g_\lambda,\mathscr{E}_\lambda}\]
    given by
    \[u_{0*}c^!r_2^*\lambda_*\stackrel{Ex}\longrightarrow u_{0*}c^!\lambda_*r_{2\lambda}^*\stackrel{Ex^{-1}}\longrightarrow u_{0*}\lambda_*c_\lambda^!r_{2\lambda}^*\stackrel{\sim}\longrightarrow \lambda_*u_{0\lambda*}c_\lambda^!r_{2\lambda}^*.\]
    Here, the second arrow is defined and an isomorphism by (\ref{10.2.10}) since $c$ is a Cartesian strict closed immersion.
    \item We have the {\it inverse} exchange transformation
    \[\Omega_{f,g,\mathscr{E}}^d\lambda_*\stackrel{Ex^{-1}}\longrightarrow \lambda_*\Omega_{f,g_\lambda,\mathscr{E}_\lambda}^d\]
    given by
    \[\begin{split}
      u_{0*}\phi_*d^!s_2^*\pi^*\lambda_*&\stackrel{Ex}\longrightarrow u_{0*}\phi_*d^!s_2^*\lambda_*\pi_\lambda^*\stackrel{Ex}\longrightarrow u_{0*}\phi_*d^!\lambda_*s_{2\lambda}^*\pi_\lambda^*\\
      &\stackrel{Ex^{-1}}\longrightarrow u_{0*}\phi_*\lambda_*d_\lambda^!s_{2\lambda}^*\pi_\lambda^*\stackrel{\sim}\longrightarrow \lambda_*u_{0\lambda*}\phi_{\lambda*}d_\lambda^!s_{2\lambda}^*\pi_\lambda^*.
    \end{split}\]
    Here, the third arrow is defined and an isomorphism by (\ref{10.2.10}) since $d$ is a Cartesian strict closed immersion.
    \item We have the {\it inverse} exchange transformation
    \[\Omega_{f,g,\mathscr{E}}^n\lambda_*\stackrel{Ex^{-1}}\longrightarrow \lambda_*\Omega_{f,g_\lambda,\mathscr{E}_\lambda}^n\]
    given by
    \[u_{0*}e^!t_2^*\lambda_*\stackrel{Ex}\longrightarrow u_{0*}e^!\lambda_*t_{2\lambda}^*\stackrel{Ex^{-1}}\longrightarrow u_{0*}\lambda_*e_\lambda^!t_{2\lambda}^* \stackrel{\sim}\longrightarrow \lambda_*u_{0\lambda*}e_\lambda^!t_{2\lambda}^*.\]
    Here, the second arrow is defined and an isomorphism by (\ref{10.2.10}) since $e$ is a Cartesian strict closed immersion.
    \item We have the {\it inverse} exchange transformation
    \[\Omega_{f,g}^n\lambda_*\stackrel{Ex^{-1}}\longrightarrow \lambda_*\Omega_{f,g_\lambda}^n\]
    given by
    \[e'^!t_2'^*\lambda_*\stackrel{Ex}\longrightarrow e'^!\lambda_*t_{2\lambda}'^*\stackrel{Ex^{-1}}\longrightarrow \lambda_*e_\lambda'^!t_{2\lambda}'^*.\]
    Here, the first arrow is an isomorphism by (\ref{10.2.4}) since $t_2'$ is Cartesian exact log smooth, and the second arrow is defined and an isomorphism by (\ref{10.2.10}) since $e'$ is a Cartesian strict closed immersion. Thus the composition is also an isomorphism.
    \item We have the exchange transformation
    \[\lambda^*\Omega_{f,g,\mathscr{E}}^d\stackrel{Ex}\longrightarrow \Omega_{f,g_\lambda,\mathscr{E}_\lambda}^d\lambda^*\]
    given by
    \[\begin{split}
      \lambda^*u_{0*}\phi_*d^!s_2^*\pi^*&\stackrel{Ex}\longrightarrow u_{0\lambda*}\lambda^*\phi_*d^!s_2^*\pi^*\stackrel{Ex}\longrightarrow u_{0\lambda*}\phi_{\lambda*}\lambda^*d^!s_2^*\pi^*\\
      &\stackrel{Ex}\longrightarrow u_{0\lambda*}\phi_{\lambda*}d_\lambda^!\lambda^*s_2^*\pi^*\stackrel{\sim}\longrightarrow u_{0\lambda*}\phi_{\lambda*}d_\lambda^!s_{2\lambda}^*\pi_\lambda^*\lambda^*.
    \end{split}\]
    Here, the first and second arrows are isomorphisms by (\ref{10.2.7}), and the third arrow is defined and an isomorphism by (\ref{10.2.9}) since $d$ is a Cartesian strict closed immersion. Thus the composition is an isomorphism.
    \item We have the exchange transformation
    \[\lambda^*\Omega_{f,g,\mathscr{E}}^n\stackrel{Ex}\longrightarrow \Omega_{f,g_\lambda,\mathscr{E}_\lambda}^n\lambda^*\]
    given by
    \[\lambda^*u_{0*}e^!t_2^*\pi^*\stackrel{Ex}\longrightarrow u_{0\lambda*}\lambda^*e^!t_2^*\pi^*\stackrel{Ex}\longrightarrow u_{0\lambda*}e_\lambda^!\lambda^*t_2^*\pi^* \stackrel{\sim}\longrightarrow w_{\lambda*}e_\lambda^!t_{2\lambda}^*\pi_\lambda^*\lambda^*.\]
    Here, the first arrow is an isomorphism by (\ref{10.2.7}), and the second arrow is defined and an isomorphism by (\ref{10.2.10}) since $e$ is a Cartesian strict closed immersion. Thus the composition is an isomorphism.
  \end{enumerate}
  We also have the natural transformation
  \[\Omega_{f,g,\mathscr{D},\lambda}\stackrel{T_{\mathscr{D}}}\longrightarrow \Omega_{f,g,\lambda}\]
  given by
  \[u_{0*}b^!\lambda_*q_{2\lambda}^*\stackrel{Ex}\longrightarrow a'^!u_*\lambda_*q_{2\lambda}^*\stackrel{\sim}\longrightarrow a'^!\lambda_*u_{\lambda*}u_\lambda^*p_{2\lambda}^* \stackrel{ad^{-1}}\longrightarrow a'^!\lambda_*p_{2\lambda}^*.\]
  Here, the third arrow is defined and an isomorphism since $\mathscr{T}$ satisfies strict \'etale descent.
\end{none}
\begin{none}\label{9.6.7}
  Under the notations and hypotheses of (\ref{9.6.6}), for $\lambda\in I$, we have the commutative diagram
  \[\begin{tikzcd}
    \lambda^*\Omega_{f,g,\mathscr{E}}^n\arrow[d,"Ex"]\arrow[r,"(T^n)^{-1}"]&\lambda^*\Omega_{f,g,\mathscr{E}}^d\arrow[d,"Ex"]\\
    \Omega_{f,g_\lambda,\mathscr{E}_\lambda}^n\lambda^*\arrow[r,"(T^n)^{-1}"]&\Omega_{f,g_\lambda,\mathscr{E}_\lambda}^d\lambda^*
  \end{tikzcd}\]
  of functors. By (loc.\ cit), the vertical arrows are isomorphisms, and by (\ref{9.6.5}), the lower horizontal arrow is an isomorphism. Thus the upper horizontal arrow is also an isomorphism. Then by (PD--4), the natural transformation
  \[\Omega_{f,g,\mathscr{E}}^n\stackrel{(T^n)^{-1}}\longleftarrow \Omega_{f,g,\mathscr{E}}^d\]
  is an isomorphism. Let $T^n$ denote its inverse.

  Now, consider the commutative diagram
    \[\begin{tikzpicture}[baseline= (a).base]
    \node[scale=.98] (a) at (0,0)
    {\begin{tikzcd}
    \lambda_*\Omega_{f,g_\lambda}^n\arrow[dd,"Ex^{-1}",leftarrow]\arrow[r,"T^{n'}"]&\lambda_*\Omega_{f,g_\lambda,\mathscr{E}_\lambda}^{n}\arrow[dd,"Ex^{-1}",leftarrow] \arrow[r,"T^n"]&\lambda_* \Omega_{f,g_\lambda,\mathscr{E}_\lambda}^d\arrow[dd,"Ex^{-1}",leftarrow]\arrow[r,"T^d"]&\lambda_*\Omega_{f,g_\lambda,\mathscr{E}_\lambda}  \arrow[dd,"Ex^{-1}",leftarrow] \arrow[r,"T_{\mathscr{D}_\lambda,\mathscr{E}_\lambda}"] &\lambda_*\Omega_{f,g_\lambda,\mathscr{D}_\lambda}\arrow[d,"Ex"]\arrow[r,"T_{\mathscr{D}_\lambda}"]& \lambda_*\Omega_{f,g_\lambda}\arrow[d,"Ex"]\\
    &&&&\Omega_{f,g,\lambda}\arrow[d,"Ex^{-1}",leftarrow]\arrow[r,"T_{\mathscr{D}}"]&\Omega_{f,g,\lambda}\arrow[d,"Ex"]\\
    \Omega_{f,g}^n\lambda_*\arrow[r,"T^{n'}"]&\Omega_{f,g,\mathscr{E}}^n\lambda_*\arrow[r,"T^n"]&\Omega_{f,g,\mathscr{E}}^d\lambda_*\arrow[r,"T^d"]& \Omega_{f,g,\mathscr{E}}\lambda_*\arrow[r,"T_{\mathscr{D},\mathscr{E}}"]&\Omega_{f,g,\mathscr{D}}\lambda_*\arrow[r,"T_\mathscr{D}"]&\Omega_{f,g}\lambda_*
  \end{tikzcd}};
  \end{tikzpicture}\]
  of functors. Here, the arrows are constructed in (\ref{9.6.6}), (\ref{9.6.9}), and the $\mathscr{S}$-diagram version of (\ref{4.2.2}). The top horizontal arrows are isomorphisms by (\ref{9.6.9}) and (\ref{9.6.6}), and we have shown in (loc.\ cit) that the left side vertical and the right side vertical arrows are isomorphisms. Thus the composition of the five lower horizontal arrows
  \[\Omega_{f,g}^n\lambda_*\longrightarrow \Omega_{f,g}\lambda_*\]
  is an isomorphism. Then its left adjoint
  \[\lambda^*\Sigma_{f,g}\longrightarrow \Sigma_{f,g}^n\lambda^*\]
  is also an isomorphism where
  \[\Sigma_{f,g}=p_{2\sharp}'a_*',\quad \Sigma_{f,g}^n=t_{2\sharp}'e_*'.\]

  We also denote by $T_\mathscr{E}^n$ the composition
  \[\Omega_{f,g}^n\stackrel{T^{n'}}\longrightarrow \Omega_{f,g,\mathscr{E}}^n\stackrel{T^n}\longrightarrow \Omega_{f,g,\mathscr{E}}^d\stackrel{T^d}\longrightarrow \Omega_{f,g,\mathscr{E}} \stackrel{T_{\mathscr{D},\mathscr{E}}}\longrightarrow \Omega_{f,g,\mathscr{D}}\stackrel{T_{\mathscr{D}}}\longrightarrow \Omega_{f,g}.\]
  It is called again a {\it transition transformation.}
  Then its left adjoint
  \[\Sigma_{f,g}\longrightarrow \Sigma_{f,g}^n\]
  is an isomorphism by (PD--4) and the above paragraph. Therefore, we have proven the following theorem.
\end{none}
\begin{thm}\label{9.6.8}
  Under the notations and hypotheses of (\ref{9.6.6}), the transition transformation
  \[\Omega_{f,g}^n\stackrel{T_\mathscr{E}^n}\longrightarrow \Omega_{f,g}\]
  is an isomorphism.
\end{thm}
\begin{none}\label{9.6.11}
  Under the notations and hypotheses of (\ref{9.6.6}), we put
  \[\Omega_{f,X\times_S \mathscr{X}}=h_*\Omega_{f,g}h^*,\quad \Omega_f^n=h_*\Omega_{f,g}^nh^*.\]
  Then the natural transformation
  \[\Omega_f^n=h_*\Omega_{f,g}^nh^*\stackrel{T_\mathscr{E}^n}\longrightarrow h_*\Omega_{f,g}h^*=\Omega_{f,X\times_S \mathscr{X}}\]
  is an isomorphism by (\ref{9.6.8}). We also have the natural transformation
  \[T_{X\times_S \mathscr{X}}:\Omega_{f,X\times_S \mathscr{X}}\longrightarrow \Omega_f\]
  given by
  \[h_*a'^!p_2'^*h^*\stackrel{Ex}\longrightarrow a^!h_*'p_2'^*h^*\stackrel{\sim}\longrightarrow a^!h_*'h'^*p_2^*\stackrel{ad^{-1}}\longrightarrow a^!p_2^*.\]
  Here, the first arrow is an isomorphism by (\ref{10.2.10}), and the third arrow is defined and an isomorphism since $\mathscr{T}$ satisfies strict \'etale descent. Thus the composition is also an isomorphism.

  Now, consider the natural transformations
  \[\Omega_f^n \stackrel{T_\mathscr{E}^n}\longrightarrow \Omega_{f,X\times_S \mathscr{X}}\stackrel{T_{X\times_S \mathscr{X}}}\longrightarrow \Omega_f.\]
  The composition is also denoted by $T_\mathscr{E}^n$. It is an isomorphism by (\ref{9.6.8}) and the above paragraph.

  Then consider the natural transformations
  \[\Omega_f^nf^!\stackrel{T_\mathscr{E}^n}\longrightarrow \Omega_{f}f^!\stackrel{\mathfrak{q}_f} \longrightarrow f^*.\]
  The composition is denoted by $\mathfrak{q}_{f,\mathscr{E}}^n$. By (\ref{9.5.5}) and the above paragraph, we have proven the following theorem.
\end{none}
\begin{thm}\label{9.6.10}
  Under the notations and hypotheses of (\ref{9.6.9}), the natural transformation
   \[\Omega_f^nf^!\stackrel{\mathfrak{q}_{f,\mathscr{E}}^n}\longrightarrow f^*.\]
  is an isomorphism.
\end{thm}
\subsection{Canonical version of purity transformations}
\begin{none}\label{9.7.1}
  Let $f:X\rightarrow S$ be a separated vertical exact log smooth morphism of $\mathscr{S}$-schemes. The category of localized compactified exactifications of the diagonal morphism $a:X\rightarrow X\times_S X$, denoted by $\mathcal{LCE}_a$, is the category whose object is the data of $v_\lambda:\mathscr{E}_i\rightarrow \mathscr{X}_\lambda\times_{S_\lambda}\mathscr{X}_\lambda$ and commutative diagram
  \[\begin{tikzcd}
    \mathscr{X}_\lambda\arrow[r,"h_\lambda"]\arrow[d,"f_\lambda"]&X\arrow[d,"f"]\\
    S_\lambda\arrow[r,"l_\lambda"]&S
  \end{tikzcd}\]
  for $\lambda\in I$ where
  \begin{enumerate}
    \item $I$ is a set, and the diagram commutes,
    \item $f_\lambda$ and $l_\lambda$ are strict \'etale,
    \item $v_i$ is a compactified exactification of the diagonal morphism $\mathscr{X}_\lambda\rightarrow \mathscr{X}_\lambda\times_{S_\lambda}\mathscr{X}_\lambda$.
  \end{enumerate}
  Morphism is the data of
  \[S_\lambda'\rightarrow S_\lambda,\quad \mathscr{X}_\lambda'\rightarrow \mathscr{X}_\lambda,\quad \mathscr{E}_\lambda'\rightarrow \mathscr{E}_\lambda\]
  compatible with the morphisms in (\ref{9.6.3.1}).

  Then $\mathcal{LCE}_a$ is not empty by (\ref{9.6.3}), and as in (\ref{9.1.6}), it is connected since we can take the fiber products of $(\mathscr{X}_\lambda,S_\lambda,\mathscr{E}_\lambda)_{\lambda\in I}$ and $(\mathscr{X}_\lambda',S_\lambda',\mathscr{E}_\lambda')_{\lambda\in I}$. For any object $\omega$ of $\mathcal{LCE}_a$, as in (\ref{9.6.8}), we can associate the natural transformation
  \[\Omega_{f}^n\stackrel{T_\omega^n}\longrightarrow \Omega_{f}.\]
  Then as in (\ref{4.2.12}), we have the compatibility, i.e., this defines the functor
  \[T^n:\mathcal{LCE}_a\rightarrow {\rm Hom}(\Omega_f^n,\Omega_f).\]
  To make various natural transformations $T_\omega^n$ canonical, we take the limit
  \[\varprojlim_{\omega}T_\omega^n.\]
  It is denoted by $T^n:\Omega_f^n\rightarrow \Omega_f$. Now, the definition of the {\it purity transformation} is the composition
  \[\Omega_f^n f^!\stackrel{T^n}\longrightarrow \Omega_f f^!\stackrel{\mathfrak{q}_f}\longrightarrow f^*,\]
  and it is denoted by $\mathfrak{q}_f^n$. By (\ref{9.6.10}), we have the following theorem.
\end{none}
\begin{thm}\label{9.7.2}
  Let $f:X\rightarrow S$ be a separated vertical exact log smooth morphism. Then the purity transformation
   \[\Omega_f^nf^!\stackrel{\mathfrak{q}_f^n}\longrightarrow f^*.\]
  is an isomorphism.
\end{thm}
\titleformat*{\section}{\center \scshape }

\end{document}